\renewcommand{\bar}{\overline}
\newcommand{\lint}{\llbracket}
\newcommand{\rint}{\rrbracket}
\newcommand{\bt}{\mathbf{t}}
\newcommand{\bx}{\mathbf{x}}
\newcommand{\bn}{\mathbf{n}}
\numberwithin{equation}{section}
\newtheorem{theorema}{Theorem}
\newtheorem{theorem}{Theorem}[section]
\newtheorem{lemma}[theorem]{Lemma}
\newtheorem{claim}{Claim}
\newtheorem{proposition}[theorem]{Proposition}
\newtheorem{rem}[theorem]{Remark}
\newcommand{\Det}{\mathrm{Det}}
\newcommand{\dd}{\mathrm{d}}
\newcommand{\ind}{\mathbf{1}}
\newcommand{\restrict}[1]{\raise-.3ex\hbox{\big|}_{#1}}
\newcommand{\R}{\mathbb{R}}
\renewcommand{\tilde}{\widetilde}
\renewcommand{\hat}{\widehat}
\newcommand{\cc}{\complement}
\newcommand{\cA}{{\ensuremath{\mathcal A}} }
\newcommand{\cB}{{\ensuremath{\mathcal B}} }
\newcommand{\cP}{{\ensuremath{\mathcal P}} }
\newcommand{\cC}{{\ensuremath{\mathcal C}} }
\newcommand{\cL}{{\ensuremath{\mathcal L}} }
\newcommand{\cZ}{{\ensuremath{\mathcal Z}} }
\newcommand{\cK}{{\ensuremath{\mathcal K}} }
\newcommand{\bP}{{\ensuremath{\mathbf P}} }
\newcommand{\bQ}{{\ensuremath{\mathbf Q}} }
\newcommand{\bE}{{\ensuremath{\mathbf E}} }
\newcommand{\cM}{{\ensuremath{\mathcal M}} }
\newcommand{\cW}{{\ensuremath{\mathcal W} }}
\DeclareMathSymbol{\leqslant}{\mathalpha}{AMSa}{"36} 
\DeclareMathSymbol{\geqslant}{\mathalpha}{AMSa}{"3E} 
\DeclareMathSymbol{\eset}{\mathalpha}{AMSb}{"3F}     
\renewcommand{\leq}{\, \leqslant\,}                   
\newcommand{\Var}{\mathrm{Var}}        
\newcommand{\mintwo}[2]{\min_{\substack{#1 \\ #2}}} 
\newcommand{\suptwo}[2]{\sup_{\substack{#1 \\ #2}}} 
\newcommand{\sumtwo}[2]{\sum_{\substack{#1 \\ #2}}} 
\newcommand{\inttwolimits}[2]{\int\limits_{\substack{#1 \\ #2}}}     
\newcommand{\Cov}{\mathrm{Cov}}
\newcommand{\bbE}{{\ensuremath{\mathbb E}} }
\newcommand{\bbH}{{\ensuremath{\mathbb H}} }
\newcommand{\bbN}{{\ensuremath{\mathbb N}} }
\newcommand{\bbP}{{\ensuremath{\mathbb P}} }
\newcommand{\bbQ}{{\ensuremath{\mathbb Q}} }
\newcommand{\bbR}{{\ensuremath{\mathbb R}} }
\newcommand{\bbZ}{{\ensuremath{\mathbb Z}} }
\newcommand{\ga}{\alpha}
\newcommand{\gb}{\beta}
\newcommand{\gd}{\delta}
\newcommand{\gep}{\varepsilon}       
\newcommand{\gp}{\varphi}
\newcommand{\gG}{\Gamma}
\newcommand{\go}{\omega}
\newcommand{\gO}{\Omega}
\newcommand{\ups}{\upsilon}
\def\captionfont@{\footnotesize}
\def\captionheadfont@{\scshape}
\long\def\@makecaption#1#2{%
  \vspace{2mm}
  \setbox\@tempboxa\vbox{\color@setgroup
    \advance\hsize-6pc\noindent
    \captionfont@\captionheadfont@#1\@xp\@ifnotempty\@xp
        {\@cdr#2\@nil}{.\captionfont@\upshape\enspace#2}%
    \unskip\kern-6pc\par
    \global\setbox\@ne\lastbox\color@endgroup}%
  \ifhbox\@ne 
    \setbox\@ne\hbox{\unhbox\@ne\unskip\unskip\unpenalty\unkern}%
  \fi
  \ifdim\wd\@tempboxa=\z@ 
    \setbox\@ne\hbox to\columnwidth{\hss\kern-6pc\box\@ne\hss}%
  \else 
    \setbox\@ne\vbox{\unvbox\@tempboxa\parskip\z@skip
        \noindent\unhbox\@ne\advance\hsize-6pc\par}%
\fi
  \ifnum\@tempcnta<64 
    \addvspace\abovecaptionskip
    \moveright 3pc\box\@ne
  \else 
    \moveright 3pc\box\@ne
    \nobreak
    \vskip\belowcaptionskip
  \fi
\relax
}
\def\writefig#1 #2 #3 {\rlap{\kern #1 truecm
\raise #2 truecm \hbox{#3}}}
\newcommand{\blue}{\color{blue}}
\title[The scaling limit of the directed polymer with power-law tail disorder]{The scaling limit of the directed  polymer\\
with power-law tail disorder
}
\author{Quentin Berger}
\address{Q. Berger: LPSM, Sorbonne Universit\'e,  UMR 8001
Campus Pierre et Marie Curie, Bo\^ite courrier 158, 4 Place Jussieu, 75252 Paris Cedex 05;
}
\email{quentin.berger@sorbonne-universite.fr}
\author{Hubert Lacoin}
\address{
H. Lacoin:   IMPA, Institudo de Matem\'atica Pura e Aplicada, Estrada Dona Castorina 110
Rio de Janeiro, CEP-22460-320, Brasil;
}
\email{lacoin@impa.br}
\begin{document}

\begin{abstract}
In this paper, we study the so-called intermediate disorder regime for a directed polymer
in a random environment
with heavy-tail. 
Consider a simple symmetric random walk $(S_n)_{n\geq 0}$
on $\mathbb{Z}^d$, with $d\geq 1$,
and modify its law using 
Gibbs weights in the product form
$\prod_{n=1}^{N} (1+\beta\eta_{n,S_n})$, where $(\eta_{n,x})_{n\ge 0, x\in \bbZ^d}$ is a field of i.i.d.\ random variables whose distribution satisfies $\bbP(\eta>z) \sim  z^{-\alpha}$ as $z\to\infty$, for some $\alpha\in(0,2)$.
We prove that if $\alpha< \min(1 +  \frac{2}{d} ,2)$,
when sending $N$ to infinity and rescaling the disorder intensity by taking 
$\beta=\beta_N \sim  N^{-\gamma}$ with $\gamma =\frac{d}{2\alpha}(1+\frac{2}{d}-\alpha)$, the distribution of the trajectory under diffusive scaling converges in law towards a random limit, which is the 
continuum polymer with L\'evy  $\alpha$-stable noise constructed in the companion paper \cite{BL20_cont}.

\end{abstract}

\maketitle

\section{Introduction}

We consider in this article 
the directed polymer model,
which has been introduced by Huse and Henley~\cite{HH85}
as an effective model for a $(1+1)$-dimensional interface
in the Ising model with impurities.
It has then been generalized and used as a model
for a $(1+d)$-dimensional stretched polymer
placed in a heterogeneous solvent
and has received a lot of attention over the past decades:
we refer to~\cite{C17} for an overview.
The main achievement of the present paper is to identify,
in the case of a power-law tail environment, a continuum limit for the model when:
\begin{itemize}
 \item The size of the system $N$ tends to infinity;
\item Space and time are rescaled diffusively;
\item The intensity of the disorder $\beta=\beta_N$ is sent to zero at an appropriate rate. 
\end{itemize}

\subsection{The directed polymer model}

Let $(S_n)_{n\geq 0}$ be a simple symmetric random walk 
on $\bbZ^d$ with $d\geq 1$, starting from the origin. Its  law is denoted by $\bP$.
Let also $(\eta_{n,x})_{(n,x)\in \bbN \times \bbZ^d}$
be a $(1+d)$-dimensional field of i.i.d.\ random variables,
whose law is denoted by $\bbP$.
We will denote by $\eta$
a generic random variable with the same law as $\eta_{n,x}$.
We make the assumption that 
\begin{equation}
\label{standard}
\bbP(\eta> -1) =1 \qquad \text{ and either }   \bbE[\eta]  =0 \ \text{ or }  \ \bbE[\eta]=\infty.
\end{equation}
Now, for a fixed realization of the environment $(\eta_{n,x})_{(n,x)\in \bbN \times \bbZ^d}$ and
given a parameter $\gb \in (0,1)$
which tunes the disorder's strength, 
we define for $N\in \bbN$ the Gibbs measure~$\bP_{N,\gb}^{\eta}$ by
\begin{equation}\label{defdiscrete}
\frac{\dd \bP_{N,\gb}^{\eta}}{\dd \bP} (S) := \frac{1}{Z_{N,\gb}^{\eta} } \prod_{n=1}^N \big(  1+ \gb \eta_{n,S_n}\big) \,,
\end{equation}
where $Z_{N,\gb}^{\eta}$
is the partition function that normalizes $\bP_{N,\gb}^{\eta}$
to a probability measure, \textit{i.e.}
\begin{equation}
\label{def:Zn}
Z^{\eta}_{N,\beta}:= \bE\Big[\prod_{n=1}^N \big( 1+\beta \eta_{n,S_n} \big) \Big] \, .
\end{equation}
The graph of $(S_n)_{n=0}^N$ models the spatial configuration of the random polymer, and the field $(\eta_{n,x})_{(n,x)\in \bbN \times \bbZ^d}$ accounts for the heterogeneous environment.
The probability measure $\bP_{N,\gb}^{\eta}$ 
then favors trajectories of the random walk that visit
space-time points in the environment
with a large value of~$\eta$.
Let us stress that  the assumptions in \eqref{standard} are merely
practical. The first one ensures that 
$1+\beta \eta_{n,S_n}$  is always positive (which is required for our density \eqref{defdiscrete} to be positive),
 the second assumption is present for the sake of normalization so that
$\bbE [ Z^{\eta}_{N,\beta}]=1$ when the expectation is finite.

\smallskip

 In the present paper, we focus on the case of a disordered field whose tail distribution has a power-law decay. More precisely, we are going to assume that there exists $\alpha\in(0,2)$  and 
 $\varphi$  a slowly varying function (that is such that $\lim_{x\to \infty}\varphi(ax)/\varphi(x)= 1$ for any $a\in(0,\infty)$, see~\cite{BGT89}) such that for every $z\ge 0$ we have\footnote{The choice to consider 
$1+\eta$ rather than $\eta$  in \eqref{def:eta} is for convenience, because it is a non-negative quantity, but this detail is of no importance for this introduction.}
\begin{equation}
\label{def:eta}
\bbP(1+\eta > z)  = \varphi(z)z^{-\alpha},
\end{equation}
We investigate here the existence of a non-trivial scaling limit of the model in a so-called \textit{intermediate disorder} regime, where the intensity of the disorder is also rescaled with the size of the system.

\smallskip

To motivate this research, let us provide a  short and necessarily incomplete review of  results that can be found in the literature concerning the localization transition for directed polymer, and convergence towards a continuum model after rescaling.
For a complement we refer to \cite{C17} for an introduction to the directed polymer model, with an extensive list of references.
Before we start, let us mention that the bulk of the literature on directed polymer (including~\cite{C17}) uses a different formalism and writes the Gibbs weight in the exponential form 
$\exp\big( \sum_{n=1}^N\beta \tilde\eta_{n,S_n} \big)$, rather than $\prod_{n=1}^N \big( 1+\beta \eta_{n,S_n} \big)$, and assumes that the variable $\tilde \eta$ has finite exponential moments of all orders 
most of the results remain valid in our setup under the assumption $\bbE[\eta^2]<\infty$.
We will later comment on the necessity to adopt the product form in this work, see Remark~\ref{rem:expoGibbs}.

\subsubsection*{The localization transition}

The typical behavior of $(S_{n})_{n=1}^N$ under $\bP^{\eta}_{N,\beta}$ in the large $N$ limit depends on the asymptotic behavior of the partition function. Under the assumption 
$\bbE[\eta]=0$, the sequence $(Z^{\eta}_{N,\beta})_{N\ge 1}$ is a martingale for the natural filtration associated with~$\eta$ (
see for instance~\cite{Bol89}) and thus converges almost surely towards a limit $Z^{\eta}_{\infty,\beta}$. A simple tail sigma-algebra argument yields the following dichotomy
\begin{equation}
 \bbP[ Z^{\eta}_{\infty,\beta}=0 ]\in \{0,1\}.
\end{equation}
Whether  $Z^{\eta}_{\infty,\beta}>0$ or $Z^{\eta}_{\infty,\beta}=0$ holds almost surely depends on the dimension, on $\beta$ and on the distribution of $\eta$.
The regime where $Z^{\eta}_{\infty,\beta}>0$ is called \textit{weak disorder} regime 
and the one where $Z^{\eta}_{\infty,\beta}=0$ is referred to as \textit{strong disorder} regime.
In the weak disorder regime, the influence of the environment is not noticeable on large scales. It has been proved that under weak disorder, the distribution of 
$(S_{n})_{n=1}^N$ rescaled diffusively converges to that of a Brownian Motion (see \cite{CY} and references therein).

\smallskip

Under the \textit{strong disorder} assumption, it is believed that the environment has 
an influence on the trajectories behavior noticeable even on large scale. So far, this phenomenon has been better understood in the sub-regime of \textit{very strong disorder} which corresponds to exponential decay to zero of the partition function, or more precisely 
when
\begin{equation}
p(\beta):=\lim_{N\to\infty} -\frac{1}{N}\log Z^{\eta}_{N,\beta} >0,
\end{equation}
where the limit is in the almost sure sense (the existence of $p(\beta)$ is proved in \cite{CSY04}). In the very strong disorder regime, trajectories are believed to localize around favorite corridors of the environment. Rigorous localization results for the endpoint of the trajectories have been proved in  \cite{CH02, CSY03}  and recently refined in  \cite{BC20}.

\smallskip

It has been proved in \cite{CY} that the quantity $p(\beta)$ is increasing in $\beta$ (in the case of exponential Gibbs weight, see in \cite[App.~A]{Vi19} how the proof adapts to the present setup), meaning that there exists $\beta_c\in[0,1]$ such that very strong disorder holds if and only if $\beta>\beta_c$. The critical intensity $\beta_c$ (when positive) should mark the transition from a diffusive to a localized regime: it is believed that weak disorder holds as long as $\beta<\beta_c$
(see \cite{Vi20} for a recent development on this conjecture).

\begin{rem}\label{hivy}
 The weak/strong disorder terminology has been defined in the case where  $Z^\eta_{N,\beta}$ has finite expectation, that is $\bbE[\eta]=0$. When $\bbE[\eta]=\infty$ we say by convention that very strong disorder holds for every $\beta>0$ (strong localization properties have been proved in that case, see \cite{vargas07}). 
\end{rem}

This phase transition has been studied, mostly under the assumption that 
$\bbE[\eta^2]<\infty$ (the common assumption
in the exponential setup is that $\bbE[e^{\gb \tilde \eta}] <\infty$ for all $\gb$).
Under this assumption, it has been showed that a diffusive phase exists in dimension $d\geq 3$
for sufficiently small $\gb$, \textit{i.e.}\ $\gb_c>0$,
see~\cite{Bol89,IS88}.
On the other hand,
in dimension $d=1,2$ there is no phase transition 
and the polymer is localized  for all $\gb>0$, \textit{i.e.}\ $\gb_c=0$,
see~\cite{CV06} for $d=1$
and~\cite{Lac10pol} for $d=2$ (see also \cite{CH02,CSY03} for earlier results in this direction).

\subsubsection*{The intermediate disorder regime}
Under the assumption $\bbE[\eta^2]<\infty$,
dimensions $d=1$ and $d=2$ are the only dimensions where the value of $\beta_c$ is known. Hence they are the ideal setup in which one can study the crossover regime between a diffusive behavior (at $\beta=0$) and localized behavior (for $\beta>0$).
The idea is to tune the disorder intensity $\beta_N$ to zero as $N$ tends to infinity so that the probability $\bP^{\eta}_{N,\beta_N}( (S_{\lceil Nt\rceil }/\sqrt{N})_{t\in[0,1]}\in \cdot)$ converges (in distribution) to a random continuum distribution (which is not the Wiener measure, obtained when $\beta=0$).

\smallskip

This has been called the \emph{intermediate disorder regime} in the literature, and has been sucessfully studied in the case $d=1$ \cite{AKQ10, AKQ14b, AKQ14}.
In this case the approach is simply to find $\beta_N$ such that $Z^{\eta}_{N,\beta_N}$ converges in distribution
to a non-degenerate limit. For the directed polymer model in dimension $d=1$,
when $\bbE[\eta^2] <\infty$,
the correct scaling turns out to be $\gb_N \sim \hat \gb  N^{-1/4}$;
note that it makes the length $N$ of the system
proportional to the correlation length $|p(\gb)|^{-1} \asymp \gb^{-4}$, see \cite{AY15,Nak16}.

\smallskip

The scaling limit which is obtained, called the \textit{continuum directed polymer}, is the analog of the discrete model where the random walk $S$ and the environement $\eta$ are replaced by their respective scaling limits: Brownian Motion  and the space-time Gaussian white noise (see below for more details on this construction). This continuum model is intimately related to the Stochastic Heat Equation (SHE) with multiplicative white noise.

\smallskip

In dimension $d=2$, the situation is more complicated. The description of the crossover regime in that case is far from complete but has witnessed important progress in recent years.
One of the reasons why this case is more delicate is that the SHE with multiplicative white noise is ill-defined (see \cite{BC98}) so that the limit must be of a different nature. We assume in the following discussion that $\bbE[\eta^2]=1$ for normalization purpose.
In~\cite{CSZ15} the scaling under which  $Z^{\eta}_{N,\beta_N}$ admits a non-trivial limit has been identified ($\gb_N \sim \hat \beta ( \log N)^{-1/2}$ with $\hat \beta<\sqrt{\pi}$), but is has been later shown in  \cite{CSZ20} that in that regime, disorder disappears in the scaling limit of $\bP^{\eta}_{N,\beta_N}( (S_{\lceil Nt \rceil}/\sqrt{N})_{t\in[0,1]}\in \cdot)$.
In order to obtain a disordered scaling limit
one needs to take $\gb_N=\sqrt{\pi} ( \log N)^{-1/2}(1 + b (\log N)^{-1} )$,
with $b\in \bbR$ as a variable parameter  (note that this choice for $\gb_N$ also makes the length~$N$ of the system  proportional to the correlation length $|p(\gb)|^{-1} \asymp e^{\pi /\gb^2}$, see \cite{BL17}). More precisely it has been shown in this regime that the distribution of the partition function is tight and that its subsequential limits are non-trivial, but uniqueness and the description of the limit
remain challenging open problems.
Progresses have been made recently in this direction,
see~\cite{CSZ18scaling,gu2019}, and the existence of a scaling limit for the polymer measure has been derived for the related hierarchical model \cite{clark2019}.

%
%
%
%
%
%
%
%
%
%
%
%

\subsubsection*{Power-law disorder and crossover regime}

The case $\bbE[\eta^2]=\infty$ 
has been investigated more recently in~\cite{Vi19}, where the author studied the localization transition under the assumption that
$\bbP(\eta>z)\sim z^{-\ga}$ as $z$ tends to infinity,
for some $\ga\in (1,2)$ (when $\alpha\in (0,1]$ according to Remark \ref{hivy}, we necessarily have very strong disorder for every $\beta$ when $\alpha\in(0,1]$, and the case $\bbE[\eta^2]<\infty$ covers the case $\alpha>2$).
In that case the presence of a phase transition depends on the dimension but also on the value of $\alpha$.
If $d>\frac{2}{\ga-1}$
 a weak disorder phase exists
(\textit{i.e.}\ $\gb_c>0$),
and if $d \leq  \frac{2}{\ga-1}$
there is no phase transition (\textit{i.e.}\ $\gb_c=0$).
Additionally, when $d<\frac{2}{\ga-1}$,
then the behavior of the free energy close to criticality
is given by $|p(\gb)| = \gb^{\nu +o(1)}$
as $\gb\downarrow 0$,
with $\nu = \frac{2\ga}{2- \ga(d-1)}$.

%
%

%
%
%

\smallskip
The main goal of this article is to
study the intermediate disorder
regime when
 $\eta$ is in the domain of attraction of an $\alpha$-stable law for some $\alpha\in (0,2)$
 such that \eqref{def:eta} holds.
 The continuum object towards which $\bP^{\eta}_{N,\beta_N}( (S_{Nt}/\sqrt{N})_{t\in[0,1]}\in \cdot)$ should converge has been constructed in the companion paper~\cite{BL20_cont} and is the ($\ga$-stable) Lévy noise counterpart of the Gaussian continuum polymer considered in~\cite{AKQ14b}, that we mentioned above. Contrary to the Gaussian model which only exists in dimension~$1$, the Lévy continuum polymer can be constructed in arbitrary dimension provided that the L\'evy measure associated with the noise satisfies some requirement, which depends on the dimension and includes the $\alpha$-stable noise when  $\alpha < \min(1+\frac{2}{d},2)$.

The main achievement of this paper is to prove that the convergence holds if $\beta_N$ is scaled correctly.
The correct scaling is given by
taking $\gb_N$
proportional to $N^{-\gamma+o(1)}$  with $\gamma = \frac{d}{2\ga}(1+\frac 2d -\ga)$. The $o(1)$ correction depends on the slowly varying function considered in \eqref{def:eta}. Note that this makes $N$ roughly proportional to $|p(\gb)|^{-1}$, see above.

\begin{rem}
\label{rem:expoGibbs}
Let us mention that the directed polymer model
with a heavy-tail environment
has already been considered, for instance
in~\cite{AL11,BT19,DZ16},
but in the setup where the Gibbs weights are  in the exponential form
$\exp(\sum_{n=1}^N \gb \tilde \eta_{n,S_n})$.
Let us simply stress that in this setup, when the distribution of $\tilde \eta$ has power-law decay,
polymer trajectories localize very strongly close to 
a single trajectory which gets its energy mostly from high energy sites (in fact this is the case even with a tail exponent $\alpha\ge 2$ since, also in that case,  $e^{\gb \tilde \eta_{n,S_n}}$ has infinite expectation).
Additionally,  the intermediate disorder limit  has less of a rich behavior (at least at the level of the partition function):
if $\beta_N$ is sent to zero the partition function either goes to $1$
or to $\infty$, see~\cite{BT19,DZ16}.
Our framework~\eqref{def:Zn}
allows for the appearance non-trivial intermediate disorder regime even when~$\eta$ has a heavy tail.
This is essentially due to the fact that after rescaling
$(\eta_{n,x})_{(n,x)\in \bbN\times \bbZ^d}$ possesses a scaling limit as a distribution, whereas $(e^{\gb \tilde \eta_{n,S_n}})_{(n,x)\in \bbN\times \bbZ^d}$ never does for heavy-tail environements, even after recentering.
\end{rem}

\subsection{The continuum directed polymer with L\'evy $\alpha$-stable noise}
\label{sec:contiZ}


Let us now describe briefly how the continuum model is constructed.
In doing so, we introduce some important notation and results that will be useful in the rest of the paper.
For more details on the construction and the main properties of the continuum model we refer to the introduction of \cite{BL20_cont}.
Formally the model is obtained by replacing
the random walk $(S_n)_{n\ge 1}$ and the field $(\eta_{n,x})_{(n,x)\in \bbN\times \bbZ^d}$  in \eqref{defdiscrete} by their corresponding  scaling limits.
A rigorous presentation of this object requires the introduction of a few definitions and notation.

\smallskip

Let us fix some finite time horizon $T=1$ for simplicity
 and let $(B_t)_{t\in [0,1]}$ be a $d$-dimensional 
standard Brownian motion.
We denote by $\bQ$ its law and $\rho_t(x)$ its transition kernel, that is
\begin{equation}
\label{def:kernel}
\rho_t(x) =  \frac{1}{(2\pi t )^{d/2}} e^{- \frac{\|x\|^2}{2t}} \, .
\end{equation}
Let us also introduce, for $0 < t_1<\cdots < t_k$
and $x_1,\ldots, x_k\in \bbR^d$ 
the multi-steps kernel
\begin{equation}
\varrho (\bt,\bx) = \prod_{i=1}^k \rho_{t_i-t_{i-1}} (x_i-x_{i-1}) \, ,
\end{equation}
with the convention $t_0 =0$ and $x_0=0$.
The scaling limit of the field
$(\eta_{n,x})_{(n,x)\in \bbN\times \bbZ^d}$
is a one-sided L\'evy $\ga$-stable noise on $\bbR \times \bbR^d$. Let us briefly introduce this object.
Consider $\go$ a Poisson point process on $\bbR \times \bbR^d \times (0,\infty)$ with intensity
\begin{equation}
\label{def:omega}
\dd t \otimes \dd x \otimes  \ga \ups^{-(1+\ga)} \dd \ups \, ,
\end{equation}
whose law we also denote $\bbP$ (it will draw no confusion).
Then formally,
in the case $\ga\in(1,2)$, the $\ga$-stable noise $\xi_{\go}$
is a random measure
obtained by summing 
weighted Dirac masses $\ups \gd_{(t,x)}$
on points $(t,x,\ups) \in \go$
and subtracting a non-random quantity 
so that it is centered in expectation.
The main difficulty is that when $\ga \in (1,2)$ the centering term is infinite,
so we need an approximation procedure.
For $a\in (0,1]$, interpreting $\go$ as a set of points,
we introduce the random measure 

 \begin{equation}
 \label{def:xia}
\xi_{\go}^{(a)} := \Big( \sum_{(t,x,\ups) \in \go} \ups \ind_{\{\ups \geq a\}}  \gd_{(t,x)} \Big) - \kappa_a  \cL \, ,
\end{equation}
where $\cL$ is the Lebesgue measure on $\bbR\times \bbR^d$ and
\begin{equation}\label{defkappaa}
\kappa_a :=
\begin{cases}
0 &\text{ if } \alpha \in(0,1), \\
\log(1/a) & \  \text{ if } \alpha = 1,\\
\frac{\alpha}{\alpha-1}a^{1-\alpha}  & \  \text{ if } \alpha \in (1,2).
\end{cases}
\end{equation}
The L\'evy $\ga$-stable noise $\xi_{\go}$
is then defined as the distributional limit of $\xi_{\go}^{(a)}$
 when $a$ is sent to zero.
Let us stress that when $\ga \in (0,1)$ the sum \eqref{def:xia} yields a locally finite Borel measure  $\xi_{\go}:= \xi^{(0)}_{\go}$
so the approximation procedure is not needed.
When  $\alpha\in [1,2)$, the total variation $|\xi_{\go}^{(a)}|$ diverges when $a \downarrow 0$, but $\xi^{(a)}_\go$ converges to a limiting distribution in the local Sobolev space $H_{\rm loc}^{-s}(\bbR^{d+1})$ for $s>\frac{d+1}{2}$.
(The definition of this functional space is recalled in Appendix~\ref{app:taixi}.)

\smallskip
Once we have defined the continuum counterparts of $(S_n)_{n\geq 0}$ and $(\eta_{n,x})_{(n,x)\in \bbN\times \bbZ^d}$ (namely $(B_t)_{t\geq 0}$
and $\xi_{\go}$),
then  the partition function of the
continuum polymer in L\'evy $\ga$-stable noise 
is  formally defined as 
\begin{equation}\label{lexpress}
\cZ^{\go}_{\beta}
= 1+ \sum_{k=1}^\infty \beta^k\int_{ \mathfrak{X}^k \times (\bbR^d)^k}  \varrho(\bt,\bx)  \prod_{i=1}^k \xi_{\go} ( \dd t_i, \dd x_i) \, ,
\end{equation}
where $\mathfrak{X}^k := \{ (t_1,\ldots t_k) \in (\bbR^d)^k 
\colon 0<t_1<\cdots <t_k <1 \}$.
This formally corresponds to the Wick
expansion of $\bE[ \mathbf{:} \exp(\gb H_{\go}(B)) \mathbf{:}]$,
where the energy functional is
$H_{\go}(B)  =  \xi_{\go} \big( \int_{0}^1 \gd_{(t,B_t)} \dd t\big) $, \textit{i.e.}\ $\xi_{\go}$ integrated
against  the graph of the Brownian trajectory $(B_t)_{t\in[0,1]}$. 
The main result of~\cite{BL20_cont}
is to give a mathematical interpretation for the formal integral \eqref{lexpress} and of the corresponding probability measure $\bQ_{\gb}^{\go}$ on the Wiener space 
\[
C_0([0,1]) := \big\{ \gp \colon [0,1] \to \bbR^d \, : \ 
\gp \text{ is continuous and } \gp(0)=0  \big\}\, ,
\]
endowed with the topology of uniform convergence.
This mathematical construction relies on
an approximation procedure which  
we now outline.
Let us  introduce two families of functions on $C_0([0,1])$
\begin{equation}\label{thespaces}
\begin{split}
 \cB&:= \left\{ \, f\colon C_0([0,1]) \to \bbR \ : \ f \text{ measurable and bounded}\, \right\},\\
 \cC&:=  \left\{ \, f \colon C_0([0,1]) \to \bbR \ : \ f \text{ continuous and bounded} \, \right\}\, .
\end{split}
\end{equation} 
We also denote $\cB_b$ (resp.~$\cC_b$)
the set of functions  $f\in \cB$ (resp.~$f\in \cC$)
with bounded support.

\noindent
For $a>0$, we define
for any $f\in \cB$
\begin{equation}\label{lafirstdef}
\cZ^{\go,a}_{\beta} (f) = \bQ(f) + \sum_{k=1}^\infty \beta^k\int_{ \mathfrak{X}^k \times (\bbR^d)^k}   \varrho(\bt,\bx,f)\xi_{\go}^{(a)} ( \dd t_i, \dd x_i) \, ,
\end{equation}
where
\[
\varrho(\bt,\bx, f) = \varrho(\bt,\bx)  \bQ\Big[ f((B_t)_{t\in [0,1]})  \, \Big| \,  B_{t_i} =x_i \ \forall  i\in \lint 1,k \rint \Big] \, .
\]
The notation $\bQ( \, \cdot  \, | \,  B_{t_i} =x_i \ \forall  i\in \lint 1,k \rint )$
is a shorthand to designate 
the law of the concatenation of $k$ independent Brownian bridges connecting $(t_{i-1},x_{i-1})$ to $(t_i,x_i)$
for $i\in \lint 1, k\rint $.
To see that~\eqref{lafirstdef} makes sense requires some work,
and is ensured by~\cite[Prop.~2.5 \& Prop.~3.1]{BL20_cont}.
Let us now state another result of~\cite{BL20_cont}, which gives
a representation of the partition function
$\cZ^{\go,a}_{\beta} (f)$ as a sum that will be useful in what follows,
and ensures in particular the positivity of $\cZ^{\go,a}_{T,\beta} (f)$ for positive $f$.

\begin{lemma}{\cite[Lem.~3.3]{BL20_cont}}
\label{prop:eazy}
We have, for any $f\in \cB$
\begin{equation}
\label{rewritefun}
 \cZ^{\go,a}_{\beta} (f):= \sum_{\sigma \in \cP(\go)} w_{a,\gb}(\sigma,f)
\end{equation}
where $\cP(\go)$ is the set of finite subsets of $\go$, 
and if $\sigma:= \{(t_i,x_i,u_i), i=1,\dots, k\}$ with $0\le t_1<\dots<t_k\le 1$, we define
$w_{a,\gb} (\sigma,f)$ as
\[
w_{a,\gb}(\sigma,f):=  e^{-\beta \kappa_a }  \gb^{|\sigma|} \varrho( {\bt},{\bx},f)  \prod_{i=1}^{k}  \ups_i\ind_{\{ \ups_i\ge a\}} \, .
\]
\end{lemma}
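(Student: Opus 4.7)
The plan is to start from the series definition \eqref{lafirstdef} and decompose each factor $\xi_\go^{(a)}(\dd t_i,\dd x_i)$ as the signed sum of its atomic Poisson part $\mu_+ := \sum_{(t,x,\ups)\in\go,\,\ups\geq a}\ups\,\gd_{(t,x)}$ and its Lebesgue correction $\mu_- := \kappa_a\cL$. Expanding the $k$-fold product into $2^k$ signed terms indexed by the subset $I\subseteq\{1,\ldots,k\}$ of Poisson indices and interchanging the sum with the integral gives
\[
\beta^k\!\!\int_{\mathfrak{X}^k\times(\bbR^d)^k}\!\!\varrho(\bt,\bx,f)\prod_{i=1}^k\xi_\go^{(a)}(\dd t_i,\dd x_i) = \sum_{I\subseteq\{1,\ldots,k\}}(-1)^{k-|I|}\beta^k\!\!\int\!\varrho(\bt,\bx,f)\prod_{i\in I}\mu_+(\dd t_i,\dd x_i)\prod_{j\notin I}\kappa_a\,\dd t_j\,\dd x_j.
\]
The strategy is then to regroup these contributions according to their Poisson content $\sigma\subset\go$ and to show that the Lebesgue-correction factors exponentiate to produce $e^{-\beta\kappa_a}$.

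The key simplification is the Chapman--Kolmogorov consistency of the Brownian kernel: for any index $j$,
\[
\int_{\bbR^d}\varrho(\bt,\bx,f)\,\dd x_j = \varrho\bigl((t_i)_{i\neq j},(x_i)_{i\neq j},f\bigr),
\]
which is immediate from the definition of $\varrho(\bt,\bx,f)$ as the joint density of $(B_{t_i})_i$ times the corresponding conditional expectation. Iterating this identity removes every Lebesgue spatial variable, leaving $\varrho(\bs,\by,f)$, where $\sigma = \{(s_1,y_1,\ups_1),\ldots,(s_m,y_m,\ups_m)\}$ is the Poisson content (with $s_1<\cdots<s_m$, since $\bt\in\mathfrak{X}^k$ is ordered and the Poisson times are distinct a.s.). The residual Lebesgue time variables $t_j$ with $j\notin I$ split into the $m+1$ gaps $(s_\ell,s_{\ell+1})$ (with $s_0:=0$, $s_{m+1}:=1$); if the $\ell$-th gap contains $n_\ell$ of them, their ordered integration yields a factor $(s_{\ell+1}-s_\ell)^{n_\ell}/n_\ell!$.

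Since a pair $(k,I)$ with $|I|=m$ is uniquely determined by the gap occupation vector $(n_0,\ldots,n_m)$ (with $k = m + \sum_\ell n_\ell$), summing the contributions of a fixed $\sigma$ of size $m$ over all admissible $(k,I)$ amounts to summing freely over $(n_0,\ldots,n_m)\in\bbN^{m+1}$:
\[
\beta^m\varrho(\bs,\by,f)\prod_{i=1}^m \ups_i\sum_{n_0,\ldots,n_m\geq 0}\prod_{\ell=0}^m \frac{(-\beta\kappa_a(s_{\ell+1}-s_\ell))^{n_\ell}}{n_\ell!} = \beta^m\varrho(\bs,\by,f)\prod_{i=1}^m \ups_i\cdot e^{-\beta\kappa_a},
\]
where the telescoping identity $\sum_\ell(s_{\ell+1}-s_\ell)=1$ collapses the product of exponentials into a single factor. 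This is precisely $w_{a,\gb}(\sigma,f)$. The empty-set contribution $w_{a,\gb}(\emptyset,f) = e^{-\beta\kappa_a}\bQ(f)$ likewise arises from combining the $k=0$ term $\bQ(f)$ with the $I=\emptyset$ terms for $k\geq 1$, using that $\int\varrho(\bt,\bx,f)\dd\bx = \bQ(f)$ and $\int_{\mathfrak{X}^k}\dd\bt = 1/k!$.

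The main technical point to verify is the justification of the rearrangement, which requires absolute summability of the doubly indexed series over $\sigma$ and $(n_\ell)$. For non-negative $f$ every term in the regrouped expression has a definite sign, so Tonelli applies once the original series in \eqref{lafirstdef} is known to converge absolutely --- this is precisely the content of \cite[Prop.~2.5 \& Prop.~3.1]{BL20_cont}. The general case $f\in\cB$ then follows by decomposing $f = f_+ - f_-$ and applying the identity to each non-negative part.
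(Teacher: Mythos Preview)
The paper does not prove this lemma; it is quoted verbatim from the companion paper \cite{BL20_cont}. Your argument is correct and is the natural one: expand each factor $\xi_\go^{(a)} = \mu_+ - \kappa_a\cL$, use the Chapman--Kolmogorov identity $\int_{\bbR^d}\varrho(\bt,\bx,f)\,\dd x_j = \varrho((t_i)_{i\neq j},(x_i)_{i\neq j},f)$ to collapse the Lebesgue spatial integrations, and then sum the gap-occupation series to recover the factor $e^{-\beta\kappa_a}$.

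One small clarification on the rearrangement step: the phrase ``every term in the regrouped expression has a definite sign, so Tonelli applies'' is not quite the right justification. What you actually need is absolute summability of the doubly-indexed expansion (over $\sigma$ and the gap occupations $(n_\ell)$). This follows by replacing $\xi_\go^{(a)}$ with its total variation $|\xi_\go^{(a)}| = \mu_+ + \kappa_a\cL$ in \eqref{lafirstdef}: the resulting series has only non-negative terms, its expansion is the absolute-value version of your double sum, and its finiteness is precisely what \cite[Prop.~2.5 \& 3.1]{BL20_cont} guarantees. Fubini (rather than Tonelli) then justifies the rearrangement of the signed version.
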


Note that $f \mapsto \varrho(\bt,\bx,f)$
is linear
and so is $f\mapsto \cZ_{\gb}^{\go,a}(f)$.
The above result ensures that that $\cZ_{\gb}^{\go,a}(f)\geq 0$
if $f\geq 0$
and that $\cZ_{\gb}^{\go,a} := \cZ_{\gb}^{\go,a} (\mathbf{1})$ is positive and finite.
Therefore, for $a>0$, we can define the polymer measure  with truncated noise
$\bQ_{\gb}^{\go,a}$
 on $C_0([0,1])$ by
\begin{equation}
\bQ_{\gb}^{\go,a} (A) :=  \frac{1}{\cZ_{\gb}^{\go,a}}\,  \cZ_{\gb}^{\go,a} (\mathbf{1}_A)   \, ,
\end{equation}
for any Borel set $A\subset C_0([0,1])$.
The main result of~\cite{BL20_cont} shows
that if $\ga$ is smaller than a critical threshold,
then $\bQ_{\gb}^{\go,a}$ converges almost surely
for the weak topology on probability measures,
to a non-trivial (\textit{i.e.}\ disordered) probability measure
$\bQ_{\gb}^{\go}$, referred to as the continuum polymer with stable noise.
Let us define
\begin{equation}
\label{def:alphac}
\alpha_c=\alpha_c(d)  = \min\Big( 1+\frac{2}{d},2\Big) \, .
\end{equation}

\begin{theorema}[see~\cite{BL20_cont}]
\label{thm:contmeasure} 
Assume $\alpha\in (0,\alpha_c)$ with $\alpha_c$ defined in \eqref{def:alphac}. Then
there exists a random probability on  $\bQ^{\go}_{\beta}$ on $C_0([0,1])$ such that almost surely we have
$
 \lim_{a\to 0} \bQ^{\go,a}_{\beta}= \bQ^{\go}_{\beta}
 \,.
$
More precisely we have almost surely
\begin{equation}\label{conv1}
 \lim_{a \to 0}\cZ^{\go,a }_{\beta} = \cZ^{\go }_{\beta}\in (0,\infty),
  \end{equation}
and there exists 
a linear form $\cZ^{\go}_{\beta}(\cdot)$ on $\cC$  such that $\bbP$-almost surely, 
\begin{equation}\label{conv2}
\forall f\in \cC,\quad 
\lim_{a \to 0}\cZ^{\go,a }_{\beta}(f)=\cZ^{\go}_{\beta}(f) \, .
\end{equation}
\end{theorema}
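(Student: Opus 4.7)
The plan is to prove the theorem by a reverse-time martingale convergence argument in the truncation parameter $a$. Introduce the filtration $\cF_a := \sigma(\go \cap (\bbR \times \bbR^d \times [a, \infty)))$ for $a \in (0, 1]$, which increases as $a$ decreases. Starting from the representation in Lemma~\ref{prop:eazy}, the first task is to verify that for $f \in \cB$ with $f \geq 0$, the process $(\cZ^{\go, a}_\beta(f))$ is a non-negative backwards martingale with respect to $(\cF_a)$. For $a' < a$, one splits each $\sigma \in \cP(\go \cap \{u \geq a'\})$ as $\sigma_1 \sqcup \sigma_2$, with $\sigma_1$ containing the atoms of mass $\geq a$ and $\sigma_2$ those of mass in $[a', a)$; conditionally on $\cF_a$, the $\sigma_2$-atoms form an independent Poisson process, and integration via Campbell's formula combined with Chapman--Kolmogorov for the successive Brownian kernels composing $\varrho(\bt, \bx, f)$ produces a factor that exactly cancels the prefactor change $e^{-\beta(\kappa_{a'} - \kappa_a)}$; this is precisely the role of the choice of $\kappa_a$ in \eqref{defkappaa}.

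The key technical step is to establish a uniform $L^p$-bound $\sup_{a \in (0, 1]} \bbE[\cZ^{\go, a}_\beta(\mathbf{1})^p] < \infty$ for some $p > 1$, which by backwards martingale convergence yields almost sure and $L^1$ convergence to a finite limit $\cZ^\go_\beta$. For $\alpha \in (0, 1)$ the centering $\kappa_a$ vanishes, the noise is a locally finite positive measure, and a direct application of the exponential formula gives $\bbE[\cZ^{\go, a}_\beta(\mathbf{1})] \equiv 1$. For $\alpha \in [1, \alpha_c)$ centering is essential; one separates atoms at a fixed threshold (say mass $1$) and argues conditionally on the finitely many large atoms, reducing the problem to an $L^p$-bound on the centered small-atom chaos. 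The technical tool is a von Bahr--Esseen / Rosenthal-type inequality for Poisson chaos with heavy-tailed intensity, applied with an exponent $p \in (1, \alpha)$. Tracking the resulting bounds term-by-term in the chaos expansion, the contribution from the heavy tails of the small-atom intensity is balanced against the integrability of $\int_{\bbR^d} \rho_t(x)^p\, \dd x \asymp t^{-d(p-1)/2}$, which over the time simplex is finite iff $p < 1 + 2/d$. An admissible $p > 1$ satisfying both $p < \alpha$ and $p < 1 + 2/d$ exists precisely when $\alpha < \alpha_c = \min(1 + 2/d, 2)$.

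With $L^p$-boundedness in hand, reverse-time martingale convergence gives $\cZ^{\go, a}_\beta \to \cZ^\go_\beta$ almost surely and in $L^1$, proving \eqref{conv1} modulo positivity of the limit. The event $\{\cZ^\go_\beta = 0\}$ is invariant under modifications of finitely many atoms of $\go$ (such modifications alter $\cZ^{\go, a}_\beta$ by a finite positive factor), and hence lies in the tail $\sigma$-algebra of the environment; combined with $\bbE[\cZ^\go_\beta] > 0$ inherited from uniform integrability, it has probability zero. For a general $f \in \cC$, the same martingale argument applies with the uniform bound $|\cZ^{\go, a}_\beta(f)| \leq \|f\|_\infty \cZ^{\go, a}_\beta(\mathbf{1})$; choosing a countable family in $\cC$ sufficient to determine weak convergence of probability measures on $C_0([0, 1])$ (e.g., a countable algebra of bounded Lipschitz cylinder functions), one obtains almost sure convergence on a single event for all these functions, and extends by density and linearity to a continuous linear form $\cZ^\go_\beta(\cdot)$ on all of $\cC$. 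The weak convergence $\bQ^{\go, a}_\beta \to \bQ^\go_\beta$ follows by dividing. The main obstacle is the sharp $L^p$ bound: a naive, term-by-term estimate loses the cancellations inherent in the centering $\kappa_a$ and produces a suboptimal threshold; exploiting these cancellations rigorously via a Poisson-chaos Marcinkiewicz--Zygmund inequality is the heart of the argument and the source of the threshold $\alpha_c$.
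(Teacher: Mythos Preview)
This theorem is not proved in the present paper; it is quoted from the companion paper~\cite{BL20_cont}. However, the method of~\cite{BL20_cont} can be inferred from its discrete analogue here (Proposition~\ref{letrucdur} and Section~\ref{sec:letrucdur}): for the delicate case $\alpha\in[1,\alpha_c)$, $d\ge 2$, the argument does \emph{not} proceed via a direct $L^p$ bound on the chaos, but rather by restricting to ``good'' trajectories (the continuum analogue of the event $B_{N,q}$ in~\eqref{def:BNq}) and then performing a genuine \emph{second}-moment computation on the restricted partition function. Your backward-martingale framework is correct and is indeed the skeleton of the argument, but the uniform-integrability step is carried out quite differently from what you propose.

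There is moreover a concrete error in your moment analysis. You claim the exponent should be taken in $(1,\alpha)$, but this is the wrong direction. After separating the large atoms, the $L^p$ contribution of the centered small-atom noise on $[a,1]$ is governed by $\int_a^1 u^{p}\,\alpha u^{-(1+\alpha)}\dd u$, which is bounded uniformly as $a\downarrow 0$ only if $p>\alpha$; for $p<\alpha$ it blows up like $a^{p-\alpha}$. The correct window is therefore $p\in(\alpha,\min(2,1+\tfrac{2}{d}))=(\alpha,\alpha_c)$, which is non-empty precisely when $\alpha<\alpha_c$ --- so the threshold is right, but your stated condition ``$p<\alpha$ and $p<1+\tfrac{2}{d}$'' does not produce it (indeed, for $\alpha>1$ that pair of inequalities is satisfiable for any $p$ close to $1$, regardless of $\alpha_c$). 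Even with the corrected exponent, a rigorous $L^p$ bound for the \emph{iterated} compensated Poisson integral is not an off-the-shelf inequality; making this precise would require a careful iteration of BDG-type bounds adapted to the time-ordered simplex, and this difficulty is exactly why \cite{BL20_cont} opts for the more hands-on second-moment-with-restriction route.
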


\begin{rem}
We stress that in the case $\ga\geq \ga_c$,
Proposition~2.10 in \cite{BL20_cont} shows that 
$\lim_{a\to 0} \cZ^{\go,a}_{\beta} =0 $ a.s.,
so the limiting partition function is degenerate.
This is essentially due to the fact that when  $\alpha$ gets larger, the relative importance of small atoms in the noise increases.
When $\alpha \geq \ga_c$, the accumulation of these small atoms make the fluctuations of
 $\log \cZ^{\go,a}_{\beta}$ diverge when $a$ tends to zero so that, in that case, the limiting partition function is not defined, nor is the corresponding the continuum polymer model.
\end{rem}

\begin{rem}
Let us mention that for practical reason the centering term $\kappa_a$ for the noise considered in \eqref{defkappaa} differs from the one adopted in \cite{BL20_cont} when $\alpha \ne 1$. 
More precisely, in~\cite{BL20_cont} we use the centering
\begin{equation}\label{defkappa'}
\kappa'_a :=
\frac{\alpha}{\alpha-1}(a^{1-\alpha}-1)   \quad  \text{ if } \alpha\in  (0,1)\cup(1,2).
\end{equation}
This only changes the definition of $\cZ^{\go,a}_{\beta}(f)$  and thus that of $\cZ^{\go}_{\beta}(f)$ by a multiplicative factor $e^{\beta(\kappa_a-\kappa'_a)}=e^{\frac{\beta\alpha}{\alpha-1}}$, and thus does not affect the definition of $\bQ^{\go,a}_{\beta}$ and of $\bQ^{\go}_{\beta}$.
\end{rem}

%
%

\section{Main results}

In the following, we use standard notation: $f\sim g$ means that $f/g \to 1$ and $f=o(g)$ means that $f/g \to 0$; we also use the notation $a\wedge b = \min(a,b)$.

\subsection{Convergence  to the continuum  polymer in $\alpha$-stable noise}

Our main result shows that the martingale limit
$\cZ^{\go}_{ \beta}$ corresponds to the scaling limit of
the partition function of the directed polymer with heavy-tail disorder, when $\beta_N$ is sent to $0$ with the appropriate rate.
We first state the convergence of the partition function, which is simpler but instructive,
before we turn to the convergence of the probability measure $\bP_{N,\gb_N}^{\eta}$.
In order to describe the scaling regime that we are considering for the intensity $\beta_N$,
we need to introduce some notation.

Let $\hat \varphi$ be a slowly varying function such that
$u\mapsto \hat \varphi(1/u) \, u^{-1/\alpha}$ is a generalized inverse of $z\mapsto\bbP(\eta\ge z)$, meaning that
$\bbP( \eta \geq \hat \varphi(1/u) \, u^{-1/\alpha} ) \sim u$ as $u\downarrow 0$; recall~\eqref{def:eta}.
We define
\begin{equation}
\label{def:VN}
V_N :=  (2 d^{d/2})^{-1/\alpha} N^{\frac{1}{\alpha} \left(1+\frac{d}{2}\right)}\hat \varphi(N^{ 1+\frac{d}{2}}) \,.
\end{equation}
Note that we have $\bbP[\eta>V_N ] \sim 2 d^{d/2} N^{-(1+\frac{d}{2})}$ as $N\to\infty$,
so that $V_N$ is the order of magnitude of the maximal value of the field $(\eta_{n,x})_{(n,x) \in \bbN\times \bbZ^d}$
inside the region  $[0,N]\times [-\sqrt{N},\sqrt{N}]^d$.

\begin{theorem}
\label{thm:convergence}
Let us assume that the distribution of the environment $\eta$ satisfies \eqref{def:eta} for some $\alpha\in (0,\alpha_c)$, with $\alpha_c = \min(1+\frac 2d ,2)$ as in \eqref{def:alphac}.
 Let $\hat \gb>0$ and set
\begin{equation}\label{scalechoice}
 \beta_N:= \frac{1}{2}   \hat \beta \, \Big( \frac{N}{d} \Big)^{d/2} \,    V_N^{-1} \, .
\end{equation}
Then when $\alpha\in (0,1)\cup (1,\alpha_c)$ we have the following convergence in distribution
\begin{equation}
 Z^{\eta}_{N ,\beta_N} \stackrel{N\to \infty}\Longrightarrow \cZ^{\go }_{\hat \beta}\, .
\end{equation}
 When $\alpha=1$, if one sets
$
\gamma_N:=  \frac{1}{2d^{d/2}}  N^{1+\frac{d}{2}}\, V_N^{-1}\bbE\left[\eta\ind_{\{1+\eta\le V_N\}}\right]  ,
$ 
then we have
\begin{equation}
 e^{-\gamma_N  \hat \beta}Z^{\eta}_{N ,\beta_N} \stackrel{N\to \infty}\Longrightarrow    \cZ^{\go }_{\hat \beta}\, .
\end{equation}
\end{theorem}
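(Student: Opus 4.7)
The plan is a polynomial chaos expansion combined with a truncation argument depending on a parameter $a>0$. Expanding the product in \eqref{def:Zn} gives
\begin{equation*}
Z^\eta_{N,\beta_N} = \sum_{k\geq 0}\beta_N^k\!\!\sum_{0<n_1<\cdots<n_k\leq N}\sum_{\bx\in(\bbZ^d)^k}\prod_{i=1}^k q_{n_i-n_{i-1}}(x_i-x_{i-1})\prod_{i=1}^k\eta_{n_i,x_i},
\end{equation*}
with $q_n(x)=\bP(S_n=x)$, $n_0=x_0=0$. For a truncation parameter $a>0$, split $1+\eta_{n,x}=\eta^{\larg,a}_{n,x}+\eta^{\sm,a}_{n,x}$ according to whether $1+\eta_{n,x}\geq aV_N$, and write the corresponding decomposition $Z^\eta_{N,\beta_N}=Z^{\eta,\larg(a)}_{N,\beta_N}+R^{(a)}_N$ up to a deterministic multiplicative factor absorbing the centering. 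The theorem will reduce to: (i) $Z^{\eta,\larg(a)}_{N,\beta_N}\Rightarrow\cZ^{\omega,a}_{\hat\beta}$ for each fixed $a>0$; (ii) an $L^2$ bound on $R^{(a)}_N$ that is uniform in $N$ and vanishes as $a\downarrow 0$; (iii) the convergence $\cZ^{\omega,a}_{\hat\beta}\to\cZ^\omega_{\hat\beta}$ from Theorem~\ref{thm:contmeasure}.

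\textbf{Step (i): large atoms.} Introduce the rescaled point process
\begin{equation*}
\omega_N^{(a)}:=\sum_{(n,x):\,1+\eta_{n,x}\geq aV_N}\delta_{(n/N,\,x/\sqrt{N},\,(1+\eta_{n,x})/V_N)},
\end{equation*}
which converges in distribution, by the tail \eqref{def:eta} and the calibration \eqref{def:VN}, to the restriction $\omega\cap\{u\geq a\}$ of the Poisson process \eqref{def:omega}. A local central limit theorem converts $\prod_i q_{n_i-n_{i-1}}(x_i-x_{i-1})$ into $\varrho(\bt,\bx)$ in the rescaled variables, and \eqref{scalechoice} is tuned so that $\beta_N^k V_N^k N^{-kd/2}$ matches the continuum $\hat\beta^k$ up to the constants already absorbed in Lemma~\ref{prop:eazy}. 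Plugging both convergences into the atom representation \eqref{rewritefun}, with a cutoff on the number of atoms in a large compact box (whose tail is controlled by factorial moments of the Poisson process), yields $Z^{\eta,\larg(a)}_{N,\beta_N}\Rightarrow\cZ^{\omega,a}_{\hat\beta}$. The prefactor $e^{-\gamma_N\hat\beta}$ in the $\alpha=1$ statement appears here as the discrete analogue of the logarithmic centering $\kappa_a=\log(1/a)$, with an analogous polynomial compensation absorbing $\kappa_a=\frac{\alpha}{\alpha-1}a^{1-\alpha}$ when $\alpha\in(1,\alpha_c)$.

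\textbf{Step (ii): small-atom error.} Once recentered, $\eta^{\sm,a}$ has finite variance of order $(aV_N)^{2-\alpha}$ (up to slowly varying factors) for $\alpha<2$. An orthogonal expansion of $R^{(a)}_N$ bounds its second moment by
\begin{equation*}
\sum_{k\geq 1}\beta_N^{2k}\,\bbE[(\eta^{\sm,a})^2]^k\sum_{\bn}\prod_{i=1}^k\sum_{x}q_{n_i-n_{i-1}}(x)^2.
\end{equation*}
Using $\sum_x q_n(x)^2\asymp n^{-d/2}$ and the scaling \eqref{scalechoice}, each summand is $\leq C^k a^{k(2-\alpha)} I_k$ with $I_k$ a Riemann approximation of $\int_{0<t_1<\cdots<t_k<1}\prod_i(t_i-t_{i-1})^{-d/2}\,d\bt$, finite and summable in $k$ precisely when $\alpha<\alpha_c=1+2/d$. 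Hence $\limsup_N\bbE[(R^{(a)}_N)^2]\to 0$ as $a\downarrow 0$, and an $\epsilon/3$ triangle-inequality argument combining (i)--(iii) concludes.

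\textbf{Main obstacle.} The key technical issue is the uniform-in-$N$ control of the small-atom chaos expansion: individually, $\bbE[(\eta^{\sm,a})^2]\asymp (aV_N)^{2-\alpha}$ and (for $d\leq 2$) the combinatorial sum $\sum_{\bn}\prod(n_i-n_{i-1})^{-d/2}$ both diverge with $N$, and only their product, calibrated exactly by \eqref{scalechoice}, stays finite; the threshold $\alpha<\alpha_c$ is precisely what makes the resulting integral-series summable. A secondary delicate point is the exact matching of the discrete and continuum centering: when $\alpha=1$ the logarithmically divergent quantity $\gamma_N$ emerges as the truncated mean $\beta_N\,\bbE[\eta\ind_{\{1+\eta\leq V_N\}}]$ and must compensate $\log(1/a)$ on the nose to produce the prefactor $e^{-\gamma_N\hat\beta}$, while for $\alpha\in(1,\alpha_c)$ an analogous polynomial matching absorbs $\kappa_a$.
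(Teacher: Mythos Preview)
Your overall three-step architecture (truncate at level $aV_N$, prove convergence of the truncated object via Poisson convergence plus local CLT, control the small-atom remainder uniformly in $N$, then let $a\downarrow 0$ using Theorem~\ref{thm:contmeasure}) is exactly the paper's strategy, and Step~(i) matches Proposition~\ref{letrucpasdur} closely.

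The genuine gap is in Step~(ii). Your $L^2$ bound on the small-atom remainder hinges on the finiteness of
\[
I_k \;=\; \int_{0<t_1<\cdots<t_k<1}\prod_{i=1}^{k}(t_i-t_{i-1})^{-d/2}\,\dd\bt ,
\]
but this integral converges only when $d/2<1$, i.e.\ when $d=1$. It does not depend on $\alpha$ at all, so the claim that it is ``finite and summable in $k$ precisely when $\alpha<\alpha_c=1+2/d$'' is incorrect. Concretely, for $d\ge 2$ one computes $\beta_N^{2}\,\bbE[(\eta^{\sm,a})^2]\asymp a^{2-\alpha}N^{d/2-1}$, while the combinatorial sum $\sum_{\bn}\prod_i(n_i-n_{i-1})^{-d/2}$ no longer contributes the compensating factor $N^{k(1-d/2)}$; the product therefore blows up with $N$ and the second moment of the truncated partition function itself diverges. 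The paper states this explicitly at the end of Section~\ref{case1dim}.

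What the paper does instead for $d\ge 2$ (Section~\ref{sec:letrucdur}) is not a second-moment bound on an additive remainder but a restriction of the polymer measure to a good set of trajectories $B_{N,q}(S)$, defined so that along each trajectory the products $\prod_{i\in I}(1+\eta_{i,S_i})$ over arbitrary index sets $I$ are controlled by $(qV_N)^{|I|}(N^{-|I|}\Pi_I)^{\gamma}$ for a carefully chosen exponent $\gamma$ satisfying $\tfrac{d-2}{2(2-\alpha)}<\gamma<\tfrac{1}{\alpha-1}$. The existence of such a $\gamma$ is exactly equivalent to $\alpha<1+2/d$, and it is here that the threshold $\alpha_c$ genuinely enters. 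One then proves (Lemma~\ref{firstmt}) that the restriction costs nothing in $L^1$, and (Proposition~\ref{secondmt}) that on the restricted event the second moment is uniformly bounded and the martingale difference vanishes as $a\downarrow 0$; the second-moment computation now produces the convergent integrand $\prod_i(t_i-t_{i-1})^{\gamma(2-\alpha)-d/2}$ rather than $\prod_i(t_i-t_{i-1})^{-d/2}$. A secondary point: for $\alpha\in(0,1)$ the paper does not use second moments at all but a direct first-moment argument (Section~\ref{lefirst}), since the small-atom contribution then has nonnegative mean of order $a^{1-\alpha}N^{-1}$ per site.
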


\noindent Let us stress that our choice~\eqref{scalechoice}
gives $\gb_N = N^{-\nu +o(1)}$
with $\nu = \frac{d}{2\ga} (1+\frac{d}{2} -\ga)$.
Our assumption that $\ga <\ga_c$ ensures
in particular that 
$\lim_{N\to\infty} \gb_N =0$.

\begin{rem}
Let us stress that in the case $\ga=1$,
we have $\gamma_N \leq 0$ if $\bbE[\eta] =0$
and $\gamma_N \geq 0$ if $\bbE[\eta] = \infty$ (at least for $N$ large).
Note also that by definition of $V_N$, we have $V_N^{-1} \gp(V_N) =2 d^{d/2} N^{-(1+\frac{d}{2})}(1+o(1))$, so that
\[
\gamma_N \stackrel{N\to\infty}{\sim}  \gp(V_N)^{-1} \bbE[ \eta \ind_{\{1+\eta \leq V_N\}}].
\]
Setting $L(t) := \bbE[\eta \ind_{\{1+\eta \leq t\}}]$, 
we get from \cite[Prop. 1.5.9.a.]{BGT89} that $L(\cdot)$ is a slowly varying function,
with $|L(t)| /\gp(t) \to \infty$ as $t\to\infty$.
This proves in particular that $\gamma_N$ is slowly varying, with $\lim_{N\to\infty} |\gamma_N| =+\infty$.
\end{rem}

\begin{rem}
Let us stress that when $\ga>\ga_c$, 
then with the definition~\eqref{scalechoice} we have
$\lim_{N\to\infty} \gb_N=+\infty$. The condition $\gb_N \in [0,1]$ is therefore violated and $\bP_{N,\gb_N}^{\eta}$ is not defined (recall the definition~\eqref{defdiscrete}).
Also the fact that there exists $\beta_c\in (0,\infty]$ such that $\lim_{N\to \infty}Z^{\eta}_{N,\beta}$ exists and is non-degenerate, for $\beta\in (0,\beta_c)$ (see \cite{Vi19}), indicates that the choice of $\beta_N$ in~\eqref{scalechoice} is not the correct scaling in that case.

The case $\ga=\ga_c$ is more delicate and 
we may have  $\beta_c=0$ and or $\beta_c>0$ depending on the slowly varying function $\varphi$, although indentifying the necessary and sufficient condition on $\varphi$ is a challenging open problem. It makes sense to look for a limit only in the case when $\beta_c=0$.
Similarly to the case $d=2$ and $\bbE[\eta^2]<\infty$ (see for instance \cite{CSZ18scaling}), there should be a scaling of the intensity $\tilde \gb_N$ satisfying $\lim_{N\to\infty}\tilde \beta_N$ and $\tilde \beta_N \ll \beta_N$ (with $\beta_N$ in~ \eqref{scalechoice})  which produces a non-trivial limit 
of the partition function and of the polymer measure;
more precisely, in that case, one may need to integrate the starting point on scale $\sqrt{N}$ for the partition function to converge, exactly as in \cite{CSZ18scaling}.
Since $\tilde \beta_N$ is much smaller than $\beta_N$ we should have  $\lim_{N\to\infty}Z_{N,\gb_N\wedge 1}^{\eta}=0$,
as proven in~\cite{Vi19} in the case of a constant function $\varphi$
and $\gb_N \equiv \gb \in (0,1)$.
The correct scaling choice for $\tilde \gb_N$ in the case $\ga=\ga_c$ is far from obvious, and determining the scaling limit seems even more challenging. We leave these questions for future work.
\end{rem}

Our convergence result is in fact much richer than Theorem~\ref{thm:convergence} in various ways. First we do not prove the convergence of the partition function alone, but also that of the measure 
$\bP^{\eta}_{N,\beta_N}$ towards~$\bQ^{\go}_{\hat \beta}$.
Additionally, we show that the continuous environment $\go$ appearing in the limit corresponds to the scaling limit of $\eta$ as a distribution.
Let us introduce $\xi_{N,\eta}$  the measure on~$\R^{1+d}$
obtained by rescaling the discrete environment (on a diffusive scale)
\begin{equation}
\label{def:xienv}
\xi_{N,\eta} :=   \frac{1}{V_N} \sum_{(n,x)\in \mathbb{H}_d}\big( \eta_{n,x}   -  \bbE[\eta\ind_{\{\eta \leq V_N\}}] \ind_{\{\ga=1\}}  \big) \gd_{(\frac{n}{N}, \frac{x}{\sqrt{N/d}})} 
 \,,
\end{equation}
where $\bbH_d$ denotes the set of time-space lattice points that can be reached by a random walk starting from $0$, that is the set of points  $(n,x)$ in $\bbN\times \bbZ^d$ such that $n$ and $\|x\|_1$ have the same parity.

\smallskip

We let $\mathcal M_1$ denote the space of probability distributions on $C_0([0,1])$ equipped with the topology of weak convergence.
Finally, let $S_t^{(N)}$ be the linear interpolation 
of a random walk trajectory, rescaled diffusively:
\begin{equation}\label{defsn}
S^{(N)}_t:= \sqrt{\frac{d}{N}} \Big( (1-\alpha_t) S_{\lfloor Nt \rfloor}  +\alpha_t S_{\lfloor Nt \rfloor+1}    \Big)\,,
 \quad \text{ with } \  \alpha_t= Nt-\lfloor N t \rfloor \, .
\end{equation}
The definition of the local Sobolev space $H^{-s}_{\mathrm{loc}}$ is recalled in Appendix~\ref{app:taixi}.

\begin{theorem}\label{thm:moreconverge}
With the same assumption as for Theorem \ref{thm:convergence}, we have 
the following convergence in distribution in $H^{-s}_{\mathrm{loc}}\times \cM_1$, $s>d$
\begin{equation}
\left(\xi_{N,\eta} \,,\, \bP^{\eta}_{N,\beta_N}\Big( (S^{(N)}_t)_{t\in [0,1]} \in \cdot \,\Big) \right)  \stackrel{N\to \infty}\Longrightarrow    \big(  \xi_\go,\bQ^{\go }_{\hat \beta} \big) 
\, .
\end{equation}
\end{theorem}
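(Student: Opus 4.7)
Theorem \ref{thm:convergence} gives convergence of $Z^{\eta}_{N,\beta_N}$ to $\cZ^{\go}_{\hat\beta}$, and my plan is to extend this to a \emph{joint} convergence with the rescaled environment $\xi_{N,\eta}$ and with ``partition functions with observable'' $Z^{\eta}_{N,\beta_N}(f)$ for bounded continuous $f$, and then upgrade to convergence of the polymer measure in $\cM_1$. The first step is to verify $\xi_{N,\eta} \Longrightarrow \xi_{\go}$ in $H^{-s}_{\mathrm{loc}}$ for $s>d$. By classical extreme value theory the marked point process $\sum_{(n,x)\in\bbH_d} \gd_{(n/N,\, x\sqrt{d/N},\, (1+\eta_{n,x})/V_N)}$ restricted to marks $\geq a$ converges in distribution to the truncated PPP $\go^{(a)}$, with the small-atoms residual controlled in $H^{-s}_{\mathrm{loc}}$ by a variance computation analogous to the one defining $\xi_\go^{(a)}$ in \eqref{def:xia}. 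Passing to a Skorokhod coupling, I may and do assume that $\xi_{N,\eta} \to \xi_{\go}$ almost surely in $H^{-s}_{\mathrm{loc}}$.

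\textbf{Joint convergence of partition functions with observable.} For $f \in \cC_b$, set
\begin{equation*}
Z^{\eta}_{N,\beta_N}(f) := \bE\Big[ f\big((S^{(N)}_t)_{t\in[0,1]}\big) \prod_{n=1}^N (1+\beta_N \eta_{n,S_n}) \Big],
\end{equation*}
so that $\bP^{\eta}_{N,\beta_N}(f(S^{(N)}))=Z^{\eta}_{N,\beta_N}(f)/Z^{\eta}_{N,\beta_N}$. I would revisit the proof of Theorem \ref{thm:convergence} with $f$ as an observable and show that, jointly with $\xi_{N,\eta}$,
\begin{equation*}
e^{-\gamma_N \hat\beta \ind_{\{\alpha=1\}}} Z^{\eta}_{N,\beta_N}(f) \Longrightarrow \cZ^{\go}_{\hat\beta}(f).
\end{equation*}
The key object is a truncated discrete partition function $Z^{\eta,a}_{N,\beta_N}(f)$ keeping only atoms with $(1+\eta_{n,x})/V_N \geq a$, with the appropriate discrete analog of the centering $\kappa_a$. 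Expanding the product as in Lemma \ref{prop:eazy} gives a sum over finite subsets $\sigma$ of the set $\go_N^{(a)}$ of discrete large atoms. Under the coupling of the previous step $\go_N^{(a)} \to \go^{(a)}$, and the rescaled random walk transition kernel converges to $\varrho(\bt,\bx,f)$ by Donsker's invariance principle combined with a local CLT; so each term in the (almost surely finite) sum converges to the corresponding term of $\cZ^{\go,a}_{\hat\beta}(f)$, and hence $Z^{\eta,a}_{N,\beta_N}(f) \to \cZ^{\go,a}_{\hat\beta}(f)$ jointly with $\xi_{N,\eta}$.

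\textbf{Uniform control of small atoms.} The crux is to establish
\begin{equation*}
\lim_{a \downarrow 0} \limsup_{N \to \infty} \bbE\Big[ \big| Z^{\eta}_{N,\beta_N}(f) - Z^{\eta,a}_{N,\beta_N}(f) \big| \Big] = 0,
\end{equation*}
so that, combined with Theorem \ref{thm:contmeasure} which gives $\cZ^{\go,a}_{\hat\beta}(f) \to \cZ^{\go}_{\hat\beta}(f)$, we can exchange the limits $N \to \infty$ and $a \downarrow 0$ to deduce the desired convergence of $Z^{\eta}_{N,\beta_N}(f)$ jointly with $\xi_{N,\eta}$. For $\ga \in (0,1)$, this reduces to a second moment estimate using $\bbE[\eta^2 \ind_{\{1+\eta\leq aV_N\}}] \lesssim (aV_N)^{2-\ga}\gp(aV_N)$ together with the local CLT bound $\sup_x \bP(S_n = x) \lesssim (n \vee 1)^{-d/2}$ applied to a pair of independent walks. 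For $\ga \in [1,\ga_c)$ the centering must be tracked carefully and the $L^2$ control of the small-atom contribution relies on a two-walk overlap computation; this is the discrete analog of the key Cauchy estimate driving the proof of Theorem \ref{thm:contmeasure} in \cite{BL20_cont}, and is the most delicate point of the argument.

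\textbf{Tightness and convergence of the polymer measure.} Since $\cZ^{\go}_{\hat\beta} > 0$ almost surely, the continuous mapping theorem then gives, for every $F \in \cC_b$,
\begin{equation*}
\big( \xi_{N,\eta},\ \bP^{\eta}_{N,\beta_N}(F(S^{(N)})) \big) \Longrightarrow \big( \xi_{\go},\ \bQ^{\go}_{\hat\beta}(F) \big).
\end{equation*}
To upgrade this to convergence in $H^{-s}_{\mathrm{loc}} \times \cM_1$, I would establish tightness of the random probability measure $\bP^{\eta}_{N,\beta_N}(S^{(N)} \in \cdot)$ in $\cM_1$ via a Kolmogorov-type modulus-of-continuity criterion: it suffices to combine the diffusive equicontinuity estimates for the simple random walk with an $L^1$ control of the Radon-Nikodym derivative $\prod_{n=1}^N (1+\beta_N \eta_{n,S_n})/Z^{\eta}_{N,\beta_N}$, itself a consequence of the convergence of $Z^{\eta}_{N,\beta_N}$ to a strictly positive limit. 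Applying the finite-dimensional convergence from Step~2 to cylindrical bounded continuous test functions identifies the limit as $\bQ^{\go}_{\hat\beta}$ and concludes the proof.
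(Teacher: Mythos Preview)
Your overall architecture matches the paper's: truncate the environment at level $a$, prove convergence of the truncated objects jointly with $\xi_{N,\eta}$ via point-process convergence plus local CLT, show uniformity in $N$ of the $a\downarrow 0$ limit, and combine with tightness. That is exactly Propositions~\ref{marginals}, \ref{letrucpasdur}, \ref{letrucdur} and \ref{latight}.

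However, several of your detailed claims are wrong or underestimate the difficulty:

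\begin{itemize}
\item Your uniform small-atoms control is stated in $L^1$, i.e.\ $\bbE\big[\,|Z^{\eta}_{N,\beta_N}(f)-Z^{\eta,a}_{N,\beta_N}(f)|\,\big]\to 0$. This fails for $\alpha\in(0,1]$: when $\alpha\in(0,1)$ one has $\bbE[\eta]=\infty$ so $Z^{\eta}_{N,\beta_N}$ is not even integrable, and when $\alpha=1$ the $L^1$ convergence also does not hold. The paper states the result as convergence in probability (the $\wedge 1$ in Proposition~\ref{letrucdur}) and uses an upper cap at level $b$ (Proposition~\ref{prop:tronquons}) before any moment computation.
\item You have the case split reversed. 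The case $\alpha\in(0,1)$ is the \emph{easy} one and is handled by a first-moment argument after capping large values (Section~\ref{lefirst}); there is no second-moment estimate there. The hard case is $\alpha\in[1,\alpha_c)$.
\item For $\alpha\in[1,\alpha_c)$ and $d\ge 2$, a ``two-walk overlap computation'' as you describe it fails outright: $r_N^{a,q}\asymp N^{(d-2)/2}$ and $\bbE[(Z^{\eta,[a,q)}_{N,\beta_N})^2]$ diverges for every $a$ and $q$ (see the remark closing Section~\ref{case1dim}). The paper must restrict to trajectories satisfying the multi-point constraint $B_{N,q}(S)$ of \eqref{def:BNq} before the second moment becomes finite; this is the content of Section~\ref{sec:letrucdur} and is the genuine technical core, not just ``tracking the centering carefully''.
\item Your tightness sketch invokes an ``$L^1$ control of the Radon--Nikodym derivative'', which again does not exist when $\alpha\le 1$. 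The paper instead uses tightness of $(Z^{\eta}_{N,\beta_N})^{-1}$ together with Markov's inequality on $Z^{\eta,[0,b)}_{N,\beta_N}(\ind_{K^{\complement}})$, with the upper cap $b$ needed precisely to make the first moment finite (Section~\ref{sec:latight}).
\end{itemize}
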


\begin{rem}
The powers of $2$ in the definition of~$V_N$ (and $\gb_N$) just comes from the fact that our random walk can only visit
half of the lattice sites (just like the $\sqrt{2}$ factor appearing in~\cite{AKQ14}). 
They would not appear if one considered a lazy random walk, adjusting the diffusion coefficient in the definition of  $\rho_t(x)$ accordingly.
The powers of $d$ in $V_N$ (and in $\gb_N$ and~\eqref{defsn})
comes from the adjustment of the diffusion coefficient
of the simple random walk.
\end{rem}

\begin{rem}
Let us mention that the joint convergence of the environment and of the partition function is not specific
to the case of heavy-tailed environments. It also holds for systems with disorder in the Gaussian universality class \cite{AKQ14,  CSZ14, CSZ13}, in which case the environment converges to the Gaussian white noise that appears in the construction of the limiting partition function. This fact is however usually not mentioned in the literature on the subject (a notable exception is the recent contribution \cite{BS2020} on the random field Ising model), even though it can  be deduced  almost directly  from the proofs presented in \cite{AKQ14,  CSZ14, CSZ13} (according to personal communication with Caravenna, Sun and Zygouras). 
The situation of systems with marginal disorder such as the
two-dimensional directed polymer, discussed in \cite{CSZ15, CSZ18scaling, clark2019, gu2019} is very different. In that case, the randomness appearing in the limit is expected to be independent of the distributional limit of the environment.
\end{rem}

\subsection{Possible extensions of the result}
We now comment on some directions in which our result could be extended.

\subsubsection*{Point-to-point partition functions}
In the above, we only treat the case of a point-to-line partition function, 
but we could also consider the point-to-point partition functions.
For integers $n_1\leq  n_2$ and $x_{1}, x_2 \in \bbZ^d$ such that $\|x_2-x_1\|_1$ has the same parity as $n_2-n_1$, we
define
\[
Z_{\gb_N}^{\eta} \big[  (n_1,x_1) , (n_2,x_2) \big] := \bE\Big[\prod_{n=n_1 +1}^{n_2} \big( 1+\beta \eta_{n,S_n} \big) \ind_{\{S_{n_2}=x_2\}}  \, \Big| \, S_{n_1} =x_1\Big] \, .
\]
Then, choosing
$\gb_N$ as in~\eqref{scalechoice}
and taking $n_1,n_2 \in \bbN$ and $x_1, x_2\in \bbZ^d$
(with $\|x_2-x_1\|_1$ having the same parity as $n_2-n_1$), 
  with the following scaling 
\begin{equation*}
 \begin{split}
  \lim_{N\to\infty} N^{-1} n_1 = t\,,\qquad  &\lim_{N\to\infty} (N/d)^{-1/2} x_1  = x\,,\\
  \lim_{N\to\infty} N^{-1} n_2 = t'\,,
  \qquad 
&\lim_{N\to\infty} (N/d)^{-1/2} x_2  = x\,'.
 \end{split}
\end{equation*}
where  $t,t' \in \bbR_+$ satisfy $t<t'$ and $x, x'\in \bbR^d$,
one should obtain the following convergence in distribution,
in the case $\ga\in (0,\ga_c)$
\begin{equation}
\label{conv:p2p}
\tfrac12  (\tfrac1d  N)^{d/2} \,e^{-  \hat \beta \gamma_N\ind_{\{\alpha=1\}}} Z_{\gb_N}^{\eta} \big[ (n_1,x_1) , (n_2,x_2) \big]  \stackrel{N\to \infty}\Longrightarrow     \cZ_{\hat \gb}^{\go} \big[ (t,x) , (t',x') \big]   \, .
\end{equation}
The limit $\cZ_{\hat \gb}^{\go} [ (t,x) , (t',x') ]$ is the continuum point-to-point partition function introduced in~\cite[Section~2.2]{BL20_cont}.
The convergence \eqref{conv:p2p} could also be extended to the joint distribution of finitely many point-to-point partition functions.
We choose not to present the proof of 
such a result in order to keep
the paper lighter and because it does not bring much more insight than Theorem~\ref{thm:moreconverge}.

\subsubsection*{Random walks in the Gaussian domain of attraction}
In this paper, we only consider the model based on the nearest neighbor symmetric simple random walk. The main reason for it is that it is the most frequent setup in which directed polymer is presented. However, our proofs are quite flexible and other types of random walks may be considered provided that they remain within the Gaussian universality class. Indeed, we only make use of the local central limit theorem to establish our result (and a little  bit more when $\alpha\le1$).

Let us assume that the walk (starting from the origin) has i.i.d.\ increments with finite second moment, mean ${\bf m}$ and invertible covariance matrix $\Sigma^2$, meaning that for any $i,j$
\begin{equation}\label{finitevar}
\bbE[ S^{(i)}_1]=m_i \quad \text{ and } \quad \Cov(S^{(i)}_1, S^{(j)}_1)=\Sigma^2(i,j),
\end{equation}
where $S^{(i)}_1$ is the $i$-th coordinate of $S_1$.
Our matrix $\Sigma$ is implicitly defined as the positive definite matrix whose square is the covariance matrix.
For the commodity of exposition, let us also assume that our walk is irreducible and aperiodic, that is for every $x$ there exists some $n_0(x)$ such that $\bP(S_n=x)>0$ for $n\ge n_0$ (doing without these assumptions only entails additional constants in the normalization).

\smallskip
In the case $\alpha\le 1$, we must have an additional assumption on the tail of the increments which guarantees that
the walk does not reach for atypically attractive sites beyond the scale $\sqrt{N}$. We assume that
\begin{equation}\label{extramoment}
\bE[|S_1|^{\gamma}]<\infty \quad  \text{ for some }  \gamma >\frac{d(1-\alpha)}{\alpha}.
\end{equation}
(This ensures that the
bound \eqref{checkitout} holds for all $n\ge1 $, for some $\theta <\alpha$).
Note that the condition~\eqref{extramoment} is stronger than the finite variance assumption only when $\alpha < \frac{d}{d+2}$. 
Let us also stress that this restriction is by no means due to technical limitation: indeed, if for instance $\bP(S_n=x)=(1+o(1)) \|x\|^{-d-\gamma}$ for some $2<\gamma< \frac{d(1-\alpha)}{\alpha}$, then the directed polymer itself is not well defined, in the sense that $Z^{\go}_{N,\beta}=\infty$ a.s.\ for every $\beta>0$ (and in particular Theorem~\ref{extend} below fails to hold).
Before stating the extension of our result, let us redefine the scaling parameters: we set
 \begin{equation}\begin{split}
S^{(N)}_t&:= \frac{1}{\sqrt{N}} \Sigma^{-1}\Big( (1-\alpha_t) S_{\lfloor Nt \rfloor}  +\alpha_t S_{\lfloor Nt \rfloor+1}- {\bf m} Nt    \Big)\,,\\
V_N &:=  \Det(\Sigma)^{1/\alpha} N^{\frac{1}{\alpha} \left(1+\frac{d}{2}\right)}\hat \varphi(N^{ 1+\frac{d}{2}}) \, ,\\
\xi_{N,\eta} &:=   \frac{1}{V_N} \sum_{(n,x)\in \bbN\times \bbZ^d}\big( \eta_{n,x}   -  \bbE[\eta\ind_{\{\eta \leq V_N\}}] \ind_{\{\ga=1\}}   \big) \gd_{(\frac{n}{N}, \frac{x}{\sqrt{N/d}}-{\bf m} N)}  \, .
\end{split}
\end{equation}
One can extend the proof of Theorem \ref{thm:moreconverge} to prove the following.

\begin{theorem}\label{extend}
Assume that the distribution of the environment $\eta$ satisfies \eqref{def:eta} for some $\alpha\in (0,\alpha_c)$, with $\alpha_c = \min(1+\frac 2d ,2)$ as in \eqref{def:alphac}. 
 Under the assumption  \eqref{finitevar}  and \eqref{extramoment}, then setting
 $$\beta_N= \Det(\Sigma)N^{\frac{d}{2}}(V_N)^{-1} \,,$$ 
we have the following convergence in distribution in $H^{-s}_{\mathrm{loc}}\times \cM_1$, $s>d$
\begin{equation}
\left(\xi_{N,\eta} \,,\, \bP^{\eta}_{N,\beta_N}\Big( (S^{(N)}_t)_{t\in [0,1]} \in \cdot \,\Big) \right)  \stackrel{N\to \infty}\Longrightarrow    \big(  \xi_\go,\bQ^{\go }_{\hat \beta} \big) 
\, .
\end{equation}
\end{theorem}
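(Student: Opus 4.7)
The plan is to recycle the entire skeleton of the proof of Theorem~\ref{thm:moreconverge} and to verify that each step goes through when the simple random walk is replaced by one satisfying \eqref{finitevar} (and \eqref{extramoment} when $\alpha\leq 1$). Concrete properties of the SRW enter the argument in only two places: (i) the local central limit theorem (LCLT), which converts the transition probabilities $\bP(S_n=x)$ into the Gaussian kernel $\rho_t$ appearing in the chaos expansion \eqref{lexpress}; and (ii) sub-Gaussian tail bounds on $\bP(S_n=x)$, used to discard space-time atoms $(n,x)$ far from the diffusive scale. Both ingredients have direct analogues in the general setting, and the normalizations $\beta_N=\Det(\Sigma)N^{d/2}V_N^{-1}$ and \eqref{defsn} are designed precisely so that the comparison between the two sides is exact.

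\textbf{LCLT and matching of scalings.} Under \eqref{finitevar} with the irreducibility/aperiodicity assumption, the standard LCLT yields
\[
\bP(S_n=x)=\frac{1}{\Det(\Sigma)\,n^{d/2}}\,\rho_1\!\Big(\tfrac{\Sigma^{-1}(x-\mathbf{m}n)}{\sqrt n}\Big)+o(n^{-d/2}),
\]
uniformly on moderate-deviation scales, and the change of variables \eqref{defsn} produces a process converging in law to a standard Brownian motion. In the discrete expansion of $Z^\eta_{N,\beta_N}$ (the analogue of \eqref{rewritefun}), each of the $k$ transition factors picks up a $\Det(\Sigma)^{-1}$; this is exactly compensated by the $k$-th power of $\beta_N=\Det(\Sigma)N^{d/2}V_N^{-1}$, so that $\beta_N\bP(S_n=x)$ converges on the diffusive scale to $V_N^{-1}N^{-d/2}\rho_{n/N}(\Sigma^{-1}(x-\mathbf{m}n)/\sqrt N)$. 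The factor $\Det(\Sigma)^{1/\alpha}$ inserted in the new $V_N$ is in turn the correct calibration to ensure that the rescaled environment $\xi_{N,\eta}$ still converges to $\xi_\go$.

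\textbf{Truncation and the small-atom remainder.} Once the LCLT is in place, the proof proceeds in two stages. For each fixed $\epsilon>0$, the partition function truncated to atoms of height $\geq\epsilon V_N$ converges jointly with $\xi_{N,\eta}$ restricted to $[\epsilon,\infty)$ to the continuum truncated partition function $\cZ^{\go,\epsilon}_{\hat\beta}$ of Lemma~\ref{prop:eazy}; this uses only the LCLT plus finite-sum dominated convergence and is identical to the SRW case. Then one lets $\epsilon\downarrow 0$ and invokes Theorem~\ref{thm:contmeasure} on the continuum side, while controlling uniformly in $N$ the contribution of atoms below $\epsilon V_N$ on the discrete side. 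The latter estimate requires an exit bound of the form
\[
\sum_{x\in\bbZ^d}\bP(S_n=x)^\theta \leq C_\theta\, n^{-d(\theta-1)/2}\qquad\text{for some }\theta<\alpha, \ \forall\, 1\leq n\leq N,
\]
which is what the text refers to as the tail bound behind \eqref{extramoment}. For the SRW this is immediate from Gaussian tails; in the general setting, under only finite variance one obtains it for $\theta$ slightly above $1$, and extending it to $\theta<\alpha$ when $\alpha<\tfrac{d}{d+2}$ requires the power-moment assumption \eqref{extramoment} (the sharp threshold $\gamma>d(1-\alpha)/\alpha$ comes from combining a Fuk--Nagaev/Marcinkiewicz--Zygmund estimate with the LCLT on moderate deviations).

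\textbf{Main obstacle and conclusion.} The hard part is precisely the derivation and correct use of the bound above under the sharp assumption \eqref{extramoment}, handled separately depending on whether $\alpha>1$ (second-moment estimate on the centered small-atom sum), $\alpha=1$ (where the centering in \eqref{def:xienv} and the factor $e^{-\hat\beta\gamma_N\ind_{\{\alpha=1\}}}$ built into the statement absorb the diverging drift, exactly as in Theorem~\ref{thm:convergence}), or $\alpha<1$ (direct first-moment/Fuk--Nagaev control of the uncentered sum). Once this is in place, the convergence of all smeared partition functions $Z^\eta_{N,\beta_N}(f)$ for $f\in\cC$ and the joint convergence $(\xi_{N,\eta},\bP^\eta_{N,\beta_N}((S^{(N)})\in\cdot))\Rightarrow(\xi_\go,\bQ^\go_{\hat\beta})$ in $H^{-s}_{\mathrm{loc}}\times\cM_1$ follow from the same continuity scheme as in the proof of Theorem~\ref{thm:moreconverge}; the remaining differences reduce to bookkeeping for $\Det(\Sigma)$, the drift $\mathbf{m}$, and the (ir)reducibility/aperiodicity constants.
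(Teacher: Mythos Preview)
Your proposal is correct and matches the paper's approach; indeed, the paper does not give a separate proof of Theorem~\ref{extend} but simply states that the proof of Theorem~\ref{thm:moreconverge} extends, with the explicit remark that the argument uses only the local central limit theorem (and ``a little bit more when $\alpha\le 1$''), and that the role of assumption~\eqref{extramoment} is precisely to guarantee the bound~\eqref{checkitout}, $\sum_{x}p_n(x)^\theta\le C n^{d(1-\theta)/2}$, for some $\theta<\alpha$. Your identification of this as the only nontrivial obstacle, and your observation that the $\Det(\Sigma)$ factors in $\beta_N$ and $V_N$ are calibrated to cancel the Jacobian from the general LCLT, are exactly the points the paper is alluding to.
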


\subsubsection*{Extension to $\gamma$-stable walks}
A natural question that now comes to mind is whether the system admits a similar scaling limit when $S$ is in the domain of attraction of a stable process with exponent $\gamma\in (0,2)$.
For the simplicity of exposition, let us assume that $\bP(S_n=x)=(1+o(1)) \|x\|^{-d-\gamma}$ as $\|x\| \to \infty$.

\smallskip

In that case we strongly believe that our proof techniques can be adapted without major changes. The procedure starts with defining the continuum model, replacing the Brownian kernel by that of an isotropic $\gamma$-stable kernel --- this part is discussed in \cite[Section~2.4]{BL20_cont}.
Then the proof in the present paper, which mostly relies on the local limit theorem, should go through.
Note that in that case our requirement on $\alpha$ becomes
\begin{equation}
  \frac{d}{d+\gamma}<\alpha< \left(1+\frac{\gamma}{d}\right)\wedge 2.
\end{equation}
For a descrition of the scaling limit, we refer to \cite[Section 2.4]{BL20_cont}.

\subsubsection*{Extension to the disordered pinning model}
Since most of the techniques used in the proof do not rely on the specificity of the directed polymer model, we believe that similar results can be derived for other disordered model. 
One specific model for which we are confident that our techniques could adapt is the disordered pinning model -- see~\cite{GB07} for a general introduction. We refer to \cite{CSZ14} for a study of the scaling limit when the environment has a finite second moment and to \cite{LS17} for a study of the model with power-tail disorder.

\smallskip

Consider a sequence $(\eta_n)_{n\in \bbN}$ of i.i.d.\ random variables with a distribution satisfying \eqref{def:eta} and a recurrent renewal process $\tau = \{\tau_0=0,\tau_1,\tau_2,\ldots\}$ on $\bbN$ 
whose inter-arrival law satisfies 
$\bP[ \tau_1=n ]\stackrel{n\to \infty}{\sim} n^{-(1+\gamma)}$, $\gamma\in (0,1)$.
The disordered pinning model is then defined as the modification of the renewal distribution defined by
\begin{equation}\begin{split}
\label{def:pinning}
\frac{\dd \bP_{N,\gb,h}^{\eta}}{\dd \bP} &:=  \frac{1}{Z_{N,\gb,h}^{\eta}} \bE\Big[ \prod_{n=1}^N e^{h \ind_{\{n\in\tau \}}} \big( 1+ \gb \eta_{n} \ind_{\{n\in \tau\}}  \big) \Big] \, ,\\
\text{with } \qquad Z_{N,\gb,h}^{\eta} &:= \bE\Big[ \prod_{n=1}^N e^{h \ind_{\{n\in\tau \}}} \big( 1+ \gb \eta_{n} \ind_{\{n\in \tau\}}  \big) \Big] \, ,
\end{split}
\end{equation}
where $h\in \R$ is a parameter, corresponding to a (homogeneous) pinning field.
We believe that the approach used in the present paper could be used to prove the existence of a non-trivial limit for the distribution 
of the rescaled renewal set $\frac{1}{N}\left( [0,N]\cap \tau \right)$ (considering the Hausdorff topology for subsets of $[0,1]$) when $\alpha<\min(\frac{1}{1-\gamma},2 )$,
if $\beta_N$ and~$h_N$ are  scaled as $\hat \beta N^{1-\gamma- \frac{1}{\alpha}+o(1)}$ and $\hat h N^{-\gamma+o(1)}$. The $o(1)$ in the exponent accounts for a slowly varying correction which is absent if 
$\varphi$ in \eqref{def:eta} is asymptotically equivalent to a constant.
For a description of the scaling limit, we refer again  to \cite[Section 2.4]{BL20_cont}.

\section{Main steps of the proof of Theorem~\ref{thm:moreconverge}}
\label{sec:steps}

In this section, we outline our proof strategy,
and highlight the main steps needed to obtain our main theorem.
Some important notation is introduced.

\subsection{Convergence of marginals and tightness}

Recall here that $\cC$ denote the set of real-valued continuous and bounded functions on $C_0([0,1])$.
Recalling the definition \eqref{defsn} and writing $S^{(N)}$ for $(S^{(N)}_t)_{t\in [0,1]}$ we define for  any $f\in \cC$
\begin{equation}
Z^{\eta}_{N, \gb_N}(f):= \bE \Big[ f(S^{(N)}) \prod_{n=1}^N \big(1+\beta_N \eta_{n,S_n} \big) \Big]  \, .
\end{equation}
Our main task in the proof of Theorem~\ref{thm:moreconverge} is to prove the convergence of finite-dimensional marginals. Since both $ \xi_{N,\eta}$ and $Z^{\eta}_{N,\gb_N}(\cdot)$ are linear forms, it is in fact sufficient to prove the convergence for one-dimensional marginals.
\begin{proposition}\label{marginals}
Given $\psi$ a smooth compactly supported function on $\bbR^{d+1}$ and   $f \in \cC$, we have the following joint convergence in distribution, under the assumptions of Theorem~\ref{thm:convergence},
\begin{equation}
 \left( \langle \xi_{N,\eta}, \psi \rangle,  e^{-\hat \beta \gamma_N \ind_{\{\alpha=1\}}} Z^{\eta}_{N,\gb_N}(f)  \right) \stackrel{N\to \infty}{\Longrightarrow}  \left( \langle \xi_{\go}, \psi \rangle, \cZ^{\go}_{\hat\beta}(f)  \right) \,.
\end{equation}
\end{proposition}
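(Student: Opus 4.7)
The plan is to prove Proposition~\ref{marginals} through a truncation argument that mirrors the construction of $\cZ^{\go}_{\hat\beta}(f)$ in \cite{BL20_cont}. Fixing a threshold $a > 0$, I would split each factor $1+\gb_N\eta_{n,x}$ of the product defining $Z^{\eta}_{N,\gb_N}(f)$ according to whether $\eta_{n,x}\ge a V_N$ (``large'') or $\eta_{n,x}<aV_N$ (``small''). Expanding the product yields a chaos-like representation
\[
Z^{\eta}_{N,\gb_N}(f) = \sum_{A\subset \lint 1,N\rint} \gb_N^{|A|}\, \bE\Big[ f(S^{(N)}) \prod_{n\in A} \eta_{n,S_n} \Big],
\]
which is the direct analogue of \eqref{lexpress}. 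I would then define a truncated partition function $Z^{\eta,a}_{N,\gb_N}(f)$ by keeping only those $A$ for which every $\eta_{n,S_n}$, $n\in A$, is large, multiplied by a prefactor matching $e^{-\hat\beta\kappa_a}$ of Lemma~\ref{prop:eazy}, and prove (i) $Z^{\eta,a}_{N,\gb_N}(f)$ converges to $\cZ^{\go,a}_{\hat\beta}(f)$ jointly with the environment, and (ii) the small-atom residue is negligible as $a \to 0$, uniformly in $N$.

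For step (i), classical heavy-tail Poisson convergence gives that the rescaled configuration
\[
\go^{(a)}_N := \big\{ (n/N,\, x\sqrt{d/N},\, \eta_{n,x}/V_N) : (n,x) \in \bbH_d,\  \eta_{n,x} \geq a V_N \big\}
\]
converges in law (locally) to $\go^{(a)} := \go \cap (\bbR\times\bbR^d\times[a,\infty))$, jointly with the large-atom part of $\langle\xi_{N,\eta},\psi\rangle$ converging to $\langle\xi_\go^{(a)},\psi\rangle$. Using the local central limit theorem to replace the discrete bridge probabilities $\prod_i \bP(S_{n_i}=x_i\mid S_{n_{i-1}}=x_{i-1})$ by $(1+o(1))\cdot 2(d/N)^{d/2}\varrho(\bt,\bx)$, whose prefactor is exactly absorbed by the scaling \eqref{scalechoice}, and exploiting the a.s.\ local finiteness of $\go^{(a)}$ (after a mild spatial truncation to handle Gaussian tails of the polymer trajectory), the representation of Lemma~\ref{prop:eazy} together with the continuous mapping theorem produces
\[
\big( \langle \xi^{\mathrm{large}}_{N,\eta}, \psi \rangle,\, Z^{\eta,a}_{N,\gb_N}(f) \big) \stackrel{N \to \infty}{\Longrightarrow} \big( \langle \xi^{(a)}_\go, \psi \rangle,\, \cZ^{\go,a}_{\hat\beta}(f) \big).
\]

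Step (ii) is the main obstacle, and the step where the condition $\ga<\ga_c$ is genuinely used: one must show that for every $\gep>0$,
\[
\lim_{a\to 0}\limsup_{N\to\infty} \bbP\Big(\big| Z^{\eta}_{N,\gb_N}(f) - e^{\hat\beta\gamma_N \ind_{\{\ga=1\}}} Z^{\eta,a}_{N,\gb_N}(f) \big| > \gep\Big) = 0.
\]
I would attack this by conditioning on the large-atom configuration and bounding the fluctuations of the polymer restricted to small disorder. The corresponding second moment expands, by independence of the $\eta$ field, into a sum over pairs of random walk trajectories weighted by $\gb_N^{2k}\,\bbE[\eta^2\ind_{\{\eta<aV_N\}}]^k$ times $k$-fold intersection terms. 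Using $\bbE[\eta^2\ind_{\{\eta<aV_N\}}]\sim \tfrac{\ga}{2-\ga}(aV_N)^{2-\ga}$ together with local CLT bounds on two-walk intersection probabilities, the choice \eqref{scalechoice} absorbs all polynomial factors in $N$, so the $k$-th chaos contribution is controlled by a power of $a^{2-\ga}$ (up to slowly varying corrections) — the condition $\ga<\ga_c$ being precisely what ensures the resulting series in $k$ converges uniformly in $N$. The borderline case $\ga=1$ requires matching $\gamma_N$ to $\kappa_a$ via the asymptotics $V_N^{-1}\bbE[\eta\ind_{\{1+\eta\leq V_N\}}]$ noted in the remark after Theorem~\ref{thm:convergence}; cross terms between large and small atoms are then handled by the same bound conditional on the large configuration. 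Combining (i) and (ii) and passing to the limit $a\to 0$ using Theorem~\ref{thm:contmeasure} (for $\cZ^{\go,a}_{\hat\beta}(f)\to \cZ^{\go}_{\hat\beta}(f)$ a.s.) together with the convergence $\xi^{(a)}_\go\to \xi_\go$ in $H^{-s}_{\mathrm{loc}}$ delivers the desired joint convergence.
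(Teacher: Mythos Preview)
Your overall architecture matches the paper's exactly: truncate the environment at level $aV_N$, prove joint convergence of the truncated objects via Poisson convergence plus the local CLT (the paper's Proposition~\ref{letrucpasdur}), show the small-atom residue vanishes uniformly in $N$ as $a\to 0$ (the paper's Proposition~\ref{letrucdur}), and then send $a\to 0$ using Theorem~\ref{thm:contmeasure}. Step~(i) is correctly sketched and corresponds to Section~\ref{sec:letrucpasdur}.

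The genuine gap is in step~(ii), for $d\ge 2$ and $\alpha\in[1,\alpha_c)$. The second moment you propose to compute actually diverges in $N$: one has $\gb_N^2\,\bbE[\eta^2\ind_{\{\eta<aV_N\}}]\asymp a^{2-\alpha}N^{(d-2)/2}$ (see \eqref{moments3}), so the per-intersection weight $r_N$ does \emph{not} go to zero when $d\ge 2$, and $\bE^{\otimes 2}[(1+r_N)^{L_N}]$ blows up. The paper states this explicitly at the end of Section~\ref{case1dim}. The condition $\alpha<\alpha_c$ does not by itself make the naive chaos series summable; it only becomes relevant after a further surgery on the set of trajectories. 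Concretely, the paper restricts the partition function to an event $B_{N,q}(S)$ (see~\eqref{def:BNq}) that forbids any sub-pattern $I$ of times from accumulating a product $\prod_{i\in I}(1+\eta_{i,S_i})$ larger than $(qV_N)^{|I|}(N^{-|I|}\Pi_I)^\gamma$, where $\Pi_I$ is the product of successive gaps and $\gamma$ is chosen in the window~\eqref{lesdeuxconds}. One must then prove separately that this restriction costs nothing in $L^1$ (Lemmas~\ref{firstmt}--\ref{firstmtbis}, via a size-biased measure and a combinatorial union bound over sub-patterns), and that the restricted partition function $W^{a,q}_N$ has a second moment converging uniformly in $N$ as $a\to 0$ (Proposition~\ref{secondmt}). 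It is only after imposing this constraint that the second-moment expansion produces integrands of the form $\prod_i (t_i-t_{i-1})^{-\tau}$ with $\tau=\tfrac d2-\gamma(2-\alpha-\gep)<1$, and the existence of an admissible $\gamma$ is exactly where $\alpha<\alpha_c$ enters. Your proposal misses this entire layer.

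A smaller point: for $\alpha\in(0,1)$ the paper does not use second moments at all but a direct first-moment argument (Section~\ref{lefirst}), after truncating the very large atoms via Proposition~\ref{prop:tronquons}; your sketch does not distinguish this regime.
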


The proof of Proposition~\ref{marginals} is the core of the paper and its steps are outlined in Section~\ref{martdecompo}.
The actual proof is carried out in Sections~\ref{sec:letrucmoinsdur} and~\ref{sec:letrucdur}.
Once we have Proposition~\ref{marginals}, it only remains to prove tightness of $\xi_{N,\eta}$ and $\bP^{\eta}_{N,\hat\beta}( S^{(N)}\in \cdot\, )$. 
The first result is standard. A proof is included in Appendix~\ref{app:taixi} for completeness.
\begin{lemma}\label{letight}
 The sequence of distributions of $(\xi_{N,\eta})_{N\ge 1}$ under $\bbP$ is tight in $H^{-s}_{\mathrm{loc}}$ for $s>d$.
\end{lemma}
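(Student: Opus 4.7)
The plan is to prove tightness via the standard three-step scheme: (i) reduce to tightness of $\chi \xi_{N,\eta}$ in $H^{-s}(\bbR^{d+1})$ for any smooth compactly supported cutoff $\chi$ via a diagonal argument over an exhausting family of such cutoffs; (ii) since $s > d \geq (d+1)/2$, fix $s' \in ((d+1)/2, s)$ and invoke the compact Sobolev embedding $H^{-s'}(\Omega) \hookrightarrow H^{-s}(\Omega)$ on any bounded $\Omega \supset \mathrm{supp}(\chi)$; (iii) it therefore suffices to prove a uniform-in-$N$ moment bound on $\|\chi \xi_{N,\eta}\|_{H^{-s'}}$.

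The main obstacle is that $\eta$ has infinite second moment when $\alpha < 2$, so a direct $L^2$ bound on the Fourier side is unavailable. To bypass this, split at the threshold $V_N$: write $\eta_{n,x} = \eta_{n,x}^{\sm} + \eta_{n,x}^{\larg}$ with $\eta_{n,x}^{\sm} = \eta_{n,x}\ind_{\{|\eta_{n,x}| \leq V_N\}}$, and decompose $\xi_{N,\eta} = \xi_N^{\sm} + \xi_N^{\larg} + \xi_N^{\mathrm{det}}$, where the last term absorbs the deterministic correction required to center the first two parts. The three regimes $\alpha \in (0,1)$, $\alpha = 1$, $\alpha \in (1,2)$ call for slightly different centering prescriptions, all exploiting the defining relation $V_N^{\alpha} \sim 2 d^{d/2} N^{1+d/2} \varphi(V_N)$ from~\eqref{def:VN}.

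For the small part, $|\eta^{\sm}| \leq V_N$, so an $L^2$ estimate is available. Using Plancherel in $H^{-s'}(\bbR^{d+1})$ and independence of the centered variables (so that cross-terms vanish):
\begin{equation*}
\bbE\bigl[\|\chi \xi_N^{\sm}\|_{H^{-s'}}^2\bigr] \leq V_N^{-2}\, \Var(\eta^{\sm}) \sum_{(n,x)\in \bbH_d}\bigl|\chi\bigl(\tfrac{n}{N},\tfrac{x}{\sqrt{N/d}}\bigr)\bigr|^2 \int_{\bbR^{d+1}}(1+|\zeta|^2)^{-s'}\dd\zeta.
\end{equation*}
The Fourier integral converges because $2s' > d+1$; the lattice sum over $\mathrm{supp}(\chi)$ has cardinality $O(N^{1+d/2})$; and $\Var(\eta^{\sm}) \lesssim V_N^{2-\alpha}\varphi(V_N)$ by a standard tail computation from~\eqref{def:eta}. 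Combining with the definition of $V_N$ gives a uniform $O(1)$ bound.

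For the large part, the expected number of sites $(n,x)\in \mathrm{supp}(\chi)$ with $|\eta_{n,x}| > V_N$ is $O(1)$, since $\bbP(|\eta|>V_N) \lesssim N^{-(1+d/2)}$; and $\|\delta_y\|_{H^{-s'}(\bbR^{d+1})}$ does not depend on $y$. Picking $\theta \in (0, \alpha \wedge 1)$ and using the subadditivity $\|u+v\|^\theta \leq \|u\|^\theta + \|v\|^\theta$,
\begin{equation*}
\bbE\bigl[\|\chi\xi_N^{\larg}\|_{H^{-s'}}^\theta\bigr] \lesssim V_N^{-\theta} \sum_{(n,x)\in\mathrm{supp}(\chi)} \bbE[|\eta|^\theta\ind_{\{|\eta|>V_N\}}] \lesssim N^{1+d/2}\,V_N^{-\alpha}\varphi(V_N) = O(1),
\end{equation*}
using $\bbE[|\eta|^\theta\ind_{\{|\eta|>V_N\}}] \sim c\, V_N^{\theta-\alpha}\varphi(V_N)$. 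The deterministic drift $\xi_N^{\mathrm{det}}$ (nontrivial only when $\alpha \leq 1$) is a bounded multiple of the empirical lattice measure $N^{-(1+d/2)}\sum_{(n,x)}\delta_{(n/N, x/\sqrt{N/d})}$, which converges weakly to a multiple of Lebesgue measure on~$\Omega$ and is therefore uniformly bounded in $H^{-s'}(\Omega)$. Markov's inequality applied to both moment bounds delivers the required tightness in $H^{-s'}$, hence in $H^{-s}$ via the compact embedding; the only real subtlety in the whole argument is keeping track of the correct centering in each of the three regimes of $\alpha$.
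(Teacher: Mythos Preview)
Your argument is correct and takes a genuinely different route from the paper's proof. The paper proceeds by first replacing $\xi_{N,\eta}$ with its truncation $\xi_{N,\eta}^{[0,b)}$ at level $bV_N$ (which coincides with the original with high probability once $b$ is large), and then shows that the Fourier transform of $\psi\xi_{N,\eta}^{[0,b)}$ lies with high probability in a Fr\'echet--Kolmogorov-type compact set $K_R\subset L^2(\mu^{-s})$ defined by an $L^2(\mu^{-s'})$ bound together with a modulus-of-continuity estimate in the frequency variable. Your approach instead splits at the fixed threshold $V_N$, controls the small part in $L^2$ and the large part via a $\theta$-th moment with $\theta<\alpha\wedge 1$, and then appeals to the compact Sobolev embedding (Rellich) for distributions supported in a fixed compact set to pass from a uniform $H^{-s'}$ bound to tightness in $H^{-s}$. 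Your route is arguably more elementary, since it avoids the Fr\'echet--Kolmogorov criterion entirely; the paper's route, on the other hand, needs only second moments throughout once the truncation is in place.

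One small slip: your parenthetical ``nontrivial only when $\alpha\le 1$'' for the deterministic drift is backwards. With your decomposition (small part centered, large part uncentered), the residual drift equals $V_N^{-1}\bigl(\bbE[\eta^{\sm}]-\bbE[\eta\ind_{\{\eta\le V_N\}}]\ind_{\{\alpha=1\}}\bigr)\sum\delta$. For $\alpha=1$ the two means coincide (since $\eta>-1$ forces $\{|\eta|\le V_N\}=\{\eta\le V_N\}$ for large $N$), so the drift vanishes; for $\alpha\ne 1$ it is present but still $O(N^{-(1+d/2)})\sum\delta$ by the tail estimates you already invoke. This does not affect the validity of your bound on $\xi_N^{\mathrm{det}}$.
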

\noindent For the second tightness result, proven in Section~\ref{sec:latight}, recall that~$\cM_1$ denotes the set of probability distributions on $C_0([0,1])$ equipped with the weak convergence topology.
\begin{proposition}\label{latight}
  The sequence of distributions of random probabilities $\bP^{\eta}_{N,\gb_N}(S^{(N)}\in \cdot\, )$ under $\bbP$ is tight in $\cM_1$.
\end{proposition}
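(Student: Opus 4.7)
The plan is to verify the classical tightness criterion in the Polish space $\cM_1$: for every $\varepsilon, \delta>0$, exhibit a compact set $K \subset C_0([0,1])$ such that
\[
\bbP\Big( \bP^\eta_{N,\gb_N}\big(S^{(N)}\notin K\big) > \delta \Big) \leq \varepsilon,
\]
uniformly for $N$ large. Once established, Prokhorov's theorem applied in $\cM_1$ yields the stated tightness of the law-valued sequence.

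The first ingredient is a uniform lower bound on the partition function. Applying Proposition~\ref{marginals} with $f\equiv 1$, and using that the limit $\cZ^\go_{\hat\beta}$ is almost surely strictly positive by Theorem~\ref{thm:contmeasure}, I can find $c=c(\varepsilon)>0$ such that the event
$\cA_N := \{Z^\eta_{N,\gb_N} \geq c \, e^{\hat\beta \gamma_N \ind_{\{\alpha=1\}}}\}$
has $\bbP$-probability at least $1-\varepsilon/2$ uniformly in $N$. The second ingredient is an upper bound on the numerator: for $K$ compact and any continuous majorant $g \in \cC$ with $\ind_{K^c} \leq g \leq 1$, on $\cA_N$ one has
\[
\bP^\eta_{N,\gb_N}\big(S^{(N)}\notin K\big) \leq c^{-1} e^{-\hat\beta \gamma_N \ind_{\{\alpha=1\}}} Z^\eta_{N,\gb_N}(g).
\]
By Proposition~\ref{marginals} applied to $g \in \cC$, the right-hand side converges in distribution to $c^{-1} \cZ^\go_{\hat\beta}(g) = c^{-1} \cZ^\go_{\hat\beta} \cdot \bQ^\go_{\hat\beta}(g)$. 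It thus suffices to choose $K$ and $g$ so that $\bbP\big(c^{-1}\cZ^\go_{\hat\beta}(g) > \delta\big) \leq \varepsilon/2$.

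To that end, I would introduce a nested family of Arzel\`a--Ascoli compacts, for instance
$K_m := \{\gp \in C_0([0,1]) : \sup_{s\neq t} |\gp(t)-\gp(s)|/|t-s|^{1/4} \leq m \}$,
together with continuous cutoffs $g_m \in \cC$ majorizing $\ind_{K_m^c}$ and converging to $0$ pointwise. The key reduction is then to prove that $\cZ^\go_{\hat\beta}(g_m) \to 0$ in $\bbP$-probability as $m \to \infty$, equivalently that $\bQ^\go_{\hat\beta}$ is a.s.\ concentrated on H\"older continuous paths. This regularity should be read off from Lemma~\ref{prop:eazy}: for each frozen subset $\sigma\in\cP(\go)$, the signed measure $f\mapsto \varrho(\bt,\bx,f)$ is a concatenation of Brownian bridges, hence $(1/2-\varepsilon)$-H\"older almost surely; one then has to propagate this regularity through the sum over $\sigma$ and the limit $a\to 0$ that defines $\cZ^\go_{\hat\beta}$ in Theorem~\ref{thm:contmeasure}.

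The main obstacle is this last regularity step, in particular the case $\alpha \leq 1$ where $\bbE[\eta]=\infty$ and the naive first-moment bound $\bbE[Z^\eta_{N,\gb_N}(g_m)] = \bE[g_m(S^{(N)})]$ — which would directly reduce the question to Donsker tightness — is unavailable. A natural workaround is to truncate the environment at height $V_N$ and split the partition function accordingly: the truncated part admits finite first moments and yields the Donsker-type control on polymer increments, while the contribution of atoms above $V_N$ is handled probabilistically using the Poisson structure of $\go$ (large atoms are finitely many in any compact time-space window, and their effect on the polymer path can be localized on small intervals whose modulus contribution is controlled uniformly). If instead one prefers to avoid truncation, a parallel route is to establish directly, on the continuum side, second-moment bounds on point-to-point partition functions $\cZ^\go_{\hat\beta}[(t,x),(t',x')]$ (cf.\ \eqref{conv:p2p}) and deduce a Kolmogorov-type estimate for the increments of $\bQ^\go_{\hat\beta}$.
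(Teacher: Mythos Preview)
Your overall reduction—lower-bound the denominator via Proposition~\ref{marginals} with $f\equiv 1$, upper-bound the numerator by $Z^\eta_{N,\gb_N}(g)$ for some $g\in\cC$ with $g\ge\ind_{K^c}$, and pass to the continuum limit $\cZ^\go_{\hat\beta}(g)$ via Proposition~\ref{marginals}—is a legitimate strategy, different from the paper's. But the step you flag as the ``main obstacle'' is self-inflicted: by choosing H\"older balls $K_m$ you force yourself to prove that $\bQ^\go_{\hat\beta}$ is supported on H\"older paths, and propagating such support through the weak limit $a\to 0$ of Theorem~\ref{thm:contmeasure} is genuinely nontrivial and not supplied by anything you cite. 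In fact no path regularity is needed. What you actually require is, for each $\varepsilon,\delta>0$, a \emph{single} compact $K\subset C_0([0,1])$ with $\bbP\big(\bQ^\go_{\hat\beta}(K^c)>c\delta/M\big)<\varepsilon/4$ (together with $\bbP(\cZ^\go_{\hat\beta}>M)<\varepsilon/4$). This is immediate: the law of $\bQ^\go_{\hat\beta}$ under $\bbP$ is one probability measure on the Polish space $\cM_1$, hence tight, and by Prokhorov compact subsets of $\cM_1$ are exactly the uniformly tight families. One then builds $g\in\cC$ by a Urysohn interpolation between two nested compacts. So your route closes without the H\"older detour.

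The paper's proof is more elementary and never invokes the continuum side for the numerator. For $\alpha\in(1,2)$, since $\bbE[\eta]=0$ one has $\bbE\big[Z^\eta_{N,\gb_N}(\ind_{K^c})\big]=\bP(S^{(N)}\notin K)$, so Markov's inequality plus Donsker tightness of the rescaled walk finishes the job directly. For $\alpha\in(0,1]$ the paper first truncates the environment at level $b_m V_N$ using Proposition~\ref{prop:tronquons}; the truncated numerator $Z^{\eta,[0,b_m)}_{N,\gb_N}(\ind_{K^c})$ then has finite first moment bounded by a constant times $\bP(S^{(N)}\notin K)$, and the same Markov/Donsker argument applies. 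Your ``natural workaround'' in the last paragraph is exactly this truncation—so you identified the right mechanism for $\alpha\le 1$, but the paper makes it the primary argument rather than a fallback, and uses Proposition~\ref{marginals} only for $f\equiv 1$.
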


\noindent
Our main result then follows readily from the above statements, as we now show.
\begin{proof}[Proof of Theorem \ref{thm:moreconverge} using Propositions \ref{marginals} and \ref{latight}]
 Since tightness is already proven, one only needs to prove the convergence for the marginals of the type
 \begin{equation}
   \left( \langle \xi_{N,\eta}, \psi\rangle,  \bE^{\eta}_{N,\gb_N}\big(f(S^{(N)})\big)\right)= \Bigg( \langle \xi_{N,\eta}, \psi \rangle,  \frac{Z^{\eta}_{N,\gb_N}(f)}{Z^{\eta}_{N,\gb_N}}\Bigg) . 
 \end{equation}
But this follows immediately  from the almost-sure positivity of  $\cZ^{\go}_{\hat\beta}$  (cf.~\eqref{conv1}) and from Proposition \ref{marginals}, which yields as a corollary
 \begin{equation}
 \left( \langle \xi_{N,\eta}, \psi \rangle, e^{-\hat\beta \gamma_N \ind_{\{\ga=1\}} } Z^{\eta}_{N,\gb_N}(f), e^{-\hat\beta \gamma_N \ind_{\{\ga=1\}} }Z_{N,\gb_N}^{\eta}\right) 
 \stackrel{N\to \infty}{\Longrightarrow}  \left( \langle \xi_{\go}, \psi \rangle, \cZ^{\go}_{\hat\beta}(f) , \cZ^{\go}_{\hat\beta}\right) \,.
 \end{equation}
 This concludes the proof of Theorem~\ref{thm:convergence}.
\end{proof}


\subsection{Proposition~\ref{marginals} via cutoff approximation}
\label{martdecompo}

Let us now describe the main ingredients in the proof of Proposition~\ref{marginals}.
Firstly, we replace $(Z^{\eta}_{N,\beta_N}(f))$ by a
cutoff approximation, analogous to the one used in the continuous case, see~\eqref{lafirstdef}. 
This martingale approximation is obtained by keeping the environment only at sites where~$\eta$ is larger than a certain threshold.
We fix the threshold to be equal to $ a V_N$,
recall~\eqref{def:VN}.
 The scaling $V_N$ is chosen so that  the region typically visited by the polymer, 
\textit{i.e.} a cylinder of length $N$ and width $\sqrt{N}$ according to the diffusive scaling of the random walk, contains only finitely many points above the threshold.
These points, in the limit, correspond to the points in the Poisson point process $\go$ (defined in~\eqref{def:omega}) for which $u\ge a$. 
When $\eta_{n,x}$ is smaller than this value we replace it by its conditional average.
Given $a\in (0,1]$, we set 
\begin{equation}
 \eta^{(a)}_{n,x} := 
 \begin{cases}  \eta_{n,x} \quad & \text{ if } \ 1+\eta_{n,x}\ge  a V_N,\\
 - \kappa_N^{(a)} \quad &  \text{ if } \  1+\eta_{n,x} <  a V_N \, ,
 \end{cases} 
\end{equation}
with 
\begin{equation}
\label{def:kappaN}
\kappa_N^{(a)} :=
\begin{cases}
-\bbE \big[ \eta_{n,x} \, \big| \, 1+ \eta_{n,x}<  a V_N \big] \,  &\text{ if } \alpha \in [1,2) \, ,\\
 0 \,  &\text{ if } \alpha \in (0, 1).
\end{cases}
\end{equation}
The assumption $\bbP(\eta > -1)=1$ ensures that $\eta^{(0)}=\eta$ almost surely. 
Note that $\kappa^{(a)}_N$ is positive for large $N$ when $\alpha\ne 1$ but it is negative when $\alpha=1$ and $\bbE[\eta]=\infty$.
When $\bbE[\eta]=0$ (this implies $\alpha\ge 1$),
note that $ \eta^{(a)}_{n,x}$ is still a centered variable, and that $(\eta^{(a)}_{n,x})_{a \in (0,1]}$ is a càdlàg time-reversed martingale
for the filtration
\begin{equation}\label{lafiltr}
\mathcal G_{a} =\mathcal G^{(N)}_{a}:=\sigma \big( \eta_{n,x}\ind_{\{ (1+ \eta_{n,x})\ge  a V_N \}}, (n,x)\in \bbN\times \bbZ^d \big) \, .
\end{equation}
We also set for $b> a$ a truncated version of $\eta^{(a)}$
\begin{equation}
\label{doubletronc}
 \eta^{[a,b)}_{n,x} := 
 \begin{cases}   - \kappa_N^{(a)} \quad &  \text{ if }  1+\eta_{n,x} <  a V_N \, ,\\ \eta_{n,x} \quad & \text{ if }  1+\eta_{n,x}\in [  a V_N, b V_N ) \, , \\
 0  \quad & \text{ if } 1+\eta_{n,x}\geq b V_N.
 \end{cases} 
\end{equation}
With some abuse of notation, we will denote $\eta^{(a)}$ (resp.~$\eta^{[a,b)}$) a generic random variable with the same law as $\eta_{n,x}^{(a)}$ (resp.~$\eta^{[a,b)}_{n,x}$).
We now define, for $f\in \cC$, the approximation $ Z^{\eta,a}_{N,\gb_N}(f)$ of $Z_{N,\gb_N}^{\eta}$ using the above cutoff:
\begin{equation}
\label{martingaprox}
\begin{split}
 Z^{\eta,a}_{N,\gb_N}(f):=  \bE \Big[ f(S^{(N)})\prod_{i=1}^N \big(1+\beta_N \eta^{(a)}_{n,S_n}\big) \Big],\\
  Z^{\eta,[a,b)}_{N,\gb_N}(f):=  \bE \Big[ f(S^{(N)})\prod_{i=1}^N \big(1+\beta_N \eta^{[a,b)}_{n,S_n}\big) \Big] .
\end{split}
\end{equation}
Let us stress that, if $f\geq 0$, we have $Z^{\eta,[a, b)}_{N,\gb_N}(f) \leq Z^{\eta,a}_{N,\gb_N}(f)$.
Note that when $\bbE[\eta]=0$
(which implies $\alpha\ge 1$)  we have
\begin{equation}
\label{lesmartingales}
  Z^{\eta,a}_{N,\gb_N}(f) = \bbE \Big[  Z^{\eta}_{N,\beta_N}(f) \, \Big| \, \mathcal G_a \Big] \, \quad \text{ and } \quad  Z^{\eta,[a,b)}_{N,\gb_N}(f):=  \bbE \Big[ Z^{\eta,[0,b)}_{N,\gb_N}(f) \, \Big| \, \mathcal G_a\Big].
\end{equation}
These identities are also valid when $\bbE[\eta]=\infty$  and $\alpha=1$ (when $\alpha\in(0,1)$ it is false due to the different definition of $\kappa^{(a)}_N$), the first one only in the case~$f\ge 0$ for  which the conditional expectations are unambiguously defined.

To prove Theorem~\ref{thm:convergence}, we are first going to prove 
that  $Z^{\eta,a}_{N,\gb_N}(f)$ converges to the corresponding continuum (truncated) partition function $\cZ^{\go,a}_{\hat \beta} (f)$ defined in \eqref{lafirstdef}.  In fact we even prove a joint convergence with the cutoff approximation of the environment.

\begin{proposition}
\label{letrucpasdur}
 For any $a>0$, 
given $\psi$ a smooth compactly supported function on $\bbR^{d+1}$ and $f\in \cC$, we have the following joint convergence in distribution
\[
 \Big( \langle\xi_{N,\eta}^{(a)},\psi \rangle  , e^{-\hat \gb \gamma_N  \ind_{\{\ga=1\}}}  Z^{\eta,a}_{N,\gb_N}(f)  \Big) \stackrel{N\to \infty}{\Longrightarrow} \left( \langle \xi_{\go}^{(a)},\psi \rangle ,  \cZ^{\go,a}_{\hat\gb}(f)  \right) \,,
\] 
where $\xi_{N,\eta}^{(a)}$
is the cutoff approximation of $\xi_{N,\eta}$,
\begin{equation}
\label{xicut}
\xi_{N,\eta}^{(a)} :=  \frac{1}{V_N} \sum_{(n,x)\in \mathbb{H}_d} \big( \eta^{(a)}_{n,x}  -  \bbE[ \eta \ind_{\{\eta \leq V_N\}}]  \ind_{\{\ga =1\}} \big)\, \gd_{(\frac{n}{N}, \frac{x}{\sqrt{N/d}})}  \,.
\end{equation}
\end{proposition}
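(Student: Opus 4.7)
The plan is to mirror, at the discrete level, the chaos expansion provided by Lemma~\ref{prop:eazy}. Introduce the set of ``large'' sites $\Omega_N^{(a)} := \{(n,x) \in \bbH_d \cap (\{1,\dots,N\} \times \bbZ^d) : 1 + \eta_{n,x} \geq a V_N\}$. Since $\bbP[1 + \eta \geq a V_N] \sim 2 d^{d/2} a^{-\alpha} N^{-1-d/2}$, standard Poisson convergence of extremes gives that the rescaled marked point measure
\[
  \sum_{(m,y) \in \Omega_N^{(a)}} \gd_{(m/N,\, y \sqrt{d/N},\, (1+\eta_{m,y})/V_N)}
\]
converges in distribution (as a Radon point measure on $\bbR \times \bbR^d \times [a,\infty)$) to $\go$ restricted to $\{\ups \geq a\}$. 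The functional $\langle \xi_{N,\eta}^{(a)}, \psi\rangle$ is a continuous map of this point measure, plus a deterministic centering converging to the one defining $\xi_{\go}^{(a)}$, so the joint convergence in the proposition will follow from the convergence of $Z^{\eta,a}_{N,\beta_N}(f)$ as a functional of the same rescaled point measure.

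The key algebraic step is the factorization
\[
  \prod_{n=1}^N \big(1 + \beta_N \eta^{(a)}_{n, S_n}\big) = (1 - \beta_N \kappa_N^{(a)})^N \prod_{(m,y) \in \Omega_N^{(a)}} \big(1 + \lambda_N(\eta_{m,y})\, \ind_{\{S_m = y\}}\big),
\]
with $\lambda_N(\eta) := \beta_N (\eta + \kappa_N^{(a)})/(1 - \beta_N \kappa_N^{(a)})$. Expanding the (a.s.\ finite) product over $\Omega_N^{(a)}$ as a sum over subsets $\sigma$ yields a discrete analogue of \eqref{rewritefun}. The tail asymptotics of $\eta$, together with the definitions of $V_N$, $\beta_N$ and $\kappa_N^{(a)}$ (absorbing the shift $\gamma_N$ when $\alpha = 1$) give $e^{-\hat\beta \gamma_N \ind_{\{\alpha=1\}}} (1 - \beta_N \kappa_N^{(a)})^N \to e^{-\hat\beta \kappa_a}$, matching the prefactor in Lemma~\ref{prop:eazy}. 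For a fixed ordered subset $\sigma = \{(m_i, y_i, \eta_i)\}_{i=1}^k$ with $(m_i/N,\, y_i \sqrt{d/N},\, \eta_i/V_N) \to (t_i, x_i, u_i)$, the local central limit theorem gives
\[
  \bE\Big[ f(S^{(N)}) \prod_{i=1}^k \ind_{\{S_{m_i} = y_i\}}\Big] = (2 d^{d/2} N^{-d/2})^k \big(\varrho(\bt, \bx, f) + o(1) \big),
\]
while $\beta_N V_N = \tfrac{\hat\beta}{2} (N/d)^{d/2}$ from \eqref{scalechoice} and \eqref{def:VN}. Hence the contribution of such a subset converges to $\hat\beta^k \varrho(\bt, \bx, f) \prod_i u_i$, which, multiplied by the prefactor $e^{-\hat\beta \kappa_a}$, is exactly $w_{a,\hat\beta}(\sigma, f)$. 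Combined with the point-process convergence above, this gives termwise convergence of the expansion jointly with $\langle \xi_{N,\eta}^{(a)}, \psi \rangle$.

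The main obstacle is the uniform-in-$N$ control of the tail of the chaos expansion: showing that $Z^{\eta,a}_{N,\beta_N}(f) - Z^{\eta,[a,b)}_{N,\beta_N}(f)$, or equivalently the sum over subsets with $|\sigma| > K$, can be made arbitrarily small in probability as $b \to \infty$ (respectively $K \to \infty$), uniformly in $N$. I plan to establish this via a discrete counterpart of the $L^p$-bounds on $\cZ^{\go,a}_{\hat\beta}(f)$ from \cite[Prop.~2.5 \& Prop.~3.1]{BL20_cont}: a second-moment computation on the doubly truncated partition function $Z^{\eta,[a,b)}_{N,\beta_N}$, expressed through a replica-intersection kernel and controlled by the local CLT, when $\alpha \in (1, \alpha_c)$; and a fractional $p$-moment bound with $p < \alpha$ when $\alpha \in (0,1]$, exploiting the elementary inequality $|x + y|^p \leq |x|^p + |y|^p$. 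Some additional care is needed for subsets whose times cluster near the endpoints of $[0,1]$ or whose positions lie at diffusive distance $\gg 1$, where the local CLT is less sharp, but these atypical contributions are handled by the same moment bounds. Once the tail estimate is available, a standard truncation argument upgrades the termwise convergence into the full convergence in distribution of the proposition.
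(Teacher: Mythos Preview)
Your core strategy matches the paper's: point process convergence of the rescaled large sites, the factorization/expansion of the partition function over subsets of $\Omega_N^{(a)}$, the local CLT for the kernel $p(\bn,\bx,f)$, and joint convergence of the pair as a continuous functional of the limiting Poisson process. The prefactor computation $e^{-\hat\beta\gamma_N\ind_{\{\alpha=1\}}}(1-\beta_N\kappa_N^{(a)})^N\to e^{-\hat\beta\kappa_a}$ is exactly \eqref{lalimit}.

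The difference lies in how the tail of the expansion is handled. The paper does \emph{not} use moment bounds for this step; instead it first reduces to $f\in\cC_b$ with bounded support (Lemma~\ref{lem:boundedtraj}). Once $f$ is supported on $\{\|\varphi\|_\infty\le A\}$, only sites in $\lint 1,N\rint\times\lint -A\sqrt{N/d},A\sqrt{N/d}\rint^d$ matter, and $\Omega_N^{(a)}$ restricted to this box is a.s.\ finite. The functional $w\mapsto\sum_{\sigma\in\cP(w)}\hat\beta^{|\sigma|}\varrho(\bt,\bx,f)\prod_i\ups_i$ is then continuous on the space $\cW$ of locally finite point configurations, and the convergence follows directly from the continuous mapping theorem (Lemma~\ref{lem:conv1}). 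The relation between $Z^{\eta,a}_{N,\beta_N}(f)$ and the bare expansion $\bar Z^{\eta,a}_{N,\beta_N}(f)$ is handled by Proposition~\ref{revritez}, again using that only finitely many atoms lie in the box.

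Your proposed second-moment control on $Z^{\eta,[a,b)}_{N,\beta_N}$ for $\alpha\in(1,\alpha_c)$ will run into trouble in dimension $d\ge 2$: the second moment diverges there (this is noted at the end of Section~\ref{case1dim} and is precisely the reason Section~\ref{sec:letrucdur} needs the elaborate path restriction via $B_{N,q}(S)$). The fractional-moment argument you propose for $\alpha\le 1$ is what the paper actually uses for the $b$-truncation in Proposition~\ref{prop:tronquons}, and it works for all $\alpha\in(0,2)$; but for Proposition~\ref{letrucpasdur} itself, the bounded-support reduction is a much cleaner way to dispose of both the large-$|\sigma|$ and the far-away-atom contributions at once.
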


While the proof of Proposition \ref{letrucpasdur} requires some care, it follows a quite standard roadmap.
The proof is carried out in Section~\ref{sec:letrucpasdur}.
To complete the proof of Theorem~\ref{thm:convergence}, we need to show that when $a$ is close to $0$, 
then~$Z^{\eta,a}_{N,\gb_N}(f)$ and $\langle\xi_{N,\eta}^{(a)},\psi \rangle$ are close to  $Z_{N,\hat \gb}^{\eta,0}(f)$and $\langle \xi_{N,\eta},\psi \rangle$ respectively, 
uniformly in~$N$. Recall that from Theorem~\ref{thm:contmeasure}  (more precisely~\eqref{conv1}), we already know that~$ \cZ^{\go,a}_{\hat \beta}(f)$ is close to $ \cZ^{\go}_{\hat \beta}(f)$.
Similarly, $\langle \xi_{\go}^{(a)},\psi \rangle$ is close to $\langle \xi_{\go},\psi \rangle$ as a result of convergence of $\xi_{\go}^{(a)}$ in $H^{-s}_{\mathrm{loc}}$, $s>(d+1)/2$.

\begin{proposition}\label{letrucdur} 
If $\ga\in(0,2)$, we have for any $f\in \cC$
\begin{equation} 
\label{uniformity}
\lim_{a\to 0} \sup_{N\ge 1}\bbE\Big[ \left(e^{-\hat \beta \gamma_N \ind_{\{\ga=1\}} }\big| Z^{\eta,a}_{N,\gb_N}(f)- Z^{\eta,0}_{N,\gb_N}(f) \big|\right)\wedge 1  \Big]=0 \, .
\end{equation}
\end{proposition}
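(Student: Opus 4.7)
The plan is to establish a uniform moment bound
\[
\sup_{N\geq 1}\bbE\Big[\big|e^{-\hat\beta\gamma_N\ind_{\{\alpha=1\}}}\big(Z^{\eta,a}_{N,\beta_N}(f)-Z^{\eta,0}_{N,\beta_N}(f)\big)\big|^{p}\Big] \leq C(a),
\]
with $C(a)\to 0$ as $a\to 0$, for some exponent $p\in(0,\alpha)$; the conclusion \eqref{uniformity} then follows from the elementary inequality $x\wedge 1\leq x^{\min(1,p)}$ (combined with Markov's inequality when $p>1$).

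The starting point is a hybrid chaos expansion of the difference. Set $\chi^{(a)}_{n,x}:=\eta_{n,x}-\eta^{(a)}_{n,x}$, which is supported on $\{1+\eta_{n,x}<aV_N\}$, centered when $\alpha\geq 1$ by the choice of $\kappa^{(a)}_N$ in \eqref{def:kappaN}, and non-negative when $\alpha<1$. Since $\chi^{(a)}$ and $\eta^{(a)}$ are independent fields (they are measurable with respect to complementary events), expanding the two products in $Z^{\eta,0}_{N,\beta_N}(f)-Z^{\eta,a}_{N,\beta_N}(f)$ and subtracting yields
\[
Z^{\eta,0}_{N,\beta_N}(f)-Z^{\eta,a}_{N,\beta_N}(f)=\sum_{k\geq 1}\beta_N^{k}\sum_{\substack{A\subset \lint 1,N\rint \\ |A|=k}}\bE\Big[f(S^{(N)})\prod_{n\in A}\chi^{(a)}_{n,S_n}\prod_{n\notin A}(1+\beta_N\eta^{(a)}_{n,S_n})\Big].
\]
I would then control the $L^p$-norm of each chaos order $k$ separately, using Minkowski to pull the random-walk expectation outside the $L^p$-norm, combined with two ingredients: moment control of the weight $\prod_{n\notin A}(1+\beta_N\eta^{(a)}_{n,S_n})$ (finite because $\eta^{(a)}$ is truncated by $aV_N$), and sharp moment estimates on $\chi^{(a)}$ such as $\bbE[|\chi^{(a)}|^{q}]\lesssim (aV_N)^{q-\alpha}\varphi(aV_N)$ for $q>\alpha$, which combine with the scaling of $\beta_N$ to produce small prefactors. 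Local limit theorem estimates for the simple random walk (controlling the probability that it visits a prescribed set of space-time sites), used analogously to Section~\ref{sec:letrucpasdur}, finish the estimate for each $k$, and summing the resulting geometric series in $k$ yields the claimed $C(a)\to 0$.

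The main obstacle lies in the case $\alpha\in(1,2)$: the order-$k=1$ chaos involves the sum $\sum_n\chi^{(a)}_{n,S_n}$, whose naive first-moment bound is too large. One must therefore exploit the centering $\bbE[\chi^{(a)}]=0$ via a Marcinkiewicz--Zygmund-type inequality for centered sums of heavy-tailed variables evaluated along a random walk trajectory, working with an exponent $p\in(1,\alpha)$. The case $\alpha=1$ is particularly delicate because $\kappa^{(a)}_N$ diverges slowly with $a$, but the prefactor $e^{-\hat\beta\gamma_N}$ in \eqref{uniformity} is tailored to absorb this divergence. The case $\alpha\in(0,1)$ is easier, since $\chi^{(a)}\geq 0$ and one may work in $L^p$ with $p\in(0,\alpha)$, where sub-additivity of $x\mapsto x^p$ replaces the need for martingale cancellations.
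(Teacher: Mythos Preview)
Your proposal has a fundamental misreading of the definitions that breaks the whole scheme. You write that the background weight $\prod_{n\notin A}(1+\beta_N\eta^{(a)}_{n,S_n})$ has controlled moments ``because $\eta^{(a)}$ is truncated by $aV_N$'', but the truncation goes the other way: by definition $\eta^{(a)}_{n,x}=\eta_{n,x}$ precisely when $1+\eta_{n,x}\geq aV_N$, so $\eta^{(a)}$ retains the full heavy tail of $\eta$ and only replaces the \emph{small} values by a constant. Consequently, for any $p>1$ one has $\bbE\big[(1+\beta_N\eta^{(a)})^p\big]=1+cN^{\frac d2(p-1)-1}(1+o(1))$ (from the scaling relations in Section~\ref{sec:uzeful}), and the $N$-fold product of these factors diverges like $\exp(cN^{\frac d2(p-1)})$. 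So pulling the background out by Minkowski and bounding its $L^p$ norm cannot yield anything uniform in $N$. (Your secondary claim that $\chi^{(a)}$ and $\eta^{(a)}$ are independent fields is also false: at a single site they are both deterministic functions of $\eta_{n,x}$. This is not fatal to the algebraic expansion, but it removes another tool you were counting on.)

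The paper's route is essentially the opposite of yours: rather than isolating the small noise $\chi^{(a)}$ against a heavy-tailed background, it first truncates the environment from \emph{above} so that second moments exist, and then shows $L^2$ convergence of the truncated object as $a\to 0$ (Proposition~\ref{metaprop2}). The large-value truncation is controlled separately and uniformly in $a$ by a $\theta$-moment argument with $\theta<\alpha\wedge 1$ (Proposition~\ref{prop:tronquons}). In dimension $d=1$ the simple upper truncation $\eta^{[a,q)}$ already gives a uniformly bounded second moment and the argument is short (Section~\ref{case1dim}). In dimension $d\geq 2$ this fails---the replica overlap makes $\bbE[(Z^{\eta,[a,q)}_{N,\beta_N})^2]$ diverge for every $q$---and one needs the path-dependent restriction $B_{N,q}(S)$ of Section~\ref{sec:letrucdur}, which forbids trajectories collecting products of environment values that are too large relative to the time gaps. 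This restriction, not a chaos expansion, is the missing idea; without it no direct moment scheme on the difference $Z^{\eta,0}-Z^{\eta,a}$ is known to close.
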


\begin{rem}
When $\ga\in(1,2)$, we can in fact prove uniform convergence in 
 $L^1(\bbP)$
in~\eqref{uniformity} instead of convergence in probability, that is 
 $$
 \lim_{a\to 0} \sup_{N\ge 1}\bbE\Big[ \big| Z^{\eta,a}_{N,\gb_N}(f)- Z^{\eta,0}_{N,\gb_N}(f) \big|  \Big]=0 \,. 
 $$

\end{rem}
Let us also state the analogous result for $\langle \xi_{N,\eta},\psi \rangle$, whose proof is easy (and postponed to Appendix~\ref{app:taixi}).
\begin{lemma}\label{lem:uniformxi} 
If $\ga\in (0,2)$, then for any smooth and compactly supported $\psi$ we have
\begin{equation} 
\lim_{a\to 0} \sup_{N\ge 1}\bbE\Big[  \langle \xi_{N,\eta} - \xi_{N,\eta}^{(a)} ,  \psi  \rangle^2 \Big]=0 \, .
\end{equation}
\end{lemma}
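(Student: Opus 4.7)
\textbf{Plan for Lemma~\ref{lem:uniformxi}.} The common $\alpha=1$ centering $\bbE[\eta\ind_{\{\eta\le V_N\}}]\ind_{\{\alpha=1\}}$ is identical in $\xi_{N,\eta}$ and $\xi_{N,\eta}^{(a)}$ and hence cancels, leaving
\[
\langle \xi_{N,\eta} - \xi_{N,\eta}^{(a)}, \psi\rangle \;=\; \frac{1}{V_N}\sum_{(n,x)\in \mathbb{H}_d}X_{n,x}\,\psi\!\left(\tfrac{n}{N},\tfrac{x}{\sqrt{N/d}}\right),
\]
where the $X_{n,x} := \eta_{n,x}-\eta^{(a)}_{n,x}$ are i.i.d., supported on $\{1+\eta_{n,x}<aV_N\}$, and (thanks to the specific choice~\eqref{def:kappaN} of $\kappa^{(a)}_N$) centered whenever $\alpha\in[1,2)$. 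When $\alpha\in(0,1)$ the correction $\kappa_N^{(a)}$ vanishes and $X_{n,x}=\eta_{n,x}\ind_{\{1+\eta_{n,x}<aV_N\}}$ is not centered but satisfies $X_{n,x}\ge -1$.

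By independence, expanding the square gives
\[
\bbE\!\left[\langle \xi_{N,\eta}-\xi_{N,\eta}^{(a)},\psi\rangle^2\right] \;=\; \frac{\mathrm{Var}(X)}{V_N^2}\Sigma_2 \;+\; \bbE[X]^2\,\frac{\Sigma_1^{\,2}}{V_N^2},
\]
where $\Sigma_j := \sum_{(n,x)\in \mathbb{H}_d}\psi(n/N,x/\sqrt{N/d})^j$ for $j=1,2$. Since $\psi$ has compact support, elementary Riemann-sum bounds yield $|\Sigma_1|\vee \Sigma_2 \le C_\psi N^{1+d/2}$. By Karamata's theorem~\cite[Prop.~1.5.8]{BGT89} applied to the tail $\bbP(1+\eta>z)=\varphi(z)z^{-\alpha}$, one has $\bbE[(1+\eta)^2\ind_{\{1+\eta<M\}}]\sim \tfrac{\alpha}{2-\alpha}M^{2-\alpha}\varphi(M)$ for every $\alpha\in(0,2)$ (and analogously, in the case $\alpha<1$, $\bbE[(1+\eta)\ind_{\{1+\eta<M\}}]\sim \tfrac{\alpha}{1-\alpha}M^{1-\alpha}\varphi(M)$). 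A short computation, using the identity $\bbE[X]\bbP(1+\eta<aV_N)=-\bbE[X\ind_{\{1+\eta\ge aV_N\}}]=0$ that defines $\kappa_N^{(a)}$ when $\alpha\in[1,2)$, then gives
\[
\mathrm{Var}(X)\le C(aV_N)^{2-\alpha}\varphi(aV_N),\qquad |\bbE[X]|\le C(aV_N)^{1-\alpha}\varphi(aV_N)\ \text{(only needed when }\alpha<1).
\]

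Finally, the definition~\eqref{def:VN} of $V_N$ is arranged precisely so that $\varphi(V_N)V_N^{-\alpha}\sim 2d^{d/2}N^{-(1+d/2)}$, which converts the above moment bounds into
\[
\frac{\mathrm{Var}(X)}{V_N^2}\Sigma_2 \;=\; O\!\left(a^{2-\alpha}\frac{\varphi(aV_N)}{\varphi(V_N)}\right),\qquad \bbE[X]^2\frac{\Sigma_1^{\,2}}{V_N^2}\;=\;O\!\left(a^{2-2\alpha}\Big(\frac{\varphi(aV_N)}{\varphi(V_N)}\Big)^{\!2}\right).
\]
Potter's bounds~\cite[Thm.~1.5.6]{BGT89} ensure that $\varphi(aV_N)/\varphi(V_N)$ stays bounded uniformly in $N\ge 1$ for each fixed $a\in(0,1)$, and since $2-\alpha>0$ (respectively $2-2\alpha>0$ when $\alpha<1$) both bounds vanish as $a\to 0$ uniformly in $N$, which is the claim. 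No genuine obstacle arises: the only point requiring minor care is the uniform Potter control of the slowly varying correction $\varphi(aV_N)/\varphi(V_N)$, since it is this uniformity (and not only asymptotic slow variation) that is needed to get the $\sup_{N\ge 1}$ in the statement.
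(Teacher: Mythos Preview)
Your proof is correct and follows the same approach as the paper (expand the square, bound the truncated second moment via Karamata, and use the scaling $V_N^{-\alpha}\varphi(V_N)\sim 2d^{d/2}N^{-(1+d/2)}$ to reduce to a quantity of order $a^{2-\alpha}$ up to Potter corrections). You are in fact slightly more careful than the paper on one point: for $\alpha\in(0,1)$ the convention $\kappa_N^{(a)}=0$ means $X=\eta-\eta^{(a)}=\eta\,\ind_{\{1+\eta<aV_N\}}$ is \emph{not} centered, so the cross term $\bbE[X]^2\Sigma_1^{\,2}/V_N^2$ must be treated separately as you do --- the paper's Appendix proof writes the difference with a conditional-mean centering that strictly coincides with $\eta-\eta^{(a)}$ only when $\alpha\ge 1$.
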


We stress that the core of the proof lies in Proposition~\ref{letrucdur}. Its proof is carried out in Section~\ref{sec:letrucdur} and  follows some of the ideas developed  in \cite[Sec.~4]{BL20_cont} for the construction of the continuum partition function, but presents additional technical challenges.

\begin{proof}[Proof of Proposition~\ref{marginals} from Propositions \ref{letrucpasdur} and \ref{letrucdur}]

Given an arbitrary  $\delta>0$, a compactly supported smooth function $\psi$ and  $f\in\cC$,  we are going to show that 
there is some $N_0 =N_0(\delta, \psi)$
such that  for  every $N\ge N_0$, we can find a coupling between $\eta$ and $\go$ (with some abuse of notation we use $\bbP$ for the law of the coupling) such that 
 \begin{equation}\label{zioups}
  \bbP \left( \big|Z^{\eta}_{N,\beta_N}(f)-\cZ^{\go}_{\hat \beta}(f)  \big|> \delta \right) \le \delta \qquad \text{ and }  \qquad  \bbP\big( \big|\langle \xi_{N,\eta}-\xi_{\go} ,  \psi \rangle \big| > \delta \big) \leq \delta\, .
 \end{equation}
First we want to approximate the two partition functions
by their counterparts with truncated environment.
Using Proposition \ref{letrucdur} and Theorem \ref{thm:contmeasure}-\eqref{conv1}, we can choose $a_0=a_0(\delta)$ small enough such that for every value of $N$ we have for all $a\le a_0(\delta)$
 \begin{equation}\label{lavrai}
 \begin{split}
  \bbP\Big( e^{-\hat\beta \gamma_N \ind_{\{\ga=1\}} } \big|  Z^{\eta,a}_{N,\gb_N}(f)- Z^{\eta}_{N,\beta_N}(f) \big|> \delta/3  \Big)&\le \delta/3,\\
  \bbP\Big( \big| \cZ^{\go,a}_{\hat \beta}(f)- \cZ^{\go}_{\hat \beta}(f) \big|> \delta/3  \Big) \le \delta/3.  
  \end{split}
 \end{equation}
Similarly, thanks to Lemma~\ref{lem:uniformxi} and since $\xi_{\go}^{(a)}$ converges to $\xi_{\go}$ in $H_{\rm loc}^{-s} (\bbR^{d+1})$, have for $a\le a_0(\delta)$ (lowering the value of $a_0$ if necessary)
and every value of $N$ 
  \begin{equation}\label{lavrai2}
 \begin{split}
  \bbP\big(  \big| \langle  \xi_{N,\eta} - \xi_{N,\eta}^{(a)}, \psi  \rangle  \big|> \delta/3  \big) \le \delta/3,\\
   \bbP\big( \big| \langle   \xi_{\go} - \xi_{\go}^{(a)}, \psi \rangle  \big|> \delta/3  \big)  \le \delta/3.  
  \end{split}
 \end{equation}

Now we can conclude by observing that from Proposition \ref{letrucpasdur} ,
for $N\ge N_0$ sufficiently large
one can find a coupling of $\eta$ and $\go$ (depending of course on $N$) which is such 
that with probability larger than $1-\delta/3$ one has 
   \begin{equation}\label{lavrai3}
 \begin{split}
 \bbP\left[| e^{-\hat\beta \gamma_N \ind_{\{\ga=1\}} } Z^{\eta,a}_{N,\gb_N}-\cZ^{\go,a}_{\hat \beta}(f)|>\delta/3\right]&\le \delta/3,\\
  \bbP\left[ |\langle \xi_{N,\eta}^{(a)}-\xi_{\go}^{(a)} , \psi \rangle|>\delta/3 \right]\le \delta/3,
  \end{split}
  \end{equation}
which combined with  \eqref{lavrai}-\eqref{lavrai2} implies \eqref{zioups}.
\end{proof}

\subsection{Organization of the remainder of the paper}

Now that we have outlined the main steps of the proof,
let us briefly describe how the different parts of the proof
are articulated.

\begin{itemize}
\item  In Section~\ref{sec:uzeful}, some technical preliminaries are presented. In particular we describe an expansion
of the partition function analogous to~\eqref{rewritefun}
and give some comparison  estimates between $Z^{\eta,a}_{N,\gb_N}(f)$ and  $Z^{\eta,[a,b)}_{N,\gb_N}(f)$.

\item  In Section~\ref{sec:latight}, we prove the tightness of $\bP_{N,\gb_N}^{\eta} ( S^{(N)}\in \cdot)$, \textit{i.e.}\
Proposition~\ref{latight}.
Note that the tightness of $(\xi_{N,\eta})$ is proven in Appendix~\ref{app:taixi}.

\item In Section~\ref{sec:letrucpasdur}, we carry out the proof  of Proposition~\ref{letrucpasdur},
\textit{i.e.}\ of the convergence of the partition function 
with cutoff environment.

\item Section~\ref{sec:letrucdur} contains the proof of Proposition~\ref{letrucdur}, \textit{i.e.}\ of the uniformity (in $N$) of the martingale convergence of $Z_{N,\gb_N}^{\eta,a}$ as $a\downarrow 0$.
This is the most technical part of the paper and adapts ideas 
developed in~\cite{BL20_cont} in the continuum setting.

\item In the Appendix, some further technical estimates are collected: in Appendix~\ref{app:compare}, we prove an estimate that allows us to control different expectations with respect to $\eta$;
in Appendix~\ref{app:taixi}, we collect results on the measure $\xi_{N,\eta}$ and in particular we prove Lemma~\ref{lem:uniformxi}.
\end{itemize}

\section{Technical preliminaries}

\subsection{A collection of useful estimates}
\label{sec:uzeful}

Let us collect here a few identities and asymptotic equivalents that will be useful in the computations in the remainder of the paper.
By definition~\eqref{def:VN} of~$V_N$, we have
\[
V_N^{-\ga} \gp(V_N) \stackrel{N\to\infty}{\sim}     2 d^{d/2}  N^{-(1+\frac{d}{2})}.
\]
Also, by definition $\gb_N :=  \frac12\hat \gb  (\frac{N}{d})^{d/2}  V_N^{-1} $, see~\eqref{scalechoice}, so it verifies
\begin{equation}\label{uzefulrel}
\begin{split}
\gb_N V_N& \ \ =  \ \ \tfrac{1}{2} \hat \gb \, \big( \tfrac{N}{d}\big)^{\frac d2} \, , \\
\gb_N V_N^{1-\ga}  \varphi(V_N) &\stackrel{N\to\infty}{\sim}    \hat \gb  N^{-1},\\
\gb_N^2 V_N^{2-\ga}  \varphi(V_N) & \stackrel{N\to\infty}{\sim} \tfrac{1}{2} d^{-d/2} \hat \gb^2 N^{\frac{d}{2}-1} .
\end{split}
\end{equation}
In the case $\ga=1$, we will also use that by definition of $\gamma_N$,
\begin{equation}
\label{uzeful2}
\gb_N \bbE[\eta \ind_{\{1+\eta \leq V_N\}}] = \hat \gb N^{-1} \gamma_N\, .
\end{equation}
As far as truncated first and second moments of $\eta$ are concerned, 
we have asymptotically,  as $u$ diverges to infinity
\begin{equation}
\label{moments}
\begin{split}
 \bbE[ \eta \ind_{\{ (1+\eta) < u\}}]&=
  \frac{\ga}{1-\alpha} \, u^{1-\ga} \gp(u) (1+o(1)),  \quad \text{ for } \ga \in (0,1)\cup(1,2)\\
   \bbE[\eta^2 \ind_{\{(1+\eta)<u\}}] & = \frac{\ga}{2-\ga} \, u^{2-\ga}\gp(u) (1+o(1)).
  \end{split}
\end{equation}
Choosing $u=aV_N$ for some fixed $a>0$ and combining~\eqref{moments} with~\eqref{uzefulrel}, this yields in the large $N$ limit 
\begin{align}
\label{moments2}
\gb_N \bbE[ \eta \ind_{\{ (1+\eta) < a V_N\}}]&=
  \frac{\ga}{1-\alpha} \,\hat \gb  a^{1-\ga}  N^{-1} +o(N^{-1}) \, ,  \quad \text{ for } \ga \in (0,1)\cup(1,2) \,,\\
 \gb_N^2  \bbE[\eta^2 \ind_{\{(1+\eta)<aV_N\}}] & =  \frac{\ga d^{-d/2}}{2(2-\ga)}  \hat \gb^2 \, a^{2-\ga}  N^{\frac d2 -1}+ o(N^{d/2-1}) \,. \label{moments3}
\end{align}
Note furthermore (simply by monotonicity) that 
the $o(N^{-1})$ term in \eqref{moments2} is uniform in $a\in(1,\infty)$ (in the sense that $\lim_{N\to\infty}\sup_{a>1} N \times \cdot =0$) when $\alpha>1$. 
In the same manner the  $o(N^{d/2-1})$ term in \eqref{moments3} is uniform in $a\in(0,1)$ for any $\alpha\in (0,2)$.

We therefore  find that when $\alpha\in(1,2)$, $\kappa_N^{(a)}$ defined in \eqref{def:kappaN}  (recall also the definition of $\kappa_a$ in~\eqref{defkappaa}) satisfies, as~$N$ tends to infinity for any $a\in (0,\infty)$
\begin{equation}
\label{meaneta}
\beta_N \kappa_N^{(a)} = - \gb_N \bbE\big[ \eta \, \big| \,(1+ \eta) <  a V_N ]
 =  \hat \gb \,  \kappa_a \, N^{-1}+o(N^{-1}) \,.
\end{equation}
Note that when $\ga\in (0,1)$ we have set $\kappa_N^{(a)} =0$ and $\kappa_a=0$,
so that  \eqref{meaneta} (without the equality with the middle term) is also valid in that case.
 When $\alpha=1$, after a direct computation (and using~\eqref{uzeful2}), we have
\begin{equation}
\label{cas=1}
 \beta_N \kappa_N^{(a)}=  - \hat \beta N^{-1}\gamma_N + \hat \gb \,  \kappa_a \, N^{-1}(1+o(1)).
\end{equation}
All together, in view of the definition of $\gb_N$ and $\gamma_N$, we can rewrite~\eqref{meaneta}-\eqref{cas=1}
as
\begin{equation*}
\gb_N \kappa_N^{(a)} = - \hat \gb N^{-1} \gamma_N \ind_{\{\ga=1\}} +   \hat \gb \kappa_a N^{-1} +o(N^{-1}).
\end{equation*}
In particular, we see that for any $a \in (0,1)$
\begin{equation}
\label{lalimit}
 \lim_{N\to\infty} e^{-\hat \gb \gamma_N \ind_{\{\ga =1\}} } (1- \gb_N \kappa_N^{(a)})^N  = e^{ - \hat \gb \kappa_a} \, .
\end{equation}

\subsection{Expansion of the partition function}

In order to prove the convergence of the truncated partition function, we are going to rewrite it as a sum, which is the discrete equivalent of \eqref{rewritefun}.
Then the convergence of the partition function is going to follow from the convergence of each individual term.
 Let us define, for $a\in [0,1]$ and $b \in(1,\infty]$,
\begin{equation}
\gO_{N}^{[a,b)}(\eta) := \big\{ (n,x)\in  \lint 1, N\rint\times  \bbZ^d \, : \,  1+ \eta_{n,x} \in [  aV_N,bV_n)  \big\} \, ,
\end{equation}
and let $\cP(\gO^{[a, b)}_N)$ denote the set of finite sequences 
$(n_i,x_i)_{i=1}^{k}$ taking values in $\gO_{N}^{[a,b)}$ 
and satisfying $n_1<n_2< \dots<n_k$. 
We let $(\bn, \bx)=(n_i,x_i)_{i=1}^{|\bn, \bx|}$ denote a generic element of $\cP(\gO^{[a, b)}_N)$ where  $|\bn, \bx|\ge 0$ is the length of the sequence (a length zero corresponds to the empty sequence).
We set $p_n(x)=\bP(S_n=x)$, and using the convention $n_0=0$, $x_0=0$ we define 
\begin{equation}
 \label{lesprods}
 p(\bn, \bx,f)=\bE \big[  f(S^{(N)}) \ind_{\{\forall i \in \lint 1, |\bn, \bx|\rint \,, \, S_{n_i}=x_i \} } \big]
 \quad \text{ and } \quad \eta_{\bn, \bx} =\prod_{i=1}^{|\bn, \bx|} \eta_{n_i,x_i}.
\end{equation}
We write simply  $p(\bn, \bx)$ when $f\equiv 1$.
Let us set for $a\in(0,1]$ 
\begin{multline}\label{znlabar}
\bar Z_{N, \gb_N}^{\eta,[a,b)}(f):= \bE \Big[f(S^{(N)})\prod_{n=1}^N \big( 1+\beta_N\eta_{n,S_n}\ind_{\{(1+\eta_{n,S_n})\in [aV_N,bV_N)\}} \big) \Big]\\
= \sum_{(\bn, \bx)\in \cP(\gO^{[a,b)}_N)} \beta^{|\bn, \bx|}_N p(\bn, \bx,f) \, \eta_{\bn, \bx} \,,
\end{multline}
where the second expression is simply obtained by performing an expansion of the product and taking the expectation of each term.
We write $\gO_{N}^{(a)}(\eta)$ and $\bar Z_{N,\hat \gb}^{\eta,a}$ when $b =\infty$. Recall that 
$\cB_b$ designates the set of bounded functions with bounded support on $C_0([0,1])$.
\begin{proposition}\label{revritez}
For  any non-negative function $f\in \cB_b$, and any $a\in (0,1]$ and $ b\in(1,\infty]$ we have
the following convergence in probability
 \begin{equation}\label{resultf}
  \lim_{N\to \infty}
  \frac{e^{-\hat \beta \gamma_N \ind_{\{\alpha=1\}}}
  Z^{\eta,[a,b)}_{N,\beta_N}(f)}{\bar Z_{N,\gb_N}^{\eta,[a,b)}(f)}=e^{-\hat \beta \kappa_a} \,.
 \end{equation}
Furthermore, 
 for  all  $a\in (0,1]$ there exists $N_0(a)\ge 1$
such that for any
non-negative function $g$  and
${\blue b}\in(1,\infty]$, for $N\ge N_0(a)$ we have
\begin{equation}\label{resultg}
 e^{-\hat \beta \gamma_N \ind_{\{\alpha=1\}}} Z^{\eta,[a, b)}_{N,\beta_N}(g)  \le  2e^{-\hat \beta \kappa_a} \bar Z^{\eta,[a,b)}_{N,2\beta_N}(g).
\end{equation}
\end{proposition}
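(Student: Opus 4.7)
The algebraic backbone of the proof is the multiplicative identity
\[
1 + \beta_N \eta^{[a,b)}_{n,x} = \big(1 - \beta_N \kappa_N^{(a)} A_n\big)\big(1 + \beta_N \eta_{n,x} B_n\big),
\]
where $A_n := \ind_{\{1+\eta_{n,x} < aV_N\}}$ and $B_n := \ind_{\{1+\eta_{n,x} \in [aV_N, bV_N)\}}$; this is checked by a simple case analysis, using that $A_n B_n = 0$. Inserting this factorization into the definition of $Z^{\eta,[a,b)}_{N,\beta_N}(h)$ and observing that $\sum_{n=1}^N A_n(S) = N - K_N(S)$ with $K_N(S) := \#\{n \in \lint 1, N \rint : 1+\eta_{n, S_n} \geq aV_N\}$, one obtains the representation
\[
Z^{\eta,[a,b)}_{N,\beta_N}(h) = (1-\beta_N\kappa_N^{(a)})^N \, \bE\Big[h(S^{(N)}) (1+\alpha_N)^{K_N(S)} \prod_{n=1}^N \big(1+\beta_N \eta_{n,S_n} B_n\big)\Big],
\]
where $\alpha_N := \beta_N \kappa_N^{(a)} / (1-\beta_N \kappa_N^{(a)})$ satisfies $(1+\alpha_N)(1-\beta_N\kappa_N^{(a)}) = 1$. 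This formula will drive both claims.

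For \eqref{resultf}, the asymptotic~\eqref{lalimit} gives $e^{-\hat\beta \gamma_N \ind_{\{\alpha=1\}}}(1-\beta_N\kappa_N^{(a)})^N \to e^{-\hat\beta\kappa_a}$, so it remains to show that the correction factor
\[
R_N := \frac{\bE\big[f(S^{(N)}) (1+\alpha_N)^{K_N(S)} \prod_n (1+\beta_N \eta_{n,S_n} B_n)\big]}{\bar Z^{\eta,[a,b)}_{N,\beta_N}(f)}
\]
converges to $1$ in probability. From~\eqref{meaneta}--\eqref{cas=1} one has $\alpha_N = O(1/N)$; the elementary bound $|(1+\alpha_N)^k - 1| \leq k|\alpha_N|(1+|\alpha_N|)^k$ then reduces the problem to establishing tightness of $K_N(S)$ under the random probability measure with density proportional to $f \prod_n(1+\beta_N \eta_{n,S_n} B_n)/\bar Z^{\eta,[a,b)}_{N,\beta_N}(f)$. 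This tightness is a manifestation of the Poisson behavior of the extremes: $\bbE \otimes \bE[K_N(S)] = N \bbP(1+\eta \geq aV_N) = O(a^{-\alpha})$ by~\eqref{uzefulrel}, and a second-moment computation combined with the local central limit theorem for the random walk extends this bound to the tilted measure.

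For \eqref{resultg}, the central estimate is the elementary inequality $(1+\alpha_N)(1+\beta_N z) \leq 1 + 2\beta_N z$, equivalent to $\alpha_N(1+\beta_N z) \leq \beta_N z$. On the support of $B_n$ we have $z = \eta_{n,x} \geq aV_N - 1$, so $\beta_N z \to \infty$ while $\alpha_N \to 0$; hence the inequality holds uniformly for $N \geq N_0(a)$ (and is trivial when $\alpha_N \leq 0$). Decomposing $K_N(S) = K_N^{(B)}(S) + K_N^{(\geq b)}(S)$ with $K_N^{(B)}(S) := \sum_n B_n$, and applying the pointwise inequality at each $n$ with $B_n = 1$, one derives
\[
(1+\alpha_N)^{K_N^{(B)}(S)} \prod_n (1+\beta_N \eta_{n,S_n} B_n) \leq \prod_n (1+2\beta_N \eta_{n,S_n} B_n).
\]
When $b = \infty$, $K_N^{(\geq b)}(S) \equiv 0$ and one deduces $Z^{\eta,a}_{N,\beta_N}(g) \leq (1-\beta_N\kappa_N^{(a)})^N \bar Z^{\eta,a}_{N,2\beta_N}(g)$, from which \eqref{lalimit} yields the claim for $N$ large. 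For $b < \infty$, the extra factor $(1+\alpha_N)^{K_N^{(\geq b)}(S)}$ is absorbed using the identity $(1+\alpha_N)^N (1-\beta_N\kappa_N^{(a)})^N = 1$. The main obstacle throughout is the tightness of $K_N(S)$ under the random tilted measure used to establish~\eqref{resultf}.
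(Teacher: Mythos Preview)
Your algebraic decomposition --- factoring $1+\beta_N\eta^{[a,b)}_{n,x}$ and extracting $(1-\beta_N\kappa_N^{(a)})^N$ together with a correction $(1+\alpha_N)^{K_N(S)}$ --- is exactly the paper's starting point, written there as a ratio of products in \eqref{unration} and \eqref{2unration}. For \eqref{resultg} with $b=\infty$, your pointwise inequality $(1+\alpha_N)(1+\beta_N z)\le 1+2\beta_N z$ is the same per-factor bound the paper uses, and the conclusion via \eqref{lalimit} is identical.

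The real gap is in your treatment of \eqref{resultf}. You reduce correctly to showing $(1+\alpha_N)^{K_N(S)}\to 1$ under the random tilted path measure, and then assert that the annealed bound $\bbE\otimes\bE[K_N]=O(1)$ ``extends to the tilted measure'' via a second-moment computation and the local CLT. This extension is not justified: the tilted measure involves the random normalizer $\bar Z^{\eta,[a,b)}_{N,\beta_N}(f)$ and biases paths toward high-$\eta$ sites, so an annealed first moment under $\bP$ does not by itself control $K_N$ under it. The paper circumvents the tilted measure entirely by exploiting the hypothesis $f\in\cB_b$: any trajectory contributing to either $Z^{\eta,[a,b)}_{N,\beta_N}(f)$ or $\bar Z^{\eta,[a,b)}_{N,\beta_N}(f)$ stays in the box $\lint 1,N\rint\times\lint -A\sqrt{N/d},A\sqrt{N/d}\rint^d$, and hence $K_N(S)$ is dominated by the number of sites $(n,x)$ in that box with $1+\eta_{n,x}\ge aV_N$. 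This last quantity is trajectory-independent and has uniformly bounded expectation in $N$, so it is tight in the environment alone; combined with $\alpha_N\to 0$ this yields $(1+\alpha_N)^{K_N(S)}\to 1$ uniformly over all contributing $S$ simultaneously, and $R_N\to 1$ follows. Once you insert this observation in place of your second-moment sketch, your argument becomes correct and considerably shorter.

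A secondary remark on \eqref{resultg} with $b<\infty$: your absorption step replaces $(1+\alpha_N)^{K_N^{(\ge b)}}$ by $(1+\alpha_N)^N$, which cancels $(1-\beta_N\kappa_N^{(a)})^N$ and leaves only $Z^{\eta,[a,b)}_{N,\beta_N}(g)\le\bar Z^{\eta,[a,b)}_{N,2\beta_N}(g)$; this does not yield the stated inequality when $2e^{-\hat\beta\kappa_a}<1$ (e.g.\ $\alpha>1$ and $a$ small). In fact the trajectory-wise inequality behind \eqref{resultg} fails for $b<\infty$ on paths hitting many sites in $\{1+\eta\ge bV_N\}$, so no per-trajectory argument can work there. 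The paper itself only writes out the case $b=\infty$, which is the one actually invoked elsewhere.
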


\begin{proof}
For notational simplicity we prove the result only for $b=\infty$. We are going to control the quotient for the contribution of every single trajectory.
We have for any nearest neighbor trajectory 
\begin{equation}\label{unration}
 \frac{\prod_{n=1}^N(1+\beta_N\eta^{(a)}_{n,S_n})}{\prod_{n=1}^N (1+\beta_N\eta_{n,S_n}\ind_{\{(1+\eta_{n,S_n})>aV_N\}})}=(1-\kappa^{(a)}_N\gb_N)^{N-\#\{ n\in \lint 1,N\rint, (1+\eta_{n,S_n})> aV_N  \}}.
\end{equation}
Recalling~\eqref{lalimit},
we just have to verify that the term $(1-\kappa^{(a)}_N\gb_N)^{-\#\{ n\in \lint 1,N\rint, (1+\eta_{n,S_n})> aV_N  \}},$ can be controlled uniformly over the set of trajectories which are contributing to the partition function.

\smallskip

Recall that we assumed that $f$ has bounded support: we let $A=A_f$ be such that $f( \varphi)=0$ if $\|\varphi \|_{\infty}\ge A$. For any realization of $S$ such that $f(S^{(N)})>0$ we have
\begin{multline}\label{therectangle}
 \#\{ n\in \lint 1,N\rint, (1+\eta_{n,S_n})> aV_N  \} \\ \le  \#\{ (n,x)\in \lint 1,N\rint\times  \lint -A \sqrt{N/d},  A \sqrt{N/d}\rint^d, (1+\eta_{n,x})> aV_N  \}.
\end{multline}
Now, with our definition of $V_N$, the r.h.s.~in \eqref{therectangle} has an expectation which is uniformly bounded in $N$. Since $\beta_N \kappa^{(a)}_N$ tends to $0$, this implies in particular that
\begin{equation}
 \lim_{N\to \infty} (1-\beta_N \kappa^{(a)}_N)^{\#\{ (n,x)\in \lint 1,N\rint\times  \lint -A \sqrt{N/d},  A \sqrt{N/d}\rint^d, (1+\eta_{n,x})> aV_N  \}}=1
\end{equation}
in probability,
from which we can conclude that \eqref{resultf} holds. 
For \eqref{resultg}, we simply note that we have
\begin{multline}\label{2unration}
 \frac{\prod_{n=1}^N(1+\beta_N\eta^{(a)}_{n,S_n})}{\prod_{n=1}^N (1+2\beta_N\eta_{n,S_n}\ind_{\{(1+\eta_{n,S_n})>aV_N\}})}\\
 = (1-\gb_N\kappa^{(a)}_N)^{N} \prod_{n=1}^N\frac{1+\beta_N \eta_{n,S_n}\ind_{\{(1+\eta_{n,S_n})>aV_N\}}}{(1-\beta_N \kappa^{(a)}_N \ind_{\{(1+\eta_{n,S_n})>aV_N\}} )(1+2\beta_N  \eta_{n,S_n}\ind_{\{(1+\eta_{n,S_n})>aV_N\}})} \, .
\end{multline}
Now, this is bounded by $ (1-\gb_N\kappa^{(a)}_N )^{N}$ since for $N$ sufficiently large all terms in the last product  are smaller than one.
 Then one concludes thanks to
 \eqref{lalimit}.
\end{proof}

\subsection{Truncating large weights}

We prove here the following proposition,
which allows us to truncate large weights in the partition function (this is especially needed when $\ga\in (0,1]$).

\begin{proposition}
\label{prop:tronquons}
We have for any $\alpha \in (0,2)$,
\begin{equation}
\lim_{b\to\infty} \suptwo{N\geq 1}{a\in[0,1]} \bbE\left[  \left(e^{-\hat \beta  \gamma_N  \ind_{\{\ga=1\}} } \big( Z^{\eta,a}_{N,\gb_N}-  Z^{\eta,[a,b)}_{N,\gb_N} \big)\right)\wedge 1\right] =0 \, .
\end{equation}
\end{proposition}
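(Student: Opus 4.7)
The plan is to start from the pathwise identity
\[
\prod_{n=1}^N(1+\gb_N\eta^{(a)}_{n,S_n}) - \prod_{n=1}^N(1+\gb_N\eta^{[a,b)}_{n,S_n}) = \prod_{n\notin T_b(S)}(1+\gb_N\eta^{(a)}_{n,S_n})\bigg[\prod_{n\in T_b(S)}(1+\gb_N\eta_{n,S_n})-1\bigg]\,,
\]
where $T_b(S):=\{n\leq N:1+\eta_{n,S_n}\geq bV_N\}$. For $N$ large enough all factors are positive, and since $\eta_{n,S_n}\geq bV_N-1\geq 0$ on $T_b$ the bracket is non-negative and at most $\prod_{n\in T_b}(1+\gb_N\eta_{n,S_n})$. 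Taking $\bE$ then reduces the proof to controlling
\[
Z^{\eta,a}_{N,\gb_N} - Z^{\eta,[a,b)}_{N,\gb_N} \leq \bE\Big[\prod_{n=1}^N(1+\gb_N\eta^{(a)}_{n,S_n})\,\ind_{\{T_b(S)\neq\emptyset\}}\Big]\,.
\]

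When $\alpha\in(1,2)$ my approach is an $L^1(\bbP)$ estimate: by the choice \eqref{def:kappaN} of $\kappa^{(a)}_N$ the variables $\eta^{(a)}_{n,x}$ are centered, so bounding $\ind_{\{T_b(S)\neq\emptyset\}}\leq\sum_{k=1}^N\ind_{\{1+\eta_{k,S_k}\geq bV_N\}}$, applying Fubini, and using that $(\eta_{n,S_n})_{n=1}^N$ are i.i.d.\ along any fixed trajectory (since the points $(n,S_n)$ are pairwise distinct), I obtain
\[
\bbE[Z^{\eta,a}_{N,\gb_N}-Z^{\eta,[a,b)}_{N,\gb_N}] \leq N\,\bbE\big[(1+\gb_N|\eta|)\ind_{\{1+\eta\geq bV_N\}}\big]\,.
\]
The scaling relations of Subsection~4.1 (especially \eqref{uzefulrel}) combined with the tail estimate \eqref{moments} then give $N\bbP(1+\eta\geq bV_N)\leq Cb^{-\alpha}$ and $N\gb_N\bbE[|\eta|\ind_{\{1+\eta\geq bV_N\}}]\leq Cb^{1-\alpha}$, both uniformly in $N\geq 1$ and $a\in[0,1]$, and the trivial inequality $\bbE[X\wedge 1]\leq\bbE[X]$ closes the case.

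The main obstacle will be the regime $\alpha\in(0,1]$, where $\bbE[\eta^{(a)}]=+\infty$ and the $L^1$ strategy breaks down. Here I plan to localize: setting $B_R:=\lint 1,N\rint\times\lint-R\sqrt{N/d},R\sqrt{N/d}\rint^d$ and writing $G_R^b$ for the event that no point $(n,x)\in B_R$ carries a value $1+\eta_{n,x}\geq bV_N$, a union bound using the definition \eqref{def:VN} of $V_N$ yields $\bbP((G_R^b)^c)\leq CR^d b^{-\alpha}$. On $G_R^b$ the condition $T_b(S)\neq\emptyset$ forces $S$ to leave the ball $\{|x|\leq R\sqrt{N/d}\}$, so writing $T_R:=\{\max_{n\leq N}|S_n|>R\sqrt{N/d}\}$ the right-hand side of the reduction above is majorized by $\bE[\prod_{n=1}^N(1+\gb_N\eta^{(a)}_{n,S_n})\ind_{T_R}]$. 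Applying \eqref{resultg} with $g=\ind_{T_R}$ (a non-negative path functional) upper-bounds this, after multiplying by $e^{-\hat\gb\gamma_N\ind_{\{\alpha=1\}}}$, by $2e^{-\hat\gb\kappa_a}\bar Z^{\eta,a}_{N,2\gb_N}(\ind_{T_R})$; since $e^{-\hat\gb\kappa_a}$ is bounded uniformly for $a\in(0,1]$ the proof reduces to the uniform vanishing
\[
\lim_{R\to\infty}\suptwo{N\geq 1}{a\in(0,1]}\bbE\big[\bar Z^{\eta,a}_{N,2\gb_N}(\ind_{T_R})\wedge 1\big]=0\,.
\]
This is the hard step. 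My plan to establish it is a $p$-th moment computation with $p\in(0,\alpha)$: using $X\wedge 1\leq X^p$, the expansion \eqref{znlabar} of $\bar Z^{\eta,a}_{N,2\gb_N}(\ind_{T_R})$, the subadditivity $(\sum x_i)^p\leq\sum x_i^p$, the truncated moment bound $\bbE[\eta^p\ind_{\{1+\eta\geq aV_N\}}]\leq Ca^{p-\alpha}V_N^{p-\alpha}\gp(V_N)$, and the random walk exit estimate $\bP(T_R)\leq Ce^{-cR^2}$ incorporated through $p_{T_R}(\bn,\bx)\leq p(\bn,\bx)\wedge\bP(T_R)$. Optimizing $R=R(b)\to\infty$ slowly (for instance $R=\sqrt{\log b}$) in the combination $CR^d b^{-\alpha}+$ the $p$-moment bound then concludes the proof.
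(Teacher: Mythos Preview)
For $\alpha\in(1,2)$ your direct $L^1$ argument is correct and arguably cleaner than the paper's (which treats all $\alpha$ uniformly via a fractional moment after first reducing to $a=1$).

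For $\alpha\in(0,1]$, however, the ``hard step''
\[
\lim_{R\to\infty}\suptwo{N\geq 1}{a\in(0,1]}\bbE\big[\bar Z^{\eta,a}_{N,2\gb_N}(\ind_{T_R})\wedge 1\big]=0
\]
is not delivered by your $p$-th moment plan. At each level $k\geq 1$ the subadditivity bound produces a factor $\bbE\big[\eta^p\ind_{\{1+\eta\geq aV_N\}}\big]^k\asymp (Ca^{p-\alpha})^k$, which blows up as $a\downarrow 0$ since $p<\alpha$; this destroys the uniformity in $a$ you need. The exit probability $\bP(T_R)$ only controls the $k=0$ term, and spreading it over higher levels via $p_{T_R}(\bn,\bx)\leq p(\bn,\bx)^{1-\delta}\bP(T_R)^\delta$ makes the spatial sum $\sum_{\bx} p(\bn,\bx)^{p(1-\delta)}$ grow by an extra $N^{pd\delta/2}$ per level, destroying instead the uniformity in $N$. (A secondary issue: the comparison \eqref{resultg} is stated for $N\geq N_0(a)$, and this threshold can diverge as $a\downarrow 0$.)

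The paper sidesteps the whole localization detour with one move you are missing: before any moment computation it applies conditional Jensen, $\bbE[X\wedge 1]\leq\bbE\big[\bbE[X\mid\cG_1]\wedge 1\big]$, and uses the martingale relation \eqref{lesmartingales} (respectively a one-line domination when $\alpha<1$) to reduce at once to the fixed cutoff $a=1$. Then, via \eqref{resultg}, one is left with $\bar Z^{\eta,1}_{N,2\gb_N}-\bar Z^{\eta,[1,b)}_{N,2\gb_N}$, which by \eqref{znlabar} is a sum over $\cP(\gO^{1}_N)\setminus\cP(\gO^{[1,b)}_N)$, i.e.\ over index sets containing at least one site with $\eta\geq bV_N$. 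The $\theta$-th moment (with $\theta<1\wedge\alpha$) then carries a factor $\bbE\big[\eta^\theta\ind_{\{\eta\geq bV_N\}}\big]$ giving the decay in $b$ directly, and after \eqref{checkitout} the series sums to $C_{\hat\beta}\,b^{(\theta-\alpha)/2}$. No restriction to paths in $T_R$ is needed at all.
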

%

\begin{proof}
First, let us get rid of the small jumps in the noise. We observe that by using conditional Jensen's inequality 
(recall \eqref{lafiltr}) the quantity we have to bound is smaller than 
\begin{equation}
\bbE\left[  \left(e^{-\hat \beta  \gamma_N  \ind_{\{\ga=1\}} }  \bbE \big[ Z^{\eta,a}_{N,\gb_N}-  Z^{\eta,[a,b)}_{N,\gb_N} \, \big| \, \mathcal G_1 \big]\right)\wedge 1\right] \, .
\end{equation}
Now we have
$ \bbE [ Z^{\eta,a}_{N,\gb_N}-  Z^{\eta,[a,b)}_{N,\gb_N} \, | \, \mathcal G_1 ] = (Z^{\eta,1}_{N,\gb_N}-  Z^{\eta,[1,b)}_{N,\gb_N})$ for $\alpha\ge 1$
(recall~\eqref{lesmartingales}).
For $\alpha\in(0,1)$ (this distinction is necessary because of our choice $\kappa^{(a)}_N=0$ in that case) we have for every $a\ge 0$
\begin{align*}
\bbE [ Z^{\eta,a}_{N,\gb_N}-  Z^{\eta,[a,b)}_{N,\gb_N} \, | \, \mathcal G_1 ] & \le \left(1+ \beta_N \bbE[\eta  \ | \  1+ \eta\le V_N ]\right)^N  (Z^{\eta,1}_{N,\gb_N}-  Z^{\eta,[1,b)}_{N,\gb_N}) \\
& \le C_{\hat \beta}  (Z^{\eta,1}_{N,\gb_N}-  Z^{\eta,[1,b)}_{N,\gb_N}).
\end{align*}
Therefore,
it is sufficient to prove that
$e^{-\hat \beta  \gamma_N  \ind_{\{\ga=1\}} }(Z^{\eta,1}_{N,\gb_N}-  Z^{\eta,[1,b)}_{N,\gb_N})$
converges to $0$ in probability as $b\to\infty$, uniformly in $N$.
Using Proposition~\ref{revritez}-\eqref{resultf}, it is sufficient to show that for some $\theta \in (0,1\wedge \alpha)$ we have
\begin{equation}\label{trucaprouv}
 \lim_{b\to\infty} \sup_{N\geq 1} \bbE\left[\left(\bar Z^{\eta,1}_{N,2\gb_N}-\bar Z^{\eta,[1,b)}_{N,2\gb_N}\right)^{\theta}\right] =0 \, .
\end{equation}
Now, using the representation~\eqref{znlabar} and since $\theta<1$ we have (recall the notation   \eqref{lesprods})
\begin{equation}
\left(\bar Z^{\eta,1}_{N,2\beta_N}-\bar Z^{\eta,[1,b)}_{N,2\beta_N}\right)^{\theta}\le \sum_{(\bn,\bx)\in \cP(\gO^{1}_N)\setminus\cP(\gO^{[1,q)}_N)} \left((2\beta_N)^{^{|\bn,\bx|}} p(\bn,\bx) \eta_{\bn,\bx}\right)^{\theta} \, ,
\end{equation}
Note that since $(\bn,\bx)\in \cP(\gO^{1}_N)$ we have
$1+ \eta_{n_i,x_i} \geq V_N$ for all $i \in \lint  1, |\bn,\bx|\rint$; in particular we have $\eta_{\bn,\bx} \geq 0$.
Taking the expectation with respect to the $\eta$'s and recalling the definition
of $\cP(\gO^{1}_N)$, $\cP(\gO^{[1,q)}_N)$,
we obtain that 
\begin{multline*}
 \bbE\left[\left(\bar Z^{\eta,1}_{N,2\beta_N}-\bar Z^{\eta,[1,b)}_{N,2\beta_N}\right)^{\theta}\right]\\ 
 \le \sum_{k=1}^{N} (2\beta_N)^{k\theta} \bbE \bigg[ \Big(\prod_{i=1}^k \eta_i \Big)^{\theta}\ind_{\{\forall i\in \lint 1,k\rint, \eta_i\ge V_N  ;  \exists j\in \lint 1,k\rint,  \, \eta_j\ge b V_N\}}\bigg] \sumtwo{ n_1<n_2<\dots<n_k} {\bx\in (\bbZ^{d})^k} 
p(\bn,\bx)^{\theta}.
\end{multline*}
Now, we have 
\begin{multline}\label{spoof}
\bbE \bigg[ \Big(\prod_{i=1}^k \eta_i \Big)^{\theta}\ind_{\{\forall i\in \lint 1,k\rint, \eta_i\ge V_N  ;  \exists j\in \lint 1,k\rint,  \, \eta_j\ge bV_N\}}\bigg]\\
\le\bbE \bigg[ \Big(\prod_{i=1}^k \eta_i \Big)^{\theta}\sum_{j=1}^k\ind_{\{\forall i\in \lint 1,k\rint, \eta_i\ge V_N  ;   \, \eta_j\ge b V_N\}}\bigg] = k \bbE \big[  \eta^{\theta}\ind_{\{\eta \ge V_N \}}  \big]^{(k-1)} \bbE \big[  \eta^{\theta}\ind_{\{\eta \ge b V_N \}}  \big]
\end{multline}
One can then easily check  (using Potter's bound,  see~\cite[Thm.~1.5.6]{BGT89})
that there is a constant $C$ such that
for any $c \geq 1$ (we will use it with $c=1$ or $c=b$),
for all $N$ sufficiently large,  
\[
\bbE \big[ \eta^\theta \ind_{\{\eta \ge c  V_N \}}  \big] \leq C c^{\frac12(\theta-\alpha)} V_N^{\theta-\alpha} \gp(V_N).
\]
Together with \eqref{spoof} this implies that

\begin{equation}
\label{hypodelta}
 \bbE \bigg[ \Big(\prod_{i=1}^k \eta_i \Big)^{\theta}\ind_{\{\forall i\in \lint 1,k\rint, \eta_i\ge V_N \, ; \, \exists j\in \lint 1,k\rint,  \, \eta_j\ge bV_N\}}\bigg]\le  k  (C')^k  b^{\frac12 (\theta-\ga)}  V^{\theta k}_N N^{-k\left(1+\frac{d}{2}\right)}\, .
\end{equation}

\noindent Now let us observe that as a consequence of (a sharp version of) the local central limit theorem,
see~\cite[Thm.~2.3.11]{LL10}, there exists a  constant $C'=C'_{\theta}$ such that

\begin{equation}\label{checkitout}
 \sum_{x\in \bbZ^d}p_{n}\big( x \big)^{\theta} \le C'\,  n^{\frac{d}{2}(1-\theta)}
\end{equation}
and hence 
\begin{equation}
 \sumtwo{ n_1<n_2<\dots<n_k} {\bx\in (\bbZ^{d})^k} 
p(\bn,\bx)^{\theta}\le (C')^k \binom{N}{k} N^{ k\frac{d}{2}(1-\theta)}
\leq \frac{(C')^k}{k!} N^{ k( \frac d2(1-\theta) +1  )} .
\end{equation}
Combining these bounds and replacing $\beta_N$ by its value (recall~\eqref{uzefulrel}), we obtain that
\begin{equation}
 \bbE\left[\left(\bar Z^{\eta,1}_{N,\beta_N}-\bar Z^{\eta,[1,b)}_{N,\beta_N}\right)^{\theta}\right]  
\le  \sum_{k=1}^{\infty} k \frac{ (\hat \gb C'')^k }{k!}  b^{\tfrac12 (\theta-\ga)} \leq C_{\hat \beta} b^{\frac12 (\theta-\ga)} \, ,
\end{equation}
which proves~\eqref{trucaprouv}.
\end{proof}

\section{Proof of Proposition \ref{latight}}
\label{sec:latight}

In this section we prove Proposition \ref{latight} assuming that Proposition \ref{marginals} holds.
We start with the easier case $\alpha \in (1,2)$.
We wish to find an increasing sequence of compact sets $\cK_n \subset \cM_1$ which are such that
for all $n$ and~$N$ we have
\begin{equation}
\label{compacts}
 \bbP \left[ \bP^{\eta}_{N,\gb_N}(S^{(N)}\in \cdot\, ) \notin \cK_n \right] \le 2^{-n}.
\end{equation}
Using the tightness of $(Z^{\eta}_{N,\gb_N})^{-1}$, which is ensured by Proposition \ref{marginals} and the positivity of the limit $\mathcal Z^{\go}_{\hat \gb}$ (recall~\eqref{conv1}),
we consider a  sequence $\delta_m$ going to zero such that for all $N$ and $m$
\begin{equation}
\label{deltam}
 \bbP\big( Z^{\eta}_{N,\gb_N} \le \delta_m \big) \le 2^{-m-2}.
\end{equation}
Then, we consider $K_m$ a sequence of compact subsets  of $C_0([0,1])$
such that 
\begin{equation}\label{zorid}
\bP( S^{(N)}\notin K_m )\le 4^{-m}\delta_m \, .
\end{equation}
Note that such a sequence exists simply by the fact that $\bP[ S^{(N)}\in \cdot\, ]$ is a convergent sequence (and hence is tight).
Finally, we set 
\begin{equation}\label{defkkn}
 \cK_n:=\left\{ \mu \in \cM_1 \, :\  \forall m\ge n, \   \mu(K^{\cc}_{m})\le 2^{2-m} \right\}.
\end{equation}
 The set $\cK_n$ is closed and any sequence in $\cK_n$ is tight and thus $\cK_n$ is compact. 
Now, we have by a union bound
\begin{equation}
\label{zorid2}
 \bbP\left[  \bP^{\eta}_{N,\gb_N}(S^{(N)}\in \cdot\,)  \notin \cK_n   \right]
 \le \sum_{m = n}^{\infty} \bbP \left[ \bP^{\eta}_{N,\gb_N}(S^{(N)}\notin K_m) \ge 2^{2-m}    \right],
\end{equation}
and finally 
\begin{multline}
\bbP \left[ \bP^{\eta}_{N,\gb_N}(S^{(N)}\notin K_m) \ge 2^{2-m}    \right]
 \le  \bbP \left( Z^{\eta}_{N,\gb_N}(\ind_{K^{\cc}_m}) \ge 2^{2-m}\delta_m \right)
 +   \bbP \left( Z^{\eta}_{N,\gb_N} \le \delta_m \right)\\
 \le 2^{m-2} (\delta_m)^{-1}\bP( S^{(N)}\notin K_m) + 2^{-m-2}\le 2^{-m-1} \, ,
\end{multline}
where we used Markov's inequality and the fact that $ \bbE[Z^{\eta}_{N,\gb_N}(\ind_{K^{\cc}_m}) ] =\bP( S^{(N)}\notin K_m)$
since $\bbE[\eta]=0$.
Combined with~\eqref{zorid2}, this gives~\eqref{compacts}.

\smallskip

For the case $\ga \in(0,1)$,
we need to use the truncated version of the partition function (recall \eqref{martingaprox}).
We start with the same sequence~$\delta_m$ as above (see~\eqref{deltam}), 
then thanks to Proposition~\ref{prop:tronquons} 
we can fix $b_m$ such that 
\begin{equation}
 \bbP\left(  Z^{\eta}_{N,\gb_N}-Z^{\eta,[0,b_m)}_{N,\gb_N} \ge 2^{1-m}\delta_m\right)\le 2^{-m-3} \, .
\end{equation}
As $b_m$ tends to infinity, using \eqref{moments} together with Potter's bound~\cite[Thm.~1.5.6]{BGT89}, there exists $C>0$ (which may depend on $\hat\beta$) such that for all $N$ and $m$
\begin{equation}\label{tuut}
 \beta_N \bbE[ \eta^{[0,b_m)}]\le  C N^{-1}b_m^{2(1-\alpha)} \,.
\end{equation}
Then we choose $K_m$ a sequence of compacts such that
\begin{equation}\label{zorid3}
\bP\big( S^{(N)}\notin K_m \big)\le 4^{-m-2}\delta_m e^{-Cb_m^{2(1-\alpha)}} \, ,
\end{equation}
and we define $\cK_n$ as in \eqref{defkkn}.
Then,
\begin{multline}\label{dopz}
\bbP \Big[ \bP^{\eta}_{N,\gb_N}  (S^{(N)}\notin K_m)  \ge 2^{2-m}    \Big]
 \le  \bbP \left( Z^{\eta}_{N,\gb_N}(\ind_{K^{\cc}_m}) \ge 2^{2-m}\delta_m \right)
 +   \bbP \left( Z^{\eta}_{N,\gb_N} \le \delta_m \right)\\
 \le   \bbP\left(  Z^{\eta}_{N,\gb_N}-Z^{\eta,[0,b_m)}_{N,\gb_N} \ge 2^{1-m}\delta_m\right) + \bbP\left( Z^{\eta,[0,b_m)}_{N,\gb_N}(\ind_{K^{\cc}_m}) \ge 2^{1-m}\delta_m\right) + 2^{-m-2}\\
 \le   2^{-m-3} +  \bbE\left[Z^{\eta,[0,b_m)}_{N,\gb_N}(\ind_{K^{\cc}_m})\right] 2^{m-1}(\delta_m)^{-1} + 2^{-m-2} \le 2^{-m-1} \, .
\end{multline}
In the last inequality we used the fact that for sufficiently large $m$,  as a consequence of~\eqref{tuut} and \eqref{zorid3}, we have
\begin{equation}
\label{zorid4}
\bbE\left[Z^{\eta,[0,b_m)}_{N,\gb_N}(\ind_{K^{\cc}_m})\right] =
 \big(  1+\beta_N \bbE[ \eta^{[0,b_m)}] \big)^N \bP\big( S^{(N)}\notin K_m \big) \le 4^{-m-2}\delta_m \,.
\end{equation}

Finally for $\alpha=1$ we repeat exactly the same procedure but considering rather the normalized partition functions
\[
e^{-\hat \gb \gamma_N} Z^{\eta}_{N,\gb_N}   \quad  \text{ and } \quad e^{-\hat \gb \gamma_N} Z^{[0,b_m)}_{N,\gb_N}  \,.
\]
 We fix a sequence $(b_m)_{m\geq 1}$ which is such that 
\begin{equation}
  \bbP\left( e^{-\hat \gb \gamma_N}\left( Z^{\eta}_{N,\gb_N}-Z^{\eta,[0,b_m)}_{N,\gb_N}\right) \ge 2^{1-m}\delta_m\right)\le 2^{-m-3} \, .
\end{equation}
In analogy with \eqref{tuut} (as in \eqref{cas=1} but using Potter's bound to have a result which is uniform in $m$), we obtain  that there exists $C>0$ such that for every $m$ and $N$
\begin{equation}
 \beta_N \bbE[ \eta^{[0,b_m)}]\le  -\hat \beta  N^{-1} \gamma_N + C N^{-1}b_m \,.
\end{equation}
Here $b_m$ could be replaced by an arbitrarily small power of $b_m$ but this is irrelevant. Then we fix a sequence of compact sets $K_m$ such that 
\begin{equation}
\bP\big( S^{(N)}\notin K_m \big)\le 4^{-m-2}\delta_m e^{-Cb_m} \, .
\end{equation}
We can finally conclude by repeating \eqref{dopz} (with the extra  $e^{-\hat \gb \gamma_N}$ factor). 
\qed

\section{ proof of Proposition~\ref{letrucpasdur}}
\label{sec:letrucpasdur}

\subsection{Convergence of  $Z_{N, \gb_N}^{\eta,a} (f)$}

 We are going to assume (without loss of generality) that $ 0\leq f \leq 1$.

\subsubsection*{Step 1: Reduction to functions $f$ with bounded support.}
As a first step, we reduce to proving a statement for a function $f$ with bounded support.
For $A>0$, let $h_A \in \cC_b$ be defined by $h_A(\gp) = 1\wedge (\|\gp\|_{\infty} -A )_+$,
and for $f\in \cC$ define $f_A = f h_A \in \cC_b$.
In particular, $f_A=f$ 
on $\cA := \{ \gp \in C_0([0,1]),  \|\gp \|_{\infty} \leq A \}$.

%

\begin{lemma}
\label{lem:boundedtraj}
We have, for any $a>0$,
\begin{equation}
\label{boundedtraj}
\lim_{A\to\infty} \sup_{N\geq 1} \bbE\Big[ e^{-\hat \beta \gamma_N \ind_{\{\alpha=1\}}}\big( Z_{N, \gb_N}^{\eta,a} (f) -  Z_{N, \gb_N}^{\eta,a} (f_A)\big) \wedge 1 \Big] =0 \, .
\end{equation}
\end{lemma}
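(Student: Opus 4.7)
The plan is to reduce the lemma to a tightness-type estimate for the partition function restricted to trajectories with large sup-norm, and then apply the available machinery of Propositions~\ref{prop:tronquons} and~\ref{revritez} to an $L^1$-controlled quantity. Under the intended reading of the truncation (so that $f_A\in\cC_b$ has bounded support and agrees with $f$ on $\cA$), one has $0\le f-f_A\le \ind_{\{\|\gp\|_\infty>A\}}$. Since by~\eqref{meaneta}--\eqref{cas=1} we have $\gb_N\kappa_N^{(a)}\to 0$, the factors $1+\gb_N\eta^{(a)}_{n,x}$ are all non-negative for $N$ large enough, so $Z^{\eta,a}_{N,\gb_N}$ is monotone in non-negative test functions. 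Handling the finitely many small $N$ separately by dominated convergence, the task reduces to
\[
\lim_{A\to\infty}\sup_N \bbE\Big[ e^{-\hat\beta\gamma_N\ind_{\{\alpha=1\}}} Z^{\eta,a}_{N,\gb_N}(\ind_{\{\|\gp\|_\infty>A\}})\wedge 1\Big]=0.
\]

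For $\alpha\in(1,\alpha_c)$ the argument is immediate: a direct computation shows $\bbE[\eta^{(a)}_{n,x}]=0$, so by Fubini $\bbE[Z^{\eta,a}_{N,\gb_N}(\ind_{\{\|\gp\|_\infty>A\}})]=\bP(\|S^{(N)}\|_\infty>A)$, which tends to $0$ uniformly in $N$ as $A\to\infty$ by Donsker's invariance principle and tightness of $S^{(N)}$ in $C_0([0,1])$.

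For $\alpha\in(0,1]$ the direct $L^1$ bound on $Z^{\eta,a}_{N,\gb_N}$ is unavailable because $\eta^{(a)}$ has heavy positive tails; I would combine two steps. First, applying Proposition~\ref{prop:tronquons} with the non-negative test function $\ind_{\{\|\gp\|_\infty>A\}}\le 1$ (its proof extends verbatim since $p(\bn,\bx,g)^\theta\le p(\bn,\bx)^\theta$ whenever $g\le 1$), for any $\epsilon>0$ one fixes $b=b(\epsilon)$ so that the contribution of $Z^{\eta,a}_{N,\gb_N}-Z^{\eta,[a,b)}_{N,\gb_N}$ to $\bbE[\,\cdot\wedge 1\,]$ is at most $\epsilon$, uniformly in $N$ and $A$. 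Second, Proposition~\ref{revritez}--\eqref{resultg} yields, for $N\ge N_0(a)$ and $g\ge 0$,
\[
e^{-\hat\beta\gamma_N\ind_{\{\alpha=1\}}}Z^{\eta,[a,b)}_{N,\gb_N}(g)\le 2e^{-\hat\beta\kappa_a}\bar Z^{\eta,[a,b)}_{N,2\gb_N}(g),
\]
and a Fubini computation gives
\[
\bbE\bigl[\bar Z^{\eta,[a,b)}_{N,2\gb_N}(\ind_{\{\|\gp\|_\infty>A\}})\bigr]=\bigl(1+2\gb_N\bbE[\eta\ind_{\{1+\eta\in[aV_N,bV_N)\}}]\bigr)^N\,\bP(\|S^{(N)}\|_\infty>A).
\]

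The main delicate point is that the expectation in the prefactor is the \emph{middle slice} $\bbE[\eta\ind_{\{1+\eta\in[aV_N,bV_N)\}}]$, not the full $\bbE[\eta\ind_{\{1+\eta<bV_N\}}]$ (which for $\alpha=1$ carries the potentially divergent factor $\gamma_N$). For every $\alpha\in(0,2)$ this middle slice is bounded by $C_{a,b}V_N^{1-\alpha}\gp(V_N)$ (by~\eqref{moments} when $\alpha\ne 1$, and by a Karamata-type integration giving a bound of order $\gp(V_N)\log(b/a)$ when $\alpha=1$), hence by~\eqref{uzefulrel} one has $2\gb_N\bbE[\eta\ind_{\{1+\eta\in[aV_N,bV_N)\}}]\le C_{a,b}/N$ and $(1+C_{a,b}/N)^N\le e^{C_{a,b}}$ uniformly in $N$; no cancellation with $e^{-\hat\beta\gamma_N}$ is needed. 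Combining the two steps yields $\bbE[\,\cdot\,\wedge 1]\le \epsilon+C_{a,b}\bP(\|S^{(N)}\|_\infty>A)$, and taking $A\to\infty$ first and then $\epsilon\to 0$ completes the proof.
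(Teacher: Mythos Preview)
Your proof is correct and follows essentially the same route as the paper: for $\alpha\in(1,\alpha_c)$ you use the first-moment identity $\bbE[Z^{\eta,a}_{N,\gb_N}(g)]=\bP(S^{(N)}\in\cA^\cc)$, and for $\alpha\in(0,1]$ you first invoke Proposition~\ref{prop:tronquons} to cap the weights at level $b$, then control the remaining piece in $L^1$ via the tightness of $S^{(N)}$. The only difference is cosmetic: after truncating, the paper computes $\bbE[Z^{\eta,[a,b)}_{N,\gb_N}(\ind_{\cA^\cc})]=(1+\gb_N\bbE[\eta^{[a,b)}])^N\bP(\cA^\cc)$ directly and, when $\alpha=1$, uses~\eqref{lawrong} so that the diverging $\gamma_N$ in the prefactor cancels with $e^{-\hat\beta\gamma_N}$; you instead insert the extra step~\eqref{resultg} of Proposition~\ref{revritez} to pass to $\bar Z^{\eta,[a,b)}_{N,2\gb_N}$, whose expectation involves only the slice $\bbE[\eta\ind_{\{1+\eta\in[aV_N,bV_N)\}}]$ and hence is $O(N^{-1})$ without any $\gamma_N$ to cancel. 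Both arguments are equally short; yours trades one elementary computation for a citation. Your remark that Proposition~\ref{prop:tronquons} transfers to $g\le 1$ is correct (indeed $0\le Z^{\eta,a}_{N,\gb_N}(g)-Z^{\eta,[a,b)}_{N,\gb_N}(g)\le Z^{\eta,a}_{N,\gb_N}-Z^{\eta,[a,b)}_{N,\gb_N}$ pathwise), and your reading of the intended definition of $f_A$ (so that $f_A$ has bounded support and $0\le f-f_A\le\ind_{\cA^\cc}$) is the right one.
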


\begin{proof}
In the case $\ga\in (1,2)$, recalling that $\bbE[\eta]=0$ we have
\begin{equation}
 \bbE\Big[  Z_{N, \gb_N}^{\eta,a} (f) -  Z_{N, \gb_N}^{\eta,a} ( f_A) \Big]\le \bP\big( S^{(N)} \in \cA^{\cc} \big) = \bP\Big(  \sup_{t\in [0,1]} S_t^{(N)} \geq A \Big) \, ,
\end{equation}
which can be  made arbitrarily small by choosing $A$ large (uniformly in $N$).
In the case when $\ga\in(0,1)$ (and similarly for $\alpha=1$ with the $e^{-\hat \beta \gamma_N}$ prefactor) 
we observe that the quantity we have to bound is smaller than 
\begin{equation}
\bbE\Big[ \left(Z_{N, \gb_N}^{\eta,a}(f)- Z_{N, \gb_N}^{\eta,[a,b)} (f_A) \right)\wedge 1 \Big]
\leq \bbE\Big[  \big(  Z_{N, \gb_N}^{\eta,a}(f) -  Z_{N, \gb_N}^{\eta,[a,b)}(f) \big) \wedge 1 \Big]
+ \bbE\Big[  Z_{N, \gb_N}^{\eta,[a,b)}(\ind_{\cA^{\cc}})  \Big] \, .
\end{equation}
The first term can be made arbitrarily small by taking $b$ large by Proposition~\ref{prop:tronquons}.
The second term is equal to 
\[
\big( 1+\beta_N \bbE[\eta\ind_{\{(1+\eta)< bV_N} ] \Big) ^N\bP\big(  S^{(N)} \in \cA^{\cc} \big)
\le C_b \bP \big( S^{(N)} \in \cA^{\cc} \big)
\]
thanks to~\eqref{moments2}.
This can be made arbitrarily small by choosing $A$ large (the case $\alpha=1$ is similar).
\end{proof}

%
%
%

\subsubsection*{Step 2: convergence for $f\in \cC_b$. }
We now show the convergence of the partition function in Proposition~\ref{letrucpasdur} with $f\in \cC_b$ instead of $f\in \cC$.
Also, thanks to Proposition~\ref{revritez}, we prove the convergence of  $\bar Z_{N, \gb_N}^{\eta,a} (f)$
rather than $e^{-\hat \beta \gamma_N \ind_{\{\ga=1\}}} Z_{N, \gb_N}^{\eta,a} (f)$.

\begin{lemma}
\label{lem:conv1}
For any $f\in \cC_b$, 
we have the following convergence in distribution
\[
\bar Z_{N, \gb_N}^{\eta,a} (f)   \stackrel{N\to\infty}{\Longrightarrow } e^{\hat \gb \kappa_a} \cZ^{\go,a}_{\hat \beta} (f)  \,.
\]
\end{lemma}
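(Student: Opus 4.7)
The plan is to establish Lemma~\ref{lem:conv1} in two stages: first reduce to a doubly truncated environment with finite upper cutoff $b$, then combine convergence of a rescaled marked point process with the local central limit theorem (LCLT).

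The first stage is to show that it suffices to prove, for each fixed $b\in(1,\infty)$,
\[
\bar Z^{\eta,[a,b)}_{N,\gb_N}(f)\stackrel{N\to\infty}{\Longrightarrow} e^{\hat\gb\kappa_a}\,\cZ^{\go,[a,b)}_{\hat\gb}(f)\,,
\]
where $\cZ^{\go,[a,b)}_{\hat\gb}(f)$ is defined as in Lemma~\ref{prop:eazy} but with the atoms restricted to those with mark $u_i\in[a,b)$, and then let $b\to\infty$. Treating the positive and negative parts of $f$ separately, this reduction follows on the discrete side from Proposition~\ref{revritez} together with a variant of Proposition~\ref{prop:tronquons} adapted to $\bar Z^{\eta,a}_{N,\gb_N}(f)$ (whose proof goes through verbatim since $|p(\bn,\bx,f)|\le \|f\|_\infty p(\bn,\bx)$), and on the continuum side from the corresponding approximation result in~\cite{BL20_cont}.

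For fixed $b$, exploit the bounded support of $f\in\cC_b$: pick $A>0$ with $f(\gp)=0$ for $\|\gp\|_\infty>A$, so that $p(\bn,\bx,f)=0$ as soon as some $|x_i|>A\sqrt{N/d}$, and the sum defining $\bar Z^{\eta,[a,b)}_{N,\gb_N}(f)$ effectively ranges over finite subsets of $\gO_N^{[a,b)}\cap\bbH_d\cap\bL_N^A$, where $\bL_N^A:=\lint 1,N\rint\times\lint -A\sqrt{N/d},A\sqrt{N/d}\rint^d$. Consider the rescaled marked point process
\[
\Pi_N:=\sum_{(n,x)\in\gO_N^{[a,b)}\cap\bbH_d\cap\bL_N^A}\gd_{(n/N,\,x/\sqrt{N/d},\,\eta_{n,x}/V_N)}\,.
\]
The defining property~\eqref{def:VN} of $V_N$, the tail assumption~\eqref{def:eta}, and the independence of the $\eta_{n,x}$ yield, by classical Poisson approximation, convergence in distribution of $\Pi_N$ to the Poisson process $\Pi$ on $[0,1]\times[-A,A]^d\times[a,b)$ with intensity $\dd t\otimes\dd x\otimes\alpha u^{-(1+\alpha)}\dd u$, \textit{i.e.}\ the restriction of $\go$ from~\eqref{def:omega} to this region.

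Combined with the LCLT (cf.~\cite[Thm.~2.3.11]{LL10}), which gives $p_n(x)=2(N/d)^{-d/2}\rho_{n/N}(x/\sqrt{N/d})(1+o(1))$ uniformly for $(n,x)\in\bbH_d$ with $n/N$ bounded below and $|x|/\sqrt{N/d}$ bounded, and with the scaling $\gb_N V_N=\tfrac12\hat\gb(N/d)^{d/2}$ from~\eqref{uzefulrel}, one obtains for every $k$-tuple with distinct times $t_i=n_i/N$ (the factors of $2$ from the LCLT and from $\gb_N V_N$ canceling exactly)
\[
\gb_N^k\,p(\bn,\bx,f)\,\eta_{\bn,\bx}\;\longrightarrow\;\hat\gb^k\,\varrho(\bt,\bx,f)\prod_{i=1}^k u_i\,.
\]
Since $\Pi$ almost surely has finitely many atoms in the compact set $[0,1]\times[-A,A]^d\times[a,b)$ with pairwise distinct time coordinates, the continuous mapping theorem applied to $\Pi_N$ yields convergence of $\bar Z^{\eta,[a,b)}_{N,\gb_N}(f)$ to the analogous sum over finite subsets of $\Pi$, which by Lemma~\ref{prop:eazy} equals $e^{\hat\gb\kappa_a}\cZ^{\go,[a,b)}_{\hat\gb}(f)$. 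The main obstacle is ensuring that the pointwise LCLT transfers to $p(\bn,\bx,f)$ uniformly despite possible near-coincidences of time coordinates of the atoms at finite $N$ (absent in the Poisson limit); this is handled by a crude a priori bound on small-spacing configurations along the lines of~\eqref{checkitout}.
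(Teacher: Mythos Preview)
Your approach is essentially the paper's: both establish convergence of the rescaled point process of large atoms to the restricted Poisson process $\go^{(a)}$, then use the continuous mapping theorem together with the local CLT to identify the limit of $\bar Z^{\eta,a}_{N,\gb_N}(f)$. Your preliminary $b$-truncation is not needed: the paper works directly on the space $\cW$ of locally finite configurations and argues that $w\mapsto\sum_{\sigma}\hat\gb^{|\sigma|}\varrho(\bt,\bx,f)\prod_i u_i$ is continuous there (this is where ``the proof requires some care'', precisely to handle unbounded marks). Your truncation is a legitimate and arguably more transparent alternative that makes compactness explicit, but it forces you to import an extra approximation step on both the discrete and continuum sides.

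There is one genuine omission. The LCLT alone gives $\gb_N^k\,p(\bn,\bx)\prod_i(\eta_{n_i,x_i}/V_N)\to\hat\gb^k\,\varrho(\bt,\bx)\prod_i u_i$, but not the version with $p(\bn,\bx,f)$ and $\varrho(\bt,\bx,f)$ when $f$ depends on the whole path. Recall that $p(\bn,\bx,f)=p(\bn,\bx)\,\bE[f(S^{(N)})\mid S_{n_i}=x_i\ \forall i]$ and $\varrho(\bt,\bx,f)=\varrho(\bt,\bx)\,\bQ[f(B)\mid B_{t_i}=x_i\ \forall i]$; convergence of the second factor requires the invariance principle for random walk bridges (weak convergence of the conditional law of $S^{(N)}$ to a concatenation of Brownian bridges), which the paper invokes explicitly via~\cite{Li1968}. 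Your reference to the LCLT does not cover this, and it is exactly what is needed to pass from $f\equiv 1$ to general $f\in\cC_b$.
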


\begin{proof}

Let us define $\eta^{(N)} := V_N^{-1} \eta$
the rescaled environment, and notice that thanks to~\eqref{def:eta}
and the definition of $V_N$ in~\eqref{def:VN}
we get  that for any $t>a$,
\[
\bbP\big( 1+ \eta^{(N)}_{n,x}  > t \big) \stackrel{N\to \infty}{\sim} 2 d^{d/2} t^{-\ga} N^{- (1+\frac{d}{2})}\, .
\] 
We want to show that the point process $\big(\frac{n}{N} , \frac{x}{\sqrt{N/d}},  \eta^{(N)}_{n,x}\ind_{\{(1+\eta)\ge a V_N\}} \big)_{(n,x)\in \bbH_d}$ converges in distribution towards the Poisson point process~$\go$ (recall~\eqref{def:omega}) restricted to weights larger than or equal to $a$.
Let us specify here a topology. We consider 
$$
\mathcal W:= \{ w \subset  \bbR \times \bbR^d\times \bbR_+, \#\{w\cap A\}<\infty \text{ when $A$ is bounded} \} \,,
$$
equipped with the smallest topology which makes  the maps
$$
w\mapsto \sum_{(t,x,\ups)\in w} g(t,x,u)
$$
continuous, for every continuous function $g$ with bounded support.
As a consequence of the convergence of binomials to Poisson variables
we have the following convergence in $\mathcal W$
\begin{equation}
\label{convPoisson}
 \Big\{  \Big(\frac{n}{N} , \frac{x}{\sqrt{N/d}},  \frac{\eta^{(N)}_{n,x}}{2d^{d/2}} \Big)  \, ,  (n,x) \in \gO_{N}^{a} \Big\}
 \stackrel{N\to \infty}{\Longrightarrow}  \go^{(a)} = \big\{  (t,x,  {\blue  \ups}) \in \go \cap ( [0,1] \times \bbR^d \times [a,\infty) )  \big\} ,
\end{equation}
Now, defining $\cP(w)$ like in  Lemma~\ref{prop:eazy}, notice that the function 
\begin{equation}
 w\mapsto \sum_{\sigma \in \cP(w)}\hat \beta^{|\sigma|} \varrho(\bt, \bx, f)\prod^{|\sigma|}_{i=1} \ups_i 
\end{equation}
is continuous in $\mathcal W$ (this is not a difficult statement but the proof requires some care).
As a consequence we have the following convergence 
\begin{multline}
\label{convPoisson2}
\sum_{(\bx,\bn) \in \cP(\gO^{a}_N)} \hat \beta^{|\bn,\bx|}
\varrho\Big( \frac{\bt}{N}, \frac{\bx}{\sqrt{N/d}}, f \Big) \prod^{|\bn,\bx|}_{i=1} \eta^{(N)}_{x,n} 
\\ \stackrel{N\to \infty}{\Longrightarrow}   \sum_{\sigma \in \cP(\go^{(a)})}\hat \beta^{|\sigma|} \varrho(\bt, \bx, f )\prod^{|\sigma|}_{i=1} \ups_i = e^{\hat \beta \kappa_a}\cZ^{\go,a}_{\hat \beta} (f )\, . 
\end{multline}
Now, to conclude from this that $\bar Z_{N,\gb_N}^{\eta,a} (f)$ converges, using the definition \eqref{znlabar} and our choice for $\beta_N$, we simply need to show that one can replace  
$\varrho(\frac{\bt}{N}, \frac{\bx}{\sqrt{N/d}}, f)$ in the l.h.s.~by
$ (2d^{d/2})^{-|\bn,\bx|}  N^{\frac{d}{2}|\bn,\bx|} p(\bn,\bx,  f )$.
This is a consequence of 
the local central limit theorem for the simple random walk \cite{LL10} 
and of the invariance principle for  random walk bridges \cite{Li1968}
  (the factor $2$ comes from the periodicity of the random walk and the factor $d^{d/2}$ from the fact that the simple random walk has covariance matrix $\frac1d \mathrm{Id}$).
\end{proof}

\subsubsection*{Step 3: conclusion.}
Now, to conclude the proof of Proposition~\ref{letrucpasdur}, we simply need to let $A\to\infty$,
and check that $\cZ_{\hat \gb}^{\go, a} (f_A)$ converges to $\cZ_{\hat \gb}^{\go, a}(f)$ in probability.
But this is simply a consequence of monotone convergence,
recalling the representation~\eqref{rewritefun}.
Combined with Lemma~\ref{lem:boundedtraj} and Lemma~\ref{lem:conv1}, this concludes the proof that
for any $f\in \cC$, we have the convergence
$e^{-\hat \gb \gamma_N \ind_{\{\ga=1\}}} Z_{N,\hat \gb}^{\eta,a}(f) \Rightarrow \cZ_{\hat \gb}^{\go,a}(f)$.

\subsection{Joint convergence with $\langle \xi_{N,\eta}^{(a)} , \psi \rangle$}

To prove the joint convergence of the environment and the
partition function, we simply need to adapt slightly the proof above:
in particular, we only need to adapt the proof of the second step.

\begin{lemma}
For any $a>0$, given $\psi$
a smooth compactly supported function on $\bbR^{d+1}$
and $f\in \cC_b$ we have the following joint convergence in distribution
 \[
 \Big( \langle \xi_{N,\eta}^{(a)} , \psi \rangle\, ,  \bar Z_{N, \gb_N}^{\eta,a} (f)  \Big)  \Longrightarrow    \Big( \langle \xi_{\go}^{(a)} , \psi \rangle\, , e^{\hat \gb \kappa_a}\cZ^{\go,a}_{\hat \gb} (f) \Big).
\]
\end{lemma}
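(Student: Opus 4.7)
The plan is to extend the proof of the previous lemma (which established convergence of $\bar Z_{N,\gb_N}^{\eta,a}(f)$ alone) by expressing $\langle \xi^{(a)}_{N,\eta}, \psi\rangle$ as a continuous functional of the same rescaled point process
\[
\Pi_N := \Big\{ \Big(\tfrac{n}{N},\tfrac{x}{\sqrt{N/d}},\tfrac{\eta^{(N)}_{n,x}}{2d^{d/2}}\Big) \,:\, (n,x) \in \gO_N^a \Big\}
\]
up to an asymptotically deterministic offset, and then invoking the continuous mapping theorem on $\cW$ together with the point-process convergence \eqref{convPoisson}. In the proof of the previous lemma, the convergence $\bar Z_{N,\gb_N}^{\eta,a}(f) \Rightarrow e^{\hat\gb \kappa_a}\cZ_{\hat\gb}^{\go,a}(f)$ was obtained precisely by a continuous-mapping argument on $\Pi_N$ (formula \eqref{convPoisson2}), combined with a deterministic local-CLT correction for the path probabilities $p(\bn,\bx,f)$. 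Since the point-process convergence is what drives both limits, it suffices to treat $\langle \xi^{(a)}_{N,\eta}, \psi\rangle$ in a compatible manner.

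\smallskip
\noindent I would decompose
\[
\langle \xi^{(a)}_{N,\eta}, \psi\rangle = L_N + D_N,
\]
where
\[
L_N := \frac{1}{V_N}\sum_{(n,x) \in \gO_N^a} \eta_{n,x}\, \psi\Big(\tfrac{n}{N},\tfrac{x}{\sqrt{N/d}}\Big),
\]
\[
D_N := -\frac{\kappa_N^{(a)}}{V_N}\!\!\sum_{(n,x)\in \bbH_d\setminus\gO_N^a}\!\!\!\psi\Big(\tfrac{n}{N},\tfrac{x}{\sqrt{N/d}}\Big) - \frac{\bbE[\eta\ind_{\{\eta\le V_N\}}]\ind_{\{\alpha=1\}}}{V_N}\sum_{(n,x)\in\bbH_d}\psi\Big(\tfrac{n}{N},\tfrac{x}{\sqrt{N/d}}\Big).
\]
The term $L_N$ is a continuous functional of $\Pi_N$ (since $\psi$ is continuous with compact support), so applying the continuous mapping theorem to \eqref{convPoisson}, jointly with the convergence of $\bar Z^{\eta,a}_{N,\gb_N}(f)$ from the previous lemma, gives
\[
\Big( L_N,\, \bar Z^{\eta,a}_{N,\gb_N}(f) \Big) \Longrightarrow \Big(\sum_{(t,x,u) \in \go^{(a)}} u\, \psi(t,x),\; e^{\hat\gb \kappa_a}\, \cZ^{\go,a}_{\hat\gb}(f) \Big).
\]

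\smallskip
\noindent It then remains to show that $D_N \to -\kappa_a \int \psi\, d\cL$ in probability; by Slutsky this combines with the above to yield the claimed joint limit, since by \eqref{def:xia} one has $\langle \xi^{(a)}_{\go},\psi\rangle = \sum_{(t,x,u)\in \go^{(a)}} u\, \psi(t,x) - \kappa_a \int \psi\, d\cL$. The standard Riemann-sum estimate $\sum_{(n,x) \in \bbH_d} \psi(n/N,x/\sqrt{N/d}) = \tfrac{1}{2}N(N/d)^{d/2}\int\psi\, d\cL + o(N^{1+d/2})$, combined with \eqref{meaneta} (for $\alpha\in(0,1)\cup(1,2)$) or \eqref{cas=1} (for $\alpha=1$), gives
\[
\frac{1}{V_N}\big(\kappa_N^{(a)} + \bbE[\eta\ind_{\{\eta\le V_N\}}]\ind_{\{\alpha=1\}}\big) \sim 2 d^{d/2}\, \kappa_a\, N^{-1-d/2},
\]
from which $D_N \to -\kappa_a\int\psi\, d\cL$ follows immediately. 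The discrepancy between summing over $\bbH_d\setminus\gO_N^a$ and over $\bbH_d$ in the first term of $D_N$ is negligible, as $|\gO_N^a \cap \Supp(\psi)|$ is tight under $\bbP$ while each omitted term is $O(V_N^{-1}\kappa_N^{(a)}) = o(1)$. The main obstacle lies in the case $\alpha=1$, where $\kappa_N^{(a)}$ and $\bbE[\eta\ind_{\{\eta\le V_N\}}]$ are both slowly varying and divergent, and only their precise cancellation via \eqref{cas=1} produces the finite constant $\kappa_a$; the centering by $\bbE[\eta\ind_{\{\eta\le V_N\}}]$ in the definition \eqref{xicut} of $\xi^{(a)}_{N,\eta}$ is tailored exactly to make this cancellation transparent, in agreement with the definition \eqref{defkappaa} of $\kappa_a$ in the continuum.
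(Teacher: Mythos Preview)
Your proposal is correct and follows essentially the same route as the paper: the paper rewrites $\xi_{N,\eta}^{(a)}$ as in \eqref{convergencexi}, separating a random part (your $L_N$, a continuous functional of the point process) from an asymptotically deterministic part (your $D_N$), then applies the continuous mapping theorem jointly to the pair $(\langle\cdot,\psi\rangle,\ \sum_{\sigma}\hat\beta^{|\sigma|}\varrho(\cdot)\prod u_i)$ on $\cW$ via the Poisson convergence \eqref{convPoisson}, with the Riemann sum converging to $\kappa_a\langle\cL,\psi\rangle$. Your explicit invocation of Slutsky and the careful handling of the $\alpha=1$ cancellation via \eqref{cas=1} match the paper's $(1+o(1))$ bookkeeping in \eqref{convergencexi}.
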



\begin{proof}
Notice that in view of~\eqref{xicut}, for any fixed $a>0$, we can rewrite  $\xi_{N,\eta}^{(a)}$ as
\begin{multline}
\label{convergencexi}
  V_N^{-1} \sum_{(n,x)\in \mathbb{H}_d}  \Big(  \big( \eta_{n,x} + \kappa_N^{(a)}\big) 
\ind_{\big\{ 1+\eta_{n,x} \geq a V_N \big\}} -  \big( \kappa_N^{(a)} + \bbE[\eta\ind_{\{  1+ \eta \leq V_N\}}] \ind_{\{\ga=1\}} \big)\Big) \, \gd_{(\frac{n}{N}, \frac{x}{\sqrt{N/d}})}
 \\
  = (1+o(1))  \bigg(  \sum_{(n,x)\in \mathbb{H}_d}  \eta_{n,x}^{(N)}
\ind_{\big\{ 1+\eta_{n,x} \geq a V_N  \big\}} \, \gd_{(\frac{n}{N}, \frac{x}{\sqrt{N/d}})}   - \kappa_a N^{-(1+\frac{d}{2})}  2 d^{d/2} \sum_{(n,x)\in \mathbb{H}_d}   \gd_{(\frac{n}{N}, \frac{x}{\sqrt{N/d}})} \bigg) \,,
\end{multline}
where the $o(1)$ is a quantity that goes to $0$ as $N\to\infty$
(and does not depend on the realization of $\eta$), see the calculations in Section~\ref{sec:uzeful}; recall that $\eta^{(N)} := V_N^{-1} \eta$.
Now, we observe that 
\[
w \mapsto \Big( \langle \sum_{(t,x,\ups)\in w} \ups\,  \gd_{(t,x)} , \psi \rangle \, , \,   \sum_{\sigma \in \cP(w)}\hat \beta^{|\sigma|} \varrho(\bt, \bx, f)\prod^{|\sigma|}_{i=1} u_i \Big)
\]
 is continuous
on $\cW$.
From the Poisson convergence~\eqref{convPoisson} in $\cW$,
using~\eqref{convergencexi} above and the definition~\ref{znlabar} of~$\bar Z_{N, \gb_N}^{\eta,a} (f)$
(together with the local limit theorem analogously to \eqref{convPoisson2}),
we deduce that
\begin{equation*}
\Big(  \langle \psi, \xi_{N,\eta}^{(a)} \rangle\, ,  \bar Z_{N, \gb_N}^{\eta,a} (f) \Big) 
 \stackrel{N\to \infty}{\Longrightarrow} \Big(  \langle \!\!\sum_{(t,x,\ups) \in \go^{(a)}} \!\! \ups\, \gd_{(t,x)} , \psi \rangle - \kappa_a \langle \cL , \psi \rangle  , \!\! \sum_{\sigma \in \cP(\go^{(a)})} \!\!\hat \beta^{|\sigma|} \varrho(\bt, \bx, f )\prod^{|\sigma|}_{i=1} \ups_i  \Big) \, .
\end{equation*}
Recalling the definitions~\eqref{def:xia} and~\eqref{rewritefun},
we see that the r.h.s.~in the display above is equal to $( \langle \xi_{\go}^{(a)} , \psi \rangle\, , e^{\hat \gb \kappa_a}\cZ^{\go,a}_{\hat \gb} (f))$,
which concludes the proof.
\end{proof}

\section{Proof of Proposition~\ref{letrucdur}: the easy cases}
\label{sec:letrucmoinsdur}

 Proposition~\ref{letrucdur} is the main technical difficulty of the paper. Its proof is considerably simpler in special cases $\alpha\in(0,1)$ (for any $d$) and $d=1$ (for any $\alpha$).
 These cases are treated in the present section.
 
\smallskip 
 When $\alpha\in(0,1)$, the convergence can be deduced from a first moment computation, after using the truncation argument from Proposition \ref{prop:tronquons}. The details are carried in Section~\ref{lefirst}.

\smallskip

When $\alpha\in [1,2)$, second moment computations are necessary. Since the variables $\eta$ themselves do not have a second moment, a truncation procedure is needed. A general result which describes the requirement we have for our truncated partition function is given in Section \ref{metameta}. 
Like for the proof of Theorem \ref{thm:contmeasure} in \cite{BL20_cont}, the truncation procedure that needs to be applied is considerably simpler 
for  $d=1$ than for $d\ge 2$.

When $d=1$, only the large values of $\eta$ are a problem so that, after using  Proposition~\ref{prop:tronquons}, we only need to perform a relatively simple second moment. This is done in Section~\ref{case1dim}.

When  $d\ge 2$ (and $\alpha\in [1,2)$) a simple truncation is not sufficient. Before applying the second moment method, the partition function must undergo a more advanced surgery. These details of the procedure and the computations are postponed to  Section \ref{sec:letrucdur}.

\subsection{The case $\alpha\in(0,1)$}\label{lefirst}
We assume without loss of generality that $0\le f\le 1$.
 Note that with our choice $\kappa^{(a)}_N=0$,  $Z^{\eta,a}_{N,\gb_N}(f)$ is a decreasing function of $a$.
 Thus we want to show that for $a\in (0,1]$ sufficiently small we have
\begin{equation}
 \sup_{N\ge 1}\bbE\Big[  \big(Z^{\eta,0}_{N,\gb_N}(f)- Z^{\eta,a}_{N,\gb_N}(f) \big)\wedge 1  \Big]\le \gep \, .
\end{equation}
For this we observe that the quantity we have to bound is smaller than
\begin{equation}
 \sup_{N\ge 1}\bbE\Big[ Z^{\eta,[0,b)}_{N,\gb_N}(f)- Z^{\eta,[a,b)}_{N,\gb_N}(f)  \Big]+ \sup_{N\ge 1}  \bbE\Big[ \big( Z^{\eta}_{N,\gb_N}  -Z^{\eta,[0,b)}_{N,\gb_N}\big) \wedge 1 \Big] \, .
\end{equation}
From Proposition~\ref{prop:tronquons}
the second  term can be made  smaller than~$\gep/2$ by choosing $b=b(\gep)$ large.
Concerning the first one, note that we have 
\begin{align*}
\bbE\Big[Z^{\eta,[0,b)}_{N,\gb_N}(f) -Z^{\eta,[a,b)}_{N,\gb_N}(f)  \Big]
&\le  \bE[ f(S^{(N)})]\left(\bbE\big[ 1+\beta_N \eta^{[0,b)}\big]^N - \bbE \big[1+\beta_N \eta^{[a,b)}  \big]^N\right).
\end{align*}
Now, thanks to  \eqref{moments2},
there is a  constant $C_b$ such that for every $n$ and $a$
\begin{equation}
 \bbE \big[\beta_N \eta^{[a,b)}\big]  \leq \bbE \big[\beta_N \eta^{[0,b)}\big] \le C_b N^{-1} \,.
 \end{equation}
 Using \eqref{moments2} monotonicity in $a$ and continuity at $a=0$, for any $\gep>0$ we can find $a(\gep)$ sufficiently small such that for every $N\ge 1$ we have
\begin{equation}
 \bbE \Big[\beta_N \big(\eta^{[0,b)}-\eta^{[a,b)} \big)\Big] \le   \frac{\gep}{4}e^{-C_b}N^{-1}.
\end{equation}
Hence we have for this choice of $a$, for every $N$ 
\begin{equation}
 \bbE\Big[Z^{\eta,[0,b)}_{N,\gb_N}(f) -Z^{\eta,[a,b)}_{N,\gb_N}(f)  \Big]\le e^{C_b}(e^{ \frac{\gep}{4}e^{-C_b}N^{-1}}-1)\le \gep/2 \, .
\end{equation}
\qed

\subsection{The case $\ga\in[1,2)$: a uniformity criterion}\label{metameta}
The task is more delicate in the case $\ga\in[1,2)$.  We are going to prove some uniform  $L^2(\bbP)$ convergence. The following statement, that we are going to apply to our partition function, may help to understand this difference.
 \begin{proposition}\label{metaprop2}
 Consider $(X_{N,a})_{a\in [0,1),N\ge 1}$ a collection of positive random variables.
 Assume that there exists  
 $X^{(q)}_{N,a}$ a sequence of approximations of $X_{N,a}$, indexed by $q\ge 1$, which satisfies
\begin{align*}
\mathrm{(A)} \qquad  &  \lim_{q\to \infty} \sup_{N\ge 1} \sup_{a\in[0,1)} \bbE\big[ |X^{(q)}_{N,a}-X_{N,a}|\big]=0\,  ; \\
 \mathrm{(B)}  \qquad  & \lim_{a\to 0+} \sup_{N\ge 1}   \bbE\big[ (X^{(q)}_{N,a}- X^{(q)}_{N,0})^2\big]=0 \quad \text{ for every $q\geq 1$}.
\end{align*}
Then we have
\begin{equation}
 \lim_{a\to 0}\sup_{N\ge 1} \bbE\big[ |X_{N,a}-X_{N,0}|\big]=0.
\end{equation}
If we replace $\mathrm{(A)} $ by 
\begin{align*}
\mathrm{(A')} \qquad   \lim_{q\to \infty} \sup_{N\ge 1} \sup_{a\in[0,1)} \bbE\big[ |X^{(q)}_{N,a}-X_{N,a}|\wedge 1\big]=0\,  
\end{align*}
then we have
\begin{equation}
  \lim_{a\to 0}\sup_{N\ge 1} \bbE\big[ |X_{N,a}-X_{N,0}|\wedge 1\big]=0.
\end{equation}

 \end{proposition}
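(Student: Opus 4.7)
The strategy is a three-epsilon argument based on the triangle inequality
\[
|X_{N,a} - X_{N,0}| \leq |X_{N,a} - X^{(q)}_{N,a}| + |X^{(q)}_{N,a} - X^{(q)}_{N,0}| + |X^{(q)}_{N,0} - X_{N,0}|.
\]
The idea is to first pick $q$ large, using (A), so that the first and third summands are small uniformly in $N$ and $a$ (taking $a = 0$ in the third); then, with $q$ now frozen, to pick $a$ small, using (B) combined with Cauchy--Schwarz, so that the middle summand becomes small uniformly in $N$. The order of quantifiers is essential here: (A) provides uniformity in $a$, while (B) is only a pointwise-in-$q$ statement, which is why $q$ must be fixed before $a$ is sent to zero.

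More precisely, fix $\varepsilon > 0$. By (A), pick $q = q(\varepsilon)$ with
\[
\sup_{N \geq 1,\ a \in [0,1)} \bbE\big[|X^{(q)}_{N,a} - X_{N,a}|\big] \leq \varepsilon/3;
\]
this controls both the first and third summands (the third one by specializing to $a=0$). For this now-fixed $q$, hypothesis (B) together with Cauchy--Schwarz gives
\[
\bbE\big[|X^{(q)}_{N,a} - X^{(q)}_{N,0}|\big] \leq \bbE\big[(X^{(q)}_{N,a} - X^{(q)}_{N,0})^2\big]^{1/2},
\]
so there is $a_0 = a_0(\varepsilon,q)$ with $\sup_{N \geq 1}\bbE[|X^{(q)}_{N,a} - X^{(q)}_{N,0}|] \leq \varepsilon/3$ for every $a \leq a_0$. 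Taking expectations in the triangle inequality and summing yields $\sup_{N\ge 1}\bbE[|X_{N,a} - X_{N,0}|] \leq \varepsilon$ for $a \leq a_0$, which is the first claim.

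For the variant under (A$'$), the plan is to run exactly the same scheme with everything truncated at $1$. The one ingredient to verify is the elementary subadditivity
\[
(|u| + |v| + |w|) \wedge 1 \leq (|u| \wedge 1) + (|v| \wedge 1) + (|w| \wedge 1),
\]
which is immediate by distinguishing whether any of $|u|,|v|,|w|$ already exceeds $1$. Combined with the pointwise bound $|X_{N,a} - X_{N,0}| \le |X_{N,a}-X^{(q)}_{N,a}| + |X^{(q)}_{N,a}-X^{(q)}_{N,0}| + |X^{(q)}_{N,0}-X_{N,0}|$, it gives a truncated triangle inequality. The first and third terms are then controlled in expectation by (A$'$); the middle one, being bounded by $\bbE[|X^{(q)}_{N,a} - X^{(q)}_{N,0}|]$, is still handled by (B) and Cauchy--Schwarz. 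There is no real obstacle in the argument beyond observing this subadditivity of $\cdot \wedge 1$, which is precisely what is needed to make the three-epsilon scheme go through when only $L^1(\wedge 1)$ control of the approximation error is available.
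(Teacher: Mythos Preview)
Your proof is correct and follows exactly the same route as the paper: the triangle inequality combined with Cauchy--Schwarz on the middle term, choosing first $q$ large via (A) and then $a$ small via (B). Your treatment of the (A$'$) variant, making explicit the subadditivity of $x\mapsto x\wedge 1$, is slightly more detailed than the paper's one-line ``a similar reasoning holds,'' but the argument is the same.
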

 
\noindent The assumption $(A')$ allows to treat the case $\alpha=1$ for which the  $L^1(\bbP)$ convergence of the partition function does not hold.
 
 \begin{proof}
  This simply comes from the fact that 
  \begin{equation*}
   \bbE\big[ |X_{N,a}-X_{N,0}|\big]\le   \bbE\big[ (X^{(q)}_{N,a}- X^{(q)}_{N,0})^2\big]^{1/2}+ \bbE\big[ |X^{(q)}_{N,a}-X_{N,a}|\big]+\bbE\big[ |X^{(q)}_{N,0}-X_{N,0}|\big],
  \end{equation*}
and the right-hand side can be made arbitrarily small uniformly in $N$, by taking first $q$ large and then taking $a\to 0$.
A similar reasoning holds for $(A')$.
 \end{proof}
 

\subsection{The case of dimension $d=1$}\label{case1dim}
For notational simplicity we will write the proof for the case $f\equiv 1$, 
the modifications to treat the case $f\in \cC$ are straightforward.
Let us set 
\begin{equation}
 X^{(q)}_{N,a}:= e^{-\hat \beta \gamma_N \ind_{\{\alpha=1\}}} Z^{\eta,[a,q)}_{N,\gb_N}\, .
\end{equation}
We now only need to check that the assumptions of Proposition \ref{metaprop2} are satisfied.
When $\alpha>1$, since $ Z^{\eta,[a,q)}_{N,\gb_N}\le Z^{\eta,a}_{N,\hat \beta}$ and $\bbE[Z^{\eta,a}_{N,\hat \beta} ]=1$,  Assumption $(A)$ in Proposition~\ref{metaprop2} is equivalent to the uniform convergence to $1$ of the first moment. When $\alpha=1$, Assumption $(A')$ has been already  checked in Proposition~\ref{prop:tronquons}.

\begin{lemma}
\label{lem:Wd=1}
In dimension $d=1$ (note that $\ga_c=2$ in that case), we have
\begin{align*}
\mathrm{(} \tilde{\mathrm{A}} \mathrm{)} \qquad  &  \lim_{q\to \infty} \inf_{N\ge 1} \inf_{a\in[0,1)}  \bbE \left[ Z^{\eta,[a,q)}_{N,\gb_N}\right]=1\, \quad \text{ for } \alpha \in (1,2) ; \\
 \mathrm{(B)}  \qquad  & \lim_{a\downarrow 0} \sup_{N\ge 1}   \bbE\big[  e^{-2\hat \beta \gamma_N \ind_{\{\alpha=1\}}}  ( Z^{\eta,[a,q)}_{N,\gb_N}-  Z^{\eta,[a,q)}_{N,\gb_N})^2\big]=0 \quad \text{ for every $q\geq 1$ and $\alpha \in [1,2)$}.
\end{align*}
\end{lemma}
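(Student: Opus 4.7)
By independence of the $\eta_{n,x}$ and Fubini, $\bbE[Z^{\eta,[a,q)}_{N,\beta_N}] = (1+\beta_N \bbE[\eta^{[a,q)}])^N$. The centering $\kappa_N^{(a)} = -\bbE[\eta \mid 1+\eta < aV_N]$ (valid for $\alpha\in[1,2)$) yields $\bbE[\eta^{[a,q)}] = \bbE[\eta \ind_{\{1+\eta < qV_N\}}] = -\bbE[\eta \ind_{\{1+\eta\geq qV_N\}}]$, independent of $a$ (the second equality uses $\bbE[\eta]=0$, valid for $\alpha\in(1,2)$). A Karamata-type integration by parts using \eqref{def:eta} together with \eqref{uzefulrel} gives $\beta_N \bbE[\eta^{[a,q)}] = -\frac{\alpha}{\alpha-1}\hat\beta\, q^{1-\alpha}\, N^{-1}(1+o_N(1))$, so $\bbE[Z^{\eta,[a,q)}_{N,\beta_N}] \to \exp(-\frac{\alpha}{\alpha-1}\hat\beta q^{1-\alpha})$ as $N\to\infty$, uniformly in $a$. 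This limit tends to $1$ as $q\to\infty$; control over the infimum for small $N$ is immediate because for each fixed $N$, $\bbE[\eta^{[a,q)}]\to 0$ as $q\to\infty$ (since $qV_N\to\infty$).

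\textbf{Part $(B)$.} The identity \eqref{lesmartingales} makes $a\mapsto Z^{\eta,[a,q)}_{N,\beta_N}$ a reverse martingale with respect to $(\mathcal G_a)$, so orthogonality of martingale increments gives
\[
\bbE[(Z^{\eta,[0,q)}_{N,\beta_N} - Z^{\eta,[a,q)}_{N,\beta_N})^2] = \bbE[(Z^{\eta,[0,q)}_{N,\beta_N})^2] - \bbE[(Z^{\eta,[a,q)}_{N,\beta_N})^2].
\]
The standard two-copies computation with $S,S'$ independent walks, integrating $\eta$ first and using independence across sites, yields
\[
\bbE[(Z^{\eta,[a,q)}_{N,\beta_N})^2] = (1+\beta_N m_q)^{2N}\, \bE^{\otimes 2}\!\left[(1+c_{N,a,q})^{L_N}\right],
\]
with $m_q := \bbE[\eta\ind_{\{1+\eta<qV_N\}}]$ (independent of $a$, as in $(\tilde A)$), $L_N := \sum_{n=1}^N \ind_{\{S_n=S'_n\}}$ the collision count, and $c_{N,a,q} := \beta_N^2\Var(\eta^{[a,q)})/(1+\beta_N m_q)^2$. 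Only $c_{N,a,q}$ depends on $a$: the dominant contribution $\bbE[\eta^2\ind_{\{1+\eta<aV_N\}}]$ in $\Var(\eta^{[0,q)})-\Var(\eta^{[a,q)})$ together with \eqref{moments3} gives $c_{N,0,q}-c_{N,a,q} = O_q(a^{2-\alpha}N^{d/2-1})$, which in dimension $d=1$ is $O_q(a^{2-\alpha}N^{-1/2})$.

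The mean-value bound $(1+c_{N,0,q})^{L_N} - (1+c_{N,a,q})^{L_N} \leq L_N(c_{N,0,q}-c_{N,a,q})(1+c_{N,0,q})^{L_N}$ then reduces matters to a uniform control of $\bE^{\otimes 2}[L_N(1+c_{N,0,q})^{L_N}]$. In dimension $d=1$ the difference walk $S-S'$ is recurrent and $L_N/\sqrt{N}$ has uniform exponential moments—its scaling limit is the Brownian local time at $0$ which, by L\'evy's identity, is distributed as $|B_1|$. Since $c_{N,0,q}\sqrt{N}$ remains bounded for fixed $q$, Cauchy-Schwarz combined with $\bE^{\otimes 2}[L_N^2]=O(N)$ yields $\bE^{\otimes 2}[L_N(1+c_{N,0,q})^{L_N}] \leq C_q\sqrt{N}$. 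Altogether, $\bbE[(Z^{\eta,[0,q)}_{N,\beta_N})^2] - \bbE[(Z^{\eta,[a,q)}_{N,\beta_N})^2] \leq (1+\beta_N m_q)^{2N}\, C_q\, a^{2-\alpha}$, which tends to $0$ as $a\to 0$. The prefactor $(1+\beta_N m_q)^{2N}$ is $O(1)$ for $\alpha\in(1,2)$ (since $\beta_N m_q = O(N^{-1})$), and of order $C_q e^{2\hat\beta\gamma_N}$ for $\alpha=1$ (by \eqref{uzeful2} and a slowly-varying comparison of $L(qV_N)$ with $L(V_N)$), exactly compensated by the $e^{-2\hat\beta\gamma_N}$ factor appearing in the statement.

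\textbf{Main obstacle.} The crux is the uniform exponential moment bound for $L_N$ at scale $\sqrt{N}$, which is specific to $d=1$ via the recurrence of $S-S'$. In higher dimensions $L_N$ scales differently (logarithmically for $d=2$, boundedly for $d\geq 3$), so the coefficient $c_{N,a,q}$ no longer compensates for the growth of the collision count in the same clean way, and a naive second-moment attack fails because the accumulation of small atoms of the noise dominates the fluctuations. This is precisely why the additional surgery on the partition function carried out in Section~\ref{sec:letrucdur} is required.
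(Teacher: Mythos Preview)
Your proof is correct and follows essentially the same approach as the paper's: in part $(\tilde A)$ you compute the first moment via independence, observe it is independent of $a$, and use the tail asymptotics to show the limit as $q\to\infty$ is $1$; in part (B) you use martingale orthogonality to reduce to the difference of second moments, the two-replicas formula $\bE^{\otimes 2}[(1+c_{N,a,q})^{L_N}]$, a convexity/mean-value bound, and finally the uniform exponential moments of $L_N/\sqrt N$ specific to $d=1$. The paper organizes the last step slightly differently---writing the bound as $\frac{r_N^{0,q}-r_N^{a,q}}{r_N^{0,q}}\,\bE^{\otimes 2}[(r_N^{0,q}L_N)e^{r_N^{0,q}L_N}]$ and citing a renewal result for the exponential moment---but the content is the same, and your concluding remark on why the method breaks down for $d\ge 2$ is accurate.
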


\begin{proof}
The proof of $(\tilde A)$ is straightforward.
We have 
\begin{equation}\label{lari}
  \bbE \left[ Z^{\eta,[a,q)}_{N,\beta_N}\right]= \bbE \big[ 1+ \beta_N \eta^{[a,q)} \big]^N= \left( 1+\beta_N \bbE \big[  \eta \ind_{\{1+\eta <   q V_N\}} \big]  \right)^N.
\end{equation}
From \eqref{moments2}  and monotonicity in $q$, given $\gep>0$ there exists $q_0(\gep)$ such that for all $q\ge q_0$
for every $N\ge 1$  
\begin{equation}\label{laright}
\beta_N \bbE \big[  \eta \ind_{\{1+\eta <   q V_N\}} \big]  \geq  
 - \gep N^{-1} \,.
\end{equation}
Using that $(1-x)^N \geq 1- N x$ for all $x\geq 0$, we get from \eqref{lari} that for all $N\geq 1$
\begin{equation}
 1\geq \inf_{a\in [0,1)} \bbE \left[ Z^{\eta, [a,q)}_{N}\right] \geq
  1-\gep\, ,
\end{equation}
which concludes the proof of item $(\tilde A)$.

\smallskip

For the second moment estimate $(B)$, let us notice that $Z^{\eta,[a,q)}_{N,\gb_N}$ is a (time-reversed) martingale for the filtration $\mathcal G_{a}$ (recall \eqref{lafiltr})
in particular we have 
\begin{equation}
 \bbE \big[ (Z^{\eta,[0,q)}_{N,\gb_N}-Z^{\eta,[a,q)}_{N,\gb_N})^2 \big]=   \bbE \big[ (Z^{\eta,[0,q)}_{N,\gb_N})^2 \big]-\bbE \big[ (Z^{\eta,[a,q)}_{N,\gb_N})^2\big].
\end{equation}
Hence to show that the convergence in $L^2(\bbP)$ is uniform in $N$,
it is sufficient to show that the convergence
 $$\lim_{a\downarrow 0}  e^{-2\hat \beta \gamma_N \ind_{\{\alpha=1\}}} \bbE \big[ (Z^{\eta,[a,q)}_{N,\gb_N})^2\big] = 
  e^{-2\hat \beta \gamma_N \ind_{\{\alpha=1\}}}  \bbE \big[ (Z^{\eta,[0,q)}_{N,\gb_N})^2  \big]$$ is uniform in $N$.
Note that $ e^{-\hat \beta \gamma_N \ind_{\{\alpha=1\}}} \bbE \big[ Z^{\eta,[a,q)}_{N,\gb_N}\big]$ does not depend on $a$. In the case $\ga>1$, thanks to~\eqref{lari} and~\eqref{laright}, we find that 
 it is bounded away from $0$ uniformly in $q\geq 0$ and $N\geq 1$.
In the case $\ga=1$, a straightforward calculation (recall~\eqref{uzeful2}) gives that \eqref{laright} is replaced with
\begin{equation}\label{lawrong}
\beta_N \bbE \big[  \eta \ind_{\{1+\eta <   q V_N\}} \big] 
= \hat \gb  \gamma_N N^{-1} + \hat \gb (\log q) N^{-1} (1+o(1)) \, ,
\end{equation}
so we get that 
$e^{-\hat \gb \gamma_N  \ind_{\{\alpha=1\}}} \bbE \big[ Z^{\eta,[a,q)}_{N,\gb_N}\big]$
 is  bounded away from $0$ uniformly in  $q\geq 1$ and $N\geq 1$.
All together, what we need to show is equivalent to
\begin{equation}
\label{eq:limasupN}
\lim_{a\downarrow 0} \sup_{N\ge 1}    \bigg( 
\frac{\bbE \big[ (Z^{\eta,[0,q)}_{N,\gb_N})^2 \big]}{ \bbE \big[Z^{\eta,[0,q)}_{N,\gb_N}\big]^2}-\frac{\bbE \big[ (Z^{\eta,[a,q)}_{N,\gb_N})^2 \big]}{ \bbE \big[ Z^{\eta,[a,q)}_{N,\gb_N}\big]^2} \bigg)=0.
\end{equation}
Let us set 
\begin{equation}
 r_{N}^{a,q}:=  \frac{\bbE \left[ (1+ \beta_N \eta^{[a,q)})^2 \right]}{\bbE \left[ 1+ \beta_N \eta^{[a,q)} \right]^2}-1 = \frac{\beta^2_N\Var(\eta^{[a,q)})}{\bbE \left[ 1+ \beta_N \eta^{[a,q)} \right]^2} \,,
\end{equation}
and let us stress that $r_N^{a,q}$ is non-increasing in $a$.
A direct computation yields
\begin{equation}
\label{eq:secondmoment}
 \frac{\bbE \big[ (Z^{\eta,[a,q)}_{N,\beta_N})^2 \big]}{ \bbE \big[ Z^{\eta,[a,q)}_{N,\beta_N}\big]^2}= \bE^{\otimes 2}\big[ (1+  r_{N}^{a,q})^{L_N} \big] \,,
\end{equation}
where $\bE^{\otimes 2}$ is the expectation with respect to  two independent walks $S^{(1)}$ and $S^{(2)}$ and $L_N:=\sum_{n=1}^N \ind_{\{S^{(1)}_N= S^{(2)}_N\}}$ is the replica overlap.
By convexity of $x \mapsto (1+x)^{L_n}$, we therefore get
\begin{equation}\begin{split}
\frac{\bbE \big[ (Z^{\eta,[0,q)}_{N,\beta_N})^2 \big]}{ \bbE \big[Z^{\eta,[0,q)}_{N,\beta_N}\big]^2}-\frac{\bbE \big[ (Z^{\eta,[a,q)}_{N,\beta_N})^2 \big]}{ \bbE \big[ Z^{\eta,[a,q)}_{N,\beta_N}\big]^2}&\le ( r_{N}^{0,q}- r_{N}^{a,q})\bE^{\otimes 2}\big[ L_N (1+  r_{N}^{0,q})^{L_N} \big]\\
& \le  \frac{r_{N}^{0,q}- r_{N}^{a,q}}{r_{N}^{0,q}}\bE^{\otimes 2}\big[(r_{N}^{0,q} L_N) e^{r_{N}^{0,q}L_N} \big].
\end{split}
\end{equation}
Now in order to conclude, it is sufficient to show that 
\begin{equation}\label{stimrn}
   \lim_{a\to 0} \sup_{N\ge 1}\frac{ r_{N}^{0,q}- r_{N}^{a,q}}{r^{0,q}_N}=0 \quad \text{ and } \quad r_{N}^{0,q}\le C_q N^{-1/2}.
\end{equation}
Indeed using the second statement in \eqref{stimrn}, we get that
\begin{equation}
\bE^{\otimes 2}\big[(r_{N}^{0,q} L_N) e^{r_{N}^{0,q}L_N} \big]\le 
\bE^{\otimes 2}\big[e^{2r_{N}^{0,q}L_N} \big] \le \bE^{\otimes 2}\big[   e^{2C_q N^{-1/2} L_N} \big] \,  ,
\end{equation}
which is uniformly bounded in $N$, as it is standard for the intersection time of independent random walks, see e.g.\ \cite[Lemma~4.2]{Soh09} for a general version  (with renewal processes). 

\medskip

\noindent Let us now prove the two estimates in \eqref{stimrn}.
We have
\begin{equation}
\label{eq:etasquare}
  \beta^2_N \bbE\big[(\eta^{[a,q)})^2 \big]  =  (\gb_N\kappa_N^{(a)} )^2 \bbP\big( 1+\eta < a V_N \big) +  \gb_N^2 \bbE \Big[\eta^2 \ind_{ \{ 1+\eta \in [aV_N,qV_N) \}}  \Big].
\end{equation}
Using \eqref{def:eta} together with  \eqref{moments3},
 after simplifications (in particular, the first term is negligible), we find  that  for any fixed  $a\in [0,1)$ and $q>1$, we have asymptotically for large~$N$
\begin{equation}\label{lonzo}
 \beta^2_N \bbE\big[(\eta^{[a,q)})^2 \big] =    \frac{ \alpha \hat \beta^2 d^{-d/2}}{2(2-\alpha)}  N^{\frac{d-2}{2}}  (  q^{2-\alpha}-  a^{2-\alpha}+o(1)) \,.
\end{equation}
On the other hand  \eqref{laright} and \eqref{lawrong} ensures that $\bbE[\eta^{[a,q)} ]^2$ is always negligible w.r.t.\ $\bbE[(\eta^{[a,q)})^2]$ which is thus asymptotically equivalent to the variance.
 Note that we have $\bbE[\eta^{[a,q)}]= \bbE[\eta^{[0,q)}] = \bbE[\eta \ind_{\{1+\eta < qV_N\}}]$ for any $a\in(0,1)$ and any $q\geq 1$,
  and from \eqref{lawrong} we get that 
  $\bbE[1+\beta_N \eta^{[0,q)}]$ tends to one, so that $r^{a,q}_N\sim   \beta^2_N \bbE\big[(\eta^{[a,q)})^2 \big]$.
We therefore obtain that
 for any fixed $a\in [0,1)$ and $q>1$ (recall \eqref{uzefulrel})
\begin{equation}
r^{a,q}_{N} \stackrel{N\to \infty}{\sim} \frac{ \alpha \hat \beta^2 d^{-d/2}}{2(2-\alpha)}  N^{\frac{d-2}{2}} (  q^{2-\alpha}-  a^{2-\alpha}).
\end{equation}
This, together with the monotonicity in $a$ of $r^{a,q}_{N}$, allows us to deduce both statements in~\eqref{stimrn}
and
therefore concludes the proof of part (B) of Lemma~\ref{lem:Wd=1}.
\end{proof}

To conclude this section let us stress that this strategy cannot be applied in dimension $d\ge 2$ because we still  have in that case $r_{N}^{a,q}\asymp N^{\frac{d-2}{2}}$, and the second moment $\bbE [ (Z^{\eta,[a,q)}_{N,\beta_N})^2 ]$ diverges with $N$ for any value of $a$ and $q$ (recall~\eqref{eq:secondmoment}). We need a finer restriction on the set of trajectories, analogously to what is done in \cite[Section 4.4]{BL20_cont}.

 \section{Proof of Proposition~\ref{letrucdur}: the hard case}
\label{sec:letrucdur}

In this section, we prove Proposition \ref{letrucdur} when $\alpha\in[1,\alpha_c)$ and $d\ge 2$.
We rely again on Proposition \ref{metaprop2}, but with a more sophisticated truncation procedure detailed in Section~\ref{sec:trunc}.
We consider for notational simplicity only the case $f \equiv 1$.
The modifications which are required to treat the general case $f \in \cC$ are provided at the end of the section (in Section \ref{modifgenf}).

 \subsection{The truncation procedure}
 \label{sec:trunc}
  
 Instead of simply capping the value of $\eta$ at level~$q$, we impose a restriction on the set of paths, to avoid counting atypical paths with high value of $\eta$ which give an important contribution to the second moment of $Z_{N,\beta_N}^{\eta, a}$. 
Let $\ga \in [1,\ga_c)$ and  let us fix~$\gamma$ satisfying 
 \begin{equation}
 \label{lesdeuxconds}
\frac{d-2}{2(2-\alpha)} < \gamma < \frac{1}{\alpha-1} \qquad \big(i.e.\ \ \gamma(\alpha-1)<1 \ \text{ and } \  \frac d2 -\gamma(2-\alpha) <1 \big) \,,
\end{equation}  and
 also $\gamma<\frac d2$, which is compatible with \eqref{lesdeuxconds} when $\ga<\ga_c =1+\frac{2}{d}$. We define
 \begin{equation}
 \label{def:BNq}
B_{N,q}(S) := \Big\{ \forall I \subset \lint 1, N\rint \ : \ 
  \prod_{i\in I} (1+\eta_{i, S_{i}}) < q^{|I|}  V_N^{|I| } \big(N^{-|I|} \Pi_I\big)^{\gamma} \Big\},
 \end{equation}
where $\Pi_I := \prod_{j=1}^{|I|} (i_{j}-i_{j-1})$, with $i_1<\dots<i_{|I|}$ the ordered elements of $I$, and $i_0=0$.
Notice in particular that on the event $B_{N,q}(S)$ we have $1+\eta_{i,S_i}< q V_N$ for all $1\leq i \leq N$.
We then set
\begin{equation}
\label{def:WNaq}
 W^{a,q}_N:=  \bE\bigg[\prod_{n=1}^N \big( 1+\beta_N \eta^{(a)}_{n,S_n} \big)\ind_{B_{N,q}(S) } \bigg] = \bE\bigg[\prod_{n=1}^N \big( 1+\beta_N \eta^{[a,q)}_{n,S_n} \big)\ind_{B_{N,q}(S) } \bigg] .
\end{equation}
We apply Proposition \ref{metaprop2} to  $W^{a,q}_N$ which reduces the proof of Proposition \ref{letrucdur} to
that of the following statements.

\begin{lemma}\label{firstmt}
 When $\alpha\in (1,\alpha_c)$ we have
 \begin{equation}
  \lim_{q\to \infty}\inf_{a\in [0,1)} \inf_{N\ge 1} \bbE\left[ W^{a,q}_N\right]=1 \,.
 \end{equation}
 \end{lemma}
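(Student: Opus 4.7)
The plan is to split
$$1-\bbE[W_N^{a,q}] = \bigl(1-\bbE[Z_{N,\beta_N}^{\eta,[a,q)}]\bigr) + \bbE\bigl[Z_{N,\beta_N}^{\eta,[a,q)} - W_N^{a,q}\bigr]$$
and bound each piece by $o_q(1)$ uniformly in $a$ and $N$. The first piece is a one-dimensional calculation: since $\bbE[\eta]=0$ for $\alpha>1$, one has $\bbE[\eta^{[a,q)}] = -\bbE[\eta\ind_{\{1+\eta\ge qV_N\}}]$, and \eqref{moments} combined with \eqref{uzefulrel} gives $|\beta_N\bbE[\eta^{[a,q)}]| = O(q^{1-\alpha}/N)$. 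Bernoulli's inequality yields $0\le 1-(1+\beta_N\bbE[\eta^{[a,q)}])^N \le Cq^{1-\alpha}$ uniformly in $a,N$, which vanishes as $q\to\infty$.

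For the second piece, I would use that
$$Z_{N,\beta_N}^{\eta,[a,q)} - W_N^{a,q} = \bE\Bigl[\prod_{n=1}^N(1+\beta_N\eta_{n,S_n}^{[a,q)})\ind_{B_{N,q}^c(S)}\Bigr],$$
apply the union bound $\ind_{B_{N,q}^c(S)} \le \sum_{\emptyset\ne I\subset \lint 1,N\rint}\ind_{A_I(S)}$ with $A_I(S):=\{\prod_{i\in I}(1+\eta_{i,S_i})\ge L_I\}$ and $L_I:=q^{|I|}V_N^{|I|}(N^{-|I|}\Pi_I)^\gamma$, and for each $I$ employ the Markov bound $\ind_{A_I(S)}\le L_I^{-\theta}\prod_{i\in I}(1+\eta_{i,S_i})^\theta$ for an exponent $\theta$ to be chosen below. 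Since the space-time points $(n,S_n)_{1\le n\le N}$ are distinct, the $\eta_{n,S_n}$ are i.i.d., and the expectation factorizes as
$$\bbE\Bigl[\prod_n(1+\beta_N\eta_{n,S_n}^{[a,q)})\ind_{A_I(S)}\Bigr]\le L_I^{-\theta}\bigl(1+\beta_N\bbE[\eta^{[a,q)}]\bigr)^{N-|I|}\bigl(\bbE[(1+\beta_N\eta^{[a,q)})(1+\eta)^\theta]\bigr)^{|I|}.$$
The prefactor $(1+\beta_N\bbE[\eta^{[a,q)}])^{N-|I|}$ is at most one (since $\bbE[\eta^{[a,q)}]\le 0$ for $\alpha>1$), and \eqref{moments} applied in the regime $\theta+1>\alpha$ together with \eqref{uzefulrel} gives $\bbE[(1+\beta_N\eta^{[a,q)})(1+\eta)^\theta]\le C_\theta + C\hat\beta q^{\theta+1-\alpha}V_N^\theta/N$.

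The crucial choice is $\theta\in\bigl(\alpha/(1+\tfrac d2),\min(\alpha,1/\gamma)\bigr)$, an interval that is non-empty for $\alpha\in(1,\alpha_c)$ thanks to the auxiliary condition $\gamma<d/2$ (stated just after \eqref{lesdeuxconds}), which for $\alpha<\alpha_c$ implies $\gamma<(1+d/2)/\alpha$. Bounding the $|I|$-th power by $2^{|I|}(C_\theta^{|I|}+(C\hat\beta q^{\theta+1-\alpha}V_N^\theta/N)^{|I|})$ and summing, the combinatorial estimate $\sum_{|I|=k}\Pi_I^{-\gamma\theta}\le (C'N^{1-\gamma\theta})^k$ (valid because $\gamma\theta<1$) produces two contributions: the $V_N^\theta/N$ term gives complete cancellation of the powers of $N$ and $V_N$ and leaves a geometric series $\sum_{k\ge 1}(C''q^{-(\alpha-1)})^k = O(q^{-(\alpha-1)})$; the constant $C_\theta$ term leaves $\sum_{k\ge 1}(C''' V_N^{-\theta}N q^{-\theta})^k = O(q^{-\theta})$, uniformly bounded in $N$ because $V_N^{-\theta}N$ stays bounded under $\theta>\alpha/(1+d/2)$. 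Both contributions vanish as $q\to\infty$, uniformly in $a,N$.

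The main obstacle is precisely this juggling act with $\theta$: the conditions \eqref{lesdeuxconds} together with $\gamma<d/2$ are finely tuned to reflect the borderline nature of $\alpha<\alpha_c$ and to make the three requirements $\theta>\alpha/(1+d/2)$ (to tame $NV_N^{-\theta}$), $\theta<1/\gamma$ (to keep $\sum_I\Pi_I^{-\gamma\theta}$ polynomial in $N$), and $\theta<\alpha$ (to keep $\bbE[(1+\eta)^\theta]$ finite) simultaneously realizable. The delicate cancellation of the $N$ and $V_N$ factors that produces the clean bound $O(q^{-(\alpha-1)}+q^{-\theta})$ is the heart of the argument, and distinguishes the proof from the simpler $d=1$ treatment of Section~\ref{case1dim}.
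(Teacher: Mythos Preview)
Your argument is correct, but it differs substantially from the paper's. The paper works with the tilted probability $\tilde\bbP_S^a$ (density $\prod_n(1+\beta_N\eta_{n,S_n}^{(a)})$), so that $\bbE[W_N^{a,q}]=\bE[\tilde\bbP_S^a(B_{N,q}(S))]$, and then reduces to $a=0$ via stochastic domination. To control $\tilde\bbP_N(B_{N,q}^{\cc})$ it first prunes the union bound through Claim~\ref{claiclaim}, which shows that any bad set $I$ can be shrunk to one in which every index has $1+\eta_i\ge N^{-\gamma}V_N$; only then does it apply a bound (Lemma~\ref{lekalkul}) on the tilted probability, derived by an integral comparison against the measure $u^{-(1+\alpha)}\varphi(u)\,\dd u$. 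The resulting estimate carries a clean $q^{1-\alpha}$ per index.

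Your route is more direct and entirely elementary: a plain Markov inequality with exponent~$\theta$ applied to every subset $I$, followed by the factorisation over the i.i.d.\ variables $\eta_{n,S_n}$. The price is the delicate window $\theta\in\bigl(\alpha/(1+\tfrac d2),\min(\alpha,1/\gamma)\bigr)$ you identify, which is non-empty precisely under $\alpha<\alpha_c$ together with the side condition $\gamma<d/2$. This avoids the pruning step, the tilted-measure interpretation, and the integral-comparison lemma altogether. A minor point: Potter's bound for $\varphi(qV_N)/\varphi(V_N)$ introduces an innocuous $q^{\epsilon}$ factor you should make explicit, and uniformity for small $N$ needs a word (finitely many cases). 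What the paper's approach buys is reusability: Claim~\ref{claiclaim} and Lemma~\ref{lekalkul} are invoked again in the second-moment analysis (Lemma~\ref{brutforce}), so the machinery set up here is not wasted. Your argument is self-contained for Lemma~\ref{firstmt} but would not transfer directly to that later step.
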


 \begin{lemma}\label{firstmtbis}
When $\alpha=1$ we have
 \begin{equation}
  \lim_{q\to \infty}\sup_{a\in [0,1)} \sup_{N\ge 1} \bbE\left[ \left( e^{-\hat \gb \gamma_N} \big(Z^{\eta,a}_{N,\beta_N}-W^{a,q}_N\big) \right)\wedge 1\right]=0.
 \end{equation}
 \end{lemma}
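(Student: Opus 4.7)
The plan is to adapt the first-moment strategy of Lemma~\ref{firstmt} to the case $\alpha=1$, where the centering $e^{-\hat\beta\gamma_N}$ is crucial because $\bbE[Z^{\eta,a}_{N,\beta_N}]$ need not be finite when $\bbE[\eta]=\infty$ and, in any case, the partition function concentrates around the scale $e^{\hat\beta\gamma_N}$ rather than around~$1$. Using Proposition~\ref{prop:tronquons} to absorb $Z^{\eta,a}_{N,\beta_N}-Z^{\eta,[a,q)}_{N,\beta_N}$ uniformly in $a,N$ as $q\to\infty$, and noting that $Z^{\eta,a}_{N,\beta_N}\geq Z^{\eta,[a,q)}_{N,\beta_N}\geq W^{a,q}_N\geq 0$ pathwise (so no absolute values are needed), I reduce the statement to proving
\[
\lim_{q\to\infty}\sup_{a\in[0,1),\,N\geq 1}\bbE\Big[\big(e^{-\hat\beta\gamma_N}\big(Z^{\eta,[a,q)}_{N,\beta_N}-W^{a,q}_N\big)\big)\wedge 1\Big]=0.
\]

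I then plan to apply Proposition~\ref{revritez}\eqref{resultg} conditionally on $\eta$, with the $\eta$-measurable function $g_{\eta}(\varphi):=\ind_{B^c_{N,q}(\varphi)}$, which for $N\geq N_0(a)$ gives
\[
e^{-\hat\beta\gamma_N}\big(Z^{\eta,[a,q)}_{N,\beta_N}-W^{a,q}_N\big)\leq 2 e^{-\hat\beta\kappa_a}\,\bar Z^{\eta,[a,q)}_{N,2\beta_N}\big(\ind_{B^c_{N,q}(\cdot)}\big).
\]
Taking $\bbE$, using Fubini to integrate the walk first, and applying the union bound $\ind_{B^c_{N,q}(S)}\leq \sum_{\emptyset\neq I\subset\lint 1,N\rint}\ind_{E_I(S)}$ with $E_I(S):=\{\prod_{i\in I}(1+\eta_{i,S_i})\geq q^{|I|}V_N^{|I|}(N^{-|I|}\Pi_I)^{\gamma}\}$, independence of $(\eta_{n,x})$ across times factorises the contribution of each $I$ with $|I|=k$ into a free factor over indices $n\notin I$ and a bad-subset factor. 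The free factor equals $(1+2\beta_N\bbE[\eta\ind_{\{1+\eta\in[aV_N,qV_N)\}}])^{N-k}$, and the cancellation of the $\gamma_N$-contributions in $\bbE[\eta\ind_{\{1+\eta\in[aV_N,qV_N)\}}]$ analogous to~\eqref{cas=1} makes it uniformly bounded by $(q/a)^{2\hat\beta+o(1)}$, independently of~$\gamma_N$. The bad-subset factor $\bbE[\prod_{i\in I}(1+2\beta_N\eta_{i,S_i}\ind_{\cdot})\ind_{E_I(S)}]$ is controlled by combining the crude bound $\prod_{i\in I}(1+2\beta_N\eta_i\ind_{\cdot})\leq (2\beta_N qV_N)^k$ with a $\theta$-moment estimate $\bbP(E_I(S))\leq \bbE[(1+\eta)^\theta]^k(q^k V_N^k(N^{-k}\Pi_I)^\gamma)^{-\theta}$, valid for any $\theta\in(d/(d+2),\alpha)=(d/(d+2),1)$ (possible since $\alpha=1>d/(d+2)$); the sum over $I$ with $|I|=k$ is controlled using the constraint $\gamma<d/2$.

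\textbf{The main difficulty} is obtaining a bound that decays as $q\to\infty$ \emph{uniformly in $N$}: the naive estimate above contains factors $q^{(1-\theta)k}$ that grow with $q$, whereas the decay in $N$ is obtained only for each fixed $k$. To overcome this, I plan to refine the union bound by restricting to the minimal violating subset $I^\star(S)$ (thereby imposing matching \emph{lower} bounds on $\prod_{i\in I'}(1+\eta_{i,S_i})$ for every proper $I'\subsetneq I^\star$, precluding double-counting), and to exploit the product constraint on $E_I$ more tightly than the individual max bound allows, so that only the largest $\eta_{i,S_i}$ in $I$ are forced to be close to $qV_N$; taken together with letting $\theta\uparrow\alpha=1$ to absorb the $q^{(1-\theta)k}$ factors, this should yield the required uniform control. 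This is analogous to the delicate argument developed in~\cite[Sec.~4]{BL20_cont} for the same question in the continuum setting, and constitutes the main technical content of the proof.
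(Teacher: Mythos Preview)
Your reduction via Proposition~\ref{prop:tronquons} and the identification of $Z^{\eta,[a,q)}_{N,\beta_N}-W^{a,q}_N$ with $Z^{\eta,[a,q)}_{N,\beta_N}(\ind_{B^{\cc}_{N,q}})$ are fine, but the subsequent bound is not uniform in $a\in[0,1)$. After applying Proposition~\ref{revritez}\eqref{resultg} you obtain the prefactor $e^{-\hat\beta\kappa_a}=a^{\hat\beta}$ (since $\alpha=1$), while the ``free factor'' you compute correctly as $(q/a)^{2\hat\beta+o(1)}$; combined, this leaves a factor $q^{2\hat\beta}a^{-\hat\beta+o(1)}$ that blows up as $a\downarrow 0$. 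Nothing in the bad-subset factor depends on $a$ to compensate, and your proposed refinements (minimal violating subset, $\theta\uparrow 1$) do not touch this term. There is also a secondary issue: Proposition~\ref{revritez}\eqref{resultg} gives the inequality only for $N\ge N_0(a)$, and $N_0(a)\to\infty$ as $a\to 0$.

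The paper circumvents both problems by decoupling the truncation level from $q$: one writes
\[
Z^{\eta,a}_{N,\beta_N}-W^{a,q}_N \le \big(Z^{\eta,a}_{N,\beta_N}-Z^{\eta,[a,b)}_{N,\beta_N}\big) + \bE\Big[\prod_{n=1}^N\big(1+\beta_N\eta^{[a,b)}_{n,S_n}\big)\ind_{B^{\cc}_{N,q}(S)}\Big]
\]
for a \emph{fixed} auxiliary $b>1$. The first term is handled by Proposition~\ref{prop:tronquons}. For the second, one passes to the tilted law $\tilde\bbP^{(b)}_N$ with density proportional to $\prod_n(1+\beta_N\eta^{[0,b)}_n)$; since $\bbE[\eta^{[a,b)}]=\bbE[\eta^{[0,b)}]$ and the tilted laws are stochastically decreasing in $a$ (as in~\eqref{chebcheb2}), the $a$-dependence disappears and one is left with $e^{-\hat\beta\gamma_N}\bbE[1+\beta_N\eta^{[0,b)}]^N\,\tilde\bbP^{(b)}_N(B^{\cc}_{N,q})$, where the first factor is bounded by a constant $C_{b,\hat\beta}$ (via~\eqref{lawrong}) and the second tends to $0$ as $q\to\infty$ uniformly in $N$. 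The latter is obtained not by your crude splitting $\prod_i(1+2\beta_N\eta_i)\le(2\beta_N qV_N)^k\times\bbP$, which loses the needed $N^{-k}$ and $q$-decay, but by keeping the product inside the expectation and using the integral comparison of Proposition~\ref{lescroissants} together with \cite[Eq.~(4.32)]{BL20_cont}, yielding a bound of the form $(C_b)^k q^{-\gep k}t^{-\gep}N^{-k}$ (see~\eqref{linek2}); the restriction to indices with $1+\eta_i\ge N^{-\gamma}V_N$ from Claim~\ref{claiclaim} is what makes this comparison work.
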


\begin{proposition}\label{secondmt}
 We have for every $q>0$
\begin{equation}
\lim_{a\to 0}\sup_{N\geq 0} \bbE\big[ (W^{a,q}_N- W^{0,q}_N)^2  \big]=0.
\end{equation}
\end{proposition}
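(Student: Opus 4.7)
The approach will be a uniform-in-$N$ second-moment computation performed on two independent copies $S^{(1)}, S^{(2)}$ of the random walk, where the truncation set $B_{N,q}$ must be used crucially to prevent the intersection-driven divergence that otherwise occurs in dimension $d\ge 2$ (in contrast to the $d=1$ case treated in Section~\ref{case1dim}). As a first step I would write
\[
\bbE\big[(W^{a,q}_N - W^{0,q}_N)^2\big] = \bbE[(W^{0,q}_N)^2] - 2\bbE[W^{0,q}_N W^{a,q}_N] + \bbE[(W^{a,q}_N)^2],
\]
and express each term as an iterated expectation $\bE^{\otimes 2}[\bbE_{\eta}[\cdots]]$ carrying the joint constraint $\ind_{B_{N,q}(S^{(1)})}\ind_{B_{N,q}(S^{(2)})}$. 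The $\eta$-integration naturally decouples according to whether $n$ belongs to the intersection set $T=\{n:S^{(1)}_n=S^{(2)}_n\}$ or not.

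The central algebraic input is the martingale-difference increment
\[
\Delta_{n,x} := \eta^{[0,q)}_{n,x}-\eta^{[a,q)}_{n,x} = (\eta_{n,x}+\kappa_N^{(a)})\ind_{\{1+\eta_{n,x}<aV_N\}},
\]
which is mean-zero, site-independent, and (by a computation parallel to \eqref{moments3}) has rescaled variance $\beta_N^2\bbE[\Delta^{2}]$ of order $a^{2-\alpha}N^{(d-2)/2}$. The telescoping identity $\prod(a_n+b_n) - \prod a_n = \sum_{\emptyset\ne I}\prod_{i\in I}b_i\prod_{j\notin I}a_j$ then yields
\[
W^{0,q}_N - W^{a,q}_N = \sum_{\emptyset\ne I\subset\lint 1,N\rint}\beta_N^{|I|}\,\bE\Big[\ind_{B_{N,q}(S)}\prod_{i\in I}\Delta_{i,S_i}\prod_{j\notin I}\big(1+\beta_N\eta^{[a,q)}_{j,S_j}\big)\Big].
\]
Squaring, taking expectation, and using the mean-zero, site-independence properties of $\Delta$ collapses the double sum over $(I^{(1)},I^{(2)})$ to contributions indexed by pairings of indices in $I^{(1)}\cup I^{(2)}$ along the intersection set of the two walks, in the spirit of \eqref{eq:secondmoment}.

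The main obstacle — and the reason $d\ge 2$ is harder than $d=1$ — is handling the indicator $\ind_{B_{N,q}(S)}$, which depends on products of $\eta$-values over arbitrary subsets and so is \emph{not} $\mathcal{G}_a$-measurable: the naive time-reversed-martingale identity used in Section~\ref{case1dim} therefore fails. To get around this, one would expand $W^{a,q}_N$ as a sum over atom configurations $\sigma\subset\gO^{[a,q)}_N$ in the spirit of Lemma~\ref{prop:eazy} and \eqref{znlabar}, and control the second-moment contribution of each pairing of atoms on the two walks by exploiting the subset constraint $\prod_{i\in I}(1+\eta_{i,S_i})<q^{|I|}V_N^{|I|}(N^{-|I|}\Pi_I)^{\gamma}$. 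The two inequalities in \eqref{lesdeuxconds} are calibrated precisely so that the $\eta$-expectation of each paired factor, under this constraint, yields an $a$-dependence that, after summation over all pairings and atom configurations, gives a bound of the form $\bbE[(W^{a,q}_N - W^{0,q}_N)^2]\le C_{q,\hat\beta}\,a^{2-\alpha}$ that is uniform in $N$ and vanishes as $a\to 0$. The combinatorial bookkeeping required to turn the subset-based constraint in $B_{N,q}$ into a clean per-atom estimate — adapting the continuum argument of \cite[Sec.~4]{BL20_cont} to the discrete partition function — is the core of the technical work.
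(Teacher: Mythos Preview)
Your setup is right — the same expansion of the square, the same replica representation over the intersection set, and you correctly identify the central obstruction: $\ind_{B_{N,q}(S)}$ is not $\mathcal G_a$-measurable, so the mean-zero property of $\Delta$ cannot be invoked to collapse cross terms. But the workaround you propose (``expand $W^{a,q}_N$ over atom configurations and control each pairing'') is not yet a plan; it does not say how you will decouple the increment $\Delta$ (supported on $\{1+\eta<aV_N\}$) from the indicator $\ind_{B_{N,q}}$, whose effective support threshold is $N^{-\gamma}V_N$ (cf.\ Claim~\ref{claiclaim}) and hence shrinks with $N$. Without a mechanism to make that threshold $N$-independent, no amount of atom-configuration bookkeeping will give a bound that is uniform in $N$.

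The paper does not attempt your telescoping route. Instead it compares $\bbE[(W^{a,q}_N)^2]$ and $\bbE[(W^{0,q}_N)^2]$ (and the mixed term) directly: after writing $(1+\beta_N\eta^{(a)})^2=(1-\beta_N^2+2\beta_N(1-\beta_N)\eta^{(a)})+\beta_N^2(1+\eta^{(a)})^2$ and expanding, each second moment becomes a double sum over $J\subset I$ of quantities $\tilde H^{(a)}(I,J)$. The crucial move is a two-scale split of $J$ according to whether its points are macroscopically $\delta$-spaced (the class $\Xi(\delta,N)$). For $J\notin\Xi(\delta,N)$ a crude Lemma~\ref{lekalkul2}-type bound (using the $B_{N,q}$ constraint) makes the total contribution small by taking $\delta$ small, uniformly in $a$. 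For $J\in\Xi(\delta,N)$ (there are at most $\delta^{-1}$ points), $\tilde H^{(a)}(I,J)$ is rewritten as a tilted probability $\tilde\bbP^{(a)}_{I,J}(\bar B_{N,q}(I))$; the $a$-dependence then sits only in the $|J|\le\delta^{-1}$ squared-weight factors, and a coupling (Lemma~\ref{lemlacompa}) plus a further $\delta'$-split of $\bar B_{N,q}(I)$ (Lemma~\ref{brutforce}) shows these probabilities differ by $O(a^{2-\alpha})$. The $\Xi(\delta,N)$ decomposition is the missing idea in your proposal: it is precisely what converts the $N$-dependent threshold $N^{-\gamma}V_N$ into an $N$-independent one ($q(\delta')^{\gamma/\delta'}V_N$) on the part of the event that matters, and this is what makes the comparison uniform in $N$.
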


These three results are similar to Lemma~\ref{lem:Wd=1} (and Proposition~\ref{prop:tronquons} in the case $\ga=1$), but the required computations to prove them are considerably more involved.
We prove each one separately in Sections~\ref{sec:fm}, \ref{sec:fm1bis} and \ref{sec:sm} respectively.

 \subsection{Proof of Lemma \ref{firstmt}}\label{sec:fm}
 
 Let $\tilde \bbP^{a}_S$ be the probability measure 
 whose density with respect to $\bbP$ is
 \begin{equation}\label{lamesuretilde}
 \frac{ \dd \tilde \bbP^{a}_S}{\dd \bbP} = \prod_{n=1}^N \big(1+\beta_N \eta^{(a)}_{n,S_n} \big) \, .
 \end{equation}
We stress here that since $\bbE[\eta]=0$ we have $\bbE\big[ \prod_{n=1}^N (1+\beta_N \eta^{(a)}_{n,S_n}) \big]=1$.
By Fubini, we have  $\bbE[ W^{a,q}_N] =\bE\big[ \tilde  \bbE_S^{a} [\ind_{B_{N,q}(S)}] \big]$ and hence 
we simply need to prove that
\begin{equation}
\label{cestuniforme}
\lim_{q \to \infty} \sup_{a\in[0,1)} \sup_{N\geq 1}  \sup_{S}\tilde \bbP^{a}_S\big( B^{\complement}_{N,q}(S) \big)=0.
\end{equation}
Let us make  two important observations: 
\begin{itemize}
\item[(i)] In view of the definition~\eqref{def:BNq} of the event $B_{N,q}$, the above probability does not depend on the specific trajectory~$S$:
. Indeed  $B_{N,q}(S)$ is a function of the random sequence $(\eta_{n,S_n})_{n\geq 1}$ which is i.i.d.\ distributed, and in particular has the same distribution for every $S$.
 \item[(ii)] The event $B_{N,q}^{\complement}(S)$ is increasing in $\eta$, and 
 the measures $(\tilde \bbP^{a}_S)_{a\geq 0}$ are stochastically decreasing. The supremum in $a$ is thus attained for $a=0$.
\end{itemize}
To check point (ii), since we are dealing with product measures it is sufficient to check the domination for one dimensional marginal.
Recall the continuous version of Chebychev's sum inequality: for any probability measure $\nu$ on $\bbR$ and $f$ and $g$ non-decreasing functions, we have
\begin{equation}\label{chebcheb}
 \int f(x)g(x)  \nu(\dd x) \ge  \int f(x)  \nu(\dd x) \int g(x) \nu(\dd x).
\end{equation}
This implies that for any non-decreasing $f$ we have
\begin{multline}\label{chebcheb2}
 \bbE\big[(1+\beta_N\eta)f(\eta)\big]-\bbE\big[(1+\beta_N\eta^{(a)} )   f(\eta) \big]\\
 = \beta_N \bbP( 1+\eta <a V_N  )  \, \bbE\big[ f(\eta) \big(\eta- \bbE [ \eta \, | \, 1+\eta<a V_N ] \big)    \, \big| \,  1+\eta <  a V_N  \big]\ge 0 \,,
\end{multline}
which proves that $\tilde \bbP_S^{(0)}$ stochastically dominates $\tilde \bbP_S^{(a)}$ for $a \in(0,1)$.
We therefore only have to prove that 
\begin{equation}
\label{newgoal}
\lim_{q\to\infty} \sup_{N\geq 1} \tilde \bbP_N (B^{\complement}_{N,q} ) =0 \, ,
\end{equation}
where 
  \begin{equation}
  \label{def:BNqbis}
 B_{N,q}:= \Big\{ \forall I \subset \lint 1, N\rint, \  \ 
  \prod_{i\in I}  (1+\eta_i) < q^{|I|}  V_N^{|I|} \big(N^{-|I|} \Pi_I\big)^{\gamma}  \Big\},
 \end{equation}
with $(\eta_i)_{1\le i \le N}$ i.i.d.\ random variables distributed as $\eta$ under $\bbP$ and $\tilde \bbP_N$ the  probability measure with density $\prod_{n=1}^N (1+\beta_N \eta_n)$ with respect to $\bbP$.
First of all, by a union bound, we have
\begin{equation}
\label{newgoal2}
\tilde \bbP_N (B^{\complement}_{N,q} )  \leq 
\sum_{n=1}^N 
\tilde \bbP_N \big( 1+\eta_n \geq  q V_N \big)  +
 \tilde \bbP_N\big(  B_{N,q}^{\cc}  \, ;  \, \forall\, i \in \lint 1,N \rint ,  \, 1+\eta_i < q V_N  \big) \, .
\end{equation}
Now, thanks to~\eqref{moments2}  and monotonicity in $q$, given $\gep>0$ there exists $q_0(\gep)$ such that for all $q\ge q_0$
for every $N\ge 1$  
\begin{equation}
\label{newgoal3}
\tilde \bbP_N \big( 1+\eta_n \geq  q V_N \big)= \bbE\big[ (1+\gb_N \eta) \ind_{\{1+\eta \geq  q V_N \}} \big] \le (\gep/2) N^{-1} \,.
\end{equation}
Hence the first sum 
in \eqref{newgoal2} is bounded by  $\gep/2$ for $q\ge q_0(\gep)$.
To estimate the remaining probability in~\eqref{newgoal2}, we are going to perform another union bound. The following claim will allow us to reduce the amount of error produced by  this bound.
\begin{claim}
\label{claiclaim}
If $q\ge 2^{\gamma}$ and $B_{N,q}$ is \emph{not} satisfied,
then there exists some non-empty set of indices $I \subset \lint 1,N\rint$
such that both the following conditions are satisfied
\begin{equation}\label{lescondits}
\prod_{i\in I} (1+\eta_i)  \geq q^{|I|}  V_N^{|I|} \big(N^{-|I|} \Pi_I \big)^{\gamma}  \quad \text{ and }\quad  \forall i \in I\,, \, 1+\eta_i \geq N^{-\gamma} V_N \, .
\end{equation}
\end{claim}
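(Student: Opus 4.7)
The plan is to select $I$ as a minimum-cardinality subset of $\lint 1,N\rint$ that violates the defining inequality of $B_{N,q}$, i.e.\ such that
\[
\prod_{i \in I}(1+\eta_i) \geq q^{|I|}V_N^{|I|}\bigl(N^{-|I|}\Pi_I\bigr)^{\gamma}.
\]
Such an $I$ exists by the hypothesis that $B_{N,q}$ fails, so condition (a) of \eqref{lescondits} is automatic. The key point is then to verify that (b) holds for this particular $I$, and here the assumption $q\geq 2^{\gamma}$ will enter in precisely the right way.

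For $|I|=1$, say $I=\{i_1\}$, the violation reads $1+\eta_{i_1}\geq q V_N (i_1/N)^{\gamma}$, and since $i_1\geq 1$ and $q\geq 1$, condition (b) follows immediately. For $|I|=k\geq 2$, fix any $i_j\in I$ and set $I'=I\setminus\{i_j\}$. Minimality of $|I|$ forces $I'$ to satisfy the $B_{N,q}$-condition strictly, so that
\[
\prod_{i\in I'}(1+\eta_i) < q^{k-1}V_N^{k-1}\bigl(N^{-(k-1)}\Pi_{I'}\bigr)^{\gamma}.
\]
Dividing the violating inequality for $I$ by this strict inequality for $I'$ yields
\[
1+\eta_{i_j} \;>\; q\, V_N\, N^{-\gamma}\, (\Pi_I/\Pi_{I'})^{\gamma}.
\]

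The core technical step — and the only one where one has to do anything beyond bookkeeping — is the lower bound $\Pi_I/\Pi_{I'}\geq 1/2$, uniformly over which index $i_j$ is removed. I would handle this by direct case analysis: writing $a=i_j-i_{j-1}$ and $b=i_{j+1}-i_j$ for an interior removal gives $\Pi_I/\Pi_{I'}=ab/(a+b)$; for $j=1$ one gets $i_1(i_2-i_1)/i_2$, again of the form $ab/(a+b)$; and for $j=k$ one gets simply $i_k-i_{k-1}\geq 1$. The elementary inequality $ab/(a+b)\geq 1/2$ valid for all integers $a,b\geq 1$ (since $ab/(a+b)$ is coordinatewise nondecreasing and equals $1/2$ at $a=b=1$) covers all three cases.

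Plugging $(\Pi_I/\Pi_{I'})^{\gamma}\geq 2^{-\gamma}$ into the displayed inequality and using $q\geq 2^{\gamma}$ gives $1+\eta_{i_j}>V_N N^{-\gamma}$, which is exactly condition (b). The main obstacle, such as it is, is really just spotting that minimality in cardinality (rather than, say, maximality, or picking an arbitrary violator) is what forces the elementwise lower bound, and that the combinatorial factor $1/2$ arising from the harmonic-mean inequality is precisely matched by the hypothesis $q\geq 2^{\gamma}$ — this explains why that particular threshold appears in the statement.
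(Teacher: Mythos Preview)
Your proof is correct and is essentially the same argument as the paper's: the paper starts from an arbitrary violating set and iteratively removes any element with $1+\eta_{i_{j_0}} < N^{-\gamma}V_N$, showing the remaining set still violates via the same estimate $\Pi_I/\Pi_{I'}\ge 1/2$, whereas you go directly to a minimum-cardinality violator and use minimality to force the elementwise bound. The combinatorial core (the harmonic-mean bound $ab/(a+b)\ge 1/2$ and its interaction with $q\ge 2^{\gamma}$) is identical, and your formulation is arguably slightly cleaner since it avoids the explicit induction.
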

\begin{proof}
Note that the existence of a set of indices $I_0$ satisfying the first condition in \eqref{lescondits} simply comes from the definition of $B_{N,q}$. 

Now, if a set~$I$ satisfies the first condition in~\eqref{lescondits} and if there exists some $j_0$ with 
 $1+\eta_{i_{j_0}} < N^{-\gamma} V_N$ (where
 $i_{j_0}$ is the $j_0$-th element of $I$),  we necessarily have 
 that $I$ is not reduced to~$i_{j_0}$, and
\begin{align}\label{lacompa2}
\prod_{i \in I' = I\setminus \{i_{j_0}\}} (1+\eta_i) \geq \Big(q^{|I|-1} V_N^{|I|-1} \big(N^{1-|I|}\Pi_{I'} \big)^{\gamma} \Big)
 \times q \,  \bigg( \frac{\Pi_I}{\Pi_{{I'}}}\bigg)^{\gamma}\, .
\end{align}
Recalling the definition of $\Pi_I$, we have that $\frac{\Pi_I}{\Pi_{{I'}}} = \frac{(i_{j_0}-i_{j_0-1}) (i_{j_0+1}- i_{j_0})}{(i_{j_0+1}- i_{j_0-1})} \geq \frac12\, $:
the second factor in \eqref{lacompa2} is thus larger than $q 2^{-\gamma} \geq 1$; the same reasoning applies if $j_0=|I|$.
Hence, the set $I'=I\setminus\{i_{j_0}\}$ is non-empty and also satisfies the first condition  in~\eqref{lescondits}. 
Starting from $I_0$ and proceeding by induction, we therefore
end up with a non-empty set verifying both conditions in~\eqref{lescondits}.
\end{proof}

Thanks to Claim~\ref{claiclaim},  we apply a union bound over the possible choices for the set of indices~$I$ satisfying both conditions in \eqref{lescondits}. Recalling also the definition~\eqref{def:BNqbis} of $B_{N,q}$, we obtain that the last term in~\eqref{newgoal2}
is bounded by
\begin{equation}
\label{decomp2}
\sum_{I \subset  \lint 1,N\rint } \tilde \bbP_N\Big(  \prod_{i\in I} (1+\eta_i) \ind_{\{  N^{-\gamma} V_N \leq  1+\eta_i  < qV_N\}} \geq q^{|I|}  V_N^{|I|} \big(N^{-|I|} \Pi_I \big)^{\gamma}  \Big) \, .
\end{equation}
To conclude, we estimate the probabilities in the sum thanks to the following lemma, whose proof is postponed.

\begin{lemma}
\label{lekalkul}
There is some $N_0\geq 1$ such that for all $N\geq N_0$,
for any $k\ge 1$, any $t \in (0,1)$  and any $\gep\in (0,1)$ there is a constant $C$ (allowed to depend on $\varphi$, $\hat \beta$ and~$\gep$) such that
\begin{equation}\label{linek}
 \tilde \bbP_N\left( \prod_{i=1}^k (1+\eta_i)  \ind_{\{  N^{-\gamma} V_N \leq  1+\eta_i  < qV_N\}} \ge t  (q V_N)^k \right) 
 \le   ( C q^{\gep +1-\ga})^k   \,  t^{1-\alpha-\gep}  N^{-k} \, .
\end{equation}
\end{lemma}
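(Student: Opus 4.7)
The strategy is a Markov bound with a carefully tuned exponent, exploiting that under $\tilde \bbP_N$ the variables $(\eta_i)_{1\le i\le N}$ remain independent, with the single-site tilted law having density $(1+\beta_N \eta)$ with respect to the law of $\eta$. I fix $\varepsilon\in(0,1)$ and set the Markov exponent
$\theta := \alpha-1+\varepsilon$, so that $\theta>0$ (since $\alpha \ge 1$ in this section) and $\theta+1=\alpha+\varepsilon>\alpha$. Writing $A_i := \{N^{-\gamma}V_N \le 1+\eta_i < qV_N\}$ and using independence plus the normalization $\bbE[1+\beta_N\eta_j]=1$ for the $N-k$ inactive indices, Markov's inequality yields
\begin{equation*}
\tilde\bbP_N\Big(\prod_{i=1}^k(1+\eta_i)\ind_{A_i} \ge t(qV_N)^k\Big)
 \le \frac{1}{t^\theta (qV_N)^{k\theta}}\,\prod_{i=1}^k \bbE\big[(1+\eta)^\theta(1+\beta_N\eta)\ind_{A}\big].
\end{equation*}
The factor $t^{-\theta}=t^{1-\alpha-\varepsilon}$ in front is already in the required form.

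The heart of the proof is then the single-site estimate on $\bbE[(1+\eta)^\theta(1+\beta_N\eta)\ind_A]$. Here I use two observations. First, on $A$, $1+\eta \ge N^{-\gamma}V_N$; combined with the bounds $\beta_N \asymp V_N^{-1} N^{d/2}$ from \eqref{uzefulrel} and the constraint $\gamma<d/2$ imposed at the start of Section~\ref{sec:trunc}, this gives $\beta_N\eta\to\infty$, so for $N$ large
$1+\beta_N\eta \le 2\beta_N(1+\eta)$ on $A$. Second, since $\theta+1=\alpha+\varepsilon>\alpha$, the power of $(1+\eta)$ is integrable up to the cutoff $qV_N$, and Karamata's theorem (applied to $\int y^\theta \bbP(1+\eta>y)\dd y$ via integration by parts against the tail \eqref{def:eta}) yields
\begin{equation*}
\bbE\big[(1+\eta)^{\theta+1}\ind_{\{1+\eta<qV_N\}}\big] \stackrel{N\to\infty}{\sim} \tfrac{\alpha}{\varepsilon}(qV_N)^{\varepsilon}\varphi(qV_N).
\end{equation*}
The lower truncation contributes only a factor $(N^{-\gamma}/q)^{\varepsilon}$ of this, and is therefore negligible.

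Combining the two observations and using \eqref{uzefulrel} to rewrite $\beta_N V_N^{1-\alpha}\varphi(V_N)\sim \hat\beta N^{-1}$, we obtain
\begin{equation*}
\bbE\big[(1+\eta)^\theta(1+\beta_N\eta)\ind_A\big] \le C\, q^{\varepsilon}\, \frac{\varphi(qV_N)}{\varphi(V_N)}\, V_N^{\theta}\, N^{-1}.
\end{equation*}
Plugging this into the Markov bound, the $V_N^{k\theta}$ cancels against $(qV_N)^{k\theta}$ in the denominator up to the factor $q^{-k\theta}$, producing
\begin{equation*}
\tilde\bbP_N(\cdots)\le \frac{C^k}{t^\theta}\,q^{k(\varepsilon-\theta)}\,\Big(\tfrac{\varphi(qV_N)}{\varphi(V_N)}\Big)^k\,N^{-k} = \frac{C^k\,q^{k(1-\alpha)}}{t^{\alpha-1+\varepsilon}}\,\Big(\tfrac{\varphi(qV_N)}{\varphi(V_N)}\Big)^k\,N^{-k}.
\end{equation*}
Since $q\ge 1$, $q^{k(1-\alpha)}\le q^{k(\varepsilon+1-\alpha)}$, and this gives exactly the claimed form once the slowly varying ratio is absorbed.

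The main technical point — the one step deserving real care — is this absorption of $\varphi(qV_N)/\varphi(V_N)$ uniformly in $q\ge 1$ and $N\ge N_0$. This is handled by Potter's inequality \cite[Thm.~1.5.6]{BGT89}: for every $\delta>0$ and $N\ge N_0(\delta)$, $\varphi(qV_N)/\varphi(V_N) \le C_\delta\, q^\delta$. Applying this with $\delta$ sufficiently small compared to the spare room in $\varepsilon$ (equivalently, repeating the argument with $\varepsilon/2$ in place of $\varepsilon$) lets us replace the slowly varying ratio by a constant at the cost of a harmless extra factor $q^{k\delta}$, which we reabsorb into $q^{k(\varepsilon+1-\alpha)}$. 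This is the only place where the choice of $\varepsilon$ is genuinely used, and it also forces the constant $C$ in the statement to depend on $\varepsilon$ (and on $\varphi$ through $N_0$).
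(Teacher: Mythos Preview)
Your proof is correct. It shares with the paper the opening reduction---factoring over the $N-k$ inactive coordinates (using $\bbE[1+\beta_N\eta]=1$) and the key observation that $1+\beta_N\eta_i\le 2\beta_N(1+\eta_i)$ on the truncation event---but then diverges in the main technical step.

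The paper does not apply Markov's inequality. Instead, after reaching the analogue of your expectation $\bbE[\prod_i(1+\eta_i)\ind_{A_i}\ind_{\{\prod_i(1+\eta_i)\ge t(qV_N)^k\}}]$, it invokes a measure-comparison result (Proposition~\ref{lescroissants} in the appendix) to replace the expectation with an integral against $\varphi(u)u^{-\alpha}\,\dd u$ on $[0,2qV_N)^k$, then changes variables, applies Potter's bound to the slowly varying ratio, and finally cites an integral estimate from the companion paper~\cite{BL20_cont} to extract the factor $t^{1-\alpha-\gep}$. Your approach---Markov with exponent $\theta=\alpha-1+\gep$, which immediately factorizes into single-site $(\theta+1)$-moments controlled by Karamata---is more elementary and self-contained: it avoids both the comparison proposition and the external citation. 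The paper's route, on the other hand, fits into a broader scheme, since the same comparison device (Proposition~\ref{lescroissants} and its companion Proposition~\ref{lestartines}) is reused later in the second-moment computations.
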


Now, applying Lemma \ref{lekalkul} to the probabilities in \eqref{decomp2} with
\[t= \big(N^{-|I|} \Pi_I \big)^{\gamma} = \prod_{j=1}^{|I|} \left(\frac{i_j-i_{j-1}}{N} \right)^{\gamma}\,,
\]
we obtain that \eqref{decomp2} is smaller than
\begin{multline}
\label{decomp3}
 \sum_{k=1}^N   (C q^{\gep+ 1-\alpha})^{k} \sum_{ 1\leq i_1< \cdots < i_k \leq N}  N^{-k}  \prod_{j=1}^{k} \left(\frac{i_j-i_{j-1}}{N} \right)^{(1-\alpha-\gep)\gamma} \\
 \leq  \sum_{k=1}^N   (C q^{\gep+ 1-\alpha})^{k}   2^{k(\alpha+\gep-1)} \int_{0<s_1<\dots<s_k<1} \prod_{i=1}^{k} (s_i-s_{i-1})^{(1-\alpha-\gep)\gamma} \dd s_i  \, ,
\end{multline}
where we used  a standard comparison argument for the last inequality.
By Lemma~\ref{lemGamma},
provided that $\gep$ has been fixed small enough so that $\vartheta:=\gamma (\alpha+\gep-1) <1$, the last integral is
 equal to $\frac{\Gamma(1-\vartheta)^k}{ \gG( 1+ k(1-\vartheta))}$.
Altogether, we obtain that~\eqref{decomp2}
is bounded by
\begin{equation*}
  \sum_{k=1}^N   \big(  C'_{\ga,\hat\gb,\gep} q^{\gep +1-\alpha} \big)^{k}  \frac{\Gamma(1-\vartheta)^k}{ \gG( k(1-\vartheta)+1)}\, .
\end{equation*}
This series converges, and can be made arbitrarily small by choosing $q$ large, provided that~$\gep$ is small enough so that $\gep+1-\ga<0$.
Together with \eqref{newgoal2} and \eqref{newgoal3}, this concludes the proof of~\eqref{newgoal} and hence the proof of Lemma~\ref{firstmt}.
\qed

\begin{proof}[Proof of Lemma~\ref{lekalkul}]
First of all, recalling that $\beta_N V_N N^{-\gamma} = \frac{1}{2}d^{-d/2} \hat \gb  N^{\frac{d}{2} -\gamma}$ (see~\eqref{uzefulrel})
and the fact that we chose $\gamma<\frac{d}{2}$, we have  that
 $\gb_N \eta_i \geq 1$ if $ 1+\eta_i  \geq N^{-\gamma} V_N$, at least for large~$N$. Hence,
recalling the definition of $\tilde \bbP_N$, the probability we want to bound is
 \begin{multline}
 \label{insidelekalkul}
  \bbE\Big[  \Big(  \prod_{i=1}^k(1+\beta_N \eta_i) \ind_{\{    (1+\eta_i) \in [N^{-\gamma}V_N , qV_N )\}}  \Big) \ind_{\big\{ \prod_{i=1}^k  (1+\eta_i) \geq t (qV_N)^k  \big\}} \Big]  \\
 \leq (2\gb_N)^k  \bbE\bigg[ \Big( \prod_{i=1}^k ( 1+\eta_i )  \ind_{\{   (1+\eta_i) \in [N^{-\gamma}V_N , qV_N )\}} \Big)   \ind_{\big\{ \prod_{i=1}^k (1+ \eta_i) \geq t  (qV_N)^k  \big\}}   \bigg] \, .
 \end{multline}
Then, Proposition~\ref{lescroissants} in the appendix allows us to compare this expectation to an integral with respect to the measure $ u^{-(1+\ga)}  \gp(u) \dd u$ (which is not a probability measure). Applying Proposition~\ref{lescroissants}, we get that there is a constant $C$ (that depends on $\ga$) such that  for $N$ large enough the right-hand side of \eqref{insidelekalkul} is bounded by
\begin{equation}
\label{apreslekalkul}
C^k \beta^k_N \int_{[0,2q V_N)^k } 
   \ind_{\big\{ \prod_{i=1}^k  u_i  \ge t   (qV_N)^k \big\} }  \prod_{i=1}^k  \varphi(u_i) u^{-\alpha}_i \dd u_i  \, .
\end{equation}
With a  change of variable $u_i= V_N v_i$, and using that $\gb_N V_N^{1-\alpha}  = \hat \gb   N^{-1} \varphi(V_N)^{-1}$,
we get that this is bounded by
\begin{multline}
 (C_{\alpha,\hat \beta})^k  N^{-k} 
   \int_{[0,2q)^k }
   \ind_{ \big\{  \prod_{i=1}^k  v_i  \ge  t q^k \big\} } \prod_{i=1}^k \frac{ \varphi(V_N v_i )}{\varphi(V_N)}  v^{-\alpha}_i \dd v_i \\
   \le \big( C_{\alpha,\hat \beta,\gep} q^{\gep} \big)^k N^{-k}   \int_{[0,2q)^k }
   \ind_{ \big\{  \prod_{i=1}^k  v_i  \ge  t q^k  \big\} } \prod_{i=1}^k  v^{-\alpha-\frac{\gep}{2}}_i \dd v_i ,
   \label{compareintegral}
\end{multline}
where we used Potter's bound to get that 
$\gp(V_N v) \leq C_{\gep} v^{-\gep/2} \gp(V_N)$ if $v\leq 1$ and that
$\gp(V_N v) \leq C_{\gep}  v^{\gep/2} \gp(V_N)$ if $v\in [1,2q)$, with $v^{\gep/2} \leq (2q)^{\gep} v^{-\gep/2}$ for $v\in [1,2q)$.
An estimate for the integral in the r.h.s.~of~\eqref{compareintegral} has been proved in \cite{BL20_cont}, we 
apply  \cite[Equation $(4.32)$]{BL20_cont} to conclude the proof.
\end{proof}

 \subsection{Proof of Lemma \ref{firstmtbis}}\label{sec:fm1bis}

Let us consider $b>1$. 
We have 
\begin{multline}
 Z^{\eta,a}_{N,\beta_N}-W^{a,q}_N =\bE\Big[ \prod_{n=1}^N \big( 1+\beta_N\eta^{(a)}_{n,S_n} \big) \ind_{B_{N,q}^{\cc}(S)}\Big] \\
 \le  \big(Z^{\eta,a}_{N,\beta_N}-  Z^{\eta,[a,b)}_{N,\beta_N}\big)+
 \bE\Big[ \prod_{n=1}^N \big( 1+\beta_N\eta^{(a)}_{n,S_n} \ind_{\{ (1+\eta_{n,S_n})< b V_N\}} \big) \ind_{B_{N,q}^{\cc}(S)}\Big].
\end{multline}
Proposition~\ref{prop:tronquons} establishes that 
$e^{-\hat \beta \gamma_N}(Z^{\eta,a}_{N,\beta_N}-  Z^{\eta,[a,b)}_{N,\beta_N})$ converges to zero in probability when $b\to \infty$.
To conclude we thus only have to show that the second term also converges to zero in probability, or using first moment estimates, that for every $b>1$,
\begin{equation}
 \lim_{q\to \infty} \sup_{N\ge 1} e^{-\hat \beta \gamma_N} \bbE\bigg[ \bE\Big[ \prod_{n=1}^N \big( 1+\beta_N\eta^{(a)}_{n,S_n} \ind_{\{ (1+\eta_{n,S_n}< b V_N\}} \big) \ind_{B_{N,q}^{\cc}(S)}\Big]\bigg] = 0.
\end{equation}
Like for the proof of Lemma \ref{firstmt} (note that \eqref{chebcheb2} remains valid when adding the restriction $\ind_{\{ \eta < b V_N \} }$), we consider the probability measure $\tilde \bbP_N^{(b)}$ defined by
  \begin{equation}\label{lamesuretilde2}
 \frac{ \dd \tilde \bbP^{(b)}_N}{\dd \bbP} = \frac{\prod_{n=1}^N \big(1+\beta_N \eta^{[0,b)}_{n} \big)}{\bbE\big[ 1+\beta_N \eta^{[0,b)}_{n} \big]^N } \, .
 \end{equation}
and we obtain that  (recall the definition \eqref{def:BNqbis})
\begin{multline}
 e^{-\hat \beta \gamma_N} \bbE\bigg[ \bE\Big[ \prod_{n=1}^N \big( 1+\beta_N\eta^{(a)}_{n,S_n} \ind_{\{ (1+\eta_{n,S_n})\le b V_N\}} \big) \ind_{B_{N,q}^{\cc}(S)}\Big]\bigg] \\
 \le  e^{-\hat \beta \gamma_N}  \bbE\big[ 1+\beta_N \eta^{[0,b)}_{n} \big]^N  \tilde \bbP^{(b)}_N[B_{N,q}^{\cc} ] .
\end{multline}
From \eqref{lawrong}, there exists  a constant $C_{b,\hat \gb}$ such that for every $N\ge 1$  
\[
e^{-\hat \beta \gamma_N}\bbE\big[ 1+\beta_N \eta^{[0,b)}_{n} \big]^{N} \le C_{b,\hat \gb}
\] 
so that we only need to bound $\tilde \bbP^{(b)}_N[B_{N,q}^{\cc} ] $. Using Claim \ref{claiclaim} and a union bound, we can conclude the proof as in the previous section, replacing Lemma~\ref{lekalkul}
with the following: there exists a constant $C_b$ such that the following holds for all large $N$
\begin{equation}\label{linek2}
  \tilde \bbP_N\left( \prod_{i=1}^k (1+\eta_i)  \ind_{\{  N^{-\gamma} V_N \leq  1+\eta_i  < qV_N\}} \ge t  (q V_N)^k \right) 
 \le   (C_{b,\hat \beta})^k q^{-\gep k}   \,  t^{-\gep}  N^{-k} \, .
\end{equation}
To prove~\eqref{linek2}, we observe that similarly to \eqref{insidelekalkul}, for large values of $N$ the l.h.s.~of~\eqref{linek2} is bounded by
\begin{equation}
\label{linek22}
\bbE\big[1+ \gb_N\eta^{[0,b)} \big]^{-k} (2\beta_N)^{k} \bbE\bigg[  \Big(\prod_{i=1}^k (1+\eta_i) \ind_{\{  (1+\eta_i)  < bV_N\}}\Big) \ind_{\{\prod_{i=1}^k (1+\eta_i)\geq t  (q V_N)^{k} \}} \bigg] \, .
\end{equation}
Similarly to \eqref{apreslekalkul} we then have that it is bounded by
\begin{equation}
\begin{split}
(C \beta_N)^k\bbE\bigg[  \Big(\prod_{i=1}^k   (1+\eta_i)  & \ind_{\{  (1+\eta_i)  < bV_N\}}\Big) \ind_{\{\prod_{i=1}^k (1+\eta_i)\geq t (qV_N)^k \}} \bigg]\\
&\le (C'\beta_N)^k
 \int_{[0,2 b V_N)^k } 
   \ind_{\big\{ \prod_{i=1}^k  u_i  \ge t   (qV_N)^k \big\} }  \prod_{i=1}^k \frac{ \varphi(u_i)}{u_i}  \dd u_i  
   \\
&\le (C'_{b,\hat \gb} N^{-1})^k  \int_{[0,2 b)^k } 
   \ind_{\big\{ \prod_{i=1}^k  v_i  \ge t   q^k \big\} }  \prod_{i=1}^k v^{-1-\gep/2}_i \dd v_i  \, , 
   \end{split}
\end{equation}
where we used a change of variable and the fact that 
$\varphi(V_N u_i) \leq C_b \varphi(V_N) u_i^{-\gep/2}$ uniformly for 
$u_i \in(0,2b)$, by Potter's bound.
Then, \eqref{linek2} finally  follows from \cite[Equation $(4.32)$]{BL20_cont} applied to $h= t (q/b)^k$ (note that when $h \geq 1$ then~\eqref{linek22} is equal to $0$).
 \qed

\subsection{Proof of Proposition~\ref{secondmt}}\label{sec:sm}

We need to control the value of 
\begin{equation}
   \bbE[(W^{a,q}_N-W^{ 0,q}_N)^2]= \bbE[(W^{ a,q}_N)^2] -2\bbE[W^{a,q}_N W^{ 0,q}_N ]+\bbE[(W^{ 0,q}_N)^2].
\end{equation}
and prove that it converges to $0$ when $a$ tends to $0$, uniformly in $N$
(when $\alpha=1$ we must multiply this by $e^{-2\hat \beta\gamma_N}$).
This is the most delicate part of the proof. For didactic purpose and for the sake of making computations more readable, we first show that 
$\bbE[(W^{ a,q}_N)^2]$ is uniformly bounded in $a$ and $N$ when $\alpha\in(1,\ga_c)$. In the case $\ga=1$ we bound $e^{-\hat \gb \gamma_N} \bbE[(W^{ a,q}_N)^2]$.
While this is not a required intermediate step, most of the computation made to prove this are going to be recycled for the actual proof of the Proposition~\ref{secondmt}.

\subsubsection{The second moment is uniformly bounded}

Let us prove the following estimate.
\begin{lemma}\label{lepetilem}
 We have for any $q\ge 1$
 \begin{equation}\label{boond}
  \sup_{a\in [0,1)}\sup_{N\ge 1}e^{-2\hat \beta\gamma_N \ind_{\{\ga=1\}}} \bbE\big[ (W^{ a,q}_N)^2 \big] <\infty.
 \end{equation}
\end{lemma}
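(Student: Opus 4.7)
The plan is to develop $\bbE[(W_N^{a,q})^2]$ via two independent walks, integrate out the disorder at non-overlap times, and control the remaining constrained expectation using a Taylor-type expansion combined with the restriction $B_{N,q}$, in the spirit of Lemma~\ref{firstmt}.

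Since $\eta^{(a)}_{n,S_n}=\eta^{[a,q)}_{n,S_n}$ on $B_{N,q}(S)$, one has $W_N^{a,q}=\bE\big[\prod_n(1+\beta_N\eta^{[a,q)}_{n,S_n})\ind_{B_{N,q}(S)}\big]$. I develop the second moment with two independent walks $S^{(1)},S^{(2)}$, drop $\ind_{B_{N,q}(S^{(2)})}$ for an upper bound, and split indices into the overlap $I:=\{n: S^{(1)}_n=S^{(2)}_n\}$ and its complement $J$. For $n\in J$, the variable $\eta_{n,S^{(2)}_n}$ is independent of the remaining disorder and of $\ind_{B_{N,q}(S^{(1)})}$, and can be integrated out, each contributing a factor $m:=\bbE[1+\beta_N\eta^{[a,q)}]$. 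Writing $\eta_n:=\eta_{n,S^{(1)}_n}$ (i.i.d.\ in $n$) and letting $\widehat\bbP_N$ be the tilted law on $(\eta_n)_{n=1}^N$ with density $m^{-N}\prod_n(1+\beta_N\eta^{[a,q)}_n)$, one arrives at
\begin{equation*}
\bbE[(W_N^{a,q})^2]\le m^{2N}\,\bE^{\otimes 2}\Big[\widehat\bbE_N\Big[m^{-|I|}\prod_{n\in I}(1+\beta_N\eta^{[a,q)}_n)\,\ind_{B_{N,q}}\Big]\Big],
\end{equation*}
with $B_{N,q}$ as in~\eqref{def:BNqbis}. The prefactor $m^{2N}$ is bounded when $\alpha\in(1,2)$ since $m=1+O(N^{-1})$ by~\eqref{moments2}, and equals $e^{2\hat\beta\gamma_N}\cdot O(1)$ when $\alpha=1$ by~\eqref{lawrong}, so the $e^{-2\hat\beta\gamma_N\ind_{\{\alpha=1\}}}$ in the statement exactly absorbs it.

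To control the remaining double expectation I set $\tilde f_n:=(1+\beta_N\eta^{[a,q)}_n)/m$ and use the identity $\prod_{n\in I}\tilde f_n=\sum_{K\subseteq\lint 1,N\rint}\ind_{\{K\subseteq I\}}\prod_{k\in K}(\tilde f_k-1)$. Exchanging sums and expectations reduces the bound to a sum over $K$ of $\bP^{\otimes 2}(S^{(1)}_k=S^{(2)}_k,\forall k\in K)\,\widehat\bbE_N\big[\prod_{k\in K}(\tilde f_k-1)\ind_{B_{N,q}}\big]$. By the local central limit theorem applied to the difference walk, $\bP^{\otimes 2}(\cdots)\le C^{|K|}\prod_j(k_j-k_{j-1})^{-d/2}$ for $K=\{k_1<\dots<k_{|K|}\}$. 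The tilted expectation is controlled by decomposing each factor $\tilde f_k-1$ according to whether $1+\eta_k\ge aV_N$ or not (on the first event $|\tilde f_k-1|\lesssim\beta_N(1+\eta_k)$; on the second $|\tilde f_k-1|\lesssim N^{-1}$) and applying the defining inequality of $B_{N,q}$ to $\prod_{k\in K'}(1+\eta_k)$ for every subset $K'\subseteq K$ of ``large'' indices. After summing over $K'$, using $\beta_N qV_N\asymp N^{d/2}$ from~\eqref{uzefulrel}, and rescaling times to $[0,1]$, one obtains a series $\sum_{k\ge 0}(C\hat\beta q)^k \int_{0<s_1<\cdots<s_k<1}\prod_j(s_j-s_{j-1})^{\gamma-d/2}\,\dd s$, convergent by the choice $\gamma<d/2$ from~\eqref{lesdeuxconds} and evaluated via Lemma~\ref{lemGamma}.

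The main obstacle lies in the combinatorial bookkeeping of the decomposition $K'\subseteq K$: the product $\Pi_{K'}$ entering the $B_{N,q}$ bound depends on the gaps between successive ``large'' indices of $K'$, and these gaps must be combined against the intersection probability $\prod_j(k_j-k_{j-1})^{-d/2}$ so that the final integrand has an integrable singularity on each time increment. Both conditions of~\eqref{lesdeuxconds} are used here: $\gamma<d/2$ for the resulting exponent to be negative, and $\gamma(\alpha-1)<1$ to keep the Beta integral well-defined after combining with the $\beta_N(1+\eta_k)$ contribution at the ``large'' indices. This is the two-walks analog of Lemma~\ref{firstmt} and will be recycled to prove Proposition~\ref{secondmt} with the extra work of comparing $W_N^{a,q}$ to $W_N^{0,q}$.
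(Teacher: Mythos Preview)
Your scheme has a genuine gap at the step where you bound the tilted expectation using only the deterministic constraint from $B_{N,q}$. Taking the leading case $K'=K$ in your decomposition, the bound $\prod_{k\in K}(1+\eta_k)\le(qV_N)^{|K|}(N^{-|K|}\Pi_K)^\gamma$ combined with $|\tilde f_k-1|\lesssim\beta_N(1+\eta_k)$ yields a contribution $(C\beta_N qV_N)^{|K|}(N^{-|K|}\Pi_K)^\gamma\asymp(CqN^{d/2})^{|K|}(N^{-|K|}\Pi_K)^\gamma$. After multiplying by $\bP^{\otimes 2}(K\subset I)\le C^{|K|}\Pi_K^{-d/2}$ and summing over $K$ of size $k$, the powers of $N$ combine as $N^{k(d/2-\gamma)}\cdot N^{k(1+\gamma-d/2)}=N^{k}$, which diverges. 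Your claimed final series $\sum_k(Cq)^k\int\prod_j(s_j-s_{j-1})^{\gamma-d/2}\,\dd s$ is therefore missing a factor $N^k$; equivalently, you are missing an $N^{-1}$ per index in~$K$. Attempting to recover it by also multiplying by the tilted probability $\widehat\bbP_N(1+\eta_k\ge aV_N)$ does not help uniformly in $a$: for $\alpha>1$ that probability is of order $a^{1-\alpha}N^{-1}$, and likewise your ``on-small'' bound $|\tilde f_k-1|\lesssim N^{-1}$ hides a factor $\kappa_a\asymp a^{1-\alpha}$ (cf.~\eqref{meaneta}), both blowing up as $a\downarrow 0$.

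The paper avoids this by not tilting and by expanding the square differently: one writes $(1+\beta_N\eta_i^{(a)})^2=(1+2\beta_N(1-\beta_N)\eta_i^{(a)}-\beta_N^2)+\beta_N^2(1+\eta_i^{(a)})^2$ and sums over the subset $J\subset I$ of indices receiving the quadratic piece. The linear piece integrates to at most $1$ (since $\bbE[\eta^{(a)}]=0$), while the key estimate, Lemma~\ref{lekalkul2}, bounds $\beta_N^{2|J|}\,\bbE\big[\prod_{j\in J}(1+\eta_j^{(a)})^2\ind_{\hat B_{N,q}(J)}\big]$ by $C^{|J|}N^{|J|(d/2-1)}t^{2-\alpha-\gep}$ with $t=(N^{-|J|}\Pi_J)^\gamma$. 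The exponent $2-\alpha$ comes from \emph{integrating} $(1+\eta)^2$ against the $\alpha$-tail (Proposition~\ref{lestartines}), not from the deterministic constraint, and it is precisely what produces the missing $N^{-1}$ per factor and the correct time-increment exponent $\gamma(2-\alpha-\gep)-d/2$, whose integrability is the first condition in~\eqref{lesdeuxconds}. Your exponent $\gamma-d/2$ and your invocation of $\gamma<d/2$ (irrelevant for integrability; one needs $\gamma>d/2-1$) and of $\gamma(\alpha-1)<1$ (the first-moment condition, not the second-moment one) are symptoms of this mismatch.
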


\begin{proof}
Let us first treat the case $\alpha\in(1,\alpha_c)$; we deal with the case $\alpha=1$ at the end of the proof.
First of all, notice that by definition~\eqref{def:WNaq} of $W_N^{a,q}$ we have
 \begin{equation}\label{vlap}
  \bbE[(W^{a,q}_N)^2] =  \bE^{\otimes 2}\bbE\bigg[ \prod_{n=1}^N \big(1+\beta_N \eta^{(a)}_{n,S^{(1)}_n} \big) \big(1+\beta_N \eta^{(a)}_{n,S^{(2)}_n}\big) \ind_{\big\{B_{N,q}(S^{(1)})\cap B_{N,q}(S^{(2)})\big\}}\bigg] \, ,
 \end{equation}
 where $\bE^{\otimes 2}$ is the expectation with respect to  two independent walks $S^{(1)}$ and $S^{(2)}$.
Now we can consider the set $I_N(S^{(1)},S^{(2)}):=\{ n\in \lint 1, N\rint, S^{(1)}_n=S^{(2)}_n  \}$. 
As we are interested in an upper bound we can replace 
$B_{N,q}(S^{(1)})\cap B_{N,q}(S^{(2)})$ by the larger event
\begin{equation}\label{largerevent}
\bigg\{ \forall  I\subset I_N (S^{(1)},S^{(2)}) , \,  \prod_{i\in I}   (1+\eta_{i,S_i})  \le q^{|I|} V_N^{|I|} \big(N^{-|I|} \Pi_I \big)^{\gamma} \bigg\} \, .
\end{equation}
The expectation with respect to $\eta_{n,S_n^{(1)}} ,\eta_{n,S_n^{(2)}}$ then simplifies for $n\notin I_N(S^{(1)},S^{(2)})$, and we obtain 
 \begin{equation}
    \bbE \big[ (W^{ a,q}_N)^2 \big] \le \bE^{\otimes 2} \big[ H\big(  I_N(S^{(1)},S^{(2)})  \big) \big]\, ,
 \end{equation}
where for $I\subset \lint 1, n \rint$ we define
\begin{equation}
\label{def:H}
 H( I):= \bbE \bigg[ \prod_{i\in I} \big(1+\beta_N \eta_i^{(a)} \big)^2 \ind_{\tilde B_{N,q} (I)}\bigg] \, ,
\end{equation}
with $\tilde B_{N,q} (I)$  defined by
\begin{equation}
\label{def:tildeB}
 \tilde B_{N,q}(I):= \Big\{ \forall I' \subset I, \  \ 
  \prod_{i\in I'}  (1+\eta_i) < q^{|I'|}  V_N^{|I'|} \big(N^{-|I'|} \Pi_{I'}\big)^{\gamma}  \Big\}\, .
\end{equation}
In \eqref{def:H}, the $(\eta_{i})_{i\ge 1}$ are i.i.d.\
random variables with the same distribution as $\eta_{n,x}$.
Writing 
\[
(1+\beta_N \eta_i^{(a)} )^2 =\big( 1+2 \gb_N (1-\gb_N) \eta_i^{(a)}   -\gb_N^2 \big) + \gb_N^2 \big(1+\eta_i^{(a)} \big)^2
\] 
and expanding the product in $H(I)$, we obtain 
$H(I) = \sum_{J\subset I} \gb_N^{2|J|} H(I,J)$ with 
\begin{align}
\label{expansion1}
 H(I,J) := 
 \bbE \bigg[ \prod_{i\in  I\setminus J} \big( 1+ 2\beta_N (1-\gb_N) \eta^{(a)}_i-\beta^2_N \big) \prod_{j\in J}   (1+\eta^{(a)}_j)^2  \ind_{\tilde B_{N,q} (I)}\bigg] \, .
 \end{align}
Denoting $p_I := \bP^{\otimes 2} ( I_N(S^{(1)},S^{(2)}) = I )$,
we therefore get that
\begin{equation}
\label{eq:EW2notlast}
\bbE[(W^{a,q}_N)^2] \leq  \sum_{I\subset \lint 1, N \rint} p_I \sum_{J\subset I} \gb_N^{2|J|} H(I,J) \, .
\end{equation}
Now, for $J\subset I$, we define
\begin{equation}\label{defdehat}
 \hat B_{N,q} (J) = \bigg\{   \prod_{i \in J} (1+\eta_i) < (qV_N)^{|J|} \big(N^{-|J|} \Pi_{J} \big)^{\gamma}\bigg\} \cap \left\{ \forall i \in J, (1+\eta_i) < q V_N \right\}.
\end{equation}
Of course $\tilde B_{N,q}(I) \subset  \hat B_{N,q} (J)$ since
the bound on $\eta_i$ is obtained when
considering $I'=\{ i\}$ in~\eqref{def:tildeB}.
We therefore get
\begin{equation}
\label{eq:HIJ}
\begin{split}
H(I,J)& \le \bbE \bigg[ \prod_{i\in  I\setminus J} (1+ 2\beta_N (1-\gb_N) \eta^{(a)}_i -\gb_N^2) \prod_{j\in J} \big(1+\eta^{(a)}_j \big)^2\ind_{ \hat B_{N,q} (J)}\bigg]
 \\
 & \leq   (1-\gb_N^2)^{|I|-|J|}     \bbE \Big[  \prod_{i\in J}(1+ \eta_j^{(a)})^2\ind_{ \hat B_{N,q} (J)}\Big] \, .
 \end{split}
 \end{equation}
Overall, using that for any fixed $J$ we have that $\sum_{I \supset J} p_I = \bP^{\otimes 2} ( \forall n\in J, \  S^{(1)}_n=S^{(2)}_n )$,
we obtain that 
\begin{equation}
\label{eq:EW2last}
\begin{split}
   \bbE\big[ (W^{ a,q}_N)^2 \big] & \le \sum_{J\subset \lint 1, N\rint}  
   \beta^{2|J|}_N\bbE \Big[  \prod_{i\in J}(1+ \eta_j^{(a)})^2\ind_{ \hat B_{N,q} (J)}\Big] \bP^{\otimes 2} \big( \forall n\in J, \  S^{(1)}_n=S^{(2)}_n \big)   \\
&   \le  \sum_{J\subset \lint 1, N\rint}  \beta^{2|J|}_N 
   \bbE \Big[   \prod_{i\in J}(1+ \eta_j^{(a)})^2\ind_{ \hat B_{N,q} (J)}\Big]  \, C_0^{|J|} (\Pi_J)^{-\frac d2} \, ,
 \end{split}
\end{equation}
the last line being a simple application of the local Central Limit Theorem.
Now we use  the following estimate which will allow us to conclude  (its proof is postponed).

\begin{lemma}
\label{lekalkul2}
Fix $0<\gep< 2-\ga$. Then
for any $k\ge 1$ and any $t \in (0,1)$, there exists some  $ C_1=   C_{\ga, \hat \gb, q, \gep}$
such that
\begin{equation}
\label{2ndmoment}
  \beta^{2k}_N \, \bbE\bigg[ \prod_{i=1}^k  \big(1+\eta^{(a)}_i\big)^2  \ind_{\big\{ \prod_{i=1}^k  (1+\eta_i) \le t (qV_N)^{k} \big\}}\, \ind_{\{ \forall i\in \lint 1,k \rint, (1+\eta_i)\le  qV_N\}} \bigg] 
 \le     C_1^{k} N^{k(\frac{d}{2}-1)} t^{2-\alpha-\gep} .
\end{equation}
\end{lemma}

Now, using Lemma~\ref{lekalkul2} with $t=  (N^{-|J|} \Pi_J)^{\gamma} =  \prod_{i=1}^{|J|}\big(\frac{ j_i-j_{i-1}}{ N}  \big)^\gamma$  (recall the definition~\eqref{defdehat} of $\hat B_{N,q}(J)$), 
 we finally obtain in \eqref{eq:EW2last} 
\begin{equation}\begin{split}\label{compasumint}
    \bbE[(W^{ a,q}_N)^2]
  &  \le \sum_{k\ge 0} \frac{(C_0 C_1)^k}{N^k}   \sum_{1\le j_1<\dots<j_k\le N} \prod_{i=1}^k\left(\frac{ j_i-j_{i-1}}{ N}  \right)^{\gamma(2-\alpha-\gep)-\frac{d}{2}}\\ 
  & \le \sum_{k\ge 0} (C')^k \int_{0\le t_1<\dots<t_k\le 1}\prod_{j=1}^k\left({ t_j-t_{j-1}} \right)^{  -\tau}\dd t_j \,,
\end{split}
\end{equation}
 where we have set $\tau = d/2 -\gamma(2-\alpha-\gep)$.
We conclude   using  Lemma~\ref{lemGamma},
assuming that~$\gep$ has been fixed small enough so that $\tau <1$, which establishes that
the last integral is equal to $\frac{\Gamma(1- \tau)^k}{ \Gamma( 1+ k (1-\tau))}$.
Hence, the sum in~\eqref{compasumint} is finite.

\smallskip

\noindent \textit{The case $\alpha=1$.}
 We can proceed exactly in the same way, except that instead of~\eqref{vlap} we start with 
 \begin{equation}\label{vlap2}
\bbE[(W^{a,q}_N)^2] =  \bE^{\otimes 2}\bbE\bigg[ \prod_{n=1}^N \big(1+\beta_N \eta^{[a,q)}_{n,S^{(1)}_n} \big) \big(1+\beta_N \eta^{[a,q)}_{n,S^{(2)}_n}\big) \ind_{\big\{B_{N,q}(S^{(1)})\cap B_{N,q}(S^{(2)})\big\}}\bigg] \, .
 \end{equation}
 Then we replace $B_{N,q}(S^{(1)})\cap B_{N,q}(S^{(2)})$ by the larger event in Equation \eqref{largerevent}: when integrating over $\eta_{n,S^{(i)}_n}$ for $n\notin I= I_N(S^{(1)},S^{(2)})$,
 this adds a factor
 \begin{equation}
 \label{eq:integrateonIc}
  \bbE\big[  1+\beta_N \eta^{[a,q)} \big]^{2(N- |I|)}   \, .
 \end{equation}
 Using \eqref{lawrong},
 we therefore can replace \eqref{eq:EW2notlast}  with
 \begin{align*}
 e^{-2 \hat \gb \gamma_N} \bbE[(W^{a,q}_N)^2] 
 \leq  C_q \sum_{I\subset \lint 1,N \rint}   p_I e^{-2\hat \gb \gamma_N   |I|/N}  \sum_{J\subset I} \gb_N^{2 |J|} H(I,J) \, .
\end{align*}
 Using again \eqref{lawrong} to bound above $\bbE\big[  1+2 \beta_N (1-\gb_N) \eta^{[a,q)}  -\gb_N^2\big]$, we may replace~\eqref{eq:HIJ}
  with
 \begin{equation}
 \label{eq:HIJalpha1}
 H(I,J) \leq   C'_q e^{ 2 \hat \gb \gamma_N N^{-1} (|I|-|J|) } \bbE \Big[  \prod_{i\in J}(1+ \eta_j^{(a)})^2\ind_{ \hat B_{N,q} (J)}\Big] \, .
\end{equation}
 We finally end up with the following inequality  in place of~\eqref{eq:EW2last}:
 \begin{align*}
 e^{-2 \hat \gb \gamma_N}  \bbE\big[ (W^{ a,q}_N)^2 \big]  &\le C''_q\sum_{J\subset \lint 1, N\rint}  
 \big( C_0 e^{- 2\hat \gb \gamma_N N^{-1} } \big)^{|J|}  \beta^{2|J|}_N  (\Pi_J)^{-\frac d2}\bbE \Big[  \prod_{i\in J}(1+ \eta_j^{(a)})^2\ind_{ \hat B_{N,q} (J)}\Big] \,.
 \end{align*}  
Since $e^{- 2\hat \gb \gamma_N N^{-1}}$ is bounded above by a constant (recall that $\gamma_N$ is slowly varying), it can be omitted if one changes the value of $C_0$.
 We can then conclude similarly  using Lemma~\ref{lekalkul2} and
 the following computations: we get that  $\sup_{N\in \bbN} e^{- 2\hat \gb \gamma_N}\bbE[(W^{a,q}_N)^2] \leq C_q$ for a constant $C_q$ that depends only on $q$.
\end{proof}

\begin{proof}[Proof of Lemma \ref{lekalkul2}]
Note that there exists a constant $C$ such that for every $a\in [0,1)$ and any $t \geq 0$, we have
\[
 \bbP\big( 1+\eta^{(a)} \ge t \big)\le C \gp(t) t^{-\alpha} \, .
\]
Hence we can apply Proposition \ref{lestartines} with $\mu$ the distribution of $1+\eta^{(a)}$, with a constant~$C$ that does not depend on $a$. After applying Proposition~\ref{lestartines}
the l.h.s.~of~\eqref{2ndmoment} is thus bounded by
\begin{multline*}
 \int_{\bbR_+} \ind_{\big\{
  \prod_{i=1}^k  u_i \le t(qV_N)^k \big \}}\,  \ind_{\{  \forall i\in  \lint 1,k \rint, u_i < qV_N \}} \prod_{i=1}^k u^{1-\alpha}_i  \gp(u_i) \dd u_i
  \\
   \leq (c_q)^k V_N^{k(2-\ga)} \gp(V_N)^k \int_{(0,q)^k} \ind_{\big\{
  \prod_{i=1}^k v_i \le t q^k\big\}} \prod_{i=1}^k v^{1-\alpha -\frac\gep2}_i \dd v_i \, .
\end{multline*}
where we used a change of variable, together with the fact that $\gp( V_N v) /\gp(V_N) \leq  c_q v^{-\gep/2}$ for all $N$ and $v\in (0,q)$, by Potter's bound.
The last integral has been studied in~\cite{BL20_cont}:
from \cite[Equations~$(4.34)$-$(4.35)$]{BL20_cont}, we get that 
it is bounded above by $(C_{\ga,\gep,q})^k t^{2-\ga -\gep}$.
We therefore end up with a constant $C_{\ga,\gep,q}'$ such that
the l.h.s.~of~\eqref{2ndmoment} is bounded by
\begin{multline*}
 \beta^{2k}_N \, \bbE\bigg[ \prod_{i=1}^k  \big(1+\eta^{(a)}_i\big)^2  \ind_{\big\{ \prod_{i=1}^k \eta_i \le t(qV_N)^k \big\}}\, \ind_{\{ \forall i\in \lint 1,k \rint, \eta_i<  qV_N\}} \bigg] 
 \\
 \le    (C_{\ga,q,\gep}' \gb_N^{2} V_N^{(2-\ga)} \gp(V_N) \big)^k  t^{2-\alpha-\gep} \, .
\end{multline*}
Together with~\eqref{uzefulrel}, this yields the conclusion.
\end{proof}

\subsubsection{Convergence of the second moment}\label{convsecmom}

Building on the techniques that we used to prove Lemma~\ref{lepetilem}, we are going to prove Proposition \ref{secondmt}. 
It follows from the following convergence results 
\begin{equation}
\label{limsupsecondmoment}
 \begin{split}
&\lim_{a\to 0} \sup_{N\ge 1} \, e^{- 2\hat \gb \gamma_N \ind_{\{\ga=1\}}}\big| \bbE[(W^{ a,q}_N)^2]- \bbE[(W^{ 0,q}_N)^2] \big|=0,\\
&\lim_{a\to 0} \sup_{N\ge 1}\, e^{- 2\hat \gb \gamma_N \ind_{\{\ga=1\}}} \big| \bbE[W^{ a,q}_N W^{ 0,q}_N]- \bbE[(W^{ 0,q}_N)^2] \big| =0\, .
\end{split}
\end{equation}
Again, we focus the exposition on the case $\ga\in (1,\ga_c)$ and we comment on the case $\ga=1$ along the proof.
Also, we focus on estimating $\bbE[(W^{ a,q}_N)^2]- \bbE[(W^{ 0,q}_N)^2]$ since the other convergence is proved similarly. Notice however that unlike in Section \ref{case1dim} we do not in general have $\bbE[W^{ a,q}_N W^{ 0,q}_N]= \bbE[(W^{ a,q}_N)^2]$,
since $W^{ a,q}_N$ is not a martingale in $a$.
We want to bound 
\begin{align*}
\bE^{\otimes 2}\bbE\Big[ \prod_{n=1}^N 
&\big(1+\beta_N \eta^{(a)}_{n,S^{(1)}_n} \big) \big(1+\beta_N \eta^{(a)}_{n,S^{(2)}_n} \big)
\ind_{\{B_{N,q}(S^{(1)})\cap B_{n,q}(S^{(2)})\}}\Big]
\\
&-\bE^{\otimes 2}\bbE\Big[ \prod_{n=1}^N 
\big(1+\beta_N \eta_{n,S^{(1)}_n} \big) \big(1+\beta_N \eta_{n,S^{(2)}_n} \big)
\ind_{\{B_{n,q}(S^{(1)})\cap B_{N,q}(S^{(2)})\}}\Big].
\end{align*}
Let us stress that from the definition of $B_{N,q}(S)$, we can replace $\eta^{(a)}$ with $\eta^{[a,q)}$
(and $\eta = \eta^{(0)}$ with~$\eta^{[0,q)}$).
Note that the expectations with respect to
$\eta$ and $\eta^{(a)}$ depend
on  $S^{(1)}$, $S^{(2)}$  only via the set
$I_N(S^{(1)}, S^{(2)}) = \{n\in \lint 1,n \rint \, : \, S^{(1)}_n=S^{(2)}_n\}$.
We use this fact to simplify the expression. 
Let  $(\eta_n)_{n\geq 0}$, $(\eta_{n,1})_{n\geq 0}$ and $( \eta_{n,2})_{n\geq 0}$ be i.i.d.\ random variables with the same distribution as $\eta$, and let
$p_I:=\bP^{\otimes 2} ( I_N(S^{(1)}, S^{(2)}) = I )$.
Then, denoting $I^{\cc}=\lint 1,N\rint\setminus I$, we can rewrite the above quantity as
\begin{multline}\label{3sequences}
 \sum_{I\subset \lint 1, N\rint } p_I \bigg(\bbE \Big[  \prod_{n\in I^{\cc} }  \big(1+\beta_N  \eta_{n,1}^{(a)} \big) \big(1+\beta_N  \eta_{n,2}^{(a)} \big) \prod_{n\in I}
 \big(1+\beta_N \eta^{(a)}_n\big)^2    \ind_{\bar B_{N,q}(I)}\Big]\\
 - \bbE \Big[   \prod_{n\in I^{\cc}} \big(1+\beta_N  \eta_{n,1} \big) \big(1+\beta_N   \eta_{n,2} \big) \prod_{n\in I}
 \big(1+\beta_N \eta_n\big)^2    \ind_{\bar B_{N,q}(I)}\Big]\bigg),
\end{multline}
where $\bar B_{N,q}(I)$ is defined as
\begin{equation}\label{barbie}
 \bigg\{ \forall K \subset \lint 1, N \rint, \forall r\in \{1,2\}  , \,     \prod_{i \in  I^{\cc} \cap K } (1+\eta_{i,r})  \prod_{i \in I\cap K} (1+\eta_i) < (qV_N)^{|K|}  \big( N^{- |K|} \Pi_{K} \big)^{\gamma}   \bigg\} \, .
\end{equation}
We can perform a second decomposition of \eqref{3sequences} by expanding the squares and developing the products, as in~\eqref{expansion1}-\eqref{eq:EW2notlast}.
We obtain the following upper bound
\begin{equation}
\label{hereisy}
\left| \bbE[(W^{ a,q}_N)^2]- \bbE[(W^{ 0,q}_N)^2] \right| \leq 
\sum_{I\subset \lint 1,N \rint} \sum_{J \subset I} p_I \gb_N^{2 |J|}  \big| \tilde H^{(a)}(I,J)  - \tilde H (I,J) \big|
\end{equation}
where for $J\subset I$ we have defined $ \tilde H^{(a)} (I,J)$ to be equal to
\begin{multline}
\label{def:tildeH}
\bbE \Big[ \prod_{n\in I^{\cc}}  \big(1+\beta_N  \eta_{n,1}^{(a)}   \big) \big(1+\beta_N  \eta_{n,2}^{(a)}  \big) \prod_{n\in I\setminus J} \big( 1+ 2\beta_N (1-\gb_N) \eta^{(a)}_{n}  -\gb_N^2 \big) 
   \prod_{n\in J} (1+\eta_n^{(a)})^2  \ind_{\bar B_{N,q}(I)}\Big] \, ,
\end{multline}
and $\tilde H(I,J) = \tilde H^{(0)}(I,J)$ (recall that $\eta^{(0)} =\eta$).
In the case $\ga=1$, we define similarly $\tilde H^{(a)} (I,J)$ using $\eta^{[a,q)}$
 in place of $\eta^{(a)}$ (and $\eta^{[0,q)}$
 in place of $\eta$ in $\tilde H(I,J)$).

\smallskip
We are now going to prove upper bounds for
every term in the r.h.s.~of \eqref{hereisy}.
Let us fix some small $\gd>0$.
We will use different estimates for the sets $J$ whose points are 
macroscopically $\delta$-spaced, that is sets $J$ belonging to
\[
\Xi(\gd,N):= \big\{ J \subset \lint 1, N\rint \colon \text{for all } \{j,j'\} \subset J\cup\{0,N\} \text{ we have } |j'-j|\ge \gd N  \big\} \,,
\]
and for the sets $J$ with at least a pair of points at distance smaller than $\delta N$.
The most difficult part will consist in estimating the contribution of
sets $J\in \Xi (\gd,N)$.
By convention, the empty set $J=\emptyset$ belongs to $\Xi(\gd,N)$.
Also, we denote $(j_i)_{i=1}^{|J|}$ the ordered points of $J$,
with by convention $j_0=0$.

\medskip
\noindent
{\it (a) Estimate for sets $J\notin \Xi(\gd,N)$.} 
For sets $J$ that are not macroscopically $\delta$-spaced,
we can use 
the computations made in the previous section to
prove that their contribution to the sum in~\eqref{hereisy} is negligible.

\begin{lemma}\label{smallcontrib}
For every $q \geq 1$ and $0<\gep<2-\ga$ fixed (with $d/2- \gamma(2-\ga-\gep)<1$), there exists a constant $C =C_{\ga,\hat \gb, q,\gep}>0$ such that for all $N\geq 1$, 
 $J\subset \lint 1,N\rint$, and
$a\in [0,1)$  we have
 \begin{equation}\label{fgts}
  e^{- 2\hat \gb \gamma_{N} \ind_{\{\ga=1\}}}  \sumtwo{I \subset \lint 1, N\rint}{I\supset J} p_I \beta^{2|J|}_N  \tilde H^{(a)}(I,J) \le  \left(\frac{C}{N}\right)^{|J|} \prod_{i=1}^{|J|}   \left(\frac{ j_i-j_{i-1}}{ N}  \right)^{\gamma(2-\alpha-\gep)-\frac{d}{2}}. 
 \end{equation}
 As a consequence, for all $\zeta>0$ and any $q\geq 1$,
 there exists $\gd = \gd(\epsilon,q)$ such that 
 \begin{equation}\label{lespetits}
\sup_{a\in [0,1)}  \sup_{N\in \bbN}    e^{- 2\hat \gb \gamma_{N} \ind_{\{\ga=1\}}}  \sum_{I\subset \lint 1, N\rint } \sumtwo{J\subset I}{J\notin \;\Xi(\gd,N)}  p_I \beta^{2|J|}_N\big| \tilde H^{(a)}(I,J)  - \tilde H (I,J) \big|\le \zeta \, .
 \end{equation}
\end{lemma}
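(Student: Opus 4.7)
The plan is to mimic the computations in the proof of Lemma~\ref{lepetilem} to establish the bound~\eqref{fgts}, and then to combine this bound with a Riemann sum approximation to derive~\eqref{lespetits}.

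For the first bound~\eqref{fgts}, we start from the definition~\eqref{def:tildeH} of $\tilde H^{(a)}(I,J)$ and first observe that $\ind_{\bar B_{N,q}(I)} \leq \ind_{\hat B_{N,q}(J)}$, which holds since taking $K \subset J$ in~\eqref{barbie} recovers exactly the constraints defining $\hat B_{N,q}(J)$. Since $\ind_{\hat B_{N,q}(J)}$ depends only on $(\eta_n)_{n \in J}$, the expectation in~\eqref{def:tildeH} factorizes over the remaining variables: for $n \in I^{\cc}$ each $\bbE[1+\beta_N \eta_{n,r}^{(a)}] = 1$ when $\ga \in (1, \ga_c)$ by our choice of $\kappa_N^{(a)}$, while for $n \in I\setminus J$ one has $\bbE[1+2\beta_N(1-\gb_N)\eta_n^{(a)} - \gb_N^2] = 1 - \gb_N^2 \leq 1$. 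This yields $\tilde H^{(a)}(I,J) \leq \bbE[\prod_{j\in J}(1+\eta_j^{(a)})^2 \ind_{\hat B_{N,q}(J)}]$. The local central limit theorem then gives $\sum_{I \supset J} p_I \leq \bP^{\otimes 2}[\forall n \in J,\, S_n^{(1)} = S_n^{(2)}] \leq C_0^{|J|} \prod_{i=1}^{|J|} (j_i - j_{i-1})^{-d/2}$, and applying Lemma~\ref{lekalkul2} with $t = (N^{-|J|}\Pi_J)^\gamma$ exactly as in~\eqref{eq:EW2last}--\eqref{compasumint} produces~\eqref{fgts}. For $\ga=1$, one substitutes $\eta^{[a,q)}$ in place of $\eta^{(a)}$ (legitimate because $\ind_{\hat B_{N,q}(J)}$ forces $1+\eta_j < qV_N$) and uses~\eqref{lawrong} to control the free-variable expectations; the resulting $e^{C(N-|J|)\hat\gb\gamma_N/N}$ factor is absorbed by the $e^{-2\hat\gb\gamma_N}$ prefactor, exactly as at the end of the proof of Lemma~\ref{lepetilem}.

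For the consequence~\eqref{lespetits}, the triangle inequality applied to~\eqref{fgts}, valid uniformly for $a \in [0,1)$ and in particular at $a=0$, yields
\[
e^{- 2\hat\gb\gamma_N \ind_{\{\ga=1\}}}\sum_{I\supset J} p_I \gb_N^{2|J|} |\tilde H^{(a)}(I,J) - \tilde H(I,J)| \leq 2\bigl(C/N\bigr)^{|J|}\prod_{i=1}^{|J|} \bigl((j_i-j_{i-1})/N\bigr)^{-\tau}
\]
with $\tau := d/2 - \gamma(2-\ga-\gep) < 1$ for $\gep$ small enough, by~\eqref{lesdeuxconds}. Summing this over $J\notin \Xi(\gd,N)$ and performing a union bound over the index $i \in \{0,\ldots,|J|\}$ of the offending close pair (with the convention $j_0=0$, $j_{|J|+1}=N$) reduces the problem to estimating a Riemann sum that converges to an integral analogous to~\eqref{compasumint}, but restricted either to $t_{i+1}-t_i<\gd$ for some $i<|J|$, or to $t_{|J|} > 1-\gd$. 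After the change of variables $s_i = t_i - t_{i-1}$, the first type of restriction contributes a factor $\int_0^\gd s^{-\tau}ds = \gd^{1-\tau}/(1-\tau)$ times a Dirichlet integral, while the second type (with no singularity near $s_i=0$) contributes $(1-(1-\gd)^{k(1-\tau)})\gG(1-\tau)^k/\gG(1+k(1-\tau))$ by a scaling argument, which is at most $k(1-\tau)\gd \cdot \gG(1-\tau)^k/\gG(1+k(1-\tau))$. Both contributions, once summed over $k = |J|$ using Lemma~\ref{lemGamma}, vanish as $\gd \downarrow 0$ uniformly in $N$.

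The main obstacle will be verifying uniformity in $a \in [0,1)$ in the bound~\eqref{fgts}: the distribution of $\eta^{(a)}$ varies with $a$ through the centering $\kappa_N^{(a)}$, but this is handled by the uniform tail estimate $\bbP(1+\eta^{(a)} \geq t) \leq C\gp(t)t^{-\ga}$ with a constant independent of $a$, used inside Lemma~\ref{lekalkul2}. A secondary technical point is that in the case $\ga = 1$ the $e^{-2\hat\gb\gamma_N}$ prefactor is indispensable to absorb the slowly-growing expectation factors produced when integrating out the free $\eta$ variables, by the same mechanism as at the end of the proof of Lemma~\ref{lepetilem}.
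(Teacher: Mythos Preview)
Your proposal is correct and follows essentially the same route as the paper: bound $\bar B_{N,q}(I)$ by $\hat B_{N,q}(J)$, integrate out the free variables (using $\bbE[\eta^{(a)}]=0$ for $\ga>1$ and~\eqref{lawrong} for $\ga=1$), sum over $I\supset J$ via the local CLT, and apply Lemma~\ref{lekalkul2} to obtain~\eqref{fgts}; then sum over $J\notin\Xi(\gd,N)$ with a sum-integral comparison. One minor difference is that in deriving~\eqref{lespetits} you explicitly split off the boundary case $j_{|J|}>(1-\gd)N$ and compute its contribution via Lemma~\ref{lemGamma}, whereas the paper's symmetry argument focuses only on $\min_i(t_i-t_{i-1})\le\gd$; your treatment is arguably more complete on this point. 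A small wording issue: in the $\ga=1$ case the replacement of $\eta^{(a)}$ by $\eta^{[a,q)}$ is already part of the \emph{definition} of $\tilde H^{(a)}(I,J)$ (it is justified upstream by the full constraint $B_{N,q}(S)$, not by $\hat B_{N,q}(J)$), so your parenthetical justification is misplaced but the step itself is correct.
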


\begin{proof}
For the inequality \eqref{fgts}, recalling the definition  \eqref{defdehat} of $\hat B_{N,q}(J)$ and observing that 
$\bar B_{N,q}(I)\subset \hat B_{N,q}(J)$, we get as in~\eqref{eq:HIJ} that for $\ga>1$ we have
 \begin{equation}
  \tilde H^{(a)} (I,J) \le    \bbE \Big[  \prod_{n \in J} \big( 1+ \eta_n^{(a)}\big)^2\ind_{ \hat B_{N,q} (J)}\Big] \, .
 \end{equation}
In the case $\ga=1$, integrating over $\eta_{n,r}^{[a,q)}$ for $n\notin I$ and $\eta_{n}^{[a,q)}$ for $n\in I\setminus J$ in analogy with~\eqref{eq:integrateonIc}-\eqref{eq:HIJalpha1}, 
and using that $e^{ - 2\gb_N \gamma_N N^{-1}}$ is bounded by a constant $C$, this is replaced with
 \begin{equation}
 \label{tildeHIJ1}
  e^{- 2\hat \gb \gamma_N} \tilde H^{(a)} (I,J)  \leq   C'_q  C^{|J|} \bbE \Big[  \prod_{n \in J} \big( 1+ \eta_n^{(a)}\big)^2\ind_{ \hat B_{N,q} (J)}\Big]\,.
 \end{equation}
Summing over sets $I$ containing $J$
and using that 
$
\sum_{I\supset J} p_I  =\bP^{\otimes 2} \big( \forall n\in J, \  S^{(1)}_n=S^{(2)}_n \big)$,
we therefore get,
 analogously to~\eqref{eq:EW2last}, 
\begin{align*}
 e^{- 2\hat \gb \gamma_N \ind_{\{\ga=1\}}}   \sum_{I\supset J} p_I \beta^{2|J|}_N  \tilde H^{(a)}(I,J)
 &  \le C'_q (C'_0 )^{|J|} \beta^{2|J|}_N  \bbE \Big[  \prod_{n\in J} \big(1+ \eta^{(a)}_n\big)^2\ind_{ \hat B_{N,q} (J)}\Big]    (\Pi_J)^{-\frac d2}     \notag\\
 &   \le  C'_q  (C')^{|J|}  N^{-|J|} \big( N^{-|J|} \Pi_J \big)^{\gamma(2-\alpha-\gep)-\frac{d}{2}} \,,
\end{align*}
where we used
Lemma \ref{lekalkul2} for the second inequality with $t =( N^{-|J|} \Pi_J)^{\gamma}$.
This proves~\eqref{fgts}.
Now, summing~\eqref{fgts} over all $J\notin \Xi(\gd,N)$ (we necessarily have $|J|\geq 1$),  and setting
$\tau := d/2 -\gamma(2-\alpha-\gep) <1$, we obtain, using a standard sum-integral comparison that for all $a\in [0,1)$,
 in analogy with~\eqref{compasumint},
\begin{multline}
 e^{- 2\hat \gb \gamma_N \ind_{\{\ga=1\}}}   \sumtwo{J\subset I}{J\notin \Xi(\gd,N)}  p_I \beta^{2|J|}_N \big( \tilde H^{(a)} (I,J) + \tilde H (I,J) \big)\\
 \le 2\sum_{k = 1}^{\infty} (C')^k \int_{0<t_1<\dots<t_k<1} 
\ind_{\{\min_i(t_i-t_{i-1})\le \delta \}} 
 \prod_{i=1}^{k} \left(t_i-t_{i-1}  \right)^{ - \tau} \dd t_i\, .
\end{multline}
By symmetry, we can assume that the minimum $\min_i(t_i-t_{i-1})$ is attained for $i=1$, \textit{i.e.} that $t_1\leq \gd$, loosing only a factor $k$.
We thus get that the l.h.s.~of~\eqref{lespetits} 
is bounded by
\begin{equation*}
2 \Big( \int_0^{\gd}  t_1^{\gamma(2-\alpha-\gep)-\frac{d}{2}} \dd t_1 \Big) \times  \sum_{k = 1}^{\infty} k (C')^k  \int_{0<t_2<\dots<t_k<1}  
 \prod_{i=2}^{k} \left(t_i-t_{i-1}  \right)^{  -\tau} \dd t_i \, ,
\end{equation*}
where by convention we set the integral inside the sum is equal to $1$ for $k=1$.
As for~\eqref{compasumint}, the last integral is equal to
$\frac{\Gamma(1- \tau)^k}{ \Gamma( 1+ k (1-\tau))}$
(cf.\ Lemma~\ref{lemGamma}), and the series converges.
Then, the first integral can be made arbitrarily small by taking $\delta$ small.
\end{proof}

\smallskip
\noindent
{\it (b) Estimate for sets $J\in \Xi(\gd,N)$.}
We now turn to the case of sets $J$ that are
macroscopically $\gd$-spaced: we give an
estimate on the contribution to the r.h.s.~of~\eqref{hereisy}
of the sets $J\in  \Xi(\gd,N)$ that enables us to conclude
the proof of Proposition~\ref{secondmt}.

\begin{lemma}\label{maincontrib}
 For any $\zeta>0$, for all $\delta>0$, $q\ge 1$, there exists $a_0 = a_0(\zeta,\delta,q)  >0$ such that for any $N\geq 1$ and every $J\in  \Xi(\gd,N)$ we have, for all $a\leq a_0$
 \begin{equation}\label{fgts2}
  e^{- 2\hat \gb \gamma_N \ind_{\{\ga=1\}}}     \sumtwo{I\subset \lint 1, N\rint}{I\supset J}  p_I \beta^{2|J|}_N \big| \tilde H^{(a)}(I,J)  - \tilde H(I,J) \big|  \le \zeta N^{-|J|}. 
  \end{equation}
 As a consequence we obtain that, for all $N\geq 1$, for all $a\leq a_0$,
 \begin{equation}\label{lasomme}
  e^{- 2\hat \gb \gamma_N \ind_{\{\ga=1\}}}   \sum_{I\subset \lint 1, N\rint } \sumtwo{J\subset I}{J\in \Xi(\gd,N)}  p_I \beta^{2|J|}_N\big| \tilde H^{(a)}(I,J)  - \tilde H(I,J) \big| 
 \le e \, \zeta.
 \end{equation}
\end{lemma}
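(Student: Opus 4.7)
The consequence \eqref{lasomme} follows from the pointwise estimate \eqref{fgts2} by a combinatorial summation. Any $J \in \Xi(\delta,N)$ satisfies $|J| \leq \lfloor 1/\delta \rfloor + 1$, and for each $k \geq 0$ the number of sets $J$ of cardinality $k$ is at most $\binom{N}{k} \leq N^k/k!$, so that $\sum_{J \in \Xi(\delta,N)} N^{-|J|} \leq \sum_{k=0}^{\infty} 1/k! = e$. The task is therefore to prove the pointwise bound \eqref{fgts2} uniformly in $N$ and $J \in \Xi(\delta,N)$.

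The plan to establish \eqref{fgts2} is to compare $\tilde H^{(a)}(I,J)$ and $\tilde H(I,J)$ through a telescoping expansion of the products appearing in \eqref{def:tildeH}. At each site $n$, the difference $\eta^{(a)}_{n,\cdot} - \eta_{n,\cdot}$ vanishes on $\{1+\eta_{n,\cdot} \geq aV_N\}$, an event whose complement has probability $O(a^\alpha \varphi(aV_N)/V_N^\alpha)$, which is small when $a$ is small. Taking expectations reduces the difference $\tilde H^{(a)}(I,J) - \tilde H(I,J)$ to a finite sum of contributions indexed by the set $K$ of sites at which the replacement has occurred; each such contribution involves at least one factor that vanishes as $a \to 0$.

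The main obstacle will be decoupling the global restriction $\bar B_{N,q}(I)$, which couples all of the $\eta$-variables. The strategy is to split the intersection defining $\bar B_{N,q}(I)$ in \eqref{barbie} according to whether the tested subset lies entirely in $J$ or not: the former piece reduces to $\tilde B_{N,q}(J)$, depending only on the $\eta$-values at sites in $J$, while the latter contributions can be estimated by second-moment arguments in the same spirit as Lemma~\ref{lekalkul2}. The hypothesis $J \in \Xi(\delta,N)$ plays a crucial role here: the lower bound $j_i - j_{i-1} \geq \delta N$ on consecutive gaps provides uniform control over the interval analogues of the integrals $\int_0^1 t^{\gamma(2-\alpha-\varepsilon) - d/2}\,\mathrm{d}t$ appearing in \eqref{compasumint}, preventing accumulation of errors from small intervals. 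After this decoupling, $|\tilde H^{(a)}(I,J) - \tilde H(I,J)|$ summed over $I \supset J$ factorizes (up to a controlled error) over the $|J|+1$ intervals delimited by $J$; the uniform smallness of each intervalwise defect, combined with the bound $|J| \leq 1/\delta$, yields the desired estimate for $a$ small enough depending only on $\zeta$, $\delta$ and $q$.

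For $\alpha = 1$, the prefactor $e^{-2\hat\beta \gamma_N}$ absorbs the factor $\bbE[1+\beta_N \eta^{[0,q)}]^{2(N-|I|)}$ produced by integrating over the $I^{\complement}$-variables (thanks to \eqref{lawrong}), exactly as in the proof of Lemma~\ref{lepetilem}; the remainder of the argument proceeds unchanged.
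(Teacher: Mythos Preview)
Your derivation of \eqref{lasomme} from \eqref{fgts2} is fine and matches the paper.

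For \eqref{fgts2} itself, however, your outline does not come to grips with the real obstruction, and the ``factorization over intervals'' you announce is not supported by the structure of the problem. The constraint $\bar B_{N,q}(I)$ in \eqref{barbie} is an intersection over \emph{all} $K\subset\lint 1,N\rint$ (and $r\in\{1,2\}$); such $K$ freely straddle the points of $J$ and mix the three families $(\eta_n)_{n\in I}$, $(\eta_{n,1})_{n\in I^{\complement}}$, $(\eta_{n,2})_{n\in I^{\complement}}$. There is no reason for the expectation $\bbE[\,\cdots\,\ind_{\bar B_{N,q}(I)}]$ to decouple over the intervals delimited by $J$, even ``up to a controlled error'', and your proposed split of $\bar B_{N,q}(I)$ into the piece with $K\subset J$ (which indeed gives $\tilde B_{N,q}(J)$) versus the rest only provides one-sided inclusions. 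That was enough for an upper bound on a single $\tilde H^{(a)}(I,J)$ as in \eqref{eq:HIJ}, but here you need a two-sided comparison of $\tilde H^{(a)}(I,J)$ with $\tilde H(I,J)$, and replacing $\bar B_{N,q}(I)$ by $\tilde B_{N,q}(J)$ introduces an error of the same order as the quantity you are trying to bound. A naive telescoping on the sites likewise runs into trouble: the differences $\eta_n^{(a)}-\eta_n$ are supported on $\{1+\eta_n<aV_N\}$, but the sites in $I^{\complement}$ and $I\setminus J$ number of order $N$, so the accumulated error from those factors is not small unless you already control the contribution of $\ind_{\bar B_{N,q}(I)}$ precisely.

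The paper proceeds very differently. It interprets $\tilde H^{(a)}(I,J)=\bbE[Y^{(a)}(I,J)]\,\tilde\bbP^{(a)}_{I,J}(\bar B_{N,q}(I))$ for a tilted product law $\tilde\bbP^{(a)}_{I,J}$, so that after showing $\sum_{I\supset J}p_I\beta_N^{2|J|}\bbE[Y(I,J)]\le C_{q,\delta}N^{-|J|}$, the problem reduces to proving that the \emph{probabilities} $\tilde\bbP^{(a)}_{I,J}(\bar B_{N,q}(I))$ and $\tilde\bbP_{I,J}(\bar B_{N,q}(I))$ are close. This is done by a second spacing parameter $\delta'$: the piece of $\bar B_{N,q}^{\complement}(I)$ coming from $K\in\Xi(\delta',N)$ is, for $a$ small enough, measurable with respect to $(\eta^{(a)}_n,\eta^{(a)}_{n,1},\eta^{(a)}_{n,2})$ alone (since well-spaced violating $K$ force each $1+\eta$ above $q(\delta')^{\gamma/\delta'}V_N$), and a coupling between $\tilde\bbP^{(a)}_{I,J}$ and $\tilde\bbP_{I,J}$ shows these truncated laws are $O(|J|\,a^{2-\alpha})$-close in total variation; the piece coming from $K\notin\Xi(\delta',N)$ is shown to have small $\tilde\bbP^{(a)}_{I,J}$-probability uniformly in $a$ (Lemma~\ref{brutforce}, itself a delicate union-bound argument). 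None of these ingredients --- the probabilistic rewriting, the second scale $\delta'$, the measurability/coupling step --- appears in your sketch, and they are what makes the proof work.
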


Combining this lemma with Lemma~\ref{smallcontrib}
shows that for any fixed $q$, \eqref{hereisy}
can be made arbitrarily small uniformly in $N$
by choosing first $\gd$ small (so that \eqref{lespetits} holds)
and then~$a$ small (so that \eqref{lasomme} holds). This therefore shows the first part in~\eqref{limsupsecondmoment}.

\begin{proof}[Proof of Lemma~\ref{maincontrib}]
Of course, \eqref{lasomme} follows easily from \eqref{fgts2}. Indeed from \eqref{fgts2} the sum in the r.h.s.~of \eqref{lasomme} is smaller than (using the binomial expansion)

\begin{equation}
 \zeta \sum_{J\subset \lint 1, N\rint} N^{-|J|}= \zeta (1+ N^{-1})^N.
\end{equation}
%
%
%
%
%
Let us warn the reader that the proof of \eqref{fgts2} is quite lengthy and technical.
We are going to use another representation for  $\tilde H^{(a)}(I,J)$ and $\tilde H(I,J)$
as probabilities.
Define, for $J\subset I \subset \lint 1,N\rint$,
\begin{multline}
\label{def:YIJ}
Y^{(a)}(I,J) = \prod_{n\in I^{\cc}}  \big(1+\beta_N  \eta_{n,1}^{(a)}   \big) \big(1+\beta_N  \eta_{n,2}^{(a)}  \big) \prod_{n\in I\setminus J} \big( 1+ 2\beta_N (1-\gb_N) \eta^{(a)}_{n}  -\gb_N^2 \big) 
   \prod_{n\in J} (1+\eta_n^{(a)})^2 \\
   \times \ind_{ \big\{ \forall i\in I, 1+\eta_i < q V_N \big\} \cap \big\{\forall i\in I^c,  \forall r \in \{1,2\}  , 1+\eta_{i,r} < q V_N \big\} } 
\end{multline}
and $Y(I,J)= Y^{ (0)}(I,J)$; we omit the dependence in $(\eta_{n,1}),(\eta_{n,2}),(\eta_n)$
in the notation for simplicity.
This way, we have $\tilde H^{(a)}(I,J) = \bbE[ Y^{(a)}(I,J)\ind_{\bar B_{N,q} (I) } ]$
We now interpret $Y^{(a)}(I,J)$ and $Y(I,J)$ as probability densities for $(\eta_n,\eta_{n,1},\eta_{n,2})_{n=1}^N$.
We define for $J\subset I \subset \lint 1, N\rint$ 
\begin{equation}
 \frac{\dd \tilde \bbP^{(a)}_{I,J}}{\dd \bbP}:=    \frac{Y^{(a)}(I,J)}{\bbE\big[Y^{(a)}(I,J)\big]}\, ,
\end{equation}
and $\tilde \bbP_{I,J}=\tilde \bbP^{(0)}_{I,J}$.
We have therefore reduced the problem to comparing the probability of the event $\bar B_{N,q}(I)$
under $\tilde \bbP^{(a)}_{I,J}$ and $\tilde \bbP_{I,J}$.
The left-hand side of~\eqref{fgts2} is
 equal to
\begin{equation}\label{leqacontrol}
  e^{- 2\hat \gb \gamma_N \ind_{\{\ga=1\}}}  \sumtwo{I\subset \lint 1, N\rint }{ I\supset J } p_I\beta^{2|J|}_N \bbE\big[Y(I,J)\big]\;
  \Bigg|  \frac{ \bbE\big[Y^{(a)}(I,J)\big]}{\bbE\big[Y(I,J)\big]}  \tilde \bbP^{(a)}_{I,J} \big( \bar B_{N,q}(I) \big) - 
 \tilde \bbP_{I,J} \big( \bar  B_{N,q}(I) \big) \Bigg| \, .
\end{equation}
The expectation of $Y^{(a)}$ and $Y$ can be expressed as follows
\begin{multline}
\bbE\big[ Y^{(a)}(I,J) \big]
= \Big(1+\beta_N \bbE\big[ \eta \ind_{\{ 1+ \eta < q V_N \}} \big]\Big)^{2(N-|I|)} \\
\times \Big(1 - \gb_N^2 +2\beta_N (1-\gb_N) \bbE \big[ \eta \ind_{\{1+\eta <  qV_N \big\}} \big] \Big)^{|I|-|J|}
\, \bbE \big[ (1+\eta^{(a)})^2 \ind_{\{1+\eta < q V_N\}} \big]^{|J|} \,.
\label{calculEYIJ}
\end{multline}
In particular, when $\ga>1$,  using that $\bbE[\eta]=0$ we get that $\bbE\big[ \eta \ind_{\{ 1+ \eta < q V_N \}}]\leq 0$, so 
\begin{equation*}
\bbE\big[ Y^{(a)}(I,J) \big] \le  \bbE \big[ (1+\eta^{(a)})^2 \ind_{\{1+\eta < q V_N \}} \big]^{|J|} \, .
\end{equation*}
When $\ga=1$, using again \eqref{lawrong} (or see~\eqref{tildeHIJ1})
 and the fact that
$e^{- \hat\gb \gamma_N N^{-1}}$ is bounded by a constant, we get that
\begin{equation*}
 e^{-2\hat \gb \gamma_N} \bbE\big[ Y^{(a)}(I,J) \big] \leq  C'_q C^{|J|} \bbE \big[ (1+\eta^{(a)})^2 \ind_{\{1+\eta < q V_N \}} \big]^{|J|} \, .
\end{equation*}
In particular, we obtain that
\begin{multline}
   e^{-2\hat \gb \gamma_N \ind_{\{\ga=1\}}}    \sumtwo{I\subset \lint 1, N\rint }{ I\supset J } p_I\beta^{2|J|}_N\bbE\big[Y(I,J)\big] \\
  \le  C'_q  C^{|J|}  \beta^{2|J|}_N \bbE \big[ (1+\eta)^2 \ind_{\{1+\eta < q V_N \}} \big]^{|J|}   \bP^{\otimes 2} \big( \forall n\in J, \  S^{(1)}_n=S^{(2)}_n \big) \, .
\end{multline}
Now, we can use that we are working with $J \in \Xi(\gd,N)$, for which we have
\[
\bP^{\otimes 2} \big( \forall n\in J, \  S^{(1)}_n=S^{(2)}_n \big) \leq  \Big( \frac{C_0}{ (\gd N)^{d/2}}\Big)^{|J|} \leq C_{\gd}\, N^{- \frac{d}{2} |J|} \, ,
\]
where  $C_{\gd} = (C_0 \gd^{-d/2})^{1/\gd}$, using  that $|J|\leq 1/\gd$
for $J \in \Xi(\gd,N)$.
From the calculations of Section~\ref{sec:uzeful} (or from~\eqref{lonzo}), we have
$\beta^{2}_N \bbE \big[ (1+\eta)^2 \ind_{\{1+\eta \le qV_N  \}}\big] \le C_{\hat \beta, q} N^{\frac{d}{2}-1}$,
so we have that 
\begin{equation}
  e^{-2\hat \gb \gamma_N \ind_{\{\ga=1\}}}   \sumtwo{I\subset \lint 1, N\rint }{ I\supset J } p_I\beta^{2|J|}_N\bbE\big[Y(I,J)\big] \le C'_{\hat\beta, q,\gd} N^{-|J|} \, ,
\end{equation}
with $C'_{\hat\beta, q,\gd} =  C'_q C_{\gd} ( C C_{\hat \beta, q})^{1/\gd}$ (using again that $|J|\leq 1/\gd$).
Hence to prove that \eqref{leqacontrol} converges indeed to $0$ uniformly in $N$, we need to prove that for a fixed value of $q$ and $\delta$ we have
\begin{equation}\label{ogniuno}
  \lim_{a\to 0} \sup_{N\ge 1} \suptwo{J\in \Xi(\gd,N)}{J\subset I \subset \lint 1, N\rint}\Bigg| \frac{ \bbE\big[Y^{(a)}(I,J)\big]}{\bbE\big[Y(I,J)\big]}  \tilde \bbP^{(a)}_{I,J} \big(\bar B_{N,q}(I)\big)- 
 \tilde \bbP_{I,J}\big( \bar  B_{N,q}(I) \big) \Bigg|=0\, .
\end{equation}
 We are in fact going to split the proof of \eqref{ogniuno}, by proving separately the following two following convergences 
\begin{equation}\label{ogniunoss}
  \lim_{a\to 0} \sup_{N\ge 1} \suptwo{J\in \Xi(\gd,N)}{J\subset I \subset \lint 1, N\rint}\Bigg| \frac{ \bbE\big[Y^{(a)}(I,J)\big]}{\bbE\big[Y(I,J)\big]}-1\Bigg|=0.
  \end{equation}
  \begin{equation}\label{ogniunos}
   \lim_{a\to 0} \sup_{N\ge 1} \suptwo{J\in \Xi(\gd,N)}{J\subset I \subset \lint 1, N\rint}\Big|  \tilde \bbP^{(a)}_{I,J} \big(\bar B^{\cc}_{N,q}(I) \big)- 
 \tilde \bbP_{I,J} \big(\bar  B^{\cc}_{N,q}(I)\big) \Big|=0\, .
\end{equation}
Now, recalling the expression~\eqref{calculEYIJ},
we get that 
\[
\frac{ \bbE\big[Y^{(a)}(I,J)\big]}{\bbE\big[Y(I,J)\big]} = \frac{\bbE \big[ (1+\eta^{(a)})^2 \ind_{\{1+\eta < q V_N\}} \big]^{|J|}}{\bbE \big[ (1+\eta)^2 \ind_{\{1+\eta < q V_N\}} \big]^{|J|}  } \, .
\]
 Again, we notice that the above ratio is non-decreasing in $a$.
Analogously to the calculation done in \eqref{eq:etasquare}-\eqref{lonzo},
thanks to \eqref{moments3}
we find that for any fixed $q\geq 1$ and $a\in [0,1)$  we have,
as $N\to\infty$,
\begin{align}
\bbE \big[ (1+\eta^{(a)})^2 \ind_{\{1+\eta < q V_N\}} \big]
 & =  \bbE \Big[(1+\eta)^2 \ind_{\{ a V_N \leq 1+ \eta <   q V_N\}} \Big] 
 + ( 1-\kappa_N^{(a)} )^2 \bbP\big( 1+\eta \leq   a V_N \big) \notag\\
 & =  \frac{\ga d^{-d/2}}{2(2-\ga)}  \hat \gb^2 \big( q^{2-\ga} - a^{2-\ga} \big) N^{\frac d2 -1}  +o(N^{\frac{d}{2}-1})\,.
 \label{expsquare}
\end{align}
Using monotonicity in $a$ and continuity at $0$ we obtain that given $\gep>0$, there exists $a_0(\gep,\delta)$ such that for $a\in (0,a_0)$ and every $N\ge 1$ we have
\[
 1-\delta \gep \le \frac{\bbE \big[ (1+\eta^{(a)})^2 \ind_{\{1+\eta < q V_N\}} \big] }{\bbE \big[ (1+\eta)^2 \ind_{\{1+\eta < q V_N\}} \big] } \le 1 \,.
 \]
Note that  the $o(N^{\frac{d}{2}-1})$ term in \eqref{moments3} is  uniform in $a\in(0,1)$.
%
%
Using the fact that $|J| \leq 1/\gd$ for any $J\in \Xi(\gd,N)$, we therefore  get that given $\gep>0$,
for every $N\ge 1$ and $a\in (0,a_0)$
\begin{equation}
\left|  \frac{ \bbE\big[Y^{(a)}(I,J)\big]}{\bbE\big[Y(I,J)\big]}-1\right| =\Bigg|\bigg( \frac{\bbE \big[ (1+\eta^{(a)})^2 \ind_{\{1+\eta < q V_N\}} \big] }{\bbE \big[ (1+\eta)^2 \ind_{\{1+\eta < q V_N\}} \big] } \bigg)^{|J|}-1\Bigg| \le  \gep \,,
\end{equation}
 which conclude the proof of 
\eqref{ogniunoss}. Let us now prove \eqref{ogniunos}.
Recalling the definition~\eqref{barbie} of $\bar B_{N,q}$,
we have that $\bar B_{N,q}(I)$ \emph{does not occur} if the product of the  variables $\eta$ for some subset $K\subset \lint 1,N \rint$ assumes a high value. 
We are going to split this event according to whether the points in $K$ are well spaced or not.
Given $\delta'>0$ (which we are going to chose small and depending on $\delta$) we have
\begin{equation}
 \bar  B^{\cc}_{N,q}(I)=  C^{(1)}_{q,\gd'}(I)\cup C^{(2)}_{q,\gd'}(I),
\end{equation}
where 
\begin{equation*}
\begin{split}
  C^{(1)}_{q,\gd'}(I):=
   \bigcup_{r=1,2}\Big\{ \exists K  \in \Xi(\gd',N) ,
 \prod_{i\in K\cap I} (1+\eta_i)   \prod_{i\in K\cap I^\cc} (1+\eta_{i,r})\geq  (qV_N)^{|K|}   \big( N^{-|K|} \Pi_K \big)^{\gamma}\Big\},\\
   C^{(2)}_{q,\gd'}(I):=
  \bigcup_{r=1,2} \Big\{  \exists K  \notin \Xi(\gd',N) ,
  \prod_{i\in K\cap I} (1+\eta_i)   \prod_{i\in K\cap I^\cc} (1+\eta_{i,r})\geq  (qV_N)^{|K|}   \big( N^{-|K|} \Pi_K \big)^{\gamma} \Big\}\,.
 \end{split}
\end{equation*}

Hence we have
\begin{align*}
\Big| \tilde \bbP^{(a)}_{I,J}\big( \bar B_{N,q}^{\cc}(I)\big)-   \tilde \bbP_{I,J}\big(\bar B_{N,q}^{\cc}(I) \big)\Big|
& \le \Big |\tilde \bbP^{(a)}_{I,J} \big(C^{(1)}_{q,\gd'}(I) \big)- \tilde \bbP_{I,J}\big(C^{(1)}_{q,\gd'}(I)\big) \Big| \\
 & \qquad \qquad \qquad  + \tilde \bbP^{(a)}_{I,J}\big(C^{(2)}_{q,\gd'}(I)\big)+\tilde \bbP_{I,J}\big(C^{(2)}_{q,\gd'}(I)\big) \, ,
\end{align*}
and we need to show that all three terms are small.
To estimate the probability of $C^{(2)}_{q,\gd'}(I)$
under  $\tilde \bbP^{(a)}_{I,J}$ (and $\tilde \bbP_{I,J}$),
we use a union bound and estimates which are similar to the ones previously used in  Lemma \ref{smallcontrib}.
The calculations are heavy and we postpone the details to the end of the section: the conclusion is summarized by the following lemma.

\begin{lemma}\label{brutforce}
For any $\zeta>0$ and any
 fixed $\delta,q$, there exists $\delta' = \delta'(\delta,q,\zeta)$ such that for all  $J\subset I \subset \llbracket 1, N\rrbracket$ with $J\in \Xi(\gd,N)$ and $a\in [0,1)$ we have
 \begin{equation}
  \tilde\bbP^{(a)}_{I,J}\big(C^{(2)}_{q,\gd'}(I)\big)\le \zeta. 
 \end{equation}
\end{lemma}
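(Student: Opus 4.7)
The plan is to bound $\tilde\bbP^{(a)}_{I,J}(C^{(2)}_{q,\gd'}(I))$ by a union bound over all sets $K\notin \Xi(\gd',N)$, estimating each term via moment computations analogous to those used in Lemma \ref{lekalkul2} and Lemma \ref{smallcontrib}. The crucial point is that the density $Y^{(a)}(I,J)/\bbE[Y^{(a)}(I,J)]$ factorizes over the indices: for $n\in J$ the marginal of $(1+\eta_n)$ has density proportional to $(1+\eta_n^{(a)})^2\ind_{\{1+\eta_n<qV_N\}}$; for $n\in I\setminus J$ it is proportional to $(1+2\beta_N(1-\beta_N)\eta_n^{(a)}-\beta_N^2)\ind_{\{1+\eta_n<qV_N\}}$; and for $n\in I^\cc$ both $(1+\eta_{n,1})$ and $(1+\eta_{n,2})$ are independent with density proportional to $(1+\beta_N\eta_{n,r}^{(a)})\ind_{\{1+\eta_{n,r}<qV_N\}}$. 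In each case the normalizing constant is $1+O(N^{-1})$ uniformly in $a$ and $q$ (using the calculations from Section \ref{sec:uzeful}), so up to a bounded multiplicative constant these tilted laws are comparable to the restriction of the law of $\eta$ to $\{1+\eta<qV_N\}$.

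First I would fix a small parameter $\theta\in(\alpha/2,\alpha)$ (say $\theta=\alpha-\epsilon$ for some small $\epsilon$) and apply Markov's inequality at power $\theta$: for each $r\in\{1,2\}$ and each non-empty $K\subset\lint 1,N\rint$,
\begin{equation*}
\tilde\bbP^{(a)}_{I,J}\Bigl( \prod_{i\in K\cap I}(1+\eta_i)\prod_{i\in K\cap I^\cc}(1+\eta_{i,r}) \ge (qV_N)^{|K|}(N^{-|K|}\Pi_K)^\gamma\Bigr) \le \frac{\tilde\bbE^{(a)}_{I,J}\!\bigl[\prod_{i\in K\cap I}(1+\eta_i)^\theta\prod_{i\in K\cap I^\cc}(1+\eta_{i,r})^\theta\bigr]}{(qV_N)^{\theta|K|}(N^{-|K|}\Pi_K)^{\gamma\theta}}.
\end{equation*}
Using the factorized density together with Proposition \ref{lescroissants}, each one-site expectation $\tilde\bbE^{(a)}_{I,J}[(1+\eta)^\theta\ind_{\{1+\eta<qV_N\}}]$ is bounded by $C_{q,\hat\beta}V_N^{\theta-\alpha}\varphi(V_N)\asymp N^{-1}V_N^\theta$ (for $n\in I\setminus J$ or $n\in I^\cc$, where the weight is $1+O(\beta_N\eta)$) and by $C_{q,\hat\beta}V_N^\theta$ for $n\in K\cap J$ (where the extra factor $(1+\eta)^2$ merely absorbs $\gb_N^{-2}V_N^{-2}$ up to a constant, and $|J|\le 1/\delta$ so this only costs a $\delta$-dependent constant). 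Substituting in, the Markov bound becomes $C_{\delta,q,\hat\beta}^{|K|}q^{-\theta|K|}N^{-|K|}(N^{-|K|}\Pi_K)^{-\gamma\theta}$.

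Next I would sum this bound over $K\notin\Xi(\delta',N)$ exactly as in the proof of Lemma \ref{smallcontrib}: writing $K=\{k_1<\cdots<k_m\}$, the sum over the positions $k_i$ produces an integral
\begin{equation*}
\sum_{m\ge 1} (C_{\delta,q,\hat\beta}q^{-\theta})^m \int_{0<t_1<\cdots<t_m<1}\ind_{\{\min_i(t_i-t_{i-1})\le \delta'\}}\prod_{i=1}^m(t_i-t_{i-1})^{-\gamma\theta}\,\dd t_i.
\end{equation*}
By choosing $\theta$ close enough to $\alpha$, the exponent $\gamma\theta<1$ (from \eqref{lesdeuxconds}), so Lemma \ref{lemGamma} applied to the analogous unconstrained integral gives a convergent series. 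Isolating the smallest gap (losing a factor $m$) bounds the expression by $C\int_0^{\delta'}s^{-\gamma\theta}\dd s\sum_{m\ge 1}m(C'q^{-\theta})^m/\Gamma(1+m(1-\gamma\theta))$, which is $O((\delta')^{1-\gamma\theta})$ and can be made smaller than $\zeta/2$ by choosing $\delta'$ small. The factor of $2$ from the two choices of $r$ is harmless.

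The main obstacle will be making the three-regime tilting estimates uniform in $a\in[0,1)$ and in the set $J\in\Xi(\delta,N)$. In particular, when $K\cap J\ne\emptyset$, the local density $(1+\eta^{(a)})^2\ind_{\{1+\eta<qV_N\}}$ gives a mass $\asymp\beta_N^{-2}N^{-1}$ that depends on $a$, and one must check that this is bounded from below independently of $a$ so that Markov applies cleanly; this is precisely the computation \eqref{expsquare}, which shows the denominator $\bbE[Y^{(a)}(I,J)]$ stays comparable (up to constants depending on $\delta,q$) to its value at $a=0$ for all $a\in[0,1)$. Once this uniformity is in hand the remainder of the argument is a direct adaptation of the computation carried out in Lemma \ref{smallcontrib}, and the case $\alpha=1$ is treated as usual by carrying the $e^{-2\hat\beta\gamma_N}$ prefactor through, using \eqref{lawrong} to absorb it into constants.
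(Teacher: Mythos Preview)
Your approach has two genuine gaps.

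First, the claim that ``by choosing $\theta$ close enough to $\alpha$, the exponent $\gamma\theta<1$ (from \eqref{lesdeuxconds})'' is wrong on both counts: taking $\theta$ closer to $\alpha$ only makes $\gamma\theta$ larger, and \eqref{lesdeuxconds} gives $\gamma(\alpha-1)<1$, not $\gamma\alpha<1$. In fact for $d\ge 3$ and $\alpha$ close to $\alpha_c$ no $\gamma$ satisfying the constraints of Section~\ref{sec:trunc} has $\gamma\alpha<1$. A Markov bound at power $\theta$ \emph{can} be made to work if one takes $\theta\in\bigl(\tfrac{2\alpha}{d+2},\,1/\gamma\bigr)$ (non-empty precisely because $\alpha<\alpha_c$), but this is not the choice you describe.

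Second, and more seriously, your treatment of the sites in $K\cap J$ is incorrect. For such sites the tilted $\theta$-th moment is $\asymp V_N^\theta$, not $N^{-1}V_N^\theta$, so your Markov bound actually produces a factor $N^{-|K\setminus J|}$ rather than $N^{-|K|}$. The discrepancy $N^{|K\cap J|}$ (which can be as large as $N^{1/\delta}$) cannot be absorbed into a ``$\delta$-dependent constant'' as you assert --- it diverges with $N$ and destroys the summation over $K$.

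The paper's proof sidesteps both problems. It first uses stochastic domination (via \eqref{chebcheb}) to replace $\tilde\bbP^{(a)}_{I,J}$ by a product measure $\hat\bbP_J$ under which $1+\eta_n=qV_N$ deterministically for $n\in J$ while the remaining coordinates have density proportional to $1+2\beta_N\eta_n$. With the $J$-sites frozen at the maximal value, the event reduces to one involving only $K\subset J^{\cc}$ (with $\Pi_K$ replaced by $\Pi_{K\cup J}$), so there is no troublesome $N^{|K\cap J|}$ factor. The paper then applies Lemma~\ref{lekalkul} --- the first-moment tilted estimate, not a $\theta$-th moment Markov inequality --- which delivers the exponent $\vartheta=\gamma(\alpha-1+\gep)$ that is automatically $<1$ by \eqref{lesdeuxconds}. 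Even after this reduction the summation over $K\subset J^{\cc}$ with $K\cup J\notin\Xi(\delta',N)$ requires a careful interval-by-interval integration (splitting $[0,1]$ at the points of $J$) that your sketch does not address.
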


To conclude the proof of~\eqref{ogniunos} (and thus of Lemma~\ref{maincontrib}) we therefore need to show that
for fixed $\zeta,\gd',\gd,q$, if $a$ is sufficiently small,
\textit{i.e.}\ if $a\le a_0(\zeta,\delta',\delta,q)$, then we have
for all  $J\subset I \subset \llbracket 1, N\rrbracket$ with $J\in \Xi(\gd,N)$
\begin{equation}
\label{probaCsmall}
\Big |\tilde \bbP^{(a)}_{I,J} \big(C^{(1)}_{q,\gd'}(I) \big)- \tilde \bbP_{I,J}\big(C^{(1)}_{q,\gd'}(I)\big) \Big|\le \zeta\, .
\end{equation}
To show this we are going to prove that: 
\begin{itemize}
 \item [($*$)]  If $a$ is sufficiently small, then for all values of $N$, the event $C^{(1)}_{q,\gd'}(I)$ is measurable with respect to the $\sigma$-field
 $$\sigma\big( (\eta^{(a)}_n, \eta^{(a)}_{n,1},\eta^{(a)}_{n,2})_{n=1}^N   \big).$$
 \item [($**$)] The two distributions 
 \[ \tilde \bbP^{(a)}_{I,J} \Big( (\eta^{(a)}_n, \eta^{(a)}_{n,1},\eta^{(a)}_{n,2})_{n\in\lint 1,N\rint} \in \cdot \Big) \quad  \text{and}  
 \quad \tilde \bbP_{I,J} \Big( (\eta^{(a)}_n, \eta^{(a)}_{n,1},\eta^{(a)}_{n,2})_{n\in\lint 1,N\rint} \in \cdot \Big) \] 
 are close in total variation.
\end{itemize}

\smallskip
\noindent
($*$).\ \  
Note that we have, by definition of $\Xi(\gd',N)$ and $\Pi_K$
\begin{equation*}
\forall K\in \Xi(\gd',N), \quad  \big( N^{-|K|} \Pi_K \big)
\ge (\delta')^{|K|}\ge (\delta')^{1/\delta'}.
\end{equation*}
Also, recalling the definition~\eqref{def:YIJ} of $Y^{(a)}(I,J)$, we notice that under both $\tilde \bbP^{(a)}_{I,J}$ and $\tilde \bbP_{I,J}$ all the environment variables $1+\eta_{i}$, $1+\eta_{i,r}$ are capped by $q V_N$.
Hence, in order to have
\[
 \prod_{i\in K\cap I} (1+\eta_i)   \prod_{i\in K\cap I^\cc} (1+\eta_{i,r})\geq  (qV_N)^{|K|}   \big( N^{-|K|} \Pi_K \big)^{\gamma},\quad \text{ for some } K\in \Xi(\gd',N)\,,
\]
 it is necessary that all of the variables $1+\eta_{i}$, $1+\eta_{i,r}$ involved are larger than $q  (\delta')^{\gamma /\delta'} V_N$
 and thus part~($*$) of the statement holds for $a\le  q (\delta')^{\gamma/\delta'}$. 

\medskip
\noindent
($**$).\ \
Let us prove the following lemma, corresponding to our claim ($**$).

%

\begin{lemma}\label{lemlacompa}
There exists a coupling $\bbQ$ 
between  $\tilde \bbP_{I,J}$ and $\tilde \bbP^{(a)}_{I,J}$
(the marginals of $\bbQ$ are denoted $(\eta_n,\eta_{n,1}, \eta_{n,2})_{n=1}^N$, $( \hat\eta_n,\hat\eta_{n,1}\hat\eta_{n,2})_{n=1}^{N}$)  which is such that $\bbQ$-a.s.
 \begin{equation}\begin{cases}\label{woopz}
  \eta^{(a)}_{n,r}=\hat \eta^{(a)}_{n,r} &\text{ for  } r\in 1,2, \   \forall \, n\in  I^{\cc} \, ,\\
    \eta^{(a)}_n=\hat \eta^{(a)}_n &\text{ for  }\  n\in I \setminus J,
  \end{cases}
 \end{equation}
and 
\begin{equation}\label{lezaut}
\bbQ\big(\exists n\in J,\ \eta_n \ne \hat \eta_n \big) \le C \,  |J| \,  a^{2-\alpha}.
\end{equation}
\end{lemma}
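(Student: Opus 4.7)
The plan is to build $\bbQ$ as a product of independent site-wise couplings, exploiting the fact that both $\tilde\bbP_{I,J}$ and $\tilde\bbP^{(a)}_{I,J}$ are product measures over $n\in\lint 1,N\rint$ (with an extra product over $r\in\{1,2\}$ for $n\in I^{\cc}$). For each site I will analyse the two marginal distributions and construct a coupling that achieves either equality of the truncated variables (for sites in $I^{\cc}$ and $I\setminus J$) or equality of the raw variables outside of a controlled small-probability event (for sites in $J$).

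The core observation, which handles the sites in $I^{\cc}$ and $I\setminus J$, is that the factor appearing in $Y^{(a)}(I,J)$ at such a site is \emph{affine} in $\eta^{(a)}$ --- namely $(1+\beta_N\eta^{(a)}_{n,r})$ for $n\in I^{\cc}$ and $(1+2\beta_N(1-\beta_N)\eta^{(a)}_n-\beta_N^2)$ for $n\in I\setminus J$ --- and the centering $\kappa_N^{(a)}=-\bbE[\eta\,|\,1+\eta<aV_N]$ was chosen precisely so that the identity $\bbE[\eta\ind_{\{1+\eta<aV_N\}}]=-\kappa_N^{(a)}\bbP(1+\eta<aV_N)$ holds. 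A direct check then shows that the induced distribution of $\eta^{(a)}_n$ under $\tilde\bbP_{I,J}$ coincides with that of $\hat\eta^{(a)}_n$ under $\tilde\bbP^{(a)}_{I,J}$: the two densities agree on the large region $\{1+\eta\geq aV_N\}$, and the masses of the two atoms at $-\kappa_N^{(a)}$ on the small region match. I can therefore choose the site-wise coupling so that $\eta^{(a)}_n=\hat\eta^{(a)}_n$ (resp.\ $\eta^{(a)}_{n,r}=\hat\eta^{(a)}_{n,r}$) almost surely, which is exactly \eqref{woopz}; moreover the coupling can be taken so that the raw variables $\eta_n$ and $\hat\eta_n$ also agree on the large region.

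For $n\in J$ the factor $(1+\eta^{(a)}_n)^2$ is quadratic in $\eta^{(a)}$, so the miracle above no longer applies. However, conditionally on $\{1+\eta_n\geq aV_N\}$ both $\tilde\bbP_{I,J}$ and $\tilde\bbP^{(a)}_{I,J}$ have marginal density proportional to $(1+\eta_n)^2\ind_{\{1+\eta_n<qV_N\}}\,\dd\bbP$, and I couple such that $\eta_n=\hat\eta_n$ on this event. The defect probability is then bounded by $\tilde\bbP_{I,J}(1+\eta_n<aV_N)+\tilde\bbP^{(a)}_{I,J}(1+\hat\eta_n<aV_N)$. Using the second-moment estimate \eqref{moments3} together with Potter's bound to control the slowly varying correction, both probabilities are shown to be at most $C_q\,a^{2-\alpha}$ uniformly in $N\geq 1$; a union bound over $n\in J$ then yields \eqref{lezaut}.

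The main obstacle is really identifying the exact identity of the $\eta^{(a)}$-marginals at sites of $I^{\cc}\cup(I\setminus J)$: without it, one would only obtain a defect probability at each such site, of which there are of order $N$, and this could not be made small uniformly in $N$. The specific affine dependence in $\eta^{(a)}$ and the tuning of $\kappa_N^{(a)}$ via $\bbE[\eta\,|\,1+\eta<aV_N]$ are what produce the exact cancellation; the remaining analysis for $n\in J$, where only a quantitative $a^{2-\alpha}$ decay is needed, is routine given the moment estimates already developed in Section~\ref{sec:uzeful}.
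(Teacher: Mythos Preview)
Your proposal is correct and follows essentially the same route as the paper's proof: both exploit the product structure of $\tilde\bbP_{I,J}$ and $\tilde\bbP^{(a)}_{I,J}$, observe that at sites of $I^{\cc}\cup(I\setminus J)$ the normalized densities of $\eta_n$ and $\hat\eta_n$ coincide on $\{1+\eta\ge aV_N\}$ (your affine/centering remark is exactly what makes the normalizing constants equal), and then bound the total-variation defect at sites of $J$ by a quantity of order $a^{2-\alpha}$ via \eqref{expsquare}. The only cosmetic difference is that the paper bounds the $L^1$ difference of the two densities directly, whereas you bound the defect by $\tilde\bbP_{I,J}(1+\eta_n<aV_N)+\tilde\bbP^{(a)}_{I,J}(1+\hat\eta_n<aV_N)$; both give the same $C_q\,a^{2-\alpha}$ estimate.
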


\begin{proof}
 Recalling the definition~\eqref{def:YIJ} of $Y^{(a)}(I,J)$, 
 we observe that both $ \tilde \bbP_{I,J}$ and $\tilde \bbP^{(a)}_{I,J}$ are product measures. We then use three independent
 couplings for the marginals of both measures, \textit{i.e.}\
 we couple $\eta_{n}$ with $\hat\eta_{n}$, $\eta_{n,1}$ with $\hat\eta_{n,1}$
 and $\eta_{n,2}$ with $\hat\eta_{n,2}$,
  independently.
To obtain a coupling such that \eqref{woopz} holds, it is sufficient to observe that the density of the distributions
of $\eta_{n,r}$ and $\hat\eta_{n,r}$ coincide on $[a V_N,\infty)$ for $r=1,2$ and $n\in I^{\cc}$,
and similarly for $\eta_n$ and $\hat \eta_n$ for $n\in I\setminus J$.
For \eqref{lezaut}, we only need to check that the total variation between the two marginal distributions of $\eta_n$ and $\hat \eta_n$ is small (for $n\in J$): it is sufficient to prove that 
 \begin{equation}
  \bbE \left[\left| \frac{(1+\eta)^2 \ind_{\{1+\eta < q V_N\} } }{\bbE \left[(1+\eta)^2  \ind_{\{\eta < q V_N \} } \right]}- \frac{(1+\eta^{(a)})^2  \ind_{\{1+ \eta < q V_N \} }}{\bbE \left[(1+\eta^{(a)})^2  \ind_{\{1+\eta < q V_N\} }\right]} \right| \right]\le C_{q,\ga,\hat \gb} a^{2-\alpha} \,.
 \end{equation}
The above inequality can be checked using
 the computation in~\eqref{expsquare}.
\end{proof}

We can now conclude the proof of~\eqref{probaCsmall}.
Using the coupling $\bbQ$ of Lemma~\ref{lemlacompa},
and using that $C^{(1)}_{q,\gd'}(I)$ is measurable
with respect to $(\eta^{(a)},\eta^{(a)}_1,\eta^{(a)}_2)$
 for $a\leq (\gd')^{\gamma/\gd'}$, we therefore get for
 $a\leq (\gd')^{\gamma/\gd'}$
\begin{equation*}
 \Big| \tilde \bbP^{(a)}_{I,J} \big( C^{(1)}_{q,\gd'}(I) \big)- \tilde \bbP_{I,J} \big( C^{(1)}_{q,\gd'}(I) \big) \Big|
 \le \bbQ \Big(  (\eta^{(a)},\eta^{(a)}_1,\eta^{(a)}_2)\ne (\hat \eta^{(a)},\hat \eta^{(a)}_1,\hat \eta^{(a)}_2) \Big)\le  C |J| a^{2-\alpha},
\end{equation*}
which can be made arbitrarily small by taking $a$ small (recall that $|J|\leq 1/\gd$ for $J\in \Xi(\gd,n)$).
Hence, we have established~\eqref{probaCsmall} and we are only left with proving Lemma~\ref{brutforce}.
\end{proof}

\begin{proof}[Proof of Lemma \ref{brutforce}]
First of all, notice that instead of considering the event $C^{(2)}_{q,\gd'}(I)$, we may by symmetry consider
only the event corresponding to $r=1$.
To simplify notation, we may also consider only one sequence $(\eta_n)^N_{n=1}$. We let   $\hat \bbP^{(a)}_{I,J}$ denote
 the corresponding \emph{reduced} version of $\tilde \bbP^{(a)}_{I,J} $, defined by
\[
 \frac{\dd \hat \bbP^{(a)}_{I,J}}{\dd \bbP} := \frac{\hat Y^{(a)}(I,J)}{\bbE\big[ \hat Y^{(a)}(I,J)\big]} \, ,
\]
where $\hat Y^{(a)}(I,J)$ is  the \emph{reduced} version of $Y^{(a)}(I,J)$, \textit{i.e.}
\begin{multline*}
\hat Y^{(a)}(I,J) :=
 \prod_{n\in I^{\cc}}  \big(1+\beta_N  \eta_{n}^{(a)}   \big)  \prod_{n\in I\setminus J} \big( 1+ 2\beta_N (1-\gb_N) \eta^{(a)}_{n}  -\gb_N^2 \big) 
   \prod_{n\in J} (1+\eta_n^{(a)})^2 \\
\times    \ind_{ \big\{ \forall n\in \lint 1,N\rint, 1+\eta_n< q V_N \big\} } \, .
\end{multline*}
Recalling the definition of the event $C_{q,\gd'^{(2)}}(I)$, 
we also define its \emph{reduced} version
 \begin{equation}
 \label{def:Dq}
 D_{q,\gd'}:=
   \Big\{ \exists K  \notin \Xi(\gd',N) ,
 \prod_{i\in K} (1+\eta_i)  \geq  (qV_N)^{|K|}  \big( N^{-|K|} \Pi_K \big)^{\gamma} \Big\} \,.
 \end{equation}
 We therefore need to prove that if $\gd'$
 is fixed sufficiently small (depending on $\gd,q$ and $\zeta$ but not on $a$), we have
 $\hat \bbP^{(a)}_{I,J}( D_{q,\gd'} )\leq \zeta/2$
 for any $J\in \Xi(\gd,N)$ and $I\supset J$ (uniformly in $N$).

%

Here, one difficulty is that the set $K$ in the event $D_{q,\gd'}$
may have non-empty intersection with $J$ and $J^\cc$, on
which the densities of $\eta_n$ are very different.
We are hence going to simplify once more the problem,
to reduce the event $D_{q,\gd'}$ to sets 
$K \subset J^{\cc}$.
Since $D_{q,\gd'}$ is an increasing event,
it is sufficient to bound the probability of $D_{q,\gd'}$ 
for a probability that stochastically
dominates $\hat \bbP^{(a)}_{I,J}$.
For instance we can consider a (product) measure $\hat \bbP_J^{(a)}$
under which 
 $1+\eta_n^{(a)}=  q V_N$ for $n\in J$ and
for which all the other coordinates have density  $1+2 \beta_N \eta_n^{(a)}$ (instead of $1+ \beta_N \eta_n^{(a)}$ or $1+2 \beta_N(1-\gb_N) \eta_n^{(a)} - \gb_N^2$). 
The stochastic domination follows from
the fact that the functions
$$ \eta\mapsto \frac{1+2\beta_N \eta^{(a)}}{1+\beta_N \eta^{(a)}} \quad \text{ and } \quad  \eta\mapsto \frac{1+2\beta_N \eta^{(a)} }{1-\beta^2_N+2\beta_N(1-\beta_N)\eta^{(a)}} $$
are non-decreasing and \eqref{chebcheb}. Then we can observe repeating the computation \eqref{chebcheb2} that $\hat \bbP^{(a)}_J$ is stochastically dominated by $\hat \bbP^{(0)}_J= \hat \bbP_J$.

Under $\hat \bbP_{J}$,  since $1+\eta_n =q V_N$ for $n \in J$, the description of the event $D_{q,\gd'}$ may be simplified.
In fact, we can observe that 
if  $K \notin \Xi(\gd',N)$
satisfies the condition in $D_{q,\gd'}$ then $K\cup J$ also does.
Indeed, setting $K'=J \setminus K$ we have
\begin{align*}
\prod_{i\in K\cup J}(1+\eta_{i}) = \prod_{i\in K}(1+\eta_{i})  (qV_N)^{|K'|}  &\geq 
 (qV_N)^{-|K\cup J|}  \big( N^{-|K|} \Pi_K \big)^{\gamma}\\ 
 &\ge  (qV_N)^{-|K\cup J|}  \big( N^{-|K\cup J|} \Pi_{K\cup J} \big)^{\gamma}
 \, ,
\end{align*}
where the last inequality follows from the fact that $\big( N^{-|K|} \Pi_K \big)$ decreases when points are added. 
%
We have thus $\hat \bbP_J(D_{q,\gd'})  =  \hat \bbP_J(\hat D_{q,\gd'}(J) )$,
where we define
 \begin{equation*}
  \hat D_{q,\delta'}(J):=\Big\{ \exists K\subset J^{\cc}, \   K\cup J \notin \Xi(\gd',N) , \
\prod_{i\in K}(1+\eta_{i}) \geq  (qV_N)^{|K|}  \big( N^{-|K\cup J|} \Pi_{K\cup J} \big)^{\gamma}\Big\}\, .
 \end{equation*}
Noticing now that $\hat D_{q,\delta'}(J)$
does not depend on $\eta_n$ for $n\in J$,
we may change the distribution of $\eta_n$ for $n\in J$.
In other words, we have $\hat \bbP_J^{}(\hat D_{q,\gd'}(J) )=\hat \bbP_N^{}( \hat D_{q,\gd'}(J))$
where $\hat \bbP_N^{}$ is  the measure defined by
\begin{equation}\label{siouz}
 \frac{\dd \hat \bbP_N^{}}{\dd \bbP}:=    \frac{1}{\bbE \big[ (1+2\gb_N\eta^{}) \ind_{\{1+\eta <   q V_N\}} \big]^{N} } \prod_{n=1}^{N} \big(1+2 \beta_N \eta_n^{} \big) \ind_{\{\forall n\in \lint 1, N\rint, 1+\eta_n < q V_N\}}.
\end{equation}
For this, we will use some of the computations made in the proof of Lemma \ref{firstmt}. 
Then, recalling Claim~\ref{claiclaim} (in particular~\eqref{lescondits}), we can add the condition 
that for all $ n\in K$ one has $1+\eta_n^{(a)}\ge  N^{-\gamma}  V_N$,
without modifying the event $\hat D_{q,\gd'}(J)$.
Hence using a union bound like in \eqref{decomp2} we simply need to show that when $\delta'$ is small the following sum is small
\begin{equation*}
 e^{-2 \hat \gb \gamma_N \ind_{\{\ga =1\}}}\sumtwo{ K\subset J^{\cc}}{ K\cup J \notin \Xi(\gd',N)} 
\hat \bbP_N^{}\Big(  \prod_{i\in K} (1+\eta_i) \ind_{\{  1+ \eta_i\ge N^{-\gamma} V_N \}} \geq (qV_N)^{|K|}   \big(N^{ -|K \cup J|} \Pi_{K\cup J} \big)^{\gamma}  \Big) \,,
\end{equation*}
uniformly in $N$ and $J\in \Xi(\gd,N)$.
We can use Lemma \ref{lekalkul} with $\hat \bbP_N^{}$ instead of~$\tilde \bbP_N^{}$ to bound the above sum.
We can easily reduce to~\eqref{insidelekalkul}, replacing the factor $2$ in the upper bound by some larger constant -- there is a factor $2$ in front of $\gb_N$ in the l.h.s., using also that $\bbE[ (1+2\gb_N\eta^{}) \ind_{\{1+\eta <   q V_N\}}]$ is bounded below by a positive constant.
We stress that the remainder of the proof of Lemma~\ref{lekalkul} is also valid when $\ga=1$,
since we only need that $\ga+\frac{\gep}{2} >1$ in~\eqref{compareintegral}.
Therefore, applying the conclusion of Lemma \ref{lekalkul} with $t=(N^{-|K \cup J|} \Pi_{K\cup J} )^{\gamma}$ we obtain for any arbitrary $\gep>0$
\begin{equation}
  \hat \bbP_N^{} \big( \hat D_{q,\delta'}(J) \big) \le  \sumtwo{ K\subset J^{\cc}}{ K\cup J \notin \Xi(\gd',N)}   C_{\hat \beta, q,\gep}^{|K|}
N^{-|K|}\big(N^{-|K\cup J|} \Pi_{K\cup J} \big)^{\gamma(1-\alpha-\gep)} \,.
\end{equation}
We assume for the rest of the proof that $\vartheta:= \gamma(\alpha-1+\gep) \in(0,1)$.
Let $s_1< \dots < s_m$ denote the position of the points of $J$  divided by $N$ (we work with fixed values of
$(s_i)_{i=0}^{N}$); recall that by
assumption we have $J\in \Xi(\gd,N)$ so that $s_{i}-s_{i-1}\ge \delta$.
By a sum/integral comparison, 
we therefore get that
\begin{equation}
\label{eq:hatP}
 \hat \bbP_N^{} \big( \hat D_{q,\delta'}(J) \big) \le
   \sum_{k=1}^{\infty}    (C'_{\hat \beta, q,\gep})^{k}
\int_{\Delta_k({\bf s},\delta')} 
    \pi_{\bf s}({\bf t})^{-\vartheta} \dd t_1\dots \dd t_k \, ,
\end{equation}
where the set $\Delta_k({\bf s},\delta')$ is defined by (we set  $s_0=0$ and $s_{m+1}=1$ by convention)
\[
 \Delta_k({\bf s},\delta'):=\bigg\{ 0<t_1<\dots<t_k<1 \ :  \min_{i\in \lint 1,k-1\rint}(t_{i+1}-t_{i})\wedge \mintwo {i\in \lint 1,k\rint}{j\in \lint 0, m+1\rint}|t_i-s_j|\le \delta' \bigg\},
\]
and the function $ \pi_{\bf s}({\bf t})$ is defined by 
\[
  \pi_{\bf s}({\bf t}):=\prod_{i=0}^{k+m} (u_{i+1}-u_{i}),
 \]
where the $(u_i)^{k+m}_{i=1}$ are the ordered elements of $\{t_i\}_{i=1}^{k} \cup \{s_j\}_{j=1}^m$, $u_0=0$, $u_{k+m+1}=1$. Note that we have added a last factor in the product  $\pi_{\bf s}({\bf t})$ compared to $\Pi_{K\cup J}$, but it is smaller than one and the exponent is $-\vartheta<0$.
Now we have to bound \eqref{eq:hatP} uniformly
 over all sets ${\bf s}$ that are $\gd$-spaced, 
which still requires some work. Let us define, for $j\in \lint 0,m\rint$, the following subset of $\Delta_k({\bf s}, \gd')$:
\begin{multline}
\Delta^{j}_k({\bf s},\delta')= \big\{  0<t_1<\dots<t_k<1 \ : \ \exists i\in\lint 1,k\rint, \\
\  s_{j} \le t_i< s_{j+1}, 
\text{ and } \min(t_i-s_{j},t_{i}-t_{i-1},s_{j+1}-t_i) \le \delta' \big\} \, ,
\end{multline}
with by convention $s_{m+1}=1$, $s_0=t_0=0$. 
From~\eqref{eq:hatP}, and using that  $\Delta_k({\bf s},\delta')=\bigcup_{j=0}^m\Delta^{j}_k({\bf s},\delta')$,
we therefore get that
\begin{equation}
\label{eq:hatP2}
\hat \bbP_N ^{}\big( \hat D_{q,\delta'}(J) \big) \le
  \sum_{j=0}^m  \sum_{k=1}^{\infty}    (C'_{\hat \beta, q,\gep})^{k} \int_{\Delta^j_k({\bf s},\delta')} 
     \pi_{\bf s}({\bf t})^{-\vartheta} \dd t_1\dots \dd t_k \, ,
\end{equation}
and we control each integral in the last sum.
Decomposing over the number $\ell_{i}$ of points~$t_r$ falling in $(s_{i},s_{i+1})$ (which may be equal to $0$ except for $i=j$), we have after scaling on each interval $(s_{i},s_{i+1})$ (and using Lemma~\ref{lemGamma} for the intervals with index $i\neq j$)
\begin{multline*}
\int_{\Delta^j_k({\bf s},\delta')} 
    \pi_{\bf s}({\bf t})^{- \vartheta} \dd t_1\dots \dd t_k\\
  = \sumtwo{\ell_0, \ldots, \ell_{m} , \, \ell_j\geq 1 }{ \ell_0+\cdots + \ell_m =k}
\prod_{i \in \lint 0,m \rint \setminus \{j\} }   (s_{i+1} -s_i)^{ (1-\vartheta)\ell_{i}-\vartheta }   \frac{\Gamma( 1-\vartheta )^{\ell_{i}+1}}{ \Gamma( (\ell_{i} +1) (1-\vartheta) )} \\
 \times(s_{j+1} -s_{j})^{( 1- \vartheta) \ell_j- \vartheta}
\!\!\!\!\!\!\!\!\!\!\!\!\!\!\!\!
\inttwolimits{0<t_1< \cdots < t_{\ell_j} < 1} {\min_{0\leq r \leq \ell_j} (t_{r+1}-t_{r}) \leq \gd'/ (s_{j+1} -s_j)}
\!\!\!\!\!\!\!\! \prod_{r=0}^{\ell_j } 
(t_{r+1}-t_r)^{-\vartheta} \dd t_1 \ldots \dd t_k\, .
\end{multline*}
In the last integral, we can use that $s_{j+1} -s_j\geq \gd$.
By symmetry, we can assume that the 
minimum $\min_{0\leq r \leq \ell_j} (t_{r+1}-t_{r})$
is attained for $r=1$, losing a factor $\ell_j$.
 Using  Lemma~\ref{lemGamma} again, this allows to bound the integral in the last line by 
\begin{multline}
 \ell_j \int_{0<t_1< \cdots < t_{\ell_j} < 1}
 \prod_{r=0}^{\ell_j } 
(t_{r+1}-t_r)^{-\vartheta}\ind_{\{t_1\le \delta'/\delta\}} \dd t_1 \ldots \dd t_k \\ = \frac{\ell_j \gG(1-\vartheta)^{\ell_j}}{ \gG(\ell_j(1-\vartheta))}\int^{\delta'/\delta}_0 t_1^{-\vartheta}(1-t_1)^{ (1-\vartheta)(\ell_j-1) -\vartheta}   \dd t_1\,,
\end{multline}
Altogether, bounding all the other $(s_{i+1} -s_i)$  by either $1$ or $\delta$ and using that $\ell_j \geq 1$,
we get that
\begin{multline*}
\int_{\Delta^j_k({\bf s},\delta')} 
    \pi({\bf t},{\bf s})^{1-\vartheta} \dd t_1\dots \dd t_k \\\leq  \sumtwo{\ell_0, \ldots, \ell_{m}, \ell_j\geq 1 }{ \ell_0+\cdots + \ell_m =k}  \left(\prod_{i \in \lint 0,m\rint \setminus \{j\} } \frac{ \delta^{-\vartheta}\Gamma( 1-\vartheta )^{\ell_{i}+1}}{ \Gamma( (\ell_{i} +1) (1-\vartheta) )} \right)
  \times \frac{ \gd^{-\vartheta} \ell_j \gG(1-\vartheta)^{\ell_j}}{ \gG(\ell_j(1-\vartheta))} \frac{  (1-\gd'/\gd)^{-\vartheta}}{(1-\vartheta)}  (\delta'/\delta)^{1-\vartheta} \,.
\end{multline*}
Note that by symmetry the upper bound does not depend on $j$.
Going back to~\eqref{eq:hatP2}, summing of all values for $j$ and $k$
 and factorizing the sum we get that
\begin{multline*}
  \hat \bbP_N^{} \big( \hat D_{q,\delta'}(J) \big)
   \leq (m+1) \bigg( \sum_{\ell=0}^{\infty}   (C'_{\hat \beta, q,\gep})^{\ell} \frac{ \delta^{-\vartheta} \Gamma( 1-\vartheta) )^{ \ell+1} }{  \Gamma( (\ell +1)(1-\vartheta)+1) )}  \bigg)^{m-1} \\
  \qquad   \times \frac{  (1-\gd'/\gd)^{-\vartheta}}{(1-\vartheta)}  (\delta'/\delta)^{1-\vartheta}\sum_{\ell=1}^{\infty} \ell \,  (C'_{\hat \beta, q,\gep})^{\ell} \, \frac{\Gamma(1 -\vartheta) )^{\ell }}{ \Gamma( \ell (1-\vartheta) )} \,.
\end{multline*}
All the sums are finite, and using that $m = |J|\leq 1/\gd$
(recall that $J\in \Xi(\gd,N)$),
we can therefore choose $\gd'$ small enough (how small depends only on $\gd$) so that $\hat \bbP_N^{} ( \hat D_{q,\delta'}(J) ) \leq \zeta/2$.
\end{proof}

 \subsection{Adapting the proof to the case of general bounded $f$}\label{modifgenf}

 Let us focus on the proof developed for $d\ge 2$, since it also works in dimension $1$.
 For simplicity of notation, we assume here  that $\ga\in (1,\ga_c)$ but the case $\alpha=1$ is exactly the same.
We define 
\begin{equation}
\label{def:WNaqf}
 W^{a,q}_N(f):=  \bE\bigg[f(S^{(N)})\prod_{n=1}^N \big( 1+\beta_N \eta^{(a)}_{n,S_n} \big)\ind_{B_{N,q}(S) } \bigg].
\end{equation}
Similarly to Lemma \ref{firstmt} and Proposition \ref{secondmt}, we need to prove 
\begin{equation}\begin{split}\label{twothingz}
 &\lim_{q\to \infty}\sup_{a\in [0,1)} \sup_{N\ge 1}\bbE \left[| W^{a,q}_N(f)-Z^{a}_N(f)|\right]=0 \\
 &\lim_{a\to 0} \sup_{N\ge 1} \bbE \left[\big( W^{a,q}_N(f)-W^{0,q}_N(f)\big)^2\right]=0.
\end{split}\end{equation}
For the first line, recalling \eqref{lamesuretilde} we have
\begin{equation}
\bbE \left[| W^{a,q}_N(f)-Z^{a}_N(f)|\right]]\le  \bE \left[ |f(S^{(N)})|  \, \tilde \bbP^{(a)}_S [    B^{\cc}_{N,q}(S)]\right] \le \|f \|_{\infty}  \bE \left[ \tilde \bbP^{(a)}_S [    B^{\cc}_{N,q}(S)]\right],
\end{equation}
and we can conclude using the proof of Lemma \ref{firstmt} (recall we proved \eqref{cestuniforme}).
Now for the second line of \eqref{twothingz}, we need to prove the analog of \eqref{limsupsecondmoment}. As in the case $f\equiv 1$, we focus on  
\begin{equation}
\lim_{a\to 0} \sup_{N\ge 1} \Big(  \bbE \big[ W^{a,q}_N(f)^2 \big]-\bbE \big[ W^{0,q}_N(f)^2\big] \Big)=0.
\end{equation}
We can follow the computation of Section \ref{convsecmom}. We can rewrite the above quantity as in \eqref{3sequences} but replacing $p_I$ by 
\[
p_I(f):= \bbE^{\otimes 2}\Big[  f(S^{(N,1)})f(S^{(N,2)})\ind_{\{I_N(S^{(1)},S^{(2)})=I\}} \Big] 
\]
where, with some abuse of notation $S^{(N,1)}$ and $S^{(N,2)}$ denote the rescaled version of the two independent random walks $S^{(1)}$ and $S^{(2)}$.
One can then proceed with the proof  exactly as above, observing that since $p_I(f)\le \|f\|^2_{\infty} p_I$, Lemma \ref{smallcontrib} and Lemma \ref{maincontrib} remain valid when 
$p_I$ is replaced by $p_{I}(f)$.
\qed

\appendix

\section{Stochastic comparison: expectation vs.\ integrals}
\label{app:compare}

We present here a technical result which allows to replace some
expectations with respect to
a random variable whose law $\mu$ satisfies  $\mu([u,+\infty))= \varphi(u)u^{-\ga}$  by integrals with respect to the measure with density $ \alpha u^{-(1+\alpha)}\varphi(u) \dd u$ (which is not necessarily a probability).

\begin{proposition}\label{lescroissants}
Let $\mu$ be a probability measure on $\bbR_+$ that satisfies $\mu\big( [t,+\infty) \big)=\varphi(t)t^{-\ga}$
for some slowly varying $\varphi$.
There exist constants $C$ and $B_0$ (depending on $\varphi$ and~$\alpha$)  such that for all  $k\in \bbN$ and for all non-decreasing function $f  :  \bbR_+^k \to \bbR_+$ with $f(0)=0$, and all $B\ge B_0$, we have 
 \begin{align*}
 \int_{[0,B)^k} f(u_1,\cdots,u_k)  \prod_{i=1}^k \mu (\dd u_k)  & \le C^k \int_{[0, 2B)^k} f(u_1,\cdots,u_k)  \prod_{i=1}^k u_i^{-(1+\alpha)} \varphi(u_i)\dd u_i \, .
 \end{align*}
\end{proposition}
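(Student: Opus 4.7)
The plan is to reduce the statement to the one-dimensional case and treat that via a dyadic block comparison.

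\smallskip
First I would argue by iteration on the number of variables. Writing $\nu(\dd u) := u^{-(1+\alpha)} \varphi(u) \dd u$, a one-dimensional comparison of the form
\[
\int_{[0,B)} g(u)\, \mu(\dd u) \le C \int_{[0,2B)} g(u)\, \nu(\dd u),
\]
valid for any non-decreasing $g : \bbR_+ \to \bbR_+$ and any $B\ge B_0$, can be applied successively to each coordinate. Indeed, freezing $(u_2,\ldots,u_k)$, the function $u_1 \mapsto f(u_1,u_2,\ldots,u_k)$ is non-decreasing, so the 1D bound applies; moreover, the resulting function
\[
(u_2,\ldots,u_k) \mapsto \int_0^{2B} f(u_1,\ldots,u_k) u_1^{-(1+\alpha)} \varphi(u_1) \dd u_1
\]
is again non-decreasing in each variable, so Fubini permits a second application to $u_2$, and so on. Each step costs a factor $C$, giving the constant $C^k$ in the final inequality.

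\smallskip
The heart of the proof is thus the one-dimensional comparison, which I would prove by a dyadic decomposition. Let $n_0$ be chosen so that $2^{n_0} \ge B_0$, where $B_0$ is large enough for Potter's bound and Karamata's theorem to apply, and let $N := \lfloor \log_2 B \rfloor$. On each dyadic piece $\Delta_n := [2^n, 2^{n+1})$ with $n_0 \le n \le N-1$, monotonicity gives
\[
\int_{\Delta_n} g(u)\, \mu(\dd u) \le g(2^{n+1}) \mu(\Delta_n) \le g(2^{n+1}) \varphi(2^n) 2^{-\alpha n}.
\]
Karamata's theorem yields $\nu(\Delta_{n+1}) \sim \alpha^{-1}(1-2^{-\alpha}) \varphi(2^{n+1}) 2^{-\alpha(n+1)}$, and Potter's bound ensures $\varphi(2^{n+1})/\varphi(2^n)$ is uniformly bounded below by a positive constant for $n \ge n_0$. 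Combining, one gets a constant $C_1$ such that $\mu(\Delta_n) \le C_1 \nu(\Delta_{n+1})$, and since $g \ge g(2^{n+1})$ on $\Delta_{n+1}$,
\[
\int_{\Delta_n} g\, \dd\mu \le C_1 g(2^{n+1}) \nu(\Delta_{n+1}) \le C_1 \int_{\Delta_{n+1}} g\, \dd\nu.
\]
Summing over $n_0 \le n \le N-1$ gives $\sum_n \int_{\Delta_n} g\, \dd\mu \le C_1 \int_{[2^{n_0+1}, 2^{N+1})} g\, \dd\nu \le C_1 \int_{[0,2B)} g\, \dd\nu$, using $2^{N+1} \le 2B$.

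\smallskip
Two boundary pieces remain, both of which are handled by absorbing their contribution into the $\nu$-integral on a fixed interval. For the initial piece $[0, 2^{n_0})$, one uses $\mu([0, 2^{n_0})) \le 1$ and monotonicity to get $\int_{[0, 2^{n_0})} g\, \dd\mu \le g(2^{n_0})$. Since $\nu([2^{n_0}, 2^{n_0+1}))$ is a strictly positive constant $c_0$ and $g(u) \ge g(2^{n_0})$ for $u \ge 2^{n_0}$, we have $g(2^{n_0}) \le c_0^{-1} \int_{[2^{n_0}, 2^{n_0+1})} g\, \dd\nu \le c_0^{-1} \int_{[0, 2B)} g\, \dd\nu$, which uses $2B > 2^{n_0+1}$ (guaranteed by $B \ge B_0$). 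The remaining piece $[2^N, B)$ is treated exactly like the middle dyadic terms. Adjusting constants gives a single $C$, independent of $B$ and of $g$.

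\smallskip
The main obstacle is getting a constant $C$ that is uniform in $B$ (and ultimately in $k$): the crude asymptotics provided by Karamata's theorem only hold for large arguments, and so one must use Potter's uniform bound on $\varphi$ together with the assumption $B \ge B_0$ to control the slowly varying function on all dyadic scales at once. The role of $f(0)=0$ is mostly cosmetic: it rules out a pathological atom at the origin, but the dyadic decomposition does not require it beyond ensuring the integrals are well defined.
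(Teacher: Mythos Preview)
Your proof is correct and takes a genuinely different route from the paper's. Both reduce to the one-dimensional case by the same iteration argument, but diverge thereafter. The paper approximates $f$ by a smooth bounded function and integrates by parts twice: first against the tail $\bar F(u) = \mu([u,\infty)) = \varphi(u) u^{-\alpha}$, then it introduces the auxiliary slowly varying function $\bar\varphi(u) := u^\alpha \int_u^\infty \alpha v^{-(1+\alpha)} \varphi(v)\, dv$, observes that $\varphi \le C \bar\varphi$ globally (using $\varphi(u) \sim u^\alpha$ near $0$ and $\bar\varphi \sim \varphi$ at infinity by Karamata), and unwinds via a second integration by parts to reach the $\nu$-integral. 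The leftover boundary term $f(B) B^{-\alpha} \bar\varphi(B)$ is then absorbed into $\int_{[B,2B)} f\, d\nu$ by monotonicity, which is what forces the doubling of the interval. Your dyadic block comparison is more elementary: no smoothness, no integration by parts, just monotonicity of $g$ together with the mass estimate $\mu(\Delta_n) \le C_1\, \nu(\Delta_{n+1})$ from Potter's bound. The paper's approach is a bit shorter once one accepts the auxiliary $\bar\varphi$; yours is arguably more transparent and, as you correctly note, does not actually need the hypothesis $f(0)=0$ --- the paper uses it to kill a boundary term at the origin in the first integration by parts. One cosmetic wrinkle: the ordering of $n_0$ and $B_0$ in your write-up is slightly tangled --- you want to \emph{set} $B_0 := 2^{n_0+1}$ after choosing $n_0$ large enough for Potter's bound, not the other way round.
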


\begin{proof}
The first remark is that we need to show the result only in the case $k=1$. Integrating successively the functions $u_i \mapsto f(u_1, \ldots, u_k)$ then yields the result.
Also, it is sufficient to check the result for a function $f$ that is differentiable and bounded
(the other cases can be obtained by monotone convergence).
We set  $\bar F(u) = \mu([u,\infty))$. 
Using an integration by parts, and applying these inequalities (recall that $f'(u)\geq 0$) we get
\begin{equation}
\int_{[0,B)}   f(u) \mu(\dd u) 
\, =  
\int_{[0,B)}  f'(u)  \bar F(u) \dd u  - f(B)  \bar F(B)
\end{equation}
Now we set 
$$\bar\varphi(u):= u^{\alpha} \int^{\infty}_{u} \alpha v^{-(1+\alpha)} \varphi(v) \dd v.$$ 
Since $\bar\varphi$ is asymptotically equivalent to $\varphi$ at $\infty$, and to $\alpha u^{\alpha}\log u$ at $0$,  we have $\varphi\le C  \bar \varphi$.

\begin{align*}
 \int_{[0,B)}  f'(u)  \bar F(u) \dd u
 &\le C  \int_{[0,B)}  f'(u) u^{-\alpha} \bar\varphi(u) \dd u \\
 &=C\alpha\left(\int_{[0,B)}  f(u) u^{-(1-\alpha)} \varphi(u) \dd u
 + f(B) B^{-\alpha}\bar\varphi(B)\right)  \,,
\end{align*}
where we used another integration by parts for the last identity.
Hence we obtain that 
\begin{equation}\label{zlooz}
 \int_{[0,B)}   f(u) \mu(\dd u) \le  C \alpha \int_{[0,B)}  f(u) u^{-(1-\alpha)} \varphi(u) \dd u  +  C\alpha  f(B) B^{-\alpha} \bar \varphi(B) \,. 
\end{equation}
Now, to conclude with use that $f$ is non-decreasing and that $\bar \varphi$ and $\varphi$ are asymptotically equivalent to obtain that for $B$ sufficiently large
 $$f(B) B^{-\ga}\bar  \varphi(B)  \le C' \int_{[B,2B)} f(u) u^{-(1+\ga)} \varphi(u) \dd u ,$$
 so that the second term in \eqref{zlooz} can be absorbed into the first one.
\end{proof}

\begin{proposition}\label{lestartines}
Let $\mu$ be a probability  measure on $\bbR_+$ such that  $\mu\big([t,\infty) \big) \leq  t^{-\alpha} \varphi(t)$  for all $t\geq 0$, with  $\ga \in(0,2)$.
Then  there is  some constant $C$ such that for all  $k\in \bbN$ and  for all non-increasing function $f  :  \bbR_+^k \to \bbR_+$  with bounded support,  we have
 \begin{equation}
  \int_{\bbR^k_+}  f(u_1,\cdots,u_k) \prod_{i=1}^k u_i^2 \mu(\dd u_i)
  \le C^k  \int_{\bbR^k_+} \ f(u_1,\cdots,u_k) \prod_{i=1}^k u_i^{1-\alpha} \varphi(u_i)\dd u.
 \end{equation}
 \end{proposition}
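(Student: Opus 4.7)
The plan is to reduce to $k=1$ by a Fubini-and-iterate argument, and then to handle $k=1$ with an integration-by-parts trick that exploits the monotonicity of $f$ together with the tail bound on $\mu$.

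First, I will establish the one-dimensional case with explicit constant $C=2$. Writing $u^2 = \int_0^u 2v\,\dd v$ and applying Fubini to the positive integrand, one gets
\begin{equation*}
\int_{\bbR_+} f(u)\, u^2 \, \mu(\dd u) = \int_0^\infty 2v \left( \int_{[v,\infty)} f(u)\, \mu(\dd u) \right) \dd v \,.
\end{equation*}
Since $f$ is non-increasing, for $u \geq v$ one has $f(u) \leq f(v)$, hence
\begin{equation*}
\int_{[v,\infty)} f(u)\, \mu(\dd u) \;\leq\; f(v)\, \mu\big([v,\infty)\big) \;\leq\; f(v)\, v^{-\alpha}\varphi(v) \,,
\end{equation*}
using the assumed tail bound on $\mu$. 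Plugging this back in yields
\begin{equation*}
\int_{\bbR_+} f(u)\, u^2 \, \mu(\dd u) \leq 2 \int_0^\infty f(v)\, v^{1-\alpha} \varphi(v) \,\dd v \,,
\end{equation*}
which is the desired inequality for $k=1$.

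For general $k$, I proceed by induction, peeling off one coordinate at a time. Writing the $k$-fold integral as an iterated integral, I fix $(u_2,\ldots,u_k)$ and apply the one-dimensional bound to $u_1 \mapsto f(u_1,u_2,\ldots,u_k)$, which is non-increasing for any fixed choice of the remaining coordinates. This gives
\begin{equation*}
\int_{\bbR^k_+} f \prod_{i=1}^k u_i^2\, \mu(\dd u_i) \;\leq\; 2 \int_{\bbR^{k-1}_+} \!\! \left( \int_{\bbR_+} f\, u_1^{1-\alpha}\varphi(u_1)\,\dd u_1 \right) \prod_{i=2}^k u_i^2\, \mu(\dd u_i).
\end{equation*}
The key observation is that the function $(u_2,\ldots,u_k) \mapsto \int_{\bbR_+} f\, u_1^{1-\alpha}\varphi(u_1)\,\dd u_1$ is still non-increasing in each of its variables (since pointwise monotonicity of $f$ in each coordinate is preserved by integrating out another coordinate against a positive measure). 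Hence the $k=1$ bound can be reapplied to $u_2$, then to $u_3$, and so on, producing a factor of $2$ at each step and leaving a fully dominated integral at the end. After $k$ iterations one obtains the claim with $C = 2$.

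There is no serious obstacle here: the argument is a clean Fubini-and-monotonicity computation, closely parallel in spirit to Proposition~\ref{lescroissants} but simpler because the weight $u^2$ allows the rewriting $u^2 = \int_0^u 2v\,\dd v$, bypassing the integration-by-parts boundary terms that appeared in the non-decreasing case. The assumption that $f$ has bounded support is used only implicitly to ensure finiteness of all integrals encountered (no tail correction term arises, unlike in Proposition~\ref{lescroissants}).
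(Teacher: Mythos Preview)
Your proof is correct and takes a more direct route than the paper's. Both arguments reduce to $k=1$ by the same Fubini-and-iterate mechanism, but the one-dimensional step differs: the paper introduces the auxiliary function $\tilde\varphi(u) = (2-\alpha)u^{\alpha-2}\int_0^u \varphi(v)v^{1-\alpha}\dd v$ and the measure $\tilde\mu(\dd t) = t^2\mu(\dd t)$, bounds $\tilde\mu([0,t))$ via one integration by parts, then performs a second integration by parts against $-f'$ (requiring $f$ differentiable as a preliminary reduction). Your layer-cake identity $u^2 = \int_0^u 2v\,\dd v$ followed by Fubini and the pointwise bound $f(u)\le f(v)$ for $u\ge v$ achieves the same conclusion in one step, yields the explicit constant $C=2$ directly, and does not require a differentiability reduction on $f$. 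The paper's approach is closer in structure to the companion Proposition~\ref{lescroissants} (where boundary terms genuinely matter), while yours exploits the specific form of the weight $u^2$ to bypass those complications entirely.
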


 \begin{proof}
 As for Proposition~\ref{lescroissants}, we only need to prove the result in the case 
 $k=1$, for a differentiable function $f$.
Let $T$ be such that the support of $f$ is included in $[0,T)$ and that of $f'$ is included in $(0,T)$.
We define $\tilde \varphi$ by
\begin{equation}
 \tilde \varphi(u)= (2-\alpha)u^{\alpha-2}\int^t_0 \varphi(u) u^{1-\alpha} \dd u.
\end{equation}
Let  also $\tilde \mu$ denote the measure on $[0,T]$ defined by $\tilde \mu(\dd t)= t^2 \mu(\dd t)$.  
By an integration by parts, we get, 
 using our assumption on $\mu$, that for all $t\geq 0$
\begin{align*}
 \tilde \mu([0,t))  = \int_0^t s^2 \mu(\dd s)
  & = - t^2 \mu((t,\infty)) + 2 \int_0^t s \mu((s,\infty)) \dd s\\
& \leq 2 \int_0^t s^{1-\ga} \gp(s) \dd s  = \frac{2}{2-\ga} t^{2-\ga} \tilde \varphi(t) \,.
\end{align*}

 
 Therefore, thanks to an integration by parts (using that $f(T)=0$), we get that
   \begin{align*}
   \int_{[0,T]} f(u) u^2 \mu(\dd u) & =  
-\int_{[0,T]}    f'(u) \tilde \mu([0,u])\dd u  \le   C\int_{[0,T]}  (-f'(u))  u^{2-\alpha} \tilde \varphi(u)\dd u \, ,
   \end{align*}
where we have used that $-f'(u) \geq 0$ so the inequality goes in the right direction.
We conclude the proof by another integration by parts.
 \end{proof}

Let us conclude this section with the proof of a useful (and standard) identity,
used repeatedly in the paper.

\begin{lemma}
\label{lemGamma}
For any $t >0$, $k\ge 0$ and $\zeta_1,\dots ,\zeta_{k+1} >0$, using the convention $s_0=0$ and $s_{k+1}=t$ we have,
for all $k\geq 1$,
\[
\int_{0<s_1< \cdots <s_k < t }  \prod_{i=1}^{k+1} (s_i-s_{i-1})^{\zeta_i-1} \dd s_i    = t^{\sum_{i=1}^{k+1}\zeta_i-1} \frac{\prod_{i=1}^{k+1}\Gamma(\zeta_i)}{\Gamma(\sum_{i=1}^{k+1}\zeta_i)} \,.
\]
\end{lemma}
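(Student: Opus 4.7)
The plan is to establish the identity by a standard reduction to the Dirichlet/Beta integral, proceeding by induction on $k$ after normalizing to $t=1$.

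First I would reduce to the case $t=1$ via the change of variables $s_i = t u_i$. This yields
\[
\int_{0<s_1<\cdots<s_k<t} \prod_{i=1}^{k+1} (s_i - s_{i-1})^{\zeta_i - 1} \dd s_i = t^{\sum_{i=1}^{k+1} \zeta_i - 1} \int_{0<u_1<\cdots<u_k<1} \prod_{i=1}^{k+1} (u_i - u_{i-1})^{\zeta_i - 1} \dd u_i,
\]
where in the rescaled integral $u_0 = 0$ and $u_{k+1} = 1$. The $t$-dependent prefactor already matches the claimed one, so it remains to prove the identity for $t=1$.

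Next I would proceed by induction on $k$. For the base case $k=1$, the integral $\int_0^1 s_1^{\zeta_1 - 1}(1 - s_1)^{\zeta_2 - 1} \dd s_1$ is the classical Beta function $B(\zeta_1, \zeta_2) = \Gamma(\zeta_1)\Gamma(\zeta_2)/\Gamma(\zeta_1 + \zeta_2)$. For the inductive step, assuming the identity holds at level $k-1$, I would integrate out the innermost variable $s_k$ over $(s_{k-1}, 1)$: setting $s_k = s_{k-1} + (1-s_{k-1}) v$ transforms
\[
\int_{s_{k-1}}^{1} (s_k - s_{k-1})^{\zeta_k - 1}(1 - s_k)^{\zeta_{k+1} - 1} \dd s_k = (1 - s_{k-1})^{\zeta_k + \zeta_{k+1} - 1} \, B(\zeta_k, \zeta_{k+1}).
\]
The remaining integral in $s_1,\dots,s_{k-1}$ now has the same form as the original one with $k-1$ inner variables and with the last two exponents $\zeta_k, \zeta_{k+1}$ replaced by a single exponent $\zeta_k + \zeta_{k+1}$. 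Applying the inductive hypothesis gives
\[
\frac{\Gamma(\zeta_1)\cdots \Gamma(\zeta_{k-1})\, \Gamma(\zeta_k + \zeta_{k+1})}{\Gamma\big(\sum_{i=1}^{k+1} \zeta_i\big)} \cdot \frac{\Gamma(\zeta_k)\Gamma(\zeta_{k+1})}{\Gamma(\zeta_k + \zeta_{k+1})} = \frac{\prod_{i=1}^{k+1} \Gamma(\zeta_i)}{\Gamma\big(\sum_{i=1}^{k+1} \zeta_i\big)},
\]
which is the desired expression.

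There is no real obstacle here; the only point of care is ensuring convergence of the intermediate integral, which follows from the positivity assumption $\zeta_i > 0$ for all $i$. Alternatively, one may recognize the identity as the normalization constant of the Dirichlet distribution with parameters $(\zeta_1, \ldots, \zeta_{k+1})$, but the inductive argument above is elementary and self-contained.
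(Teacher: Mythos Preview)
Your proof is correct. The paper takes a different, one-shot route: after the same scaling to $t=1$, it writes $\prod_{i=1}^{k+1}\Gamma(\zeta_i)$ as an integral over $(0,\infty)^{k+1}$ of $\prod u_i^{\zeta_i-1}e^{-\sum u_i}$ and performs the change of variables $v=\sum_i u_i$, $s_j=(\sum_{i\le j}u_i)/v$, which factorizes the integral into $\Gamma(\sum_i\zeta_i)$ times the simplex integral. Your inductive reduction via the Beta integral is more elementary and avoids computing the Jacobian of a multivariable substitution; the paper's approach is slicker in that it handles all $k$ at once without induction. Both are standard derivations of the Dirichlet normalization.
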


\noindent
Let us stress that in this paper we use this identity with $\zeta_i=\zeta$ for all $i=1,\dots, k$ and either
$\zeta_{k+1}=\zeta$ or $\zeta_{k+1}=1$.

\begin{proof}
By scaling it is sufficient to prove the identity for $t=1$.
We have
\begin{equation}
\prod_{i=1}^{k+1}\Gamma(\zeta_i)
=\int_{(0,\infty)^{k+1}}  \prod_{i=1}^{k+1} u^{\zeta_i-1}_i e^{-\sum_{i=1}^{k+1} u_i}\dd u_i   \,.
\end{equation}
Using the change of variables $(u_1,\dots,u_{k+1})\to (s_1,\dots,s_k,v)$ where  $v:=\sum_{i=1}^{k+1} u_i$ and   $s_j:=\frac{\sum_{i=1}^j u_i}{v}$ for $j\in \lint 0,k\rint$, we obtain
\begin{equation}
 \prod_{i=1}^{k+1}\Gamma(\zeta_i) = \int_{(0,\infty)} v^{\sum_{i=1}^{k+1}\zeta_i-1}  e^{-v}  \dd v \int_{0<s_1< \cdots <s_k < 1 }   \prod_{i=1}^{k+1} (s_i-s_{i-1})^{\zeta_i-1}  \prod_{i=1}^k \dd s_i \, ,  
\end{equation}
which yields the result.
%
%
\end{proof}

\section{Tightness for $\xi^{\eta}_N$}\label{app:taixi}

First of all, let us recall the definition of the functional space $H_{\rm loc}^s(\bbR^{d+1})$.
Given $s\in \bbR$,
let $H^s(\bbR^{d+1})$
 be defined as the topological closure
of the space of smooth and compactly supported functions,
with respect to the norm
\[
\|f\|_{H^s} =\Big( \int_{\bbR^{d+1}} (1+|z|^2)^s |\hat f (z)|^2 \dd z  \Big)^{1/2} \, ,
\]
where $\hat f (z) = \int_{\bbR^{d+1}} f(x) e^{ -i x \cdot z} \dd x$ is the Fourier transform of $f$. The  associated local Sobolev space is given by  
\[
H_{\rm loc}^{s} (\bbR^{d+1})  := \big\{ f \colon f\psi \in H^{s}
\text{ for every compactly supported } \psi \in C^{\infty}  \big\}
\]
with the topology induced
by the family of semi-norms $(\|f \psi\|_{H^s} )_{\psi}$.

\begin{proof}[Proof of Lemma~\ref{lem:uniformxi}]
First of all, let us notice that
 we can write
\begin{equation}
\xi_{N,\eta} - \xi_{N,\eta}^{ (a)}  := \frac{1}{V_N}  \sum_{(n,x)\in \mathbb{H}_d} \bar \eta^{ (a)}_{n,x}\, \gd_{(\frac{n}{N}, \frac{x}{\sqrt{N/d}})}  \,,
\end{equation}
where
\begin{equation}
\bar \eta^{ (a)}_{n,x} := \left(\eta_{n,x}  - \bbE\big[\eta \, \big| \,  1+\eta <   a V_N \big] \right)\ind_{\{  1+ \eta_{n,x} <  a V_N \}}\, .
\end{equation}
Notice that $\bbE[ \bar \eta^{ (a)}_{n,x}] =0$. Now using~\eqref{moments},   the monotonicity in $a$ and continuity at $0$, there exists a function $\gep: (0,1)\to \bbR_+$ with $\lim_{a\to 0} \gep(a)=0,$ such that for every $N\ge 1$ 
$$V^{-2}_N\bbE[ (\bar \eta^{ (a)}_{n,x} )^2] \leq  \gep(a)   N^{ -(\frac{d}{2} +1)}. $$
Hence we have
\begin{equation}
\label{eq:approxxi}
\bbE \left[ \langle \xi_{N,\eta} - \xi_{N,\eta}^{ (a)}, \psi \rangle^2 \right] \le   \gep(a) N^{ - (\frac{d}{2} +1)} \sum_{(n,x)\in \bbH^d} \psi\left(\frac{n}{N}, \frac{x}{\sqrt{N/d}} \right)^2.
\end{equation}
Since the Riemann sum in the r.h.s.~converges,
we have
\[
\limsup_{N \to +\infty} \bbE \left[ \langle \xi_{N,\eta} - \xi_{N,\eta}^{(a)}, \psi \rangle^2 \right] \leq C_{\psi} \, a^{2-\ga} \,,
\]
which concludes the proof. 
%
\end{proof}

\begin{proof}[Proof of Lemma~\ref{letight}]
We have to show that for every smooth $\psi$ with compact support, the sequence   $ \xi^{\eta,\psi}_N:=\psi\times\xi^{\eta}_N$ is tight in $H^{s}(\bbR^{d+1})$.
This corresponds to showing that $\hat \xi^{\eta,\psi}_N$ is tight in  $L^2(\mu^s)$ for $\mu^s=(1+ |z|^2)^{-s}\dd z $.

\smallskip
We are going to show that with large probability $\hat \xi^{\eta,\psi}_N\in K_{R}$ where 
$K_R$ is defined (for a fixed $s'>s$)
\begin{multline}
 K_R:= \bigg\{ f  \ : \   \int |f(z)|^2 (1+ |z|^2)^{-s'} \dd z \le R  \\ \text{ and }   
  \forall a\in \bbR^{d+1}, \        \int |f(z+a)-f(z)|^2 (1+ |z|^2)^{-s} \dd z \le R  |a|    \bigg\}.
\end{multline}
Since $K_R$ is compact (by Frechet-Kolmogorov criterion) this is sufficient to conclude that the distribution of $\xi^{\eta,\psi}_N$ is tight.

\smallskip

To see that $\hat \xi^{\eta,\psi}_N\in  K_R$ with large probability, we first observe that 
 $\xi^{\eta,\psi}_N$ coincides with large probability with $\xi^{\eta,\psi,[0,b)}_N$ (constructed from the environment $\eta^{[0,b)}$, recall \eqref{doubletronc}). 
Then we have by a computation similar to \eqref{eq:approxxi}, for all $N$ sufficiently large
\begin{equation}\label{controlz}
\bbE\left[ \big|\hat \xi^{\eta,\psi,[0,b)}_N(z)\big|^2 \right]\le C_b \Big(\int |\psi|^2\Big)
\end{equation}
so that 
\begin{equation}
 \bbP\left[  \int  \left|\hat \xi^{\eta,\psi,[0,b)}_N(z)\right|^2 (1+ |z|^2)^{-s'} \dd z \ge R   \right]\le  \frac{1}{R} C_{b,\psi}\, .
\end{equation}
For the second point we observe that 
\begin{equation}
\bbE\left[ \left|\hat \xi^{\eta,\psi,[0,b)}_N(z+a)- \hat \xi^{\eta,\psi,[0,b)}_N(a)\right|^2 \right]\le C'_{b,\psi} |a|^2 \int |x|^2 |\psi (x)|^2 \dd x.
\end{equation}
(Note that $\hat \xi^{\eta,\psi,[0,b)}_N(z+a)- \hat \xi^{\eta,\psi,[0,b)}_N(a)$ is the Fourier transform  of the map
$ x\mapsto (e^{i a.x}-1) \psi\times  \xi^{\eta,\psi,[0,b)}_N$, so we are simply bounding the first factor by $|a||x|$.)
We therefore have that
\begin{equation}
 \bbP \left(  \int  \left|\hat \xi^{\eta,\psi,[0,b)}_N(z+a)- \hat \xi^{\eta,\psi,[0,b)}_N(a)\right|^2(1+ |z|^2)^{-s} \dd z \ge |a| \right)\le C_{b,\psi}'' |a|.
\end{equation}
Hence, using a union bound, we obtain that 
\[
\bbP\left(\exists k\ge k_0, \exists i\in \lint 1,d\rint\,\, \int  \left|\hat \xi^{\eta,\psi,[0,b)}_N(z+2^{-k}e_i)- \hat \xi^{\eta,\psi,[0,b)}_N(z)\right|^2(1+ |z|^2)^{-s} \dd z \ge 2^{-k} \right)\le \gep(k_0),
\]
with $\lim_{k_0\to \infty} \gep(k_0)=0$. This is sufficient to conclude that  $\hat \xi^{\eta,\psi,[0,b)}_N\in K_R$ with probability close to one, and thus so is $\hat \xi^{\eta,\psi}_N$.
\end{proof}

{\noindent \bf Acknowledgements:} We are grateful to Francesco Caravenna, Ronfeng Sun and Nikos Zygouras for enlightening discussions. 
We are also grateful for the referee's extremely detailed report,
which greatly helped us improve the presentation.
This work was realized  during H.L. extended stay in Aix-Marseille University funded by the European Union’s Horizon 2020 research and innovation programme under the Marie Skłodowska-Curie grant agreement No 837793. Q.B. acknowledges the support of ANR grant SWiWS (ANR-17-CE40-0032-0).

\bibliographystyle{plain}
\bibliography{biblio.bib}

\end{document}